\title[Bordered Floer homology for sutured manifolds]
	{Bordered Floer homology for sutured manifolds}
\author[Rumen Zarev]{Rumen Zarev}
\address{Department  of Mathematics\\Columbia University\\
	New York\\NY 10027}
\email{rzarev@math.columbia.edu}
\thanks{The author was partially supported by NSF grant number DMS-0706815.}
\def\co{\colon\thinspace}
\newcommand\lu[1]{\mbox{}^{#1}\!}
\newcommand\li[1]{\mbox{}#1\!}
\DeclareMathOperator{\ddiv}{div}
\DeclareMathOperator{\emb}{emb}
\DeclareMathOperator{\Int}{Int}
\DeclareMathOperator{\Inv}{Inv}
\DeclareMathOperator{\inv}{inv}
\DeclareMathOperator{\ind}{ind}
\DeclareMathOperator{\id}{id}
\DeclareMathOperator{\im}{im}
\DeclareMathOperator{\spin}{Spin}
\DeclareMathOperator{\rel}{rel}
\DeclareMathOperator{\ev}{ev}
\DeclareMathOperator{\Gr}{Gr}
\DeclareMathOperator{\PD}{PD}
\newcommand{\Grdn}{\underline{\Gr}}
\DeclareMathOperator{\gr}{gr}
\newcommand{\grdn}{\underline{\gr}}
\DeclareMathOperator{\Hom}{Hom}
\DeclareMathOperator{\Mod}{Mod}
\newcommand{\spinc}{\spin^{c}}
\newcommand{\F}{\mathcal{F}}
\newcommand{\A}{\mathcal{A}}
\newcommand{\Z}{\mathcal{Z}}
\newcommand{\HH}{\mathcal{H}}
\newcommand{\M}{\mathcal{M}}
\newcommand{\I}{\mathcal{I}}
\newcommand{\G}{\mathcal{G}}
\newcommand{\V}{\mathcal{V}}
\newcommand{\W}{\mathcal{W}}
\newcommand{\s}{\mathfrak{s}}
\newcommand{\C}{\mathcal{C}}
\newcommand{\D}{\mathcal{D}}
\newcommand{\B}{\mathcal{B}}
\newcommand{\PP}{\mathcal{P}}
\newcommand{\scat}{\mathcal{S}}
\newcommand{\sdcat}{\mathcal{SD}}
\newcommand{\ZZ}{\mathbb{Z}}
\newcommand{\II}{\mathbb{I}}
\newcommand{\RR}{\mathbb{R}}
\newcommand{\CC}{\mathbb{C}}
\newcommand{\DD}{\mathbb{D}}
\newcommand{\QQ}{\mathbb{Q}}
\newcommand{\wdd}{\omega_{\DD}}
\newcommand{\Zmid}{\mathbf{Z}}
\newcommand{\amid}{\mathbf{a}}
\newcommand{\wsig}{\omega_{\Sigma}}
\newcommand{\jd}{j_{\DD}}
\newcommand{\jsig}{j_{\Sigma}}
\newcommand{\pd}{\pi_{\DD}}
\newcommand{\psig}{\pi_{\Sigma}}
\newcommand{\del}{\partial}
\newcommand{\Ainf}{\mathcal{A}_{\infty}}
\newcommand{\CFhat}{\widehat{\textit{CF}}}
\newcommand{\HFhat}{\widehat{\textit{HF}}}
\newcommand{\CFD}{\widehat{\textit{CFD}}}
\newcommand{\CFA}{\widehat{\textit{CFA}}}
\newcommand{\BSD}{\widehat{\textit{BSD}}}
\newcommand{\CSFD}{\BSD}
\newcommand{\BSDA}{\widehat{\textit{BSDA}}}
\newcommand{\BSDAM}{\BSDA_{M}}
\newcommand{\BSDM}{\BSD_{M}}
\newcommand{\CSFDD}{\BSDM}
\newcommand{\BSA}{\widehat{\textit{BSA}}}
\newcommand{\CSFA}{\BSA}
\newcommand{\SFH}{\textit{SFH}}
\newcommand{\SFC}{\textit{SFC}}
\newcommand{\tDD}{\textit{DD}}
\newcommand{\DA}{\textit{DA}}
\newcommand{\dtens}{\mathbin{\widetilde{\otimes}}}
\newcommand{\sqtens}{\boxtimes}
\newcommand{\xgen}{\mathbf{x}}
\newcommand{\ygen}{\mathbf{y}}
\newcommand{\zgen}{\mathbf{z}}
\newcommand{\brho}{\boldsymbol\rho}
\newcommand{\brarrow}{\overrightarrow{\brho}}
\newcommand{\bcirc}{\mathbin{\boldsymbol\circ}}
\newcommand{\balpha}{\boldsymbol\alpha}
\newcommand{\bbeta}{\boldsymbol\beta}
\newcommand{\sigbar}{\overline{\Sigma}}
\newcommand{\sigint}{\Int(\Sigma)}
\newcommand{\ebar}{\overline{e}}
\newcommand{\obar}{\overline{o}}
\newcommand{\sbar}{\overline{s}}
\newcommand{\sige}{\Sigma_{\ebar}}
\newcommand{\trr}{\triangleright}
\newcommand{\parrow}{\overrightarrow{P}}
\newcommand{\mt}{\widetilde{\M}}
\newcommand{\dbar}{\overline{\del}}
\newtheorem{THM}{Theorem}
\newtheorem{COR}[THM]{Corollary}
\newtheorem{thm}{Theorem}[section]
\newtheorem{prop}[thm]{Proposition}
\newtheorem{cor}[thm]{Corollary}
\theoremstyle{definition}
\newtheorem{defn}[thm]{Definition}
\theoremstyle{remark}
\newtheorem{exm}{Example}[section]
\newtheorem*{rmk}{Remark}
\begin{document}

\begin{abstract}
We define a sutured cobordism category of surfaces with boundary and
3--manifolds with corners. In this category a sutured 3--manifold is regarded as a morphism from the empty
surface to itself.
In the process
we define a new class of geometric objects, called bordered sutured manifolds,
that generalize both sutured 3--manifolds and bordered 3--manifolds.
We extend the definition of bordered Floer homology to these objects, giving
a functor from a decorated version of the sutured category to $\Ainf$--algebras,
and $\Ainf$--bimodules.

As an application we give a way to recover the
sutured homology $\SFH(Y,\Gamma)$ of a sutured manifold from either of the
bordered invariants $\CFA(Y)$ and $\CFD(Y)$ of its underlying manifold $Y$.
A further application is
a new proof of the surface decomposition formula of Juh\'asz.
\end{abstract}

\maketitle
\tableofcontents

\section{Introduction}
\label{sec:introduction}

There are currently two existing Heegaard Floer theories for 3--manifolds with boundary, both of which require
extra structure on the 3--manifold.

The first such theory was developed by Juh\'asz in \cite{Juh:SFH}. It is defined for balanced sutured manifolds,
which are 3--manifolds with certain decorations on the boundary. While very fruitful, the theory has the disadvantage
that the homology $\SFH(Y,\Gamma)$ of a sutured manifold $Y$, with sutures $\Gamma$ tells us nothing about the
homology $\SFH(Y,\Gamma')$ of the same manifold with a different set of sutures $\Gamma'$.
Moreover, if a closed 3--manifold $Y$ is obtained by gluing two manifolds with boundary $Y_1$ and $Y_2$,
then the sutured Floer invariants $\SFH(Y_1,\Gamma_1)$ and $\SFH(Y_2,\Gamma_2)$ do not determine
the closed Heegaard Floer invariant $\HFhat(Y)$.

The second and more recent theory was developed by Lipshitz, Ozsv\'ath, and Thurston in \cite{LOT:pairing}.
It takes a more algebraically complicated form than sutured Floer homology. To a parametrized connected surface $F$
it assigns a differential graded algebra $\A(F)$, and to a 3--manifold $Y$ whose boundary is identified with $F$ it
assigns two different modules over $\A(F)$---an $\Ainf$--module $\CFA(Y)$, and a differential graded module
$\CFD(Y)$. These modules are well-defined up to homotopy equivalence, and have the property that
$\HFhat(Y_1\cup Y_2)$ is the homology of $\CFA(Y_1)\dtens\CFA(Y_2)$. While the bordered
invariants of a manifold $Y$ still depend on extra structure---in this case the parametrization of $\del Y$---the invariant
of one parametrization can be obtained from that of any other.

In the present work we relate these two invariants by a common generalization. We define a new set of
topological objects---bordered sutured manifolds, sutured surfaces, and a sutured cobordism category---and define several
invariants for them. We show that these invariants interpolate between sutured Floer homology and bordered Floer
homology, containing each of them as special cases. An advantage of the new invariants is that they are
well-defined and nontrivial even for unbalanced manifolds, unlike $\SFH$.

The aim of the present work is to define this common generalization. As a byproduct, we show how to obtain
the sutured invariant $\SFH(Y,\Gamma)$ of a manifold with connected boundary from either of its bordered
invariants $\CFD(Y)$ and $\CFA(Y)$. Finally, we give a new proof of the sutured decomposition formula of Juh\'asz.

\subsection{The sutured and decorated sutured categories}
A simplified description of a sutured manifold is given below. A more precise version is given in 
section~\ref{sec:sutured}.

\begin{defn}
A \emph{sutured 3--manifold} $(Y,\Gamma)$ is a 3--manifold $Y$, with a multi-curve $\Gamma$ on its boundary,
dividing the boundary into a positive and negative region, denoted $R_+$ and $R_-$, respectively. We usually
impose the conditions that $Y$ has no closed components, and that $\Gamma$ intersects every
component of $\del Y$.
\end{defn}

We can introduce analogous notions one dimension lower.

\begin{defn}
A \emph{sutured surface} $(F,\Lambda)$ is a surface $F$, with a 0--manifold $\Lambda\subset\del F$, diving
the boundary $\del F$ into a positive and negative region, denoted $S_+$ and $S_-$, respectively. Again, we
impose the condition that $F$ has no closed components, and that $\Lambda$ intersects every
component of $\del F$.
\end{defn}

\begin{defn}
A \emph{sutured cobordism} $(Y,\Gamma)$ between two sutured surfaces $(F_1,\Lambda_1)$ and $(F_2,\Lambda_2)$ is
a cobordism $Y$ between $F_1$ and $F_2$, together with a collection of properly embedded arcs and circles
\begin{equation*}
\Gamma\subset\del Y\setminus (F_1\cup F_2),
\end{equation*}
dividing $\del Y\setminus (F_1\cup F_2)$ into a positive and negative region, denoted $R_+$ and $R_-$, respectively,
such that
$R_\pm\cap F_i=S_\pm(F_i)$, for $i=1,2$. Again, we require that $Y$ has no closed components, and that
$\Gamma$ intersects every component of $\del Y\setminus(F_1\cup F_2)$.
\end{defn}

There is a \emph{sutured category} $\scat$ whose objects are sutured surfaces, and whose
morphisms are sutured cobordisms. The identity morphisms are cobordisms of the form $(F\times[0,1],\Lambda\times[0,1])$,
where $(F,\Lambda)$ is a sutured surface. As a special case, sutured manifolds are the morphisms from
the empty surface $(\varnothing,\varnothing)$ to itself. 

Unfortunately, we cannot directly define invariants for the sutured category, and we need impose a little extra
structure.

\begin{defn}
An \emph{arc diagram} is a relative handle diagram for a 2--manifold with corners, 
where the bottom and top boundaries are both 1--manifolds with no closed components.
\end{defn}

\begin{defn}
A \emph{parametrized} or \emph{decorated sutured surface} is a sutured surface $(F,\Lambda)$ with a handle
decomposition given by an arc diagram $\Z$, expressing $F$ as a cobordism from $S_+$ to $S_-$.

A \emph{parametrized} or \emph{decorated sutured cobordism} is a sutured cobordism $(Y,\Gamma)$ from $(F_1,\Lambda_1)$ to $(F_2,\Lambda_2)$,
such that $(F_i,\Lambda_i)$ is parametrized by an arc diagram $\Z_i$, for $i=1,2$.
\end{defn}

Examples of a sutured surface and its decorated version are given in Fig.~\ref{fig:sutured_surface}.
A sutured cobordism and its decorated version are given in Fig.~\ref{fig:sutured_cobordism}.
We visualize the handle decomposition coming from an arc diagram by drawing the cores of the 1--handles.

\begin{figure}
\begin{subfigure}[b]{.495\linewidth}
	\centering
	\labellist
	\small \hair 2pt
	\pinlabel $S_+$ [tr] at 3 73
	\pinlabel $S_+$ [br] at 0 0
	\pinlabel $S_+$ [l] at 188 40
	\endlabellist
	\includegraphics[scale=.7]{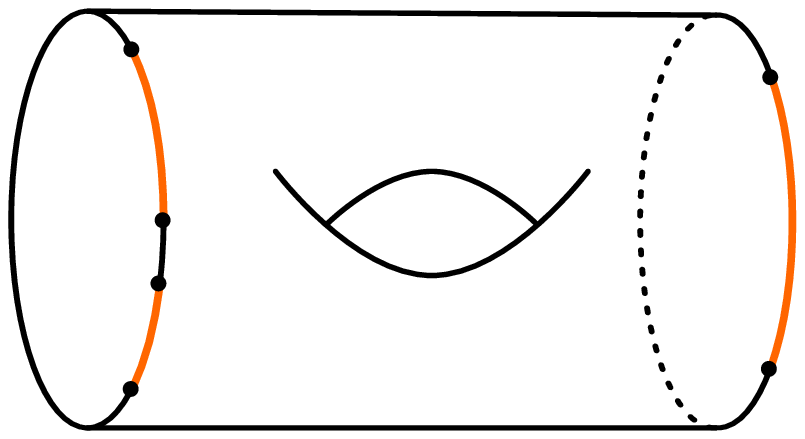}
	\caption{Unparametrized.}
	\label{subfig:sutured_surface_unparam}
\end{subfigure}
\begin{subfigure}[b]{.495\linewidth}
	\centering
	\includegraphics[scale=.7]{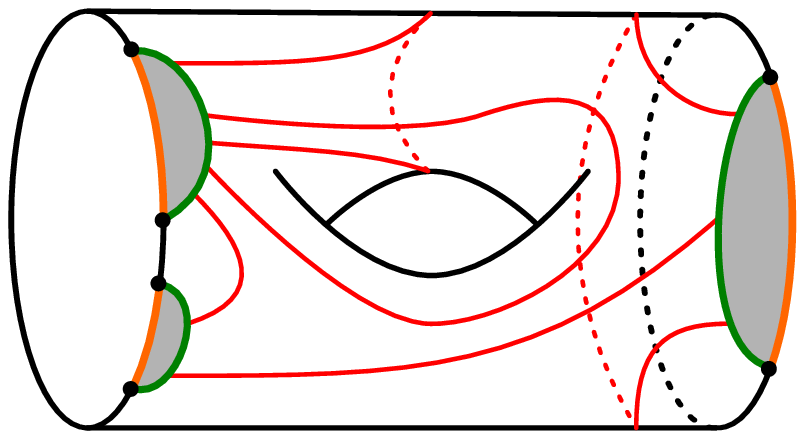}
	\caption{Parametrized by an arc diagram.}
	\label{subfig:sutured_surface_param}
\end{subfigure}
\caption{A sutured surface $(F,\Lambda)$.
The sutures $\Lambda$ are denoted by dots, while the positive region $S_+\subset\del F$ is colored in orange.}
\label{fig:sutured_surface}
\end{figure}

\begin{figure}
\begin{subfigure}[b]{\linewidth}
	\centering
	\labellist
	\small\hair 2pt
	\pinlabel $-F_1$ [b] at -20 255
	\pinlabel $F_2$ [b] at 455 255
	\pinlabel $R_+$ at 405 245
	\pinlabel $R_+$ at 410 164
	\pinlabel $R_-$ at 230 255
	\endlabellist
	\includegraphics[scale=.6]{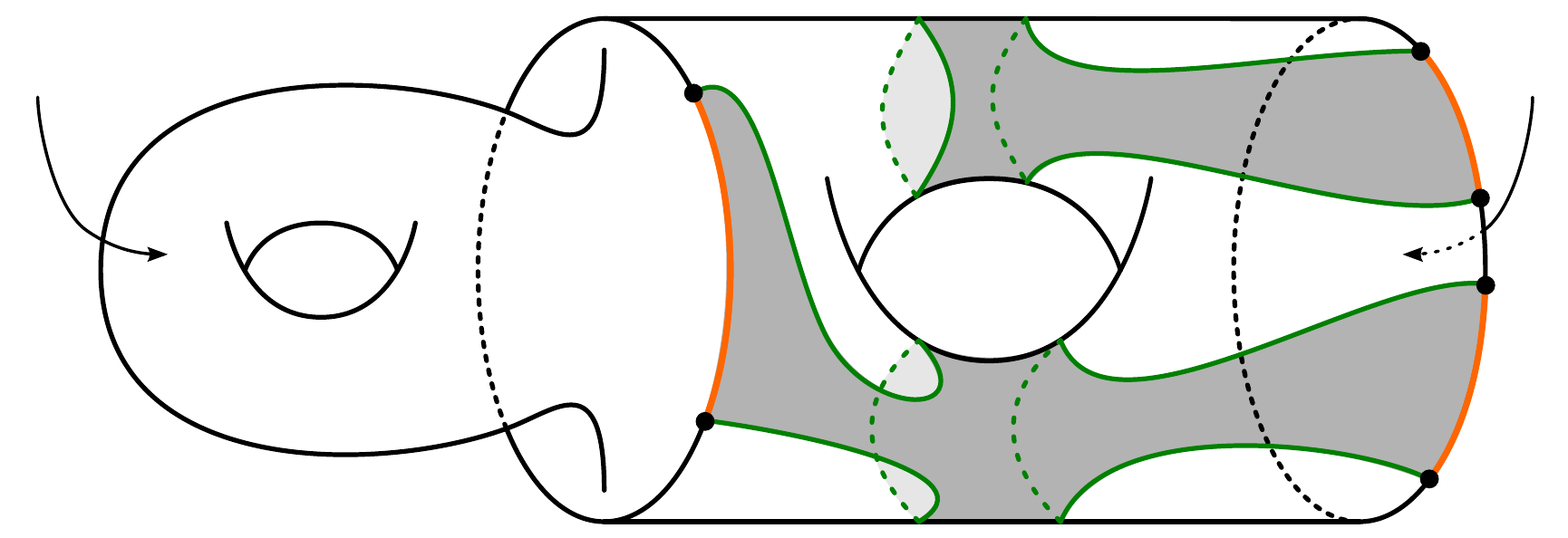}
	\caption{Unparametrized.}
	\label{subfig:sutured_cobordism_unparam}
\end{subfigure}
\begin{subfigure}[b]{\linewidth}
	\centering
	\labellist
	\small\hair 2pt
	\pinlabel $-F_1$ [b] at -30 255
	\pinlabel $F_2$ [b] at 520 255
	\endlabellist
	\includegraphics[scale=.6]{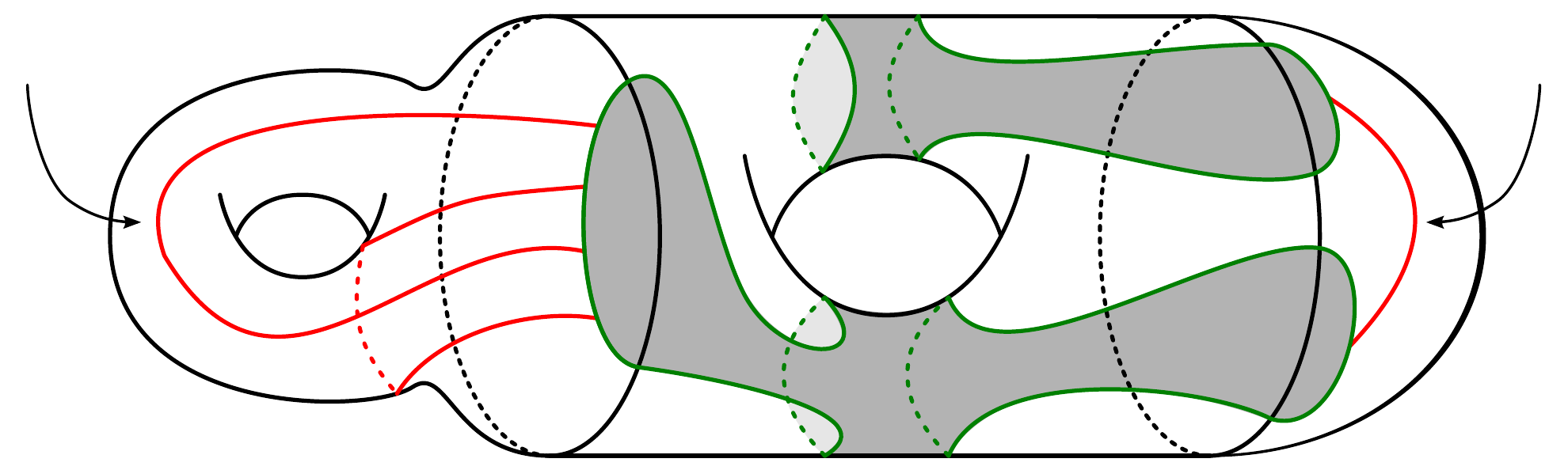}
	\caption{Parametrized (and with smoothed corners).}
	\label{subfig:sutured_cobordism_param}
\end{subfigure}
\caption{A sutured cobordism $(Y,\Gamma)$ from a once punctured torus to a disc.
The sutures $\Gamma$ are colored in green, while the positive region $R_+\subset\del Y$ is shaded.}
\label{fig:sutured_cobordism}
\end{figure}

The \emph{decorated sutured category} $\sdcat$
is a category whose objects are decorated sutured surfaces---or alternatively their arc diagrams---and
whose morphisms are decorated sutured cobordisms. Note that all decorations on the sutured identity
$(F\times[0,1],\Lambda\times[0,1])$ are isomorphisms, while the ones where the two parametrizations
on $F\times\{0\}$ and $F\times\{1\}$ agree are the identity morphisms in $\sdcat$. In particular,
any two parametrizations of the same sutured surface are isomorphic, and the forgetful functor
$\Z\mapsto F(\Z)$ is an equivalence of categories.

Sutured cobordisms have another, slightly different topological interpretation.
For a sutured
cobordism $(Y,\Gamma)$ from $(F_1,\Lambda_1)$ to $(F_2,\Lambda_2)$, we can smooth its corners, and set
$\Gamma'=\Gamma\cup S_+(F_1)\cup S_+(F_2)$. This turns $(Y,\Gamma')$ into a regular sutured manifold. Therefore, we can
think of a sutured cobordism as a sutured manifold, with two distinguished subsets $F_1$ and $F_2$ of its boundary.

Applying the same procedure to the decorated versions of sutured cobordisms, we come up with
the notion of \emph{bordered sutured manifolds},
defined more precisely in section~\ref{sec:sutured}.

\begin{defn}
A \emph{bordered sutured manifold} $(Y,\Gamma,\Z)$ is a sutured manifold $(Y,\Gamma)$, with a distinguished subset
$F\subset\del Y$, such that $(F,\del F\cap\Gamma)$ is a sutured surface, parametrized by the arc diagram $\Z$.
\end{defn}

Any bordered sutured manifold $(Y,\Gamma,\Z_1\cup\Z_2)$, where $\Z_i$ parametrizes $(F_i,\del F_i\cap\Gamma)$
gives a decorated sutured cobordism $(Y,\Gamma\setminus(F_1\cup F_2))$ from $-F_1$ to $F_2$, and vice versa.

\subsection{Bordered sutured invariants and TQFT}
To any arc diagram $\Z$---or alternatively decorated sutured surface parametrized by $\Z$---we
associate a differential graded algebra $\A(\Z)$, which is a subalgebra of some strand algebra, as defined in
\cite{LOT:pairing}.

These algebras behave nicely under disjoint union.
If $\Z_1$ and $\Z_2$ are arc diagrams, then $\A(\Z_1\cup\Z_2)\cong\A(\Z_1)\otimes\A(\Z_2)$.

To a bordered sutured manifold $(Y,\Gamma,\Z)$ we associate 
a right $\Ainf$--module $\CSFA(Y,\Gamma)$ over $\A(\Z)$, and
a left differential graded module $\CSFDD(Y,\Gamma)$ over $\A(-\Z)$.

Generalizing this construction, let $(F_1,\Lambda_1)$ and $(F_2,\Lambda_2)$ be two sutured surfaces,
parametrized by the arc diagrams $\Z_1$ and $\Z_2$, respectively.
To any sutured cobordism $(Y,\Gamma)$ between them we associate (a homotopy equivalence class of)
an $\Ainf$ $\A(\Z_1),\A(\Z_2)$--bimodule, denoted $\BSDAM(Y,\Gamma)$. This specializes to
$\CSFA(Y,\Gamma)$, respectively $\CSFDD(Y,\Gamma)$, when $F_1$, respectively $F_2$ is empty, or to
the sutured chain complex $\SFC(Y,\Gamma)$, when both are empty.

\begin{defn}
Let $\D$ be the category whose objects are differential graded algebras, and whose morphisms are
the graded homotopy equivalence classes of $\Ainf$--bimodules of any two such algebras. Composition is given
by the derived tensor product~$\dtens$. The identity is the homotopy equivalence class of the algebra
considered as a bimodule over itself.
\end{defn}

\begin{THM}
\label{thm:intro_pairing}
The invariant $\BSDAM$ respects compositions of decorated sutured cobordisms.
Explicitly, let $(Y_1,\Gamma_1,-\Z_1\cup\Z_2)$ and $(Y_2,\Gamma_2,-\Z_2\cup\Z_3)$ be
two bordered sutured manifolds, representing decorated sutured cobordisms
from $\Z_1$ to $\Z_2$, and from $\Z_2$ to $\Z_3$, respectively. Then there are graded homotopy equivalences
\begin{equation}
\label{eq:pairing_bsda}
\BSDAM(Y_1,\Gamma_1)\dtens_{\A(\Z_2)}\BSDAM(Y_2,\Gamma_2)\simeq\BSDAM(Y_1\cup Y_2,\Gamma_1\cup\Gamma_2).
\end{equation}

Specializing to $\Z_1=\Z_3=\varnothing$, we get
\begin{equation}
\label{eq:pairing_bsd_bsa}
\CSFA(Y_1,\Gamma_1)\dtens_{\A(\Z_2)}\CSFDD(Y_2,\Gamma_2)\simeq\SFC(Y_1\cup Y_2,\Gamma_1\cup\Gamma_2).
\end{equation}
\end{THM}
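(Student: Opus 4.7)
The plan is to mimic, in the bordered sutured category, the neck-stretching proof of the pairing theorem of Lipshitz--Ozsv\'ath--Thurston. First, on the topological side, choose bordered sutured Heegaard diagrams $\HH_1$ and $\HH_2$ for $(Y_1,\Gamma_1)$ and $(Y_2,\Gamma_2)$ whose boundary parametrizations along $\Z_2$ and $-\Z_2$ are inverse to one another, and verify that the glued diagram $\HH=\HH_1\cup_{\Z_2}\HH_2$ represents $(Y_1\cup Y_2,\Gamma_1\cup\Gamma_2)$ as a bordered sutured manifold with boundary $-\Z_1\cup\Z_3$. At the level of generators, intersection points of $\HH$ are in bijection with pairs $(\xgen_1,\xgen_2)$ of generators of $\HH_1$ and $\HH_2$ whose idempotents along $\Z_2$ are complementary, which is exactly the generating set of the box tensor product $\BSDAM(\HH_1)\sqtens\BSDAM(\HH_2)$, after, if needed, replacing one factor by an equivalent model making $\sqtens$ compute $\dtens$.

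The bulk of the work, and the main obstacle, lies in the holomorphic curve analysis. After stretching the neck of $\HH$ along the dividing hypersurface corresponding to $\Z_2$, one applies SFT--type compactness to describe the limits of the pseudoholomorphic curves counted by the $\Ainf$--operations on $\BSDAM(\HH)$. These limits are pairs of curves, one in $\HH_1$ and one in $\HH_2$, with matching asymptotics along a common sequence of Reeb chords dictated by $\Z_2$. Matching the resulting broken configurations with the terms of the tensor product differential, and then running the usual gluing and transversality arguments to show that every such matched pair is realized by a family of honest holomorphic curves at sufficiently stretched neck, produces a chain-level identification of $\BSDAM(\HH)$ with $\BSDAM(\HH_1)\sqtens\BSDAM(\HH_2)$. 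The new analytic input beyond the Lipshitz--Ozsv\'ath--Thurston setting is to carry out all of these compactness, transversality, and gluing statements in the presence of the additional sutured boundary of the Heegaard surface (the part parametrized by $-\Z_1\cup\Z_3$), and in particular to exclude degenerations escaping through that boundary rather than through the interior neck along $\Z_2$.

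Combined with the invariance of $\BSDAM$ under changes of diagram (established earlier), the above yields the homotopy equivalence~\eqref{eq:pairing_bsda}. The specialization~\eqref{eq:pairing_bsd_bsa} is then formal: when $\Z_1=\varnothing$ the bimodule $\BSDAM(Y_1,\Gamma_1)$ reduces to the right $\Ainf$--module $\CSFA(Y_1,\Gamma_1)$; when $\Z_3=\varnothing$ it reduces to the left differential graded module $\CSFDD(Y_2,\Gamma_2)$; and when both are empty, the glued invariant $\BSDAM(Y_1\cup Y_2,\Gamma_1\cup\Gamma_2)$ is nothing other than $\SFC(Y_1\cup Y_2,\Gamma_1\cup\Gamma_2)$.
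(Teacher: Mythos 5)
Your strategy is legitimate, but it is not the route the paper takes. The paper proves the pairing theorem via \emph{nice diagrams}, following \cite[Chapter 8]{LOT:pairing} rather than the neck-stretching proof of their Chapter 9. Concretely: one chooses nice diagrams $\HH_1$, $\HH_2$ for the two pieces, observes that the glued diagram $\HH=\HH_1\cup_{\Zmid_2}\HH_2$ is again nice (the only regions that change are boundary regions, which do not count, and rectangular regions adjacent to Reeb chords in $\Zmid_2$, which glue to larger rectangles), and then matches the combinatorially-described differentials term by term. Generators of $\HH$ biject with pairs of generators with complementary idempotents along $\Z_2$, provincial bigons/rectangles in $\HH_i$ remain bigons/rectangles in $\HH$, and the new rectangles that cross $\Zmid_2$ correspond exactly to the $(m_2\otimes\id)\circ(\id\otimes\delta)$ terms in the box tensor product. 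This yields a chain-level \emph{isomorphism} $\SFC(\HH)\cong\CSFA(\HH_1)\sqtens\CSFD(\HH_2)$ with no moduli-space compactness, transversality, or gluing analysis required; the only holomorphic input is the Riemann mapping theorem, already embedded in the nice-diagram calculus (Theorems~\ref{thm:nice_d}, \ref{thm:nice_a}, \ref{thm:nice_da}). The general bimodule case is then verbatim the same.

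Your neck-stretching approach would work in principle, but it requires precisely the SFT compactness, matched gluing, and boundary-degeneration exclusion that the nice-diagram argument sidesteps; you correctly flag this as the ``main obstacle.'' In effect, you are proposing to re-derive the needed analytic machinery in the bordered sutured setting (where the Heegaard surface has extra boundary components playing the role of the basepoint), whereas the paper deliberately localizes all the analysis in Section~\ref{sec:moduli} (in particular Proposition~\ref{prop:degenerations}, which rules out bubbling and boundary degenerations via homological linear independence) and then exports it once and for all to the combinatorial nice-diagram proof. What your route would buy is a proof that does not require passing to nice diagrams and hence is closer in spirit to the invariance arguments; what the paper's route buys is brevity and the fact that the chain-level identity is an honest isomorphism rather than just a homotopy equivalence obtained after gluing. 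Your final paragraph on specialization to~\eqref{eq:pairing_bsd_bsa} is correct and matches the paper's reduction.
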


\begin{THM}
\label{thm:equivalence}
The invariant $\BSDAM$ respects the identity. In other words, if $(Y,\Gamma,-\Z\cup\Z)$ is the identity
cobordism from $\Z$ to itself, then $\BSDAM(Y,\Gamma)$ is graded homotopy equivalent to $\A(\Z)$ as an
$\Ainf$--bimodule over itself.
\end{THM}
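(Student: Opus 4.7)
The plan is to follow the strategy Lipshitz--Ozsv\'ath--Thurston use for the bordered identity bimodule, adapted to the bordered sutured setting. I would first exhibit an explicit bordered sutured Heegaard diagram $\HH_{\Z}$ representing the identity cobordism $(F(\Z)\times[0,1],\Lambda\times[0,1])$ with both ends parametrized by $\Z$, and then identify $\BSDAM(\HH_{\Z})$ with $\A(\Z)$ as a $DA$--bimodule.

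The diagram $\HH_{\Z}$ is built by thickening the arc diagram $\Z$ into a handlebody and taking a middle level as the Heegaard surface. Each $1$--handle of $\Z$ contributes a pair of $\alpha$--arcs (one ending on each parametrized boundary component) joined by a small $\beta$--circle. The resulting generators of $\BSDAM(\HH_{\Z})$ are in canonical bijection with the minimal idempotents of $\A(\Z)$, so the underlying graded vector space already matches that of $\A(\Z)$ viewed as a bimodule over itself.

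The next step is to verify the $DA$--operations. By construction $\HH_{\Z}$ admits no provincial holomorphic disks, so $m_{1,0}$ vanishes, and the operations $m_{1,n}$ for $n\geq 1$ count holomorphic curves with Reeb-chord asymptotics on the type-$A$ boundary only. A combinatorial analysis of the allowed strip-like degenerations should identify these operations with the strand algebra multiplication on $\A(\Z)$, which is exactly the action of $\A(\Z)$ on itself by left multiplication, with the opposite action on the type-$D$ side.

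The main obstacle is the holomorphic curve count. Although the geometric setup ensures that only boundary degenerations contribute, one must check that (i) the higher operations $m_{1,n}$ for $n\geq 2$ produce exactly the products $a_1\cdots a_n$ with no stray contributions from disc bubbling or corner degenerations, and (ii) the argument is compatible with the passage from an ambient strand algebra to the subalgebra $\A(\Z)$ cut out by the matching data of $\Z$. As a backup, one can appeal to Theorem~\ref{thm:intro_pairing}: for any bordered sutured manifold $(Y,\Gamma)$ with boundary parametrization $\Z$, gluing on the identity cobordism yields $(Y,\Gamma)$ itself, which forces $\BSDAM(\HH_{\Z})\dtens_{\A(\Z)}\BSDAM(Y,\Gamma)\simeq\BSDAM(Y,\Gamma)$; a uniqueness-of-units lemma for $\Ainf$--bimodules then identifies $\BSDAM(\HH_{\Z})$ with $\A(\Z)$ up to graded homotopy equivalence, bypassing the most delicate parts of the direct computation.
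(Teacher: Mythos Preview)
Your diagram and generator identification match the paper's setup exactly. However, both your primary route and your backup miss the move the paper actually uses to finish.

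The direct holomorphic count you propose for all $m_{1,n}$ is precisely what the paper avoids: controlling domains for arbitrary algebra inputs is hard, and the paper does not attempt it. Instead it only computes the operation for \emph{length-one} Reeb chords $\rho$, where a single convex octagonal domain in $\HH_{\Z}$ gives $m_2(I_i,a(\rho,s))=a(\rho,s)\otimes I_f$. That is the full extent of the geometric computation.

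Your backup also has a gap. Knowing from Theorem~\ref{thm:intro_pairing} that $\BSDA(\id_{\Z})$ is idempotent under $\sqtens$ does not by itself force it to be the identity bimodule---an idempotent need not be invertible. The paper supplies the missing ingredient algebraically: any type~$DA$ bimodule whose underlying $\I(\Z)$--bimodule is $\I(\Z)$ encodes an $\Ainf$--algebra endomorphism $\phi\colon\A(\Z)\to\A(\Z)$. The length-one computation shows $\phi$ acts correctly on those generators; since the homology of $\A(\Z)$ is Massey-generated by length-one chords, $\phi$ is a quasi-isomorphism. Now the pairing theorem gives $\phi\circ\phi\simeq\phi$, and a homotopy-idempotent quasi-isomorphism is homotopic to the identity. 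This hybrid---a minimal geometric input feeding an algebraic idempotent-plus-quasi-iso argument---is the step you have not identified, and without it neither your direct approach nor your formal backup goes through.
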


Together, theorems~\ref{thm:intro_pairing} and~\ref{thm:equivalence} imply that $\A$ and $\BSDAM$ form a functor.

\begin{COR}
The invariants $\A$ and $\BSDAM$ give a functor from $\sdcat$ to $\D$, inducing a (non-unique) functor from
the equivalent category $\scat$ to $\D$. In particular, if $\Z_1$ and $\Z_2$ parametrize the same sutured surface,
then $\A(\Z_1)$ and $\A(\Z_2)$ are isomorphic in $\D$. In other words, there is an $\A(\Z_1),\A(\Z_2)$
$\Ainf$--bimodule providing an equivalence of the derived categories of $\Ainf$--modules over $\A(\Z_1)$ and $\A(\Z_2)$.
\end{COR}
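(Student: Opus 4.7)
The plan is to deduce this corollary as a formal consequence of Theorems~\ref{thm:intro_pairing} and~\ref{thm:equivalence}; no further geometric input is needed.

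First I would verify that the assignment $\Z\mapsto\A(\Z)$ on objects together with $(Y,\Gamma,-\Z_1\cup\Z_2)\mapsto[\BSDAM(Y,\Gamma)]$ on morphisms actually defines a functor $\sdcat\to\D$. The target of each morphism is well-defined because $\BSDAM(Y,\Gamma)$ is by construction an $\A(\Z_1),\A(\Z_2)$ $\Ainf$--bimodule, so its homotopy equivalence class is precisely a morphism in $\D$ from $\A(\Z_1)$ to $\A(\Z_2)$. Composition in $\D$ is given by $\dtens$; compatibility with composition in $\sdcat$ is exactly the content of~\eqref{eq:pairing_bsda} in Theorem~\ref{thm:intro_pairing}. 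Preservation of identities is Theorem~\ref{thm:equivalence}, which says that the identity cobordism decorated with the same $\Z$ on both ends is sent to the class $[\A(\Z)]$, i.e.\ the identity morphism in $\D$.

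To obtain the induced functor $\scat\to\D$, I would use the observation in the excerpt that the forgetful functor $\Z\mapsto F(\Z)$ is an equivalence of categories. Choose any quasi-inverse $\scat\to\sdcat$, i.e.\ pick for each sutured surface $(F,\Lambda)$ some parametrization by an arc diagram, and for each sutured cobordism a choice of compatible decorations. Composing with the functor constructed in the first step yields a functor $\scat\to\D$. The dependence on the choices of parametrizations is what accounts for the ``non-unique'' qualifier; different choices give naturally isomorphic functors.

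For the final statement, suppose $\Z_1$ and $\Z_2$ parametrize the same sutured surface $(F,\Lambda)$. The excerpt notes that the identity cobordism $(F\times[0,1],\Lambda\times[0,1])$ with the decoration $-\Z_1\cup\Z_2$ is an isomorphism in $\sdcat$: its inverse is the same cobordism decorated as $-\Z_2\cup\Z_1$, as one checks using Theorem~\ref{thm:intro_pairing} to pair them and Theorem~\ref{thm:equivalence} to identify the result with $[\A(\Z_i)]$. Any functor sends isomorphisms to isomorphisms, so $\BSDAM(F\times[0,1],\Lambda\times[0,1])$ is an invertible $\A(\Z_1),\A(\Z_2)$--bimodule in $\D$, and tensoring with it provides the advertised equivalence of derived categories of $\Ainf$--modules.

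The main obstacle has already been handled by the two cited theorems; the remaining argument is a purely categorical wrap-up. The only point requiring care is the explicit verification that the reverse-decorated identity cobordism provides a two-sided inverse in $\D$, which is where Theorems~\ref{thm:intro_pairing} and~\ref{thm:equivalence} are applied in concert rather than separately.
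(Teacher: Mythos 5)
Your argument is correct and matches what the paper intends: the corollary is treated there as a formal consequence of Theorems~\ref{thm:intro_pairing} and~\ref{thm:equivalence}, with the functor $\sdcat\to\D$ defined exactly as you describe, and the passage to $\scat$ made via a chosen quasi-inverse of the forgetful equivalence $\Z\mapsto F(\Z)$.

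One small remark on the final paragraph: the fact that the identity cobordism decorated as $-\Z_1\cup\Z_2$ is an isomorphism in $\sdcat$ (with inverse decorated $-\Z_2\cup\Z_1$) is a purely topological fact—gluing the two gives $(F\times[0,2],\Lambda\times[0,2])$ decorated $-\Z_i\cup\Z_i$, which \emph{is} the identity morphism in $\sdcat$. Once that is observed, ``functors preserve isomorphisms'' finishes the argument without needing to invoke Theorems~\ref{thm:intro_pairing} and~\ref{thm:equivalence} a second time. Your route of instead directly verifying invertibility of the image bimodule in $\D$ also works and is what one would do if one did not already trust the paper's assertion about isomorphisms in $\sdcat$; it just makes the sentence ``any functor sends isomorphisms to isomorphisms'' redundant.
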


\subsection{Applications}
The first application, which motivated the development of the new invariants, is to show that $\SFH(Y,\Gamma)$ can be
computed from $\CFA(Y)$ or $\CFD(Y)$.

\begin{THM}
\label{thm:intro_bordered_to_sutured}
Suppose $Y$ is a connected 3--manifold with connected boundary.
With any set of sutures $\Gamma$ on $\del Y$ we can associate modules $\CFA(\Gamma)$ and $\CFD(\Gamma)$ over
$\A(\pm\del Y)$, of the appropriate form, such that the following formula holds.
\begin{equation}
\SFH(Y,\Gamma)\cong H_*(\CFA(Y)\dtens\CFD(\Gamma))\cong H_*(\CFA(\Gamma)\dtens\CFD(Y)).
\end{equation}
\end{THM}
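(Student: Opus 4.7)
\emph{Strategy.} The plan is to realize the sutured manifold $(Y,\Gamma)$ as the gluing, along the parametrized boundary $\del Y$, of two bordered sutured manifolds: the manifold $Y$ itself (reinterpreted as a bordered sutured manifold), and a ``thickened boundary'' piece $W_\Gamma$ that records the sutures $\Gamma$. I will then define $\CFD(\Gamma) := \BSD(W_\Gamma)$ and $\CFA(\Gamma) := \BSA(W_\Gamma)$, and the theorem will follow from the pairing theorem (Theorem~\ref{thm:intro_pairing}).

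\emph{Construction and pairing.} Let $\Z$ be the arc diagram obtained from the pointed matched circle parametrization of $\del Y$ by deleting a small disk around the basepoint. Set $W_\Gamma := \del Y \times [0,1]$, declare the inner boundary $\del Y \times \{0\}$ to be parametrized by $-\Z$, and place the sutures $\Gamma$ on the outer boundary $\del Y \times \{1\}$ (adjusted slightly near the basepoint so as to restrict correctly to $S_\pm(-\Z)$ at the corners). This yields a well-defined bordered sutured manifold, producing a left $\A(\Z)$-module $\CFD(\Gamma) := \BSD(W_\Gamma)$ and a right $\A(-\Z)$-module $\CFA(\Gamma) := \BSA(W_\Gamma)$ of the required form. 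Next I would verify that, once $Y$ is reinterpreted as a bordered sutured manifold using $\Z$ (together with a trivial suture pattern near the basepoint), its bordered sutured invariants agree with the classical bordered invariants $\CFA(Y)$ and $\CFD(Y)$. By construction, gluing $Y$ to $W_\Gamma$ along $\del Y$ recovers $(Y, \Gamma)$ as a sutured manifold after smoothing corners, so Theorem~\ref{thm:intro_pairing} yields
\[
\SFC(Y, \Gamma) \simeq \CFA(Y) \dtens_{\A(\Z)} \CFD(\Gamma).
\]
Taking homology gives the first equivalence; the second, involving $\CFA(\Gamma) \dtens \CFD(Y)$, follows identically with the roles of the two pieces swapped.

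\emph{Main obstacle.} The principal technical difficulty is bridging between the closed-boundary bordered framework (pointed matched circles, closed $\del Y$) and the bordered sutured framework (arc diagrams, boundary with corners). Concretely, one must verify that inserting a small trivial suture near the basepoint of $\del Y$ genuinely identifies the classical bordered invariants with their bordered sutured counterparts, and that the gluing along $\del Y$ really produces $(Y, \Gamma)$ up to a canonical isotopy of sutures. Once this topological bookkeeping is in place, the theorem is a formal consequence of the already-proved pairing theorem.
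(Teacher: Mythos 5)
Your strategy is the same as the paper's: promote $Y$ to a bordered sutured manifold $\zeta(Y)$ whose invariants recover $\CFD(Y)$ and $\CFA(Y)$, define $\CFD(\Gamma)$ and $\CFA(\Gamma)$ as $\CSFD$ and $\CSFA$ of a thickened-boundary piece carrying the sutures, and invoke the pairing theorem. However, your thickened piece is off: you must take the \emph{punctured} collar $P = (\del Y \setminus D) \times [0,1]$ for a small disc $D$ around the basepoint, not $\del Y \times [0,1]$. As written, the parametrized surface $F(-\Z) = \del Y\setminus D$ does not exhaust the closed boundary component $\del Y \times \{0\}$, the leftover disc $D\times\{0\}$ is forced to be a separate sutured region, and gluing $W_\Gamma$ to $\zeta(Y)$ along $F(\Z)$ produces a sutured manifold with a spurious $S^2$ boundary component rather than $(Y,\Gamma)$ itself --- note that the pairing theorem only lets you glue along the parametrized surface $F(\Z)$, never along all of $\del Y$, so ``gluing along $\del Y$'' is not available to you. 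With $P$ in place of $W_\Gamma$, the boundary of $\zeta(Y)\cup_{F'}P$ is $D\cup(\del D\times[0,1])\cup(F'\times\{1\})$, a single copy of $\del Y$, and the sutures $\Delta$ on all three pieces of $\del P\setminus(F'\times\{0\})$ combine with the suture on $D\subset\del\zeta(Y)$ to give exactly $\Gamma$. You should also make explicit that the advertised non-uniqueness of $\CFD(\Gamma)$ comes from the choice of a basepoint with direction on $\Gamma$, which fixes where $D$ sits and hence determines $\Delta$.
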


We prove a somewhat stronger version of theorem~\ref{thm:intro_bordered_to_sutured} in section~\ref{sec:bordered_to_sutured}. 

Another application of the invariants is a new proof of the surface decomposition formula for $\SFH$.
\begin{THM}
\label{thm:intro_decomposition}
Suppose $(Y,\Gamma)$ is a balanced sutured manifold, and $S$ is a good decomposing surface, i.e. $S$
has no closed components, and any component of $\del S$ intersects $\Gamma$. If $S$ decomposes
$(Y,\Gamma)$ to $(Y',\Gamma')$, then for a certain
subset $O$ of relative $\spinc$ structures on $(Y,\del Y)$, called outer for $S$, the following
equality holds.
\begin{equation}
\SFH(Y',\Gamma')\cong\bigoplus_{\s\in O}\SFH(Y,\Gamma,\s).
\end{equation}
\end{THM}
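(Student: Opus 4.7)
The plan is to realize both $(Y,\Gamma)$ and $(Y',\Gamma')$ as pairings of a common bordered sutured complement with two different bordered sutured caps along a neighborhood of $S$, and then to identify the cap for $(Y',\Gamma')$ with a specific $\spinc$-summand of the cap for $(Y,\Gamma)$. Let $N=\nu(S)\cong S\times[-1,1]$ be a tubular neighborhood of $S$ in $Y$, set $Y_0=\overline{Y\setminus N}$, and write $F_0=(-S_-)\cup S_+$ for the new part of $\partial Y_0$, with $S_\pm=S\times\{\pm 1\}$. Choose a sutured structure $\Gamma_0$ on $Y_0$ placing $S_\pm$ in $R_\pm(F_0)$, and parametrize $F_0$ by an arc diagram $\Z$. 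Build two bordered sutured caps $W_Y$ and $W_{Y'}$, each with underlying 3--manifold $S\times[-1,1]$ and arc diagram $-\Z$ on $F_0$, whose sutured structures are chosen so that
\[
Y_0 \cup_{\Z} W_Y = (Y,\Gamma), \qquad Y_0 \cup_{\Z} W_{Y'} = (Y',\Gamma').
\]
Concretely, $W_Y$ reglues $S_+$ to $S_-$ and restores the original sutures $\Gamma$, while $W_{Y'}$ inserts sutures along $\partial S\times\{0\}$ so that after gluing the two copies of $S$ remain separated pieces of $R_\pm(\partial Y')$. Theorem~\ref{thm:intro_pairing} then yields
\[
\SFC(Y,\Gamma)\simeq\CSFA(Y_0)\dtens_{\A(\Z)}\CSFDD(W_Y),\qquad
\SFC(Y',\Gamma')\simeq\CSFA(Y_0)\dtens_{\A(\Z)}\CSFDD(W_{Y'}).
\]

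Second, carry out an explicit computation of $\CSFDD(W_Y)$ and $\CSFDD(W_{Y'})$ using Heegaard diagrams adapted to the product structure on $S\times[-1,1]$. The key geometric input is that $\CSFDD(W_Y)$ decomposes as a direct sum of summands indexed by relative $\spinc$ structures on $W_Y$ (equivalently, by the pairing of relative $c_1$ with $[S\times\{0\}]$), and that $\CSFDD(W_{Y'})$ is isomorphic to a single ``top'' summand of this decomposition. Since the pairing respects $\spinc$-gradings, a relative $\spinc$ structure $\s$ on $(Y,\partial Y)$ contributes to $\CSFA(Y_0)\dtens\CSFDD(W_{Y'})$ precisely when its restriction to $W_Y$ is this top summand, which by construction is exactly the outer condition on $\s$. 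Combining these observations,
\[
\SFC(Y',\Gamma')\simeq\bigoplus_{\s\in O}\SFC(Y,\Gamma,\s),
\]
and passing to homology yields the theorem.

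The main obstacle is the middle step: explicitly describing the two caps and computing their $\spinc$-decompositions. A Heegaard diagram for $W_Y$ can be constructed from the arc diagram $\Z$ by taking product $\alpha$-arcs crossing $S\times\{0\}$; the two caps $W_Y$ and $W_{Y'}$ differ only by a local modification of the sutures near $S\times\{0\}$, and the effect of this modification on the Heegaard diagram can be read off directly, producing the identification of $\CSFDD(W_{Y'})$ with the top $\spinc$-summand of $\CSFDD(W_Y)$. Matching this top summand with Juh\'asz's definition of outer $\spinc$ structures---those represented by nowhere-zero vector fields positively transverse to $S$---reduces to evaluating the pairing of relative $c_1$ with $[S]$ on each $\spinc$-summand, which is a standard intersection calculation. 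The remaining assertions---that the pairing respects the $\spinc$-splitting, and that the goodness of $S$ ensures the correct sutured structure on $(Y_0,\Gamma_0)$---follow from the $\spinc$-refined form of Theorem~\ref{thm:intro_pairing} together with the hypothesis that every component of $\partial S$ meets $\Gamma$.
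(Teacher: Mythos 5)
Your overall strategy matches the paper's proof: split $Y$ and $Y'$ along a neighborhood of $S$ into a common complement (your $Y_0$, the paper's $W$) and two caps, apply the pairing theorem, compute the cap invariants from explicit Heegaard diagrams, and show that the $Y'$-cap realizes a single $\spinc$-summand of the $Y$-cap. But there is a genuine error in your description of $W_{Y'}$. You assert that both caps have underlying 3--manifold $S\times[-1,1]$. Decomposing along $S$ changes the topology of $Y$: the manifold $Y'$ is $Y$ cut open along $S$, so $\partial Y'$ contains two new copies of $S$. If $W_{Y'}$ were $S\times[-1,1]$ with only the sutures modified, then $Y_0\cup_{\Z}W_{Y'}$ would again have underlying manifold $Y$ (just with extra sutures), not $Y'$ --- changing sutures on a fixed 3--manifold cannot create new boundary components. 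The cap for $(Y',\Gamma')$ must instead be $S\times[-1,1]$ cut along $S\times\{0\}$, i.e., two disjoint copies of $S\times[0,1]$, which is exactly what the paper takes as $P=S\times([-2,-1]\cup[1,2])$. Your phrase ``inserts sutures along $\partial S\times\{0\}$'' gestures at the right picture, but the cap itself must actually be disconnected for the gluing to produce $Y'$.

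With that correction made, the rest of your argument aligns with the paper's: the Heegaard diagram for the disconnected cap is a collection of boundary regions only, so its $\CSFD$ has a single generator with vanishing $\delta$; this matches a single generator of $\CSFD$ of the $Y$-cap $S\times[-1,1]$ sitting in the extremal $\spinc$-class $\s_k$; and identifying $\s_k$-extensions with the outer $\spinc$ structures is the local computation you describe via the pairing of $c_1$ with $[S]$. Note also that the error is consequential, not cosmetic: with the same underlying topology, the $W_{Y'}$ diagram would have the same generators as the $W_Y$ diagram (one for each subset of the matched pairs), so the identification with a single $\spinc$-summand would fail.
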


The proofs of both statements rely on the fact that by splitting a sutured manifold into bordered sutured pieces we
can localize the calculations.

For theorem~\ref{thm:intro_bordered_to_sutured}, we localize the suture information,
by considering a (punctured) collar neighborhood of $\del Y$, which knows about the sutures, but not about
the 3--manifold, and the complement, which knows about the 3--manifold, but not about the sutures.

For theorem~\ref{thm:intro_decomposition}, we can localize near the decomposing surface $S$.
An easy calculation shows that an equivalent formula holds for a neighborhood of $S$. The local
formula then implies the global one.

\subsection{Further questions}

There is currently work in progress \cite{Zar:gluing} to provide a general gluing formula
for sutured manifolds, generalizing theorem~\ref{thm:intro_decomposition}.
There is strong evidence that the theory is closely related to contact topology, and
to the gluing maps defined by Honda, Kazez and Mati\'c
in \cite{HKM:EH}, and the contact category and TQFT defined in
\cite{HKM:TQFT}. There are also interesting parallels to the work on chord diagrams by Mathews in
\cite{Mat:chord_diags}.

Another area of interest is a generalization, to allow a wider class of arc diagrams and Heegaard diagrams.
Currently we consider diagrams coming from splitting a sutured manifold along a surface containing
index--1 critical points, but no index--2 critical points. This corresponds to cutting a sutured Heegaard
diagram along arcs which intersect some $\alpha$ circles, but no $\beta$ circles. It would be of interest
to consider what happens if allow index--2 critical points, or equivalently cutting a diagram along an
arc that intersects some $\beta$ circles.

Extending the bordered sutured theory to this setting would require arc diagrams that contain two types of arcs,
$+$ and $-$, such that $S_+$ is the result of surgery on the $+$ arcs in $\Zmid$, while $-$ is the result
of surgery on $-$ arcs. There seem to be two distinct levels of
generalization. If we allow some components of $\Z$ to have only $+$ arcs, and the rest of the components to have
only $-$
arcs, the theory should be mostly unchanged. If however we allow the same components to have a mix of $+$ and $-$ arcs,
a qualitatively different approach is required.

\subsection*{Organization}

The first few sections are devoted to the topological constructions.
First, in section~\ref{sec:algebra} we define arc diagrams, and how they parametrize sutured surfaces,
as well as the $\Ainf$--algebra associated to an arc diagram.
In section~\ref{sec:sutured} we define bordered sutured manifolds, and in section~\ref{sec:diagrams}
we define the Heegaard diagrams associated to them.

The next few sections define the invariants and give their properties.
In section~\ref{sec:moduli} we talk about the moduli spaces of curves necessary for the definitions
of the invariants. In section~\ref{sec:invariants} we give the definitions of the bordered
sutured invariants $\CSFDD$ and $\CSFA$, and prove
Eq.~(\ref{eq:pairing_bsd_bsa}) from theorem~\ref{thm:intro_pairing}. In section~\ref{sec:bimodules} we
extend the definitions and properties to the bimodules $\BSDAM$, and sketch the proof of the rest of
theorem~\ref{thm:intro_pairing}, as well as theorem~\ref{thm:equivalence}.
The gradings are defined together for all three invariants on the diagram level in section~\ref{sec:grading}.

A lot of the material in these sections is a reiteration of
analogous constructions and definitions from \cite{LOT:pairing}, with the differences emphasized.
The reader who is encountering bordered Floer homology for the first time can skip most of that discussion
on the first reading, and use theorems~\ref{thm:nice_d},~\ref{thm:nice_a} and~\ref{thm:nice_da} as definitions. 

Section~\ref{sec:examples} gives some examples of bordered sutured manifolds and computations
of their invariants. The reader is encouraged to read this section first, or immediately after
section~\ref{sec:diagrams}. The examples can be more enlightening than the definitions,
which are rather involved.

Finally, section~\ref{sec:applications} gives several applications of the new invariants, in
particular proving theorems~\ref{thm:intro_bordered_to_sutured} and~\ref{thm:intro_decomposition}.

\subsection*{Acknowledgements} 
I am very grateful to my advisor Peter Ozsv\'ath for his suggestion to look into this problem, and
for his helpful suggestions and ideas.
I would also like to thank Robert Lipshitz and Dylan Thurston for many useful discussions,
and their comments on earlier drafts of this work.

\section{The algebra associated to a parametrized surface}
\label{sec:algebra}
The invariants defined by Lipshitz, Ozsv\'ath and Thurston in
\cite{LOT:pairing} work only for connected manifolds with one closed boundary component.
There is work in progress \cite{LOT:bimodules} to generalize this to
manifolds with two or more closed boundary components.

In our construction we parametrize surfaces with boundary, and possibly many
connected components. This class of surfaces and of their allowed parametrizations is
much wider, so we need to expand the algebraic constructions describing them. We discuss below the
generalized definitions and discuss the differences from the purely bordered setting.

\subsection{Arc diagrams and sutured surfaces}
We start by generalizing the definition of a pointed matched circle in
\cite{LOT:pairing}.

\begin{defn}
An \emph{arc diagram} $\Z=(\Zmid,\amid,M)$ is a triple consisting
of a collection $\Zmid=\{Z_1,\ldots,Z_l\}$ of oriented line segments, a
collection $\amid=\{a_1,\ldots,a_{2k}\}$ of distinct points in
$\Zmid$, and a matching of $\amid$, i.e. a 2--to--1 function
$M\co \amid\to\{1,\ldots,k\}$. Write $|Z_i|$ for $\#(Z_i\cap\amid)$.
We will assume $\amid$ is ordered by the order on $\Zmid$ and the
orientations of the individual segments.
We allow $l$ or $k$ to be 0.

We impose the following condition, called \emph{non-degeneracy}.
After 
performing oriented surgery on the 1--manifold $\Zmid$ at each 0--sphere
$M^{-1}(i)$, the resulting 1--manifold should have no closed components. 
\end{defn}

\begin{defn}
We can sometimes consider \emph{degenerate arc diagrams} which do not satisfy the
non-degeneracy condition. However, we will tacitly assume all arc diagrams are non-degenerate,
unless we specifically say otherwise.
\end{defn}

\begin{rmk}
The pointed matched circles of Lipshitz, Ozsv\'ath and Thurston
correspond to arc diagrams where $\Zmid$ has
only one component. The arc diagram is obtained by cutting the matched circle at
the basepoint.
\end{rmk}

We can interpret $\Z$ as an upside-down handlebody diagram for a sutured surface
$F(\Z)$, or just $F$.
It will often be convenient to think of $F$ as a surface with corners, and will
use these descriptions interchangeably.

To construct $F$ we start with a collection of rectangles $Z_i\times[0,1]$
for $i=1,\ldots,l$. Then attach 1--handles at $M^{-1}(i)\times\{0\}$ for
$i=1,\ldots,k$. Thus $\chi(F)=l-k$, and $F$ has no closed components.
Set $\Lambda=\del\Zmid\times\{1/2\}$, and $S_+=\Zmid\times\{1\}\cup\del\Zmid\times[1/2,1]$.
Such a description uniquely specifies $F$ up to isotopy fixing the boundary.

\begin{rmk}
The non-degeneracy condition on $\Z$, is equivalent to the condition that any
component of $\del F$ intersects $\Lambda$. Indeed, the effect
on the boundary of adding the 1--handles is surgery on 
$\Zmid\times\{0\}$. If $\Z$ is non-degenerate, this surgery produces no new closed
components, and $F$ is indeed a sutured surface.
\end{rmk}

Alternatively, instead of a handle decomposition we can consider a Morse
function on $F$. Whenever we talk about Morse functions, a 
(fixed) choice
of Riemannian metric is implicit.
\begin{defn}
A \emph{$\Z$--compatible Morse function} on $F$ is a self-indexing
Morse function $f\co F\to[-1,4]$, such that the following conditions hold.
There are no index--0 or index--2 critical points. There are exactly $k$
index--1 critical points and they are all interior. The gradient of $f$
is tangent to $\del F\setminus f^{-1}(\{-1,4\})$.
The preimage $f^{-1}([-1,-1/2])$ is isotopic to a collection
of rectangles $[0,1]\times[-1,-1/2]$ such that $f$ is projection on the
second factor.
Similarly, $f^{-1}([3/2,4])$ is isotopic to a collection of rectangles
$[0,1]\times[3/2,4]$ such that $f$ is projection on the second factor.

Furthermore, we can identify $f^{-1}(\{3/2\})$ with $\Zmid$ such that
the unstable manifolds of the $i$--th index--1 critical point intersect
$\Zmid$ at $M^{-1}(i)$. We require that the orientation of $\Zmid$
and $\nabla f$ form a positive basis everywhere.
\end{defn}

Clearly, a compatible Morse function and a handle decomposition as above are
equivalent. Examples of an arc diagram, and the different ways we can interpret
its parametrization of a sutured surface, are given in Fig.~\ref{fig:annulus}.
A slightly more complicated example of an arc diagram, corresponding to the
parametrization in Fig.~\ref{subfig:sutured_surface_param}, is given in
Fig.~\ref{fig:arc_diagram_big}.

\begin{figure}
\begin{subfigure}[t]{.48\linewidth}
	\centering
	\includegraphics[scale=.83]{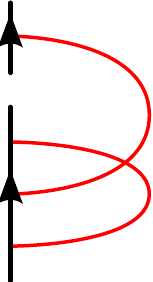}
	\caption{Arc diagram $\Z$ for an annulus.}
\end{subfigure}
\begin{subfigure}[t]{.48\linewidth}
	\centering
	\includegraphics[scale=.65]{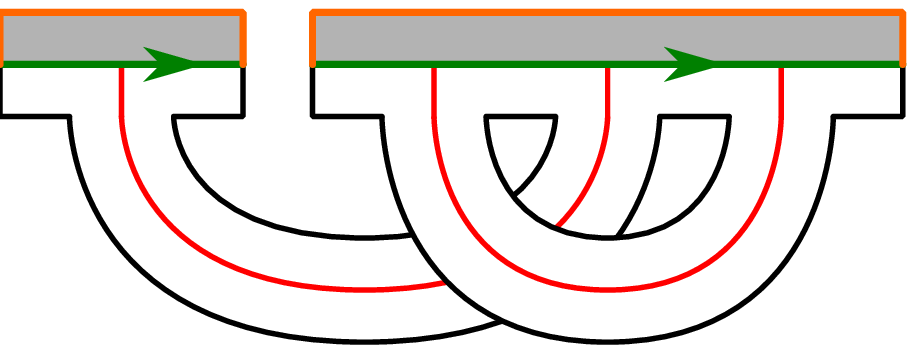}
	\caption{Handle decomposition associated with $\Z$.}
\end{subfigure}
\begin{subfigure}[t]{.48\linewidth}
	\centering
	\includegraphics[scale=.625]{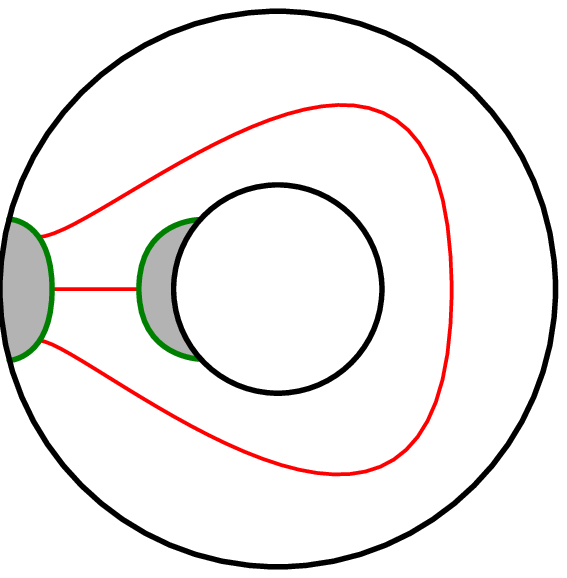}
	\caption{Annulus parametrized by $\Z$. }
\end{subfigure}
\begin{subfigure}[t]{.48\linewidth}
	\centering
	\labellist
	\small\hair 2pt
	\pinlabel $4$ [r] at 40 200
	\pinlabel $3/2$ [r] at 40 140
	\pinlabel $1$ [r] at 40 110
	\pinlabel $-1$ [r] at 40 40
	\endlabellist
	\includegraphics[scale=.5]{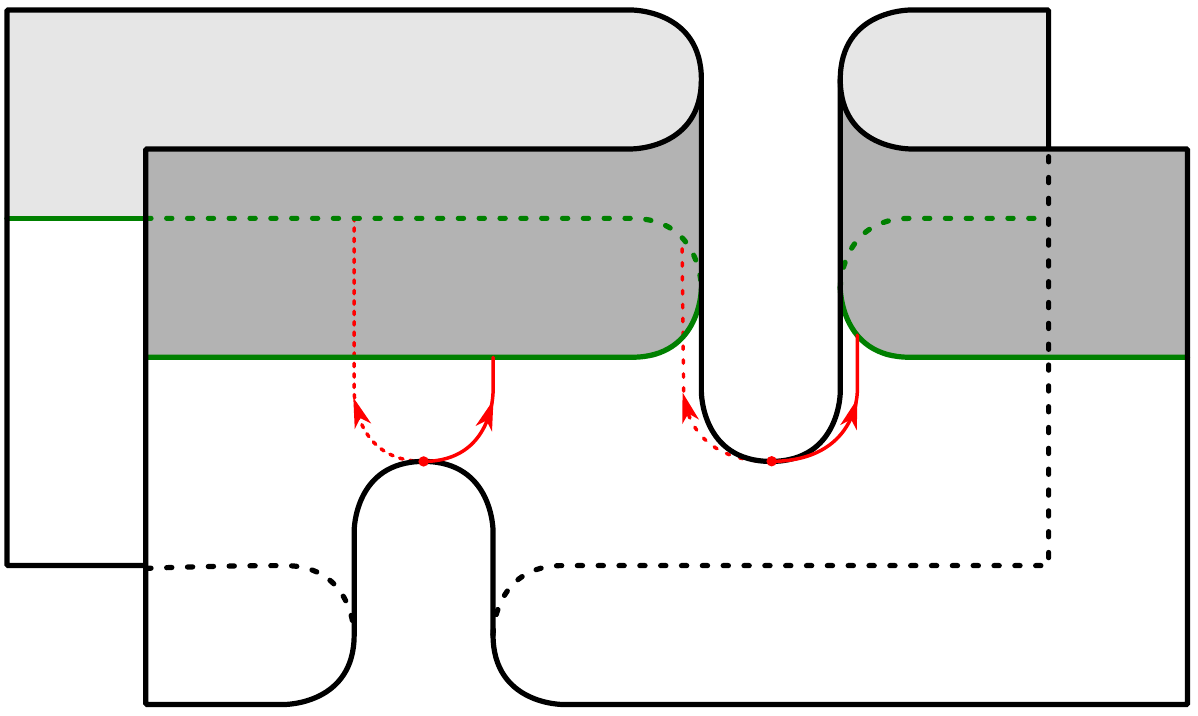}
	\caption{Morse function compatible with $\Z$.}
\end{subfigure}
\caption{Arc diagram for an annulus, and three different views of parametrization.}
\label{fig:annulus}
\end{figure}

There is one more way to describe the above parametrization. Recall that a
\emph{ribbon graph} is a graph with a cyclic ordering of the edges incident to
any vertex. An embedding of a ribbon graph into a surface will be considered
\emph{orientation preserving} if the ordering of the edges agrees with the
positive direction on the unit tangent circle of the vertex in the surface.

\begin{defn}
Let $F$ be a sutured surface obtained from an arc diagram $\Z$ as above.
The \emph{ribbon graph associated to $\Z$} is the ribbon graph $G(\Z)$
with vertices $\del\Zmid\cup\amid$, and edges the components of
$\Zmid\setminus\amid$ and the cores of the 1--handles,
which we denote $e_i$ for
$i=1,\ldots,k$. The cyclic ordering is induced from the orientation of $F$.
\end{defn}

In these terms, $F$ is parametrized by $\Z$ if we specify an orientation
preserving proper embedding $G(\Z)\hookrightarrow F$, such that $F$ deformation
retracts onto the image.

\begin{rmk}When we draw an arc diagram $\Z$ we are in fact drawing its graph $G(\Z)$. An example,
with all elements of the graph denoted, is given in Fig.~\ref{fig:arc_diagram_big}.
\end{rmk}

\begin{figure}
	\labellist
	\small\hair 2pt
	\pinlabel $a_1$ [r] at 0 10
	\pinlabel $a_2$ [r] at 0 25
	\pinlabel $a_3$ [r] at 0 40
	\pinlabel $a_4$ [r] at 0 70
	\pinlabel $a_5$ [r] at 0 85
	\pinlabel $a_6$ [r] at 0 115
	\pinlabel $a_7$ [r] at 0 130
	\pinlabel $a_8$ [r] at 0 145
	\pinlabel $a_9$ [r] at 0 160
	\pinlabel $a_{10}$ [r] at 0 175
	\pinlabel $Z_1$ [r] at -40 25
	\pinlabel $Z_2$ [r] at -40 77.5
	\pinlabel $Z_3$ [r] at -40 145
	\pinlabel $e_1$ [l] at 40 25
	\pinlabel $e_2$ [l] at 40 50
	\pinlabel $e_3$ [l] at 40 100
	\pinlabel $e_4$ [l] at 40 145
	\pinlabel $e_5$ [l] at 40 160
	\endlabellist
	\includegraphics[scale=.7]{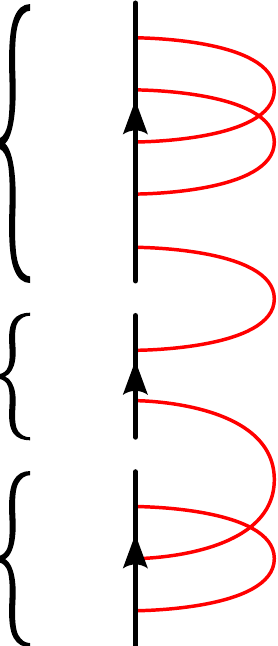}
\caption{An arc diagram $\Z$ for a twice punctured torus, and its graph $G(\Z)$.}
\label{fig:arc_diagram_big}
\end{figure}

\subsection{The algebra associated to an arc diagram}

Recall the definition of the strands algebra from \cite{LOT:pairing}.

\begin{defn}
The \emph{strands algebra} $\A(n,k)$ is a free $\ZZ/2$--module with generators
of the form $\mu=(T,S,\phi)$,
where $S$ and $T$ are $k$--element subsets of $\{1,\ldots,n\}$, and 
$\phi\co S\to T$ is a non--decreasing bijection.
(We think of $\phi$ as a collection of \emph{strands} from $S$ to $T$.)
Denote by $\inv(\mu)=\inv(\phi)$ the number of inversions of $\phi$, i.e.
the elements of $\Inv(\mu)=\{(i,j):i,j\in S, i<j, \phi(i)>\phi(j) \}$.

Multiplication is given by
\begin{equation*}
(S,T,\phi)\cdot(U,V,\psi)=\begin{cases}
(S,V,\psi\circ\phi) & \textrm{$T=U$, $\inv(\phi)+\inv(\psi)=\inv(\psi\circ\phi)$,}\\
0 & \textrm{otherwise.}
\end{cases}
\end{equation*}
The differential on $(S,T,\phi)$ is given by the sum of all possible ways to
``resolve'' an inversion,
i.e. switch $\phi(i)$ and $\phi(j)$ for some inversion $(i,j)\in\Inv(\mu)$.
\end{defn}

Next, we consider the larger \emph{extended strands algebra}
\begin{equation*}
A(n_1,\ldots,n_l;k)= \bigoplus_{k_1+\cdots+k_l=k}
\A(n_1,k_1)\otimes\cdots\otimes\A(n_l,k_l).
\end{equation*}

We will slightly abuse notation and think of elements of $\A(n_i,k_i)$ as
functions acting on subsets of
$\{(n_1+\cdots+n_{i-1})+1,\ldots,(n_1+\cdots+n_{i-1})+n_i\}$ instead of
$\{1,\ldots,n_i\}$.
This allows us to identify $\A(n_1,\ldots,n_l;k)$ with a subalgebra of
$\A(n_1+\cdots+n_l,k)$.

We will sometimes talk about the sums
$\A(n)=\A(n,0)\oplus\cdots\oplus\A(n,n)$, and
$\A(n_1,\ldots,n_l)=\A(n_1,\ldots,n_l;0)\oplus\cdots\oplus\A(n_1,\ldots,n_l;n_1+\cdots+n_l).$

The definition of $\A(\Z,i)$ as a subalgebra of $\A(|Z_1|,\ldots,|Z_l|;i)$
below is a straightforward generalization of the definition of the
algebra associated to a pointed matched circle in \cite{LOT:pairing}. There is, however,
a difference in notation. In \cite{LOT:pairing} $\A(\Z,0)$ denotes the middle summand and
negative summand indeices are allowed. Here, $\A(\Z,0)$ is the bottom summand, and
we only allow non-negative indices.

For any $i$--element subset $S\subset\{1,\ldots,2k\}$, there is an
idempotent $I(S)=(S,S,\id_S)\in\A(|Z_1|,\ldots,|Z_l|,i)$.
For an $i$--element subset $s\subset\{1,\ldots,k\}$, a \emph{section}
$S$ of $s$ is an $i$--element set 
$S\subset M^{-1}(s)$, such that $M|_S$ is
injective. To each $s$ there is an associated idempotent
\begin{equation*}
I(s)=\sum_{S~\textrm{is a section of}~s}I(S).
\end{equation*}

Consider triples of the form $(S,T,\psi)$, where $S,T\subset\{1,\ldots,2k\}$,
$\psi\co S\to T$ is a strictly increasing bijection. Consider all
possible sets $U\subset\{1,\ldots,2k\}$ disjoint from $S$ and $T$, and such that
$S\cup U$ has $i$ elements. Let
\begin{equation*}
a_i(S,T,\psi)=\sum_{U~\textrm{as above}}
(S\cup U,T\cup U,\psi_U)\in \A(|Z_1|,\ldots,|Z_l|;i),
\end{equation*}
where $\psi_U|_T=\psi$, and $\psi_U|_U=\id_U$. In the language of strands,
this means ``to a set of moving strands add all possible consistent
collections of
stationary (or horizontal) strands''.

Let $\I(\Z,i)$ be the subalgebra generated by $I(s)$ for all $i$--element sets
$s$, and let $I=\sum_{s}I(s)$ be their sum.
Let $\A(\Z,i)$ be the subalgebra generated by $\I(\Z,i)$ and all elements of the form
$I\cdot a_i(S,T,\psi) \cdot I$.

All elements $(S,T,\phi)$ considered have the property that $M|_S$ and $M|_T$ are injective.

\begin{defn}
The \emph{algebra associated} with the arc diagram $\Z$ is
\begin{equation*}
\A(\Z)=\bigoplus_{i=0}^k\A(\Z,i),
\end{equation*}
which is a module over
\begin{equation*}
\I(\Z)=\bigoplus_{i=0}^k\I(\Z,i).
\end{equation*}
\end{defn}

To any element of $\mu=(S,T,\phi)\in\A(|Z_1|,\ldots,|Z_l|)$ we can associate its \emph{homology class}
$[\mu] \in H_1(\Zmid,\amid)$, by setting
\begin{equation*}
[\mu]=\sum_{i\in S}[l_i],
\end{equation*}
where $l_i$ is the positively oriented segment $[a_i,a_{\phi(i)}]\subset\Zmid$. It is additive
under multiplication and preserved by the differential on $\A(|Z_1|,\ldots,|Z_l|)$.
Since it only depends on the moving strands of $(S,T,\phi)$, any element of $\A(\Z)$ is homogeneous
with respect to this homology class, and therefore we can talk about the homology class of an
element in $\A(\Z)$.

\begin{rmk}
With a collection $\Z_1, \ldots, \Z_p$ of arc diagrams we can associate their
\emph{union} $\Z=\Z_1\cup\cdots\cup\Z_p$, where
$\Zmid=\Zmid_1\sqcup\cdots\sqcup\Zmid_p$, preserving the
matching on each piece.

There are natural identifications, of algebras
\begin{equation*}
\A(\Z)=\bigotimes_{i=1}^{p}\A(\Z_i),
\end{equation*}
and of surfaces
\begin{equation*}
F(\Z)=\bigsqcup_{i=1}^{p}F(\Z_i).
\end{equation*}
\end{rmk}

\subsection{Reeb chord description}
We give an alternative interpretation of the strands algebra $\A(\Z)$.

Given an arc diagram $\Z$ with $k$ arcs, there is a unique positively oriented contact
structure
on the 1--manifold $\Zmid$, while the 0--manifold $\amid\subset\Zmid$ is
Legendrian. There is a family of \emph{Reeb chords} in $\Zmid$, starting and ending
at $\amid$ and positively oriented. For a Reeb chord $\rho$ we will denote its
starting and ending point by $\rho^-$ and $\rho^+$, respectively.
Moreover, for a collection $\brho=\{\rho_1,\ldots,\rho_n\}$ of Reeb chords as above,
we will write $\brho^-=\{\rho_1^-,\ldots,\rho_n^-\}$, and
$\brho^+=\{\rho_1^+,\ldots,\rho_n^+\}$.

\begin{defn}A collection $\brho=\{\rho_1,\ldots,\rho_n\}$ of Reeb chords is 
\emph{$p$--com\-plet\-a\-ble} if the following conditions hold:
\begin{enumerate}
\item $\rho_i^-\neq\rho_i^+$ for all $i=1,\ldots,n$.
\item $M(\rho_1^-),\ldots,M(\rho_n^-)$ are all distinct.
\item $M(\rho_1^+),\ldots,M^(\rho_n^+)$ are all distinct.
\item \label{cond:completable} $\#(M(\brho^-)\cup M(\brho^+)) \leq k-(p-n)$.
\end{enumerate}
\end{defn}

Condition~(\ref{cond:completable})
guarantees that there is at least one choice of a $(p-n)$--element set
$s\subset\{1,\ldots,k\}$, disjoint from $M(\brho^-)$ and $M(\brho^+)$. Such a set is called
\emph{a $p$--completion} or just \emph{completion} of $\brho$.
Every completion of $\brho$ defines an element of $\A(\Z,p)$:

\begin{defn}
\label{def:a_rho_s}
For a $p$--completable collection $\brho$ and a completion $s$, their
\emph{associated element} in $\A(\Z,p)$ is
\begin{equation*}
a(\brho,s)=\sum_{S~\textrm{is a section of}~s}(\brho^-\cup S,\brho^+\cup S,\phi_S),
\end{equation*}
where $\phi_S(\rho_i^-)=\rho_i^+$, for $i=1,\ldots,n$, and $\phi|_S=\id_S$.
\end{defn}

\begin{defn} The \emph{associated element} of $\brho$ in $\A(\Z,p)$ is the sum over all
$p$--completions:
\begin{equation*}
a_p(\brho)=\sum_{s~\textrm{is a}~p\textrm{-completion of}~\brho}a(\brho,s).
\end{equation*}

If $\brho$ is not $p$--completable, we will just set $a_p(\brho)=0$. We will also sometimes
use the complete sum
\begin{equation*}
a(\brho)=\sum_{p=0}^k a_p(\brho).
\end{equation*}
\end{defn}

The algebra $\A(\Z,p)$ is generated over $\I(\Z,p)$ by the elements $a_p(\brho)$ for all possible
$p$--completable $\brho$.
Algebra multiplication of such associated elements corresponds to certain concatenations of
Reeb chords.

We can define the \emph{homology} class $[\rho]\in H_1(\Zmid,\amid)$ in the obvious
way, and extend to a set of Reeb chords $\brho=\{\rho_1,\ldots,\rho_n\}$, by taking the
sum $[\brho]=[\rho_1]+\cdots+[\rho_n]$. It is easy to see that $[a(\brho,s)]=[\brho]$, and in
particular it doesn't depend on the completion $s$.

\subsection{Grading}
There are two ways to grade the algebra $\A(\Z)$. The simpler is to grade it
by a nonabelian group $\Gr(\Z)$, which is a $\frac{1}{2}\ZZ$--extension of $H_1(\Zmid,\amid)$.
This group turns out to be too big, and does not allow for a graded version of the pairing theorems.
For this a subgroup $\Grdn(\Z)$ of $\Gr(\Z)$ is necessary, that can be identified with a $\frac{1}{2}\ZZ$--extension
of $H_1(F(\Z))$. Unfortunately, there is no canonical way to get a $\Grdn(\Z)$--grading on $\A(\Z)$.

\begin{rmk} Our notation differs from that in~\cite{LOT:pairing}.
In particular, our grading group $\Gr(\Z)$ is analogous to the group $G'(\Z)$ used by Lipshitz, Ozsv\'ath and Thurston,
while $\Grdn(\Z)$ corresponds to their $G(\Z)$.
Moreover, our grading function $\gr$ corresponds to their $\gr'$, while $\grdn$ corresponds to $\gr$.
\end{rmk}

We start with the $\Gr(\Z)$--grading.
Suppose $\Zmid=\{Z_1,\ldots,Z_l\}$. We will define a grading on the bigger algebra
$\A(|Z_1|,\ldots,|Z_l|)$ that descends to a grading on
$\A(\Z)$.

First, we define some auxiliary maps.

\begin{defn}
Let $m\co H_0(\amid)\times H_1(\Zmid,\amid)\to\frac{1}{2}\ZZ$ be the map defined by counting
local multiplicities. More precisely, given the positively oriented
line segment $l=[a_i,a_{i+1}]\subset Z_p$, set
\begin{equation*} 
m([a_j],[l]) = \begin{cases}
\frac{1}{2} & \textrm{if $j=i,i+1$,}\\ 
0 & \textrm{otherwise,}
\end{cases} 
\end{equation*}
and extend linearly to all of $H_0(\amid)\times H_1(\Zmid,\amid)$.
\end{defn}

\begin{defn}
Let $L\co H_1(\Zmid,\amid)\times H_1(\Zmid,\amid)\to\frac{1}{2}\ZZ$, be
\begin{equation*}
L(\alpha_1,\alpha_2)=m(\del(\alpha_1),\alpha_2),
\end{equation*}
where $\del$ is the connecting homomorphism in homology.
\end{defn}

The group $\Gr(\Z)$ is defined as a central extension of $H_1(\Zmid,\amid)$ by $\frac{1}{2}\ZZ$ in
the following way.

\begin{defn}
Let $\Gr(\Z)$ be the set $\frac{1}{2}\ZZ\times H_1(\Zmid,\amid)$, with multiplication
\begin{equation*}
(a_1,\alpha_1)\cdot(a_2,\alpha_2)=(a_1+a_2+L(\alpha_1,\alpha_2),\alpha_1+\alpha_2).
\end{equation*}

For an element $g=(a,\alpha)\in\Gr(\Z)$ we call $a$ the \emph{Maslov component},
and $\alpha$ the \emph{homological component} of $g$.
\end{defn}

Note that if $\Zmid$ has just one component $Z_1$ and $|Z_1|=n$, then this grading group is the same
as the group $G'(n)$ defined in \cite[Section 3]{LOT:pairing}.
In general, if $\Zmid=\{Z_1,\ldots,Z_l\}$, as a set
\begin{equation*}
G'(|Z_1|)\times\cdots\times G'(|Z_l|)\cong\left(\frac{1}{2}\ZZ\right)^l\times H_1(\Zmid,\amid),
\end{equation*}
since $H_1(Z_1,\amid\cap Z_1)\oplus\cdots\oplus H_1(Z_l,\amid\cap Z_l)\cong H_1(\Zmid,\amid)$.
Adding the Maslov components together induces a surjective homomorphism
\begin{equation*}
\sigma\co G'(|Z_1|)\times\cdots\times G'(|Z_l|)\to \Gr(\Z).
\end{equation*}

We can now define the grading $\gr\co \A(|Z_1|,\ldots,|Z_l|)\to \Gr(\Z)$.

\begin{defn}
For an element $a=(S,T,\phi)$ of $\A(|Z_1|,\ldots,|Z_l|)$, set
\begin{align*}
\iota(a) & =\inv(\phi)-m(S,[a]),\\
\gr(a) & =(\iota(a),[a]).
\end{align*}
\end{defn}

Breaking up $a$ into its components $a=(a_1,\ldots,a_l)\in\A(|Z_1|)\oplus\cdots\oplus\A(|Z_l|)$,
we see that $\gr(a)=\sigma(\gr'(a_1),\ldots,\gr'(a_l))$.

Therefore, we can apply the results about $G'$ and $\gr'$ from \cite{LOT:pairing} to deduce the following
proposition.

\begin{prop}
The function $\gr$ is indeed a grading on $\A(|Z_1|,\ldots,|Z_l|)$, with the same properties as $G'$ on $\A(n)$.
Namely, the following statements hold.
\begin{enumerate}
\item 
\label{cond:algebra_grading}
Under $\gr$, $\A(|Z_1|,\ldots,|Z_l|)$ is a differential graded algebra, where the differential drops
the grading by the central element $\lambda=(1,0)$.
\item For any completable collection of Reeb chords $\brho$, the element $a(\brho)$ is homogeneous.
\item The grading $\gr$ descends to $\A(\Z)$.
\item For any completable collection $\brho$, the grading of $a(\brho,s)$ does not depend on the completion $s$.
\end{enumerate}
\end{prop}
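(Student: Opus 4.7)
The plan is to reduce every claim to the corresponding fact for the LOT grading $\gr'$ on $\A(n)$, exploiting that the extended strands algebra is a tensor product $\A(|Z_1|,\ldots,|Z_l|) = \bigotimes_{i=1}^l \A(|Z_i|)$ and that $\gr$ is obtained from the product grading $(\gr',\ldots,\gr')$ by pushforward along $\sigma$. The first step is to verify that $\sigma$ is a group homomorphism; after that, parts (1)--(4) transport formally.

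The multiplicativity of $\sigma$ is the only nontrivial algebraic check. Writing out the group operations, it reduces to the identity
\begin{equation*}
L\bigl(\textstyle\sum_i \alpha_i,\sum_i \beta_i\bigr)=\sum_i L_i(\alpha_i,\beta_i),
\end{equation*}
where $L_i$ is the bilinear form on $H_1(Z_i,\amid\cap Z_i)$ defining the multiplication of $G'(|Z_i|)$. This will follow because both the connecting homomorphism $\del\co H_1(\Zmid,\amid)\to H_0(\amid)$ and the local-multiplicity pairing $m$ are supported locally on each component: a relative $1$--cycle in $Z_i$ has boundary supported in $\amid\cap Z_i$, and $m(a_j,[a_{j'},a_{j'+1}])$ vanishes unless both arguments lie in the same component. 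Surjectivity of $\sigma$ is immediate.

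For any pure tensor $a = a_1\otimes\cdots\otimes a_l$ in $\A(|Z_1|)\otimes\cdots\otimes\A(|Z_l|)$, additivity of $\inv$, of $m(S,[a])$, and of the homology class under disjoint union yields $\gr(a) = \sigma(\gr'(a_1),\ldots,\gr'(a_l))$, as already noted in the excerpt. Part (1) is then immediate: by \cite{LOT:pairing}, each $\gr'$ makes $\A(|Z_i|)$ a differential graded algebra over $G'(|Z_i|)$ with differential dropping degree by $\lambda_i = (1,0)$, so the product grading is a dg-algebra grading whose differential drops by $(\lambda_1,\ldots,\lambda_l)$; pushforward by $\sigma$ sends this tuple to $\lambda = (1,0)\in\Gr(\Z)$. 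Part (3) is automatic because $\A(\Z)$ is a subalgebra of $\A(|Z_1|,\ldots,|Z_l|)$ and so simply inherits the grading.

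For parts (2) and (4), the key observation is that every generator $(\brho^-\cup S,\brho^+\cup S,\phi_S)$ appearing in $a(\brho,s)$ is itself a pure tensor, whose $i$-th factor is $(\brho_i^-\cup S_i,\brho_i^+\cup S_i,\phi_{S_i})$, where subscripts denote intersections with $Z_i$. The corresponding LOT result for $\gr'$ says that the grading of this $i$-th factor depends only on $\brho_i$, not on how many or which horizontal strands are added; a direct counting check confirms this by exhibiting that each new inversion introduced by a horizontal strand is canceled by a compensating change in $m(S,[a])$. Applying this factorwise and invoking $\gr = \sigma\circ(\gr',\ldots,\gr')$ shows $\gr(a(\brho,s))$ depends only on $\brho$, proving both the completion-independence in (4) and, by treating changes in the number of horizontal strands the same way, the cross-$p$ homogeneity of $a(\brho)$ in (2). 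The main obstacle is thus isolated to the homomorphism check in step one; everything else is a formal pushforward from \cite{LOT:pairing}.
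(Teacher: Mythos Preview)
Your proposal is correct and follows essentially the same route as the paper: reduce to the LOT result via the factorization $\gr=\sigma\circ(\gr',\ldots,\gr')$, observe that the Leibniz-rule differential drops exactly one Maslov component by $1$ (hence drops by $\lambda$ under $\sigma$), and transport the remaining statements componentwise. The paper's own proof is terser and does not spell out the homomorphism check for $\sigma$ or the pure-tensor decomposition of $a(\brho,s)$, but the substance is the same.
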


\begin{proof}
The proof of~(\ref{cond:algebra_grading}) follows from the corresponding statement for $\gr'$, after noticing that
the differential on $\A(|Z_1|,\ldots,|Z_l|)$ is defined via the Leibniz rule, and the differentials on the
individual components drop one Maslov component by $1$, while keeping all the rest fixed.

The rest of the statements then follow analogously to those for $\gr'$ in \cite{LOT:pairing}.
\end{proof}

\subsection{Reduced grading}
We can now define the refined grading group $\Grdn(\Z)$. Recall that the surface $\F(\Z)$ retracts to the graph
$G(\Z)$, consisting of the segments $\Zmid$, and the arcs $E=\{e_1,\ldots,e_k\}$, such that $\Zmid\cap E=\amid$.
From the long exact sequence for the pair $(G,E)$ we know that the following piece is exact.
\begin{equation*}
0\to H_1(G) \to H_1(G,E)\to H_0(E)
\end{equation*}

The differential $\del\co H_1(G,E)\to H_0(E)$ can be identified with the composition
$M_*\circ\del\co H_1(\Zmid,\amid)\to H_0(E)$,
and $H_1(F)=H_1(G)$ can be identified with $\ker \del \subset H_1(\Zmid,\amid)$. The identification can also be seen
by adding the arcs $e_i$ to cycles in $(\Zmid,\amid)$ to obtain cycles in $G=\Zmid\cup E$.
This is induces a map $\del'\co \Gr(\Z)\to H_0(E)$, and the kernel $\Grdn(\Z)=\ker \del'$ is just the subgroup
of $\Gr(\Z)$, consisting of elements with homological component in $\ker \del\cong H_1(F)$.

\begin{prop}Under the identification $\ker \del=H_1(F)$, the group $\Grdn(\Z)$ can be explicitly described as
a central extension of $H_1(F)$ by $\frac{1}{2}\ZZ$, with multiplication law
\begin{equation*}
(a_1,[\alpha_1])\cdot(a_2,[\alpha_2])=(a_1+a_2+ \#(\alpha_1\cap\alpha_2),[\alpha_1]+[\alpha_2]),
\end{equation*}
where $a_1,a_2\in\frac{1}{2}\ZZ$, and $\alpha_1$ and $\alpha_2$ are curves in $F$, and
$\#(\alpha_1\cap\alpha_2)$ is the signed intersection number, according to the orientation of $F$.
\end{prop}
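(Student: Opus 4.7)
The plan is to prove the formula by a local verification at each point $a_j\in\amid$, after identifying $\ker\del'\cong H_1(F)$ explicitly. Both sides of the claim are $\ZZ$-bilinear in $[\alpha_1],[\alpha_2]$, and both decompose as sums of local contributions at the $a_j$'s, so it suffices to match these contributions vertex by vertex.

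First I would make the identification concrete: given $\alpha\in\ker\del$, represented by a 1-chain on the segments of $\Zmid$, the condition $M_*(\del\alpha)=0$ forces the local boundary coefficients $n_j=(\del\alpha)|_{a_j}$ to cancel across each matched pair $M^{-1}(i)=\{a_j,a_{j'}\}$. The corresponding closed cycle $\tilde\alpha$ in $G(\Z)\subset F$ is obtained by adding each 1-handle arc $e_i$ with coefficient $d_i=n_j$. Next, I would observe that $L(\alpha_1,\alpha_2)=m(\del\alpha_1,\alpha_2)$ is manifestly a sum of local contributions at the $a_j$, and that after perturbing $\tilde\alpha_1,\tilde\alpha_2$ in $F$ to be transverse, one can choose parallel copies of edges so that all intersection points are pushed into small disk neighborhoods of the vertices $a_j$.

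The main step is the local comparison at each $a_j$. Near $a_j$, the graph $G(\Z)$ is trivalent with three half-edges: the left and right half-segments $\ell,r$ of $\Zmid$, and the outgoing half of the arc $e_{M(j)}$ pointing away from $\Zmid$ in the direction opposite to $\nabla f$. The orientation of $F$ (built from the positively oriented pair $\Zmid,\nabla f$) pins down the cyclic order of these three half-edges. The contribution of $a_j$ to $L(\alpha_1,\alpha_2)$ is $(c_{1,\ell}-c_{1,r})\cdot\tfrac{1}{2}(c_{2,\ell}+c_{2,r})$, where $c_{i,\ell},c_{i,r}$ are the coefficients of $\tilde\alpha_i$ on $\ell$ and $r$. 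Meanwhile, pushing $\tilde\alpha_1,\tilde\alpha_2$ off $a_j$ consistent with the cyclic order and their edge coefficients, the local signed intersection is a specific bilinear function of $(c_{i,\ell},c_{i,r},d_{i,j})_{i=1,2}$ determined by the ribbon structure. Using the cycle relation $d_{i,j}=c_{i,\ell}-c_{i,r}$, a direct computation (or a finite case check on the types of passes $\tilde\alpha_i$ can make at $a_j$) shows the two expressions coincide.

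The main obstacle is this last sign-and-orientation bookkeeping. A useful sanity check is antisymmetry: since $L$ is manifestly bilinear but not manifestly antisymmetric, the identity $L(\alpha_1,\alpha_2)+L(\alpha_2,\alpha_1)=0$ on $\ker\del$ must hold if the claim is correct, which reduces via the cycle relation and summation over matched pairs to an elementary verification. Alternatively, one may fix a basis of $H_1(F)$ given by explicit cycles in $G(\Z)$ and verify the formula on generators, turning the proof into a finite case analysis.
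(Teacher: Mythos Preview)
Your overall strategy---reduce to showing $L|_{\ker\del}$ agrees with the intersection pairing on $H_1(F)$, realize cycles concretely in the graph $G(\Z)$, and compute locally---is the same as the paper's. The gap is in the claim that the two sides can be matched \emph{vertex by vertex} at each $a_j$ for a single fixed push-off. They cannot: the local contribution of $a_j$ to $L$ is $(c_{1,\ell}-c_{1,r})\cdot\tfrac{1}{2}(c_{2,\ell}+c_{2,r})$, which is in general a half-integer, while the local signed intersection number near $a_j$ for any transverse perturbation is an integer. Concretely, take $\alpha_1=\alpha_2$ to be a cycle that at $a_j$ turns from $\ell$ into $e$ (so $c_\ell=1$, $c_r=0$); the local $L$-contribution is $\tfrac{1}{2}$, but two parallel copies of the same curve have local intersection $0$. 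The discrepancy is compensated at the matched vertex $a_{j'}$, where the local $L$-contribution is $-\tfrac{1}{2}$; only the sum over the pair vanishes. So the finite case check you propose at a single trivalent vertex cannot succeed as stated.

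The paper handles exactly this issue by averaging: for each arc $e_i$ there are two ways to push one cycle off the other, giving $2^{\#E}$ perturbations in total. Averaging the intersection count over all of them, the $\pm 1$ contributions to $L$ correspond to crossings present in every perturbation, while the $\pm\tfrac{1}{2}$ contributions correspond to crossings present in exactly half of them. Your argument is easily repaired along the same lines, or alternatively by abandoning the vertex-by-vertex claim and comparing only the global sums (the total intersection number is independent of the push-off, and one checks that the half-integer discrepancies at matched pairs $\{a_j,a_{j'}\}$ cancel against each other). Your antisymmetry sanity check is a symptom of the same phenomenon: $L$ restricted to $\ker\del$ is antisymmetric only after summing over all vertices, not locally.
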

\begin{proof}
First, notice that the intersection pairing is well-defined, as it is, via Poinc\'are duality, just the
pairing $\left<\cdot\cup\cdot,[F,\del F]\right>$ on $H^1(F,\del F)$.
The remaining step is to show that under the identification $\ker \del=H_1(F)$, this agrees with the pairing
$L$ on $H_1(\Zmid,\amid)$.
This can be seen by starting with line segments on $\Zmid$ and arcs in $E$, pushing the arcs on $E$ away from each other in
the $2^{\# E}$ possible ways.
One can then count that $\pm 1$ contributions to $L$ always give rise to an intersection
point, while $\pm 1/2$ contributions create an intersection point exactly half of the time.
\end{proof}

In fact, for any generator $a\in\A(\Z)$ with starting and ending idempotents $I_s$ and $I_e$, respectively, 
$\del'(\gr(a))=I_e-I_s$, if we think of the idempotents as linear combinations of the $e_i$. Therefore,
for any $a$ with $I_e=I_s$, $\gr(a)$ is already in $\Grdn$, and in general it is ``almost'' in there.
At this point we would like to find a retraction $\Gr\to\Grdn$ and use this to define the refined grading.
However this fails even in simple cases.
For instance, when $\Z$ is an arc diagram for a disc with several sutures, $\Grdn(\Z)=\frac{1}{2}\ZZ$ is abelian,
as $H_1(F)$ vanishes, while the commutator of $\Gr(\Z)$ is $\ZZ\subset\Grdn(\Z)$, and there can be no retraction,
even if we pass to $\QQ$--coefficients.

The solution is to assign a grading to $\A(\Z)$ with values in $\Grdn(\Z)$, depending on the starting and ending
idempotents. First, note that the generating idempotents come in sets of \emph{connected components}, where $I$
is connected to $J$ if and only if $I-J$ is in the image of $\del'$, or equivalently in the kernel of $H_0(E)\to H_0(F)$.
These connected components correspond to the possible choices of how many arcs are occupied in each connected component
of $F(\Z)$.

\begin{defn}
A \emph{grading reduction} $r$ for $\Z$ is a choice of a \emph{base idempotent} $I_0$ in each connected component,
and a choice $r(I)\in\del'^{-1}(I-I_0)$ for any $I\in[I_0]$.
\end{defn}

\begin{defn}
Given a grading reduction $r$, define the \emph{reduced grading}
\begin{equation*}
\grdn_r(a)=r(I_s)\cdot\gr(a)\cdot r(I_e)^{-1}\in\Grdn(\Z),
\end{equation*}
for any generator $a\in\A(\Z)$ with starting and ending idempotents $I_s$ and $I_e$, respectively. When
unambiguous, we write simply $\grdn(a)$.
\end{defn}

For any elements $a$ and $b$, such that $a\cdot b$, or even $a\otimes b$ is nonzero, the $r$--terms in $\grdn$
cancel, and $\grdn(a \otimes b)=\grdn(a \cdot b)=\grdn(a)\cdot\grdn(b)$. Since $\left<\frac{1}{2}\ZZ,0\right>$ is in the
center, there is still a well-defined $\ZZ$--action by $\lambda=\left<1,0\right>$, and
$\grdn(\del a)=\lambda^{-1}\grdn(a)$. Therefore, $\grdn$ is indeed a grading.

Notice that for any $a$ with $I_s=I_e$, $\grdn(a)$ is the conjugate of $\gr(a)\in\Grdn$ by $r(I_s)$. In particular,
the homological part of the grading is unchanged, and whenever it vanishes, the Maslov component is also unchanged.

\begin{rmk} Given a set of Reeb chords $\brho$, the element $a(\brho)\in\A(\Z)$ is no longer homogeneous under $\grdn$.
Indeed, $\grdn(a(\brho,s))$ depends on the completion $s$. 
\end{rmk}

\subsection{Orientation reversals}
\label{sec:reversal}
It is sometimes useful to compare the arc diagrams $\Z$ and $-\Z$ and the corresponding gradings. Recall that
$-\Z$ and $\Z$ differ only by the orientation of $\Zmid$. Consequently, the homology components $H_1(\pm\Zmid,\amid)$ in
$\Gr(\pm\Z)$ can be identified, while their canonical bases are opposite in order and sign. In particular, the pairings
$L_{\pm\Z}$ are opposite from each other. Therefore $\Gr(\Z)$ and $\Gr(-\Z)$ are anti-isomorphic, via the map fixing
both the Maslov and homological components.

Similarly, $F(\pm\Z)$ differ only in orientation, the homological
components $H_1(F)$ can be naturally identified while the intersection pairings
are opposite from each other. Thus $\Grdn(\Z)$ and $\Grdn(-\Z)$ are also anti-homomorphic,
via the map that fixes both components, which agrees
with the restriction of the corresponding map on $\Gr(\Z)$.

Thus, left actions by $\Gr(\Z)$ or $\Grdn(\Z)$ naturally correspond to right actions by $\Gr(-\Z)$ or $\Grdn(-\Z)$,
respectively, and vice versa.

\section{Bordered sutured \texorpdfstring{3--manifolds}{3-manifolds}}
\label{sec:sutured}

In this section---and for most of the rest of the paper---we will be working from the point
of view of bordered sutured manifolds, as sutured manifolds with extra structure. We will
largely avoid the alternative description of decorated sutured cobordisms.

\subsection{Sutured manifolds}

\begin{defn}
A \emph{divided surface} $(S,\Gamma)$ is a closed surface $F$, together with
a collection $\Gamma=\{\gamma_1,\ldots,\gamma_n\}$ of pairwise disjoint
oriented simple closed curves on $F$, called \emph{sutures}, satisfying the
following conditions.

Every component $B$ of $F \setminus\Gamma$ has nonempty boundary
(which is the union
of sutures). Moreover, the boundary orientation and the suture orientation of
$\del B$ either agree on all components, in which case we call $B$ a
\emph{positive region},
or they disagree on all components, in which case we call $B$ a
\emph{negative region}.
We denote by $R_+(\Gamma)$ or $R_+$ (respectively $R_-(\Gamma)$ or $R_-$)
the closure of the union of all positive (negative) regions.
\end{defn}

Notice that the definition doesn't require $F$ to be connected, but it requires
that each component contain a suture.

\begin{defn}
A divided surface $(F,\Gamma)$ is called \emph{balanced} if
$\chi(R_+)=\chi(R_-)$.

It is called \emph{$k$--unbalanced} if $\chi(R_+)=\chi(R_-)+2k$, where $k$
could be positive, negative or $0$. In particular $0$--unbalanced is the same
as balanced.
\end{defn}

Notice that since $F$ is closed, and $\chi(S)=\chi(R_+)+\chi(R_-)$, it follows
that $\chi(R_+)-\chi(R_-)$ is always even.

Now we can express the balanced sutured manifolds of \cite{Juh:SFH} in terms
of divided surfaces.

\begin{defn}
A \emph{balanced sutured manifold} $(Y,\Gamma)$ is a 3--manifold $Y$ with no
closed components, such that $(\del Y, \Gamma)$ is a balanced divided surface.
\end{defn}

We can extend this definition to the following.
\begin{defn}
A \emph{$k$--unbalanced sutured manifold} $(Y,\Gamma)$ is a 3--manifold $Y$ with
no closed components, such that $(\del Y,\Gamma)$ is a $k$--unbalanced
divided surface.
\end{defn}

Although our unbalanced sutured manifolds are more general than the balanced
ones of Juh\'asz, they are still strictly a subclass of Gabai's general
definition in \cite{Gab:foliations}. For example, he allows toric sutures,
while we do not.

\subsection{Bordered sutured manifolds}
In this section we describe how to obtain a bordered sutured manifold
from a sutured manifold, by parametrizing part of its boundary.

\begin{defn}
A \emph{bordered sutured manifold} $(Y,\Gamma,\Z,\phi)$ consists of the
following.
\begin{enumerate}
\item A sutured manifold $(Y,\Gamma)$.
\item An arc diagram $\Z$.
\item An
orientation preserving embedding $\phi\co G(\Z)\hookrightarrow\del Y$, such that
$\phi|_{\Zmid}$ is an orientation preserving embedding into $\Gamma$, 
and $\phi(G(\Z)\setminus\Zmid)\cap\Gamma=\varnothing$. It follows that each arc $e_i$ embeds in $R_-$.
\end{enumerate}
\end{defn}

Note that a closed neighborhood $\overline{\nu(G(\Z))}\subset\del Y$
can be identified with the parametrized surface $F(\Z)$. We will make this identification
from now on.

An equivalent way to give a bordered sutured manifold
would be to specify an embedding 
$F(\Z)\hookrightarrow\del Y$,
such that the following conditions hold.
Each 0--handle of $F$ intersects $\Gamma$ in a single arc,
while each 1--handle is
embedded in $\Int(R_-(\Gamma))$.

\begin{prop}
Any bordered sutured manifold $(Y,\Gamma,\Z,\phi)$ satisfies the following condition,
called \emph{homological linear independence}.
\begin{equation}
\label{eq:hli}
\textrm{$\pi_0(\Gamma\setminus\phi(\Zmid))\to\pi_0(\del Y\setminus F(\Z))$ is surjective.}
\end{equation}
\end{prop}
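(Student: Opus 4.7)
The plan is to prove surjectivity at the level of components: I will show that every component $C$ of $\del Y \setminus F(\Z)$ meets $\Gamma \setminus \phi(\Zmid)$. This suffices, because under the identification of $F(\Z)$ with a closed regular neighborhood of $\phi(G(\Z))$, the intersection $F(\Z) \cap \Gamma$ is a neighborhood of $\phi(\Zmid)$ in $\Gamma$, so the inclusion $\Gamma \setminus F(\Z) \hookrightarrow \Gamma \setminus \phi(\Zmid)$ is a bijection on $\pi_0$, and a suture arc landing in a given component $C$ represents a $\pi_0$--preimage of $[C]$. The two ingredients I will use are the sutured condition (every component of $\del Y$ intersects $\Gamma$) and the non-degeneracy of $\Z$, which, as noted in the remark following the definition of $F(\Z)$, is equivalent to the statement that every component of $\del F(\Z)$ intersects $\Lambda$.

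I will split into two cases depending on whether $C$ has boundary in $\del Y$. If $\del C = \varnothing$, then $C$ is an entire component of $\del Y$ that happens to be disjoint from $F(\Z)$; the sutured condition gives a component of $\Gamma$ inside $C$, which automatically lies in $\Gamma \setminus \phi(\Zmid)$ because $\phi(\Zmid) \subset F(\Z)$.

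For the more interesting case $\del C \neq \varnothing$, pick any boundary circle $\gamma$ of $C$; this $\gamma$ is a component of $\del F(\Z)$ viewed inside $\del Y$, so by non-degeneracy there is a point $p \in \gamma \cap \Lambda$. Under $\phi$ the set $\Lambda = \del\Zmid \times \{1/2\}$ lies on $\Gamma$, and the local model of the embedding — with $\phi|_{\Zmid}$ embedded in $\Gamma$ and the 1--handles in $R_-$ — forces the suture through $p$ to cross $\gamma$ transversely, with one germ lying in $\phi(\Zmid) \subset F(\Z)$ and the opposite germ lying outside $F(\Z)$. The latter germ is an honest arc of $\Gamma \setminus \phi(\Zmid)$ contained in $C$, and the component of $\Gamma \setminus \phi(\Zmid)$ containing it provides the desired preimage.

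I do not expect any real obstacle: the argument is a local analysis at $\Lambda$ glued to the $\pi_0$ dichotomy above. The single point requiring care is the claim that at each point of $\Lambda$ the suture genuinely exits $F(\Z)$ into $C$ (rather than running along $\del F(\Z)$ or re-entering $F(\Z)$); this is exactly the transversality built into the definition of the embedding $\phi$, together with $S_+ \cup S_-$ meeting at $\Lambda$, but it is worth spelling out explicitly.
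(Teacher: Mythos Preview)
Your proof is correct and follows essentially the same approach as the paper: both reformulate surjectivity as the statement that every component of $\del Y\setminus F(\Z)$ meets $\Gamma$, split into the closed/non-closed component cases, and handle the non-closed case via non-degeneracy of $\Z$ (every component of $\del F$ meets $\Lambda\subset\Gamma$). Your version is more detailed in spelling out the local transversality at $\Lambda$, whereas the paper leaves that step implicit.
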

\begin{proof}
Indeed, Eq.~(\ref{eq:hli}) is equivalent to $\Gamma$ intersecting any component of 
$\del Y\setminus F$. But $\Gamma$ already intersects any component of
$\del Y$. Any component of $\del Y\setminus F$ is either a component of
$\del Y$, or has common boundary with $F$. The non-degeneracy condition on $\Z$
guarantees that any component of $\del F$ hits $\Gamma$.
\end{proof}

\begin{rmk}
If we want to work with degenerate arc diagrams (which give rise to degenerate sutured surfaces)
we can still get well-defined invariants, as long as we impose homological linear independence on
the manifolds. However, in that case there is no category, since the identity cobordism from
a degenerate sutured surface to itself \emph{does not} satisfy homological linear independence.
\end{rmk}

\subsection{Gluing}
\label{sec:glue_mflds}
We can glue two bordered sutured manifolds to obtain a sutured manifold in the
following way.

Let $(Y_1,\Gamma_1,\Z,\phi_1)$, and $(Y_2,\Gamma_2,-\Z,\phi_2)$ be two
bordered sutured manifolds. Since $\phi_1$ and $\phi_2$ are embeddings, and
$G(-\Z)$ is naturally isomorphic to $G(\Z)$ with its orientation reversed,
there is a diffeomorphism $\phi_1(G(\Z))\to\phi_2(G(-\Z))$ that can be
extended to an orientation reversing diffeomorphism
$\psi\co F(\Z)\to F(-\Z)$ of their neighborhoods. Moreover,
$\psi|_{\Gamma_1}\co \Gamma_1\cap F(\Z)\to\Gamma_2\cap F(-\Z)$ is orientation
reversing.

Set $Y=Y_1\cup_{\psi}Y_2$, and
$\Gamma=(\Gamma_1\setminus F(\Z))\cup(\Gamma_2\setminus F(-\Z))$.
By homological linear independence on $Y_1$ and $Y_2$, the sutures $\Gamma$
on $Y$ intersect all components of $\del Y$, and $(Y,\Gamma)$ is a sutured manifold.

~

More generally, we can do partial gluing.
Suppose $(Y_1,\Gamma_1,\Z_0\cup\Z_1,\phi_1)$ and
$(Y_2,\Gamma_2,-\Z_0\cup\Z_2,\phi_2)$ are bordered sutured. Then
\begin{equation*}
(Y_1\cup_{F(\Z_0)}Y_2,
(\Gamma_1\setminus F(\Z_0))\cup(\Gamma_2\setminus F(-\Z_0)),
\Z_1\cup\Z_2,
\phi_1|_{G(\Z_1)}\cup\phi_2|_{G(\Z_2)})
\end{equation*}
is also bordered sutured.

\section{Heegaard diagrams}
\label{sec:diagrams}

\subsection{Diagrams and compatibility with manifolds}
\begin{defn}
A \emph{bordered sutured Heegaard diagram} $\HH=(\Sigma,\balpha,\bbeta,\Z,\psi)$
consists of the following data:
\begin{enumerate}
\item A surface with with boundary $\Sigma$.
\item An arc diagram $\Z$.
\item An orientation reversing embedding
$\psi\co G(\Z)\hookrightarrow\Sigma$, such that $\psi|_{\Zmid}$ is an 
orientation preserving embedding into $\del\Sigma$, while
$\psi|_{G(\Z)\setminus\Zmid}$ is an embedding into $\sigint$.
\item The collection $\balpha^a=\{\alpha_1^a,\ldots,\alpha_k^a\}$ of arcs
$\alpha_i^a=\psi(e_i)$.
\item A collection of simple closed curves
$\balpha^c=\{\alpha_1^c,\ldots,\alpha_n^c\}$ in $\sigint$, which are
disjoint from each other and from $\balpha^a$.
\item A collection of simple closed
curves $\bbeta=\{\beta_1,\ldots,\beta_m\}$ in $\sigint$, which are
pairwise disjoint and transverse to $\balpha=\balpha^a\cup\balpha^c$.
\end{enumerate}

We also require that
$\pi_0(\del\Sigma\setminus\Zmid)\to\pi_0(\Sigma\setminus\balpha)$
and $\pi_0(\del\Sigma\setminus\Zmid)\to\pi_0(\Sigma\setminus\bbeta)$ be surjective.
We call this condition \emph{homological linear independence} since it is equivalent
to each of $\balpha$ and $\bbeta$ being linearly independent in $H_1(\Sigma,\Zmid)$.
\end{defn}

Homological linear independence on diagrams is the key condition required for
admissibility and avoiding boundary degenerations.

\begin{defn}
A \emph{boundary compatible Morse function} on a bordered sutured manifold
$(Y,\Gamma,\Z,\phi)$ is a self-indexing Morse function $f\co Y\to[-1,4]$ (with an
implicit choice of Riemannian metric $g$) with the following properties.
\begin{enumerate}
\item The parametrized surface $F(\Z)=\overline{\nu(G(\Z))}$ is totally geodesic,
$\nabla f$ is parallel to $F$, and $f|_F$ is a $\Z$--compatible Morse function.
\item A closed neighborhood $N=\overline{\nu(\Gamma\setminus\Z)}$ is isotopic to
$(\Gamma\setminus\Z)\times[-1,4]$, such that $f$ is projection on the second
factor (and $f(\Gamma)=3/2$).
\item 
$f^{-1}(-1)=\overline{R_-(\Gamma)\setminus(N\cup F)}$, and
$f^{-1}(4)=\overline{R_+(\Gamma)\setminus(N\cup F)}$.
\item $f$ has no index--0 or index--3 critical points.
\item The are no critical
points in $\del Y\setminus F$, and the index--1 critical points for $F$ are
also index--1 critical points for $Y$.
\end{enumerate}
\end{defn}

See Fig.~\ref{subfig:morse_true} for a schematic illustration.

From a boundary compatible Morse function $f$ we can get a bordered
sutured Heegaard diagram by
setting $\Sigma=f^{-1}(3/2)$, and letting $\balpha$ be the intersection
of the stable manifolds of the index--1 critical points with $\Sigma$, and
$\bbeta$ be the intersection of the unstable manifolds of the index--2
critical points with $\Sigma$. Note that the internal critical points
give $\balpha^c$ and $\bbeta$, while the ones in $F\subset \del Y$ give
$\balpha^a$. We notice that $\Zmid\subset F\cap\Sigma$ and $\balpha^a$
form an embedding $\psi\co G(\Z)\to\Sigma$. Homological linear independence
for the diagram follows from that of manifold.

\begin{defn}
A diagram as above is called a \emph{compatible bordered sutured Heegaard
diagram} to $f$.
\end{defn}

\begin{prop}
Compatible diagrams and boundary compatible Morse functions are in
a one-to-one correspondence.
\end{prop}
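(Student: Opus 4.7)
The plan is to verify both directions and check mutual inversion. The forward direction—Morse function to diagram—is already essentially constructed in the paragraph before the proposition: one sets $\Sigma=f^{-1}(3/2)$, takes $\balpha$ to be the intersection of the stable manifolds of the index--1 critical points with $\Sigma$, $\bbeta$ the intersection of the unstable manifolds of the index--2 critical points with $\Sigma$, and observes that $\psi$ is forced on us by $f|_F$. The boundary compatibility conditions on $f$, together with the $\Z$--compatibility of $f|_F$, translate directly into the data of a compatible bordered sutured Heegaard diagram, and homological linear independence for the diagram follows from the homological linear independence of $(Y,\Gamma,\Z,\phi)$.

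For the reverse direction, I would start from a compatible diagram $\HH=(\Sigma,\balpha,\bbeta,\Z,\psi)$ (sitting inside $Y$, with the embedding of $\Sigma$ as the level set at height $3/2$ implicit in the word ``compatible'') and reconstruct a boundary compatible Morse function via a handle decomposition. First, arrange a metric and a preliminary Morse function on a collar of $\del Y$ so that $F(\Z)$ is totally geodesic with $f|_F$ realizing the $\Z$--compatible function on $F$, $N=\overline{\nu(\Gamma\setminus\Z)}$ is a product with $f$ the projection to $[-1,4]$ sending $\Gamma$ to $3/2$, and the bottom and top level sets coincide with $R_-(\Gamma)$ and $R_+(\Gamma)$. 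Then insert the interior critical points: one index--1 critical point per $\alpha_i^c$, with the stable manifold meeting $\Sigma$ in $\alpha_i^c$, and one index--2 critical point per $\beta_j$, with the unstable manifold meeting $\Sigma$ in $\beta_j$; the boundary-arc critical points $\alpha_i^a$ are already present from $f|_F$. Extending the gradient across the resulting piece in the unique way (up to isotopy) compatible with the prescribed handle attachments yields the desired Morse function.

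The main obstacle is checking that the reconstructed handle decomposition genuinely exhausts $Y$ rather than producing some other 3--manifold with the same boundary, and that the two constructions are mutually inverse. Exhaustion uses homological linear independence for $\HH$: this is precisely what ensures that the complementary regions after attaching the $\balpha$-- and $\bbeta$--handles match onto the collars of $R_-$ and $R_+$ with no extra closed components, so that the reconstructed manifold is diffeomorphic to $Y$ rel boundary. For mutual inversion, the forward-then-reverse composition is immediate since the critical point and gradient data completely determine a boundary compatible Morse function up to boundary-fixing isotopy within the compatibility class. The reverse-then-forward composition reduces to checking that, in the Morse function built from $\HH$, the level set $f^{-1}(3/2)$ is canonically identified with $\Sigma$ and the ascending/descending manifolds recover $\balpha$ and $\bbeta$, which follows from the construction. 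Standard relative Morse theory handles the remaining smoothing and uniqueness issues.
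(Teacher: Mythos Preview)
Your approach is correct and close in spirit to the paper's, but the paper takes a cleaner route for the inverse construction. Rather than assuming $\Sigma$ already sits inside a given $Y$ and then worrying about whether the handles you attach exhaust $Y$, the paper simply \emph{builds} the manifold from the diagram: start with $\Sigma\times[1,2]$, attach genuine 2--handles along $\alpha_i^c\times\{1\}$ and $\beta_j\times\{2\}$, and attach ``halves of 2--handles'' (thickened discs glued along a boundary arc) along $\alpha_i^a\times\{1\}$. This handle decomposition carries a canonical Morse function, and the half-handles are exactly what produce the boundary index--1 critical points required by the definition. Because the manifold is constructed rather than given, there is no exhaustion issue and homological linear independence plays no role in this direction. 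Your argument would also go through, but the extra scaffolding you introduce---arranging boundary collars first, then checking that the complementary pieces match $R_\pm$---is an artifact of fixing $Y$ in advance; the paper's handle-attachment viewpoint makes the bijection essentially tautological.
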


\begin{proof}
We need to give an inverse construction.
Start with a bordered sutured
diagram $\HH$, and construct a bordered sutured manifold in the following way.
To $\Sigma\times[1,2]$
attach 2--handles at $\balpha_i^c\times\{1\}$, and at $\bbeta_i\times\{2\}$.
Finally, at $\balpha_i^a\times\{1\}$ attach ``halves of 2--handles''.
These are thickened discs $D^2\times[0,1]$ attached along an arc
$a\times\{1/2\}\subset\del D^2\times\{1/2\}$. (See Fig.~\ref{fig:half_handle}.)
Then $\Gamma$ is $\Sigma\times\{3/2\}$,
and $F(\Z)$ is $\Zmid\times[1,2]$, together with the ``middles'' of the partial
handles, i.e.\ $(\del D^2\setminus a)\times[0,1]$.
To such a handle decomposition on the new manifold $Y$ corresponds a
canonical boundary compatible Morse function $f$. Note that attaching the half-handles
has no effect topologically, but adds boundary critical points.
\end{proof}

\begin{figure}
	\includegraphics[scale=.6]{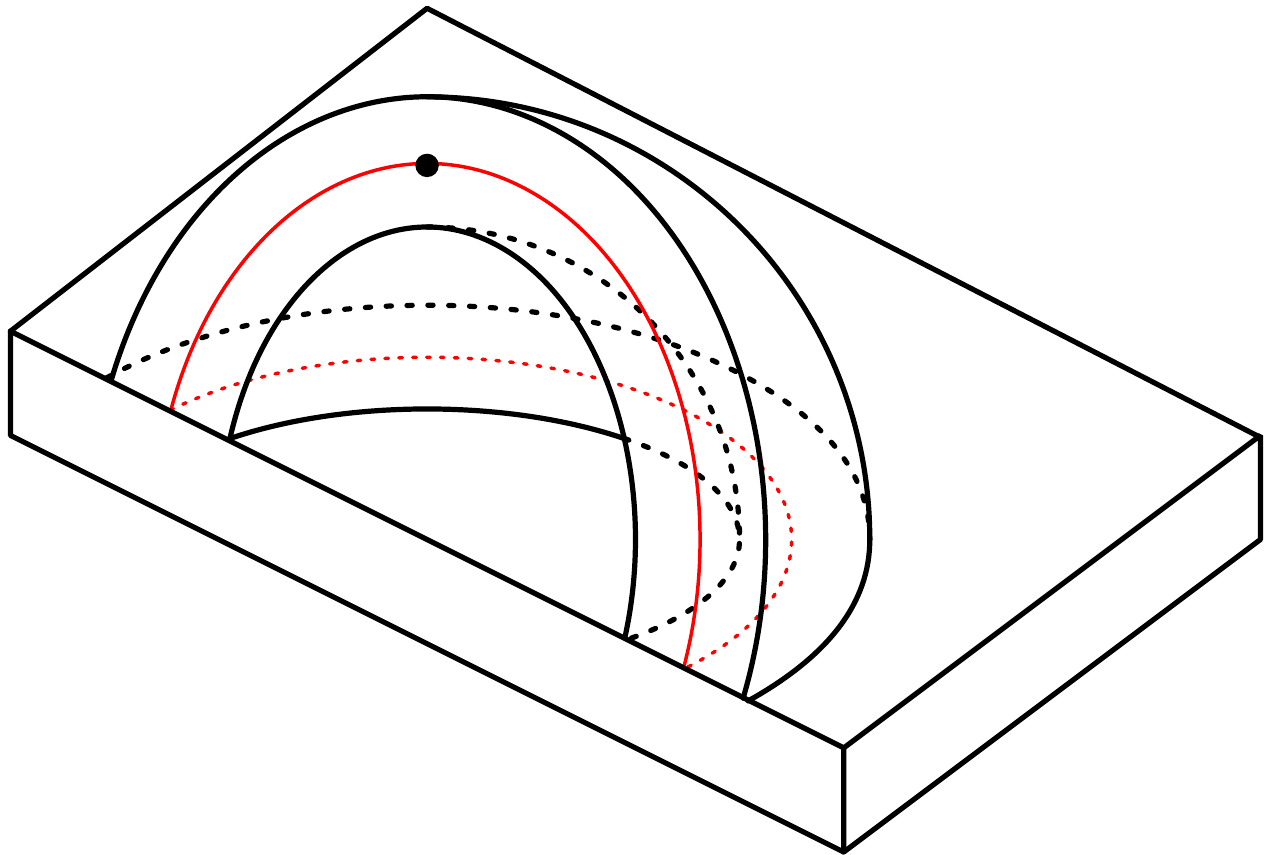}
\caption{Half of a 2--handle attached along an arc. Its critical
point and two incoming gradient flow lines are in the boundary.}
\label{fig:half_handle}
\end{figure}

\begin{prop}
\label{prop:heeagaard}
Any sutured bordered manifold has a compatible diagram in the above sense.
Moreover, any two compatible diagrams can be connected by a sequence of
moves of the following types:
\begin{enumerate}
\item Isotopy of the circles in $\balpha^c$ and $\bbeta$, and 
isotopy, relative to the endpoints, of the arcs in $\balpha^a$.
\item Handleslide of a circle in $\bbeta$ over another circle in $\bbeta$.
\item Handleslide of any curve in $\balpha$ over a circle in $\balpha^c$.
\item Stabilization.
\end{enumerate}
\end{prop}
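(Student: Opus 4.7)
The plan is to reduce to the standard existence and uniqueness theorem for Heegaard diagrams via Cerf theory, carried out in the setting of boundary compatible Morse functions. By the previous proposition, compatible bordered sutured diagrams are in bijection with boundary compatible Morse functions, so I may work entirely with Morse functions and transport the conclusions across this correspondence at the end.

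For existence, the $\Z$--compatible Morse function on $F(\Z)$ comes as part of the parametrization data. First I would extend it to a collar of $\del Y$ satisfying the required boundary structure: a product structure on a neighborhood of $\Gamma\setminus\Z$ with $f$ equal to projection, and the prescribed values on $R_\pm(\Gamma)\setminus (N\cup F)$. Then I would extend arbitrarily to a smooth function on $Y$ and perturb generically to make it Morse with all new critical points in $\Int(Y)$. Finally, a standard handle rearrangement/cancellation argument eliminates interior index--0 and index--3 critical points: such a critical point can always be cancelled against an index--1 (respectively index--2) critical point because $Y$ has no closed components and each component of $R_\pm$ is nonempty, which provides the necessary gradient flow line. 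Self-indexing is then achieved by the usual reordering, preserving the boundary conditions.

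For uniqueness, given two boundary compatible Morse functions $f_0$ and $f_1$ inducing compatible diagrams with the same $\Z$, I would pick a generic smooth path $\{f_t\}_{t\in[0,1]}$ in the space of smooth functions on $Y$ satisfying the prescribed boundary conditions throughout the path. By boundary--relative Cerf theory, $f_t$ is Morse and boundary compatible for all but finitely many $t$, with the finitely many exceptional values consisting of birth/death of cancelling pairs and handle crossings. Between exceptional values, the descending and ascending manifolds vary by isotopy, giving move~(1). A handleslide between two $\bbeta$ critical points gives move~(2), while a handleslide between two $\balpha$ critical points gives move~(3), with the caveat that one cannot slide an $\balpha^a$ arc over another $\balpha^a$ arc without altering the embedded graph $G(\Z)\subset\Sigma$; such slides can be replaced by a sequence of slides over $\balpha^c$ circles by a standard argument in the complement of $G(\Z)$. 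Births/deaths of index~$1/2$ pairs correspond to stabilization, move~(4). Any births of index~$0/1$ or index~$2/3$ pairs that appear in the generic path can be cancelled using the argument from the existence step, so the path may be perturbed to avoid them entirely.

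The main technical obstacle is the boundary--relative version of Cerf theory: one must guarantee that during the entire deformation the surface $F(\Z)$ remains totally geodesic, $f_t|_{F(\Z)}$ remains a $\Z$--compatible Morse function, and the collar structures near $\Gamma\setminus\Z$ and $R_\pm$ persist. This requires working in the space of functions constrained at the boundary, and verifying that the standard transversality arguments still produce a generic path. Once this relative Cerf package is set up, the usual proofs of the Reidemeister--Singer theorem go through with only the bookkeeping modification that handleslides involving $\balpha^a$ arcs must be reduced to slides over $\balpha^c$ circles as above.
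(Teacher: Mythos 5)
Your outline takes a genuinely different route from the paper, and the route has a gap you have not closed. You work directly with boundary compatible Morse functions, which carry index--1 critical points \emph{on} $F(\Z)\subset\del Y$, and then appeal to a ``boundary--relative Cerf theory'' for that constrained space. That package is substantially harder than you acknowledge: one would need a genericity and stratification theorem for paths of Morse functions that have prescribed boundary critical points with all the attendant tangency and product conditions, and the text's definition has several such constraints (total geodesy of $F$, $\nabla f$ parallel to $F$, $f|_F$ a $\Z$--compatible Morse function, collar structure near $\Gamma\setminus\Z$). No standard reference gives this off the shelf. The paper avoids the issue entirely by introducing \emph{pseudo} boundary compatible Morse functions, which are ordinary sutured Morse functions in the sense of Juh\'asz (no boundary critical points at all), decorated with the condition that $\phi(G(\Z))$ is disjoint from the unstable manifolds of interior index--1 critical points and tangent to $\nabla f$ near $\Gamma$. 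The arcs $\balpha^a$ are then the intersections of $\Sigma$ with the gradient flow from the embedded arcs $e_i$, rather than unstable manifolds of boundary critical points. This turns both existence and uniqueness into direct applications of the already-proven sutured statements \cite[Propositions 2.13--2.15]{Juh:SFH}, plus a perturbation to make the $e_i$-flow generic; the only new Cerf phenomenon is a flowline from a point of some $e_i$ into an interior index--1 critical point, which is exactly the handleslide of $\alpha^a_i$ over a circle in $\balpha^c$.

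A second, more concrete problem: your treatment of $\alpha^a$-over-$\alpha^a$ handleslides is not justified. You assert these ``can be replaced by a sequence of slides over $\balpha^c$ circles by a standard argument in the complement of $G(\Z)$,'' but there is no such standard argument, and the replacement is false in general (a slide of one $\alpha^a$ arc over another genuinely changes the embedded parametrizing graph, and the complement of $G(\Z)$ in $\Sigma$ need not contain the $\alpha^c$ circles you would want to slide over). In the paper's pseudo-Morse picture this issue simply never arises: the $e_i$ are not critical points, so there is no codimension--one stratum corresponding to a flowline between two of them, and the only interaction of $\balpha^a$ with anything is the $e_i$-into-index--1 flowline already accounted for. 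If you want to keep your direct approach with boundary critical points, you would need an honest argument ruling out or recasting the $\alpha^a$/$\alpha^a$ degeneracy, and you would need to construct the boundary-relative Cerf stratification from scratch; the paper's reduction is both shorter and sidesteps both hazards.
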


\begin{proof}
For the proof of this proposition we will modify our definition of a
compatible Morse function, to temporarily ``forget'' about $F$.

A \emph{pseudo boundary compatible Morse function} $f$ for a bordered sutured
manifold $(Y,\Gamma,\Z,\phi)$ is a boundary compatible Morse function for the
manifold
$(Y,\Gamma,\varnothing,\varnothing\hookrightarrow\del Y)$ (which is just a standard
Morse function for the sutured manifold $(Y,\Gamma)$, in the sense of
\cite{Juh:SFH}), with some additional conditions. Namely, we require that
$f^{-1}([-1,3/2])\cap\phi(e_i)$ consist of two arcs (at the endpoints of
$\phi(e_i)$), tangent to $\nabla f$. We also require that $\phi(G(\Z))$ be
disjoint from the unstable manifolds of index--1 critical points.

Such Morse functions are in 1--to--1 correspondence with compatible diagrams
by the following construction. As usual, $\Sigma=f^{-1}(3/2)$, while
$\balpha^c$ and $\bbeta$ are the intersections of $\Sigma$ with stable,
respectively unstable, manifolds for index--1 and index--2 critical points.
On the other hand, $\balpha^a_i$ is the intersection of $\Sigma$ with the
gradient flow from $e_i$. Since the flow avoids index--1 critical points,
$\balpha^a$ is disjoint from $\balpha^c$. See Fig.~\ref{fig:morse} for a
comparison between the two types of Morse functions.

\begin{figure}
\begin{subfigure}[t]{.9\linewidth}
	\centering
	\labellist
	\small\hair 4pt
	\pinlabel $4$ [r] at -130 80
	\pinlabel $2$ [r] at -130 35
	\pinlabel $3/2$ [r] at -130 -20
	\pinlabel $1$ [r] at -130 -75
	\pinlabel $-1$ [r] at -130 -120
	\hair 2pt
	\pinlabel $R_+$ [bl] at 355 75
	\pinlabel $\Sigma$ [l] at 360 -20
	\pinlabel $\Gamma$ [l] at 360 -75
	\pinlabel $R_-$ [bl] at 355 -115
	\endlabellist
	\includegraphics[scale=.65]{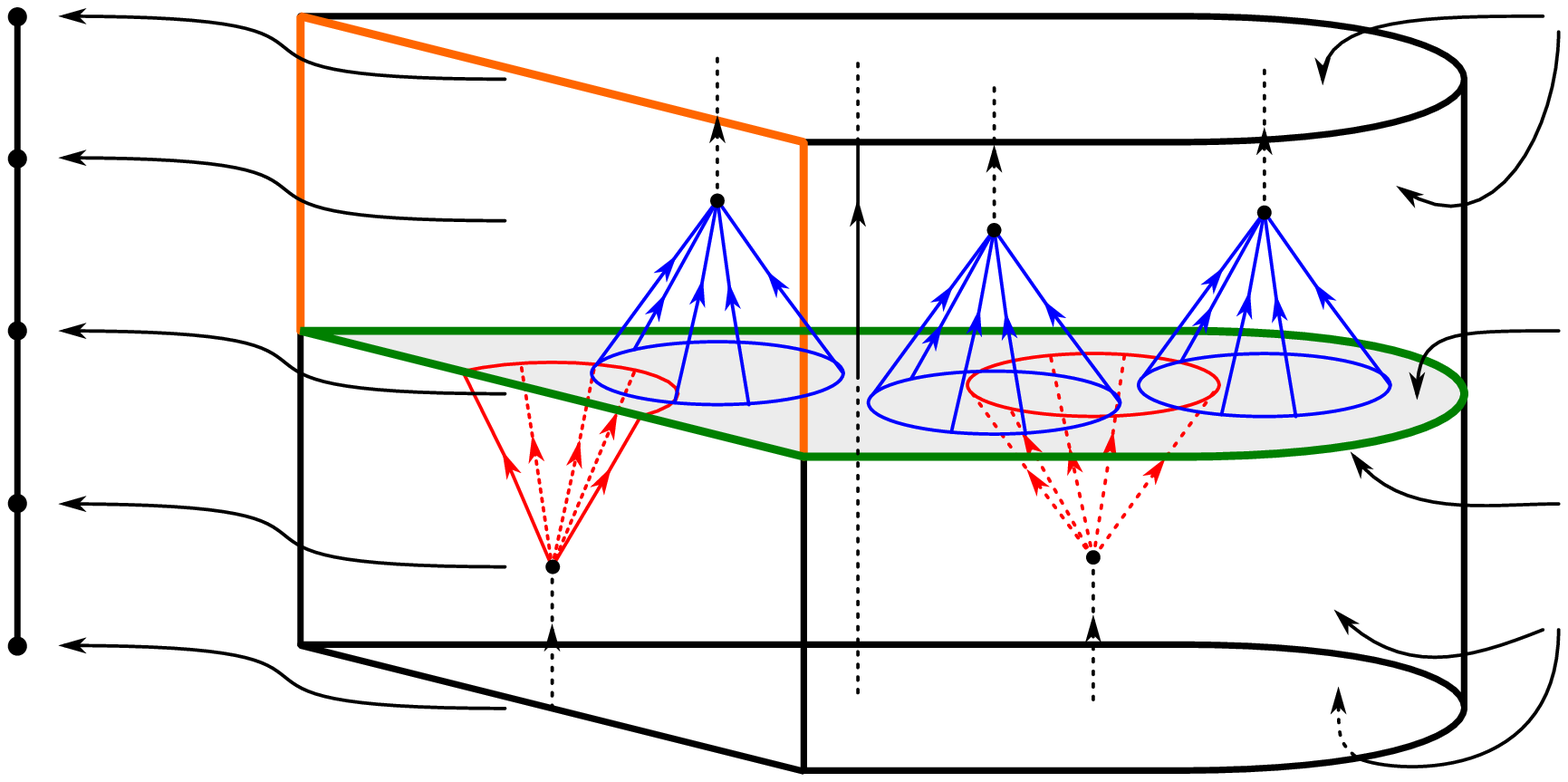}
	\caption{A true boundary compatible Morse function. There is one boundary critical point giving rise to $\balpha^a$.}
	\label{subfig:morse_true}
\end{subfigure}
\begin{subfigure}[t]{.9\linewidth}
	\centering
	\labellist
	\small\hair 4pt
	\pinlabel $4$ [r] at -130 80
	\pinlabel $2$ [r] at -130 35
	\pinlabel $3/2$ [r] at -130 -20
	\pinlabel $1$ [r] at -130 -75
	\pinlabel $-1$ [r] at -130 -120
	\hair 2pt
	\pinlabel $R_+$ [bl] at 355 75
	\pinlabel $\Sigma$ [l] at 360 -20
	\pinlabel $\Gamma$ [l] at 360 -75
	\pinlabel $R_-$ [bl] at 355 -115
	\endlabellist
	\includegraphics[scale=.65]{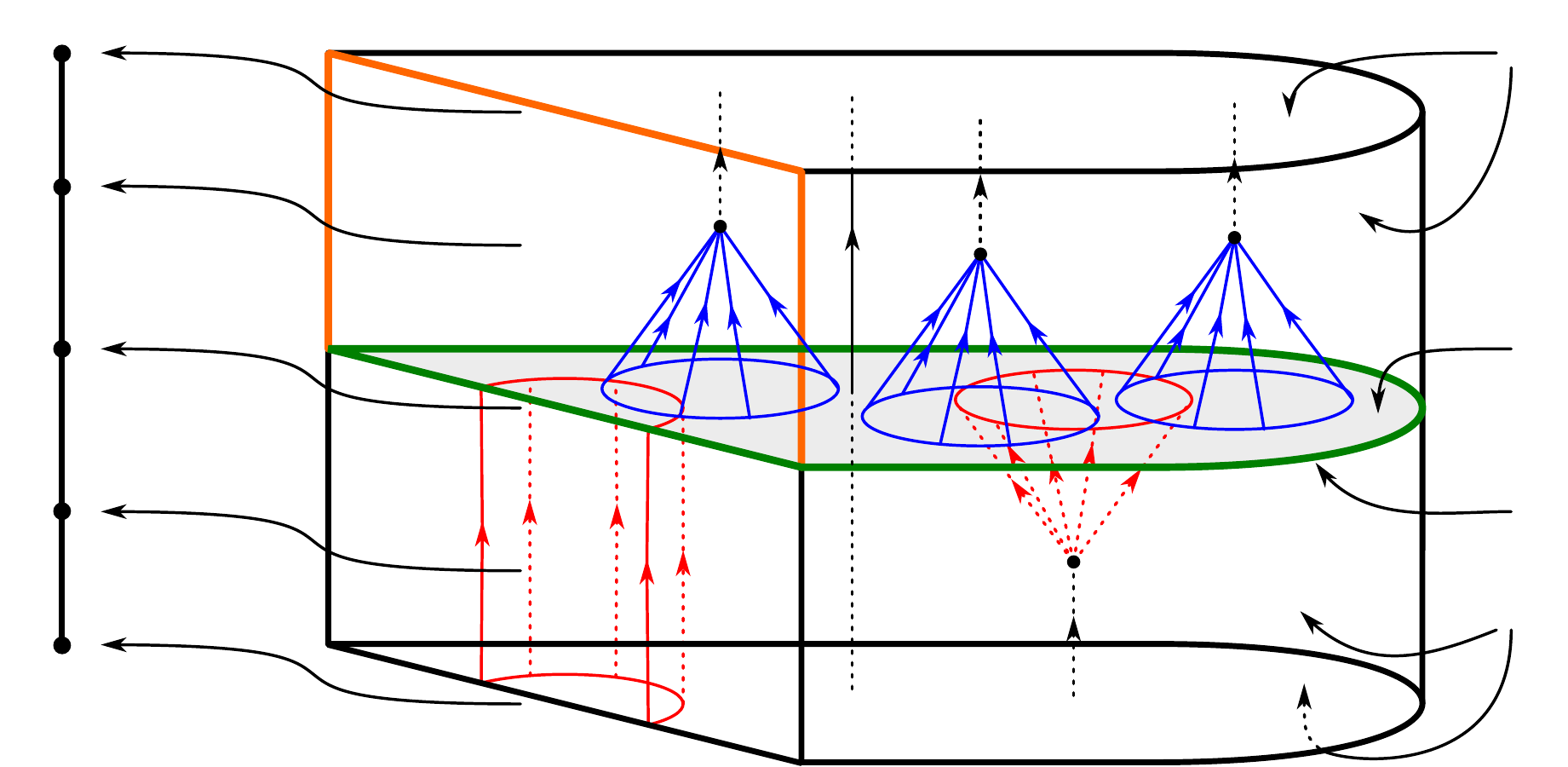}
	\caption{A pseudo boundary compatible Morse function. There is one arc in $f^{-1}(-1)$ giving rise to $\balpha^a$.}
	\label{subfig:morse_psudo}
\end{subfigure}
\caption{Comparison of a boundary compatible and pseudo boundary compatible Morse functions. Several internal
critical points are given in each, with gradient flowlines, giving rise to $\balpha^c$ and $\bbeta$.}
\label{fig:morse}
\end{figure}

The backwards construction is the same as for true boundary-compatible
Morse functions, except we do not attach the half 2--handles' at
$\balpha^a\times\{1\}$, and instead just set $e_i=\alpha_i^a\times\{1\}\cup\del\alpha_i^a\times[1,3/2]$.

This alternative construction allows us to use standard results about 
sutured manifolds. In particular, \cite[Propositions 2.13---2.15]{Juh:SFH}
imply that $(Y,\Gamma)$ has a compatible Morse function, and 
hence Heegaard diagram, and any two compatible diagrams are connected by
Heegaard moves. Namely, there is a family $f_t$ of Morse functions, which
for generic $t$ corresponds to an isotopy, and for a finite number of
critical points corresponds to a index--1, index--2 critical point creation,
(i.e. stabilization of the diagram), or a flowline between critical points of
the same index (handleslides between circles in $\balpha^c$ or between 
circles in $\bbeta$).

Since the stable manifold of any index--1 critical point
intersects $R_-$ at a pair of points, we can always perturb $f$ to get a
pseudo-compatible diagram for $(Y,\Gamma,\Z,\phi)$. Any two such diagrams are
connected by a sequence of sutured Heegaard moves (ignoring $\balpha^a$).
For generic $t$, a sutured compatible $f_t$ is also
pseudo bordered sutured compatible. At non-generic $t$, there is a flow from some point on $e_i$
to an index--1 critical point. This corresponds to sliding
$\alpha_i$ over the corresponding circle in $\balpha^c$, so we must add those to
the list of allowed Heegaard moves.
\end{proof}

\subsection{Generators}
\begin{defn}
A \emph{generator} for a bordered sutured diagram $\HH=(\Sigma,\balpha,\bbeta)$
is a collection $\xgen=(x_1,\ldots,x_g)$ of intersection points in
$\balpha\cap\bbeta$, such that there is exactly one point on each
$\alpha^c$ circle, exactly one point on each $\beta$ circle, and at most one
point on each $\alpha^a$ arc.

The set of all generators for $\HH$ is denoted $\G(\HH)$ or $\G$.

As a degenerate case, when $\#\bbeta=\#\balpha^c=0$, we will let $\G$ contain
a single element, which is the empty collection $\xgen=()$.
\end{defn}

Notice that if $\G$ is nonempty, then necessarily $g=\#\bbeta\geq\#\balpha^c$.
 We call $g$
the \emph{genus} of $\HH$. Moreover, exactly $p=g-\#\balpha^c$ many of the
$\alpha^a$ arcs are occupied by each generator.
Let $o(\xgen)\subset \{1,\ldots,k\}$ denote the set of occupied $\alpha^a$ arcs,
and $\obar(\xgen)=\{1,\ldots,k\}\setminus o(\xgen)$ denote the set of unoccupied arcs.

\begin{rmk}
If $\HH=(\Sigma,\balpha,\bbeta)$ is a bordered sutured diagram compatible with
a $p$--unbalanced bordered sutured manifold, then exactly $p$ many $\alpha^a$
arcs are occupied by each generator for $\HH$. 

Indeed, let $g=\#\bbeta$, and $h=\#\balpha$. By the construction of a
compatible manifold, $R_-(\Gamma)$ is diffeomorphic to $\Sigma$ after
surgery at each $\alpha^c$ circle, while $R_+(\Gamma)$ is diffeomorphic to
$\Sigma$ after surgery at each $\beta$ circle. But surgery on a surface at a
closed curve increases its Euler characteristic by 2. Therefore,
the manifold is $(g-h)$--unbalanced.
\end{rmk}

\subsection{Homology classes}
Later we will look at pseudoholomorphic curves that go ``between'' two
generators. We can classify such curves into homology classes as follows.

\begin{defn}
\label{def:homology_classes}
For given generators $\xgen$ and $\ygen$, the
\emph{homology classes} from $\xgen$ to $\ygen$, denoted by $\pi_2(\xgen,\ygen)$,
be the elements of
\begin{multline*}
H_2(\Sigma\times[0,1]\times[0,1],(\balpha\times\{1\}\times[0,1])\cup
(\bbeta\times\{0\}\times[0,1])\\
\cup(\Zmid\times[0,1]\times[0,1])
\cup(\xgen\times[0,1]\times\{0\})\cup(\ygen\times[0,1]\times\{1\})),
\end{multline*}
which map to the relative fundamental class of $\xgen\times[0,1]\cup\ygen\times[0,1]$
under the boundary homomorphism, and collapsing the rest of the boundary.
\end{defn}

There is
a product map $*\co \pi_2(\xgen,\ygen)\times\pi_2(\ygen,\zgen)\to\pi_2(\xgen,\zgen)$
given by concatenation at $\ygen\times[0,1]$. This product turns
$\pi_2(\xgen,\xgen)$ into a group, called the group of \emph{periodic classes at $\xgen$}.

\begin{defn}
The \emph{domain} of a homology class $B\in\pi_2(\xgen,\ygen)$ is the image
\begin{equation*}
[B]=\psig{}_*(B)\in H_2(\Sigma,\Zmid\cup\balpha\cup\bbeta).
\end{equation*}
We interpret it as a linear combination of regions in $\Sigma\setminus(\balpha\cup\bbeta)$.
We call the coefficient of such a region in a domain $D$ its \emph{multiplicity}.

The domain of a periodic class is a \emph{periodic domain}.
\end{defn}

We can split the boundary $\del [B]$ into pieces $\del^\del B\subset\Zmid,
\del^\alpha B\subset\balpha$, and $\del^\beta B\subset\bbeta$.
We can interpret $\del^\del B$ as an element of $H_1(\Zmid,\amid)$.

\begin{defn}
The set of \emph{provincial homology classes} from $\xgen$ to $\ygen$ is the
kernel $\pi_2^\del(\xgen,\ygen)$ of $\del^\del\co \pi_2(\xgen,\ygen)\to H_1(\Zmid,\amid)$.

The periodic classes in $\pi_2^\del(\xgen,\xgen)$ are \emph{provincial periodic class}
and their domains are \emph{provincial periodic domains}.
\end{defn}

The groups of periodic classes reduce to the much simpler forms
\begin{align*}
\pi_2(\xgen,\xgen)&\cong H_2(\Sigma\times[0,1],\Zmid\times[0,1]\cup\balpha\times\{0\}\cup\bbeta\times\{1\}),\\
\pi_2^{\del}(\xgen,\xgen)&\cong H_2(\Sigma\times[0,1],\balpha^{c}\times\{0\}\cup\bbeta\times\{1\}).
\end{align*}

Since 2--handles and half-handles are contractible, these groups are isomorphic to $H_2(Y,F)$ and $H_2(Y)$, respectively,
by attaching the cores of the handles.

\subsection{Admissibility}
As usual in Heegaard Floer homology, in order to get well defined invariants, we
need to impose certain admissibility conditions on the Heegaard diagrams. Like in
\cite{LOT:pairing}, there are two different notions of admissibility.

\begin{defn}
A bordered sutured Heegaard diagram is called \emph{admissible} if every nonzero periodic
domain has both positive and negative multiplicities.

A diagram is called \emph{provincially admissible} if every nonzero provincial periodic
domain has both positive and negative multiplicities.
\end{defn}

\begin{prop}
\label{prop:admissibility}
Any bordered sutured Heegaard diagram can be made admissible by performing
isotopy on $\bbeta$.
\end{prop}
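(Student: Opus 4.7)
The plan is to adapt the classical winding / finger-move argument from sutured and bordered Heegaard Floer homology to the bordered sutured setting. First I would observe that the group of periodic classes $\pi_2(\xgen,\xgen)$ is finitely generated: as just noted, it is isomorphic to $H_2(Y,F)$, which is finitely generated since $Y$ is compact. Fix a basis $B_1,\ldots,B_n$ of this group, with corresponding periodic domains $D_1,\ldots,D_n$ on $\Sigma$. Since any nonzero periodic class is a $\ZZ$-linear combination of the $B_i$, it suffices to produce, by isotopy of $\bbeta$, a diagram in which every nonzero such linear combination yields a domain with both positive and negative multiplicities.

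Next, for each $D_i$ I would construct a ``dual'' embedded simple closed curve $\gamma_i\subset\sigint$ disjoint from $\balpha\cup\Zmid$ whose algebraic intersection with $\del^{\beta}D_j$ is nonzero for $j=i$ and zero for $j\neq i$. Such a curve exists because the classes $[\del^{\beta}D_1],\ldots,[\del^{\beta}D_n]$ in $H_1(\Sigma,\balpha\cup\Zmid)$ are linearly independent (otherwise some nontrivial combination would be a periodic domain whose $\beta$-boundary is null-homologous in this group, contradicting the injectivity of the map from periodic domains to $H_2(Y,F)$), and the Poincar\'e--Lefschetz dual classes can be represented by embedded curves avoiding $\balpha\cup\Zmid$. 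Geometrically, $\gamma_i$ passes through regions of $\Sigma\setminus(\balpha\cup\bbeta)$ on which the multiplicities of $D_i$ jump while those of $D_j$ do not.

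Now I would perform the winding step: for each $i$, choose a positive integer $N_i$ and isotope a $\beta$ circle meeting $\gamma_i$ by pushing fingers along $\gamma_i$ a total of $N_i$ times. In the new diagram $D_i$ acquires a pair of thin strips near $\gamma_i$ on which its multiplicities are approximately $+N_i$ and $-N_i$, while $D_j$ for $j\neq i$ is essentially unchanged near $\gamma_i$. For a general periodic domain $D=\sum c_iD_i$ with $c_{i_0}\neq 0$, the multiplicities in these two strips near $\gamma_{i_0}$ differ by roughly $2|c_{i_0}|N_{i_0}$ from the contributions of the remaining $D_j$'s. By choosing $N_1\ll N_2\ll\cdots\ll N_n$ (or by arranging the $\gamma_i$'s to be supported in pairwise disjoint neighborhoods), one guarantees that for every nonzero integer vector $(c_1,\ldots,c_n)$ these strips produce both positive and negative multiplicities in $D$.

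The main obstacle is the bookkeeping for linear combinations: I must rule out that after winding some nontrivial $\ZZ$-combination of basis domains happens to have one sign on every region. The standard resolution, which I would use here, is the domination argument sketched above — winding along one dual curve much more aggressively than along the next forces the ``lowest'' coefficient in any combination to control the sign structure near $\gamma_{i_0}$. Everything else in the construction is routine: the isotopies are local, they preserve the embedding $\psi$ of $G(\Z)$ (since they take place in $\sigint$, away from $\balpha$ and $\Zmid$), and they do not affect homological linear independence, so the output is still a bordered sutured Heegaard diagram compatible with the same $(Y,\Gamma,\Z,\phi)$.
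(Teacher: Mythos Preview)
Your approach has a genuine gap in the construction of the dual curves $\gamma_i$. The claim that the classes $[\del^{\beta}D_i]$ are linearly independent in $H_1(\Sigma,\balpha\cup\Zmid)$ is false: each such class vanishes, since $\del^{\beta}D_i$ is bounded modulo $\balpha\cup\Zmid$ by the domain $D_i$ itself. (The isomorphism $\pi_2(\xgen,\xgen)\cong H_2(Y,F)$ says nothing about injectivity of $\del^{\beta}$.) Reinterpreting the claim in $H_1(\Sigma)$ does not save the argument either, because closed curves on a surface with boundary detect homology classes only via the degenerate intersection form, whose radical contains the image of $H_1(\del\Sigma)$. Concretely: take $\Z=\varnothing$, let $\Sigma$ be an annulus, and let $\balpha$ and $\bbeta$ each consist of a single circle isotopic to the core. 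There is a nonzero periodic domain $D$ with $\del^{\beta}D=\pm\beta_1$, yet every closed curve in $\Sigma$ has zero algebraic intersection with $\beta_1$, so no amount of winding along closed curves produces regions of both signs.

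The paper fixes this by working with properly embedded \emph{arcs} rather than closed curves, and by dualizing the whole setup. Instead of a basis of periodic domains with dual curves, it chooses a basis $a_1,\ldots,a_m$ of $H_1(\Sigma,\del\Sigma\setminus\Zmid)$ represented by pairwise disjoint arcs with endpoints on $\del\Sigma\setminus\Zmid$, and performs finger moves of $\bbeta$ along each $a_i$ and along an oppositely oriented pushoff $b_i$. This creates, for each $i$, a pair of regions in which any periodic domain $D$ has multiplicities $\pm(a_i\cdot D)$. One then argues directly that every nonzero $D$ pairs nontrivially with some $a_i$: pick a point $p$ in a region of nonzero multiplicity and connect $p$ to $\del\Sigma\setminus\Zmid$ once in the complement of $\balpha\cup\Zmid$ and once in the complement of $\bbeta$ (both possible by homological linear independence); the resulting relative cycle lies in $H_1(\Sigma,\del\Sigma\setminus\Zmid)$ and pairs nontrivially with $D$. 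Using arcs is essential here---in the annulus example the needed detector is a radial arc---and as a bonus the domination bookkeeping disappears entirely.
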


\begin{cor}Any bordered sutured 3--manifold has an admissible diagram, and
any two admissible diagrams are connected, using Heegaard moves, through
admissible diagrams.

The analogous statement holds for provincially admissible diagrams.
\end{cor}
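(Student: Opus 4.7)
The plan is to adapt the standard winding argument for admissibility (originally due to Ozsv\'ath-Szab\'o, generalized to the bordered setting in \cite{LOT:pairing}) to the bordered sutured context. Isotopies of $\bbeta$, in particular finger moves and windings along short arcs, preserve the topology of the diagram while systematically perturbing the region multiplicities of periodic domains; the goal is to arrange that every nonzero periodic domain acquires both positive and negative multiplicities.

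First I would observe that the group of periodic domains is a finitely generated free abelian group, fix a basis $P_1,\ldots,P_N$, and examine the shape of $\del P_i$. A periodic domain $D$ has boundary supported on full circles in $\balpha^c$, full circles in $\bbeta$, and arcs of $\Zmid$: no $\alpha^a$-arc can appear, since the $\alpha^a$-multiplicities at the top and bottom copies of $\xgen$ cancel in the definition of $\pi_2(\xgen,\xgen)$. Homological linear independence rules out $[\Sigma]$ as a periodic class (and more generally forces any nontrivial $P_i$ to have nonzero boundary, since $\balpha$ and $\bbeta$ are linearly independent in $H_1(\Sigma,\Zmid)$), so each nonzero $P_i$ has at least one boundary component across which its multiplicity jumps.

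Next, for each $i$ I would pick such a boundary component and perform $n_i$ successive windings of a suitably chosen curve in $\bbeta$ across it, localized in a small disk. A single winding creates a pair of adjacent regions whose multiplicities in $P_i$ differ from the ambient value by the jump across the wound curve, so $n_i$ windings produce a chain of regions whose $P_i$-multiplicities take $n_i+1$ consecutive integer values. For any nonzero combination $P=\sum c_i P_i$, some $c_i\neq 0$, and the $P$-multiplicities in the chain near the $i$-th winding form an arithmetic progression with common difference $c_i$; choosing each $n_i$ large enough, $P$ attains both positive and negative multiplicities.

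The main obstacle is ensuring that the windings can be chosen to act independently on the different basis elements, so that the effect on $P_i$ is not cancelled by simultaneous effects on $P_j$ with $j\neq i$. This is handled by a Gram-Schmidt style procedure: process the $P_i$ one at a time, at each stage selecting a winding region detected by $P_i$ and, by a preliminary small isotopy, arranging that it is disjoint from the winding regions already constructed and lies where all previously-processed $P_j$'s have locally constant multiplicity. Since every nonzero periodic domain has multiple boundary segments available, and since the winding can be moved along any such segment by a further isotopy, such a choice is always possible. The provincial version is proved identically, restricting to the subgroup $\pi_2^\del(\xgen,\xgen)$ of provincial periodic domains; these have vanishing $\del^\del$-boundary, so all windings can be localized in $\sigint$.
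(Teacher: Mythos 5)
Your winding argument's central step does not close. You arrange that near the $i$-th winding the previously processed $P_j$ are \emph{locally constant}, then claim that for any $P=\sum c_i P_i$ with $c_i\neq 0$ the arithmetic progression of $P$-multiplicities near the $i$-th winding straddles zero once $n_i$ is large enough. But the $n_i$ must be fixed before $P$ is chosen, while the \emph{base} value of that progression depends on the $c_j$'s and is unbounded: you have only made the other $P_j$'s constant there, not zero. Taking $P=P_i+KP_j$ for large $K$ shifts the entire progression by $K$ times a fixed (possibly nonzero) constant, so no finite $n_i$ guarantees both signs. The classical winding proofs cure this by \emph{anchoring} the winding to a region where every periodic domain vanishes; here that means the regions meeting $\del\Sigma\setminus\Zmid$, and without such an anchor the argument fails. (A secondary error: periodic domains \emph{can} have $\alpha^a$-boundary. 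Only provincial periodic domains cannot; when $\del^\del D\neq 0$, the $\Zmid$-part of $\del D$ must terminate on $\balpha^a$-arcs, forcing $\del^\alpha D$ to include $\alpha^a$-arcs.)

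The paper works dually, which avoids both the anchoring problem and the Gram--Schmidt bookkeeping at once. It takes a basis of disjoint properly embedded arcs $a_1,\ldots,a_m$ of $H_1(\Sigma,\del\Sigma\setminus\Zmid)$, with endpoints on $\del\Sigma\setminus\Zmid$, and performs a finger move of $\bbeta$ along each $a_i$ and along an oppositely-oriented pushoff $b_i$. Because the fingers start at zero-multiplicity boundary regions, the region at each finger-tip acquires multiplicity exactly $\pm(D\cdot a_i)$ for \emph{every} periodic domain $D$ simultaneously; no large $n_i$, no dependence on the coefficient vector $(c_i)$, and no ordering to maintain. For any nonzero $D$, a path from a nonzero region to $\del\Sigma\setminus\Zmid$ in the complement of $\balpha\cup\Zmid$, joined with one in the complement of $\bbeta$, gives a class in $H_1(\Sigma,\del\Sigma\setminus\Zmid)$ pairing nontrivially with $D$, hence some $a_i$ does, producing both signs. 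Finally, note that your proposal stops at the existence statement (Proposition~\ref{prop:admissibility}); the corollary also asserts that any two admissible diagrams are connected \emph{through} admissible diagrams, which the paper obtains by applying that isotopy result consistently along an arbitrary sequence of Heegaard moves.
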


Since provincially admissible diagrams are a subset of admissible diagrams,
the second part of the argument trivially follows from the first. The first part,
on the other hand, follows from proposition~\ref{prop:admissibility}, by taking
any sequence of diagrams connected by Heegaard moves, and making all of them
admissible, through a consistent set of isotopies.

\begin{proof}[Proof of proposition~\ref{prop:admissibility}]
The proof is analogous to those for bordered manifolds and sutured manifolds.
We use the isomorphism from the previous section between periodic domains and $H_2(Y,F)$.

Notice that $H_1(\Sigma,\del\Sigma\setminus \Zmid)$ maps onto $H_1(Y,\del Y\setminus F)$, and therefore pairs
with $H_2(Y,F)$ and periodic domains. Find a basis for $H_1(\Sigma,\del\Sigma\setminus \Zmid)$,
represented by pairwise disjoint properly
embedded arcs $a_1,\ldots,a_m$. We can always do that since every component of $\Sigma$ hits
$\del\Sigma\setminus\Zmid$. Cutting $\Sigma$ along such arcs will give a collection of discs,
each of which contains exactly one component of $\Zmid$ in its boundary.

We can do finger moves of $\bbeta$ along each $a_i$, and along a push off $b_i$ of $a_i$, in the opposite direction.
This ensures that there are regions, for which the multiplicities of any periodic domain $D$ are equal
to its intersection numbers with $a_i$ and $b_i$, which have opposite signs.
Suppose $D$ has a nonzero region, and pick a point $p$ in such a region. By homological linear independence $p$ can be
connected to $\del\Sigma\setminus\Zmid$ in the complement of $\balpha\cup\Zmid$, as well as in
the complement of $\bbeta$. Connecting these paths gives a cycle in $H_1(\Sigma,\del\Sigma\setminus\Zmid)$ ,
which pairs non trivially with $D$. Since the $a_i$ span this group, at least one of them pairs non trivially
with $D$, which means $D$ has negative multiplicity in some region.
\end{proof}

\subsection{\texorpdfstring{$\spinc$--structures}{Spinc-structures}}

Recall that a \emph{$\spinc$--structure} on an $n$--manifold is a lift of its
principal $SO(n)$--bundle to a $\spinc(n)$--bundle. For 3--manifolds there is a
useful reformulation due to Turaev (see \cite{Tur:spinc}).
In this setting, a $\spinc$--structure $\s$ on the 3--manifold $Y$
is a choice of a non vanishing vector
field $v$ on $Y$, up to \emph{homology}. We say that
two vector fields are \emph{homologous} if they are homotopic outside of a finite collection of disjoint
open balls.

Given a trivialization of $TY$, a vector field $v$ on $Y$ can be thought of
as a map $v\co Y\to S^2$. This gives an identification of the set $\spinc(Y)$
of all $\spinc$--structures with $H^2(Y)$ via $\s(v)\mapsto v^*([S^2])$.
The identification depends on the trivialization of $TY$ by an overall shift
by a 2--torsion element. This means that $\spinc(Y)$ is naturally an
affine space over $H^2(Y)$.

Given a fixed vector field $v_0$ on a subspace $X\subset Y$, we can define
the space of \emph{relative $\spinc$--structures} $\spinc(Y,X,v_0)$, or just
$\spinc(Y,X)$ in the following way. A relative $\spinc$--structure is a vector
field $v$ on $Y$, such that $v|_X=v_0$, considered up to homology in
$Y\setminus X$. If $\spinc(Y,X,v_0)$ is nonempty, it is an affine space over
$H^2(Y,X)$.

To a $\spinc$--structure $\s$ in $\spinc(X)$ or $\spinc(Y,X,v_0)$, represented by a vector field $v$,
we can associate its \emph{Chern class} $c_1(\s)$, which is just the first Chern class
$c_1(v^{\bot})$ of the orthogonal complement subbundle $v^{\bot}\subset TY$.

With a generator in a Heegaard diagram
we will associate two types of $\spinc$--structures.
Let $\xgen\in\G(\HH)$ be a generator. Fix a boundary-compatible Morse function
$f$ (and appropriate metric). The vector field $\nabla f$ vanishes only at
the critical points of $f$. Each intersection point in $\xgen$ lies on
a gradient trajectory connecting an index--1 to an index--2 critical point.
If we cut out a neighborhood of that trajectory, we can modify the vector
field inside to one that is non vanishing (the two critical points have opposite
parity). For any unoccupied $\alpha^a$ arc, the corresponding critical point is
in $F\subset\del Y$. We can therefore modify the vector field in its 
neighborhood to be non vanishing. Call the resulting vector field $v(\xgen)$.

Notice that $v_0=v(\xgen)|_{\del Y\setminus F}=
\nabla f|_{\del Y\setminus F}$ does not depend on $\xgen$, while
$v(\xgen)|_{\del Y}=v_{o(\xgen)}$ only depends on $o(\xgen)$.
Moreover, under a change of the Morse function
or metric (even for different diagrams), $v_0$ and $v_{o(\xgen)}$
can only vary inside a contractible set. Therefore the corresponding
sets $\spinc(Y,\del Y\setminus F,v_0)$ and
$\spinc(Y,\del Y,v_{o(\xgen)})$, respectively, are canonically identified.
Thus we can talk about $\spinc(Y,\del Y\setminus F)$ and
$\spinc(Y,\del Y,o)$, where $o\subset\{1,\ldots,k\}$, as invariants of the
underlying bordered sutured manifold. This justifies the following definition.

\begin{defn}
Let $\s(\xgen)$ and $\s^{\rel}(\xgen)$ be the relative $\spinc$--structures
induced by $v(\xgen)$ in $\spinc(Y,\del Y\setminus F)$ and
$\spinc(Y,\del Y,o(\xgen))$, respectively.
\end{defn}

We can separate the generators into $\spinc$ classes.
Let
\begin{align*}
\G(\HH,\s) & = \{\xgen\in\G(\HH):\s(\xgen))=\s\},\\
\G(\HH,o,\s^{\rel}) & = \{\xgen\in\G(\HH):o(\xgen)=o,\s^{\rel}(\xgen)=\s^{\rel} \}.
\end{align*}

The fact that the invariants split by $\spinc$ structures is due to the following
proposition.

\begin{prop}
\label{prop:spinc_diff}
The set $\pi_2(\xgen,\ygen)$ is nonempty if and only if $\s(\xgen)=\s(\ygen)$.
The set $\pi_2^\del(\xgen,\ygen)$ is nonempty if and only if $o(\xgen)=o(\ygen)$ and
$\s^{\rel}(\xgen)=\s^{\rel}(\ygen)$.
\end{prop}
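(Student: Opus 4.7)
The plan is to reduce both statements to a single Morse-theoretic calculation: for every pair of generators $\xgen,\ygen$ I want to exhibit a 1-cycle $\epsilon(\xgen,\ygen)$ in $Y$ which simultaneously (a) represents the Poincar\'e dual of $\s(\xgen)-\s(\ygen)$ in the appropriate relative cohomology group, and (b) bounds precisely when $\pi_2(\xgen,\ygen)$ is nonempty.

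First I would make the 1-cycle concrete. Fix a boundary compatible Morse function $f$. For each generator $\xgen$, the vector field $v(\xgen)$ is obtained from $\nabla f$ by desingularizing along a 1-complex $\gamma(\xgen)\subset Y$ built out of (i) the gradient flowline through each intersection point $x_i\in\xgen$, which connects an interior index--1 to an interior index--2 critical point, and (ii) for each unoccupied $\alpha^a$ arc, a short flowline in $F$ from the corresponding boundary index--1 critical point out to $R_-$. A Turaev-style calculation exactly as in \cite{LOT:pairing} then shows that $\s(\xgen)-\s(\ygen)\in H^2(Y,\del Y\setminus F)$ is Poincar\'e dual to $[\gamma(\xgen)-\gamma(\ygen)]\in H_1(Y,\del Y\setminus F)$, while $\s^{\rel}(\xgen)-\s^{\rel}(\ygen)\in H^2(Y,\del Y)$ is Poincar\'e dual to the analogous class in $H_1(Y,\del Y)$ (which is defined only when $o(\xgen)=o(\ygen)$, so that the boundary tails in $F$ cancel).

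Next I would identify the obstruction to nonemptiness of $\pi_2(\xgen,\ygen)$ with the same class. Using the isomorphisms $\pi_2(\xgen,\xgen)\cong H_2(Y,F)$ and $\pi_2^{\del}(\xgen,\xgen)\cong H_2(Y)$ already established in the excerpt, the set $\pi_2(\xgen,\ygen)$ is a torsor over $H_2(Y,F)$, and nonemptiness is controlled by a class in $H_1(Y,F)\cong H_1(Y,\del Y\setminus F)$. Given $B\in\pi_2(\xgen,\ygen)$ with domain $[B]\subset\Sigma$, I would attach the cores of the 2--handles along $\balpha^c\times\{1\}$ and $\bbeta\times\{2\}$ and the cores of the half 2--handles along $\balpha^a\times\{1\}$ to $[B]\times\{3/2\}$, producing a 2--chain in $Y$ whose boundary is exactly $\gamma(\xgen)-\gamma(\ygen)$ (modulo $\del Y\setminus F$). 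Conversely, given a 2--chain with this boundary, I would push it onto $\Sigma$ by gradient flow to recover a domain, and hence a homology class. This establishes both directions of the first statement. For the provincial statement, the same construction applied to a class with $\del^{\del}B=0$ produces a 2--chain that avoids $F$, and vice versa, while the condition $o(\xgen)=o(\ygen)$ is automatically forced by $\del^{\del}B=0\in H_1(\Zmid,\amid)$ together with the matching $M$.

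The main obstacle is bookkeeping: one must carefully distinguish which portions of $\gamma(\xgen)$ lie on $\del Y\setminus F$ versus strictly inside $Y$, and correspondingly which relative homology groups carry the obstructions. In the provincial case the tails of $\gamma(\xgen)$ coming from unoccupied $\alpha^a$ arcs cannot be ignored, so $o(\xgen)=o(\ygen)$ emerges as a genuine prerequisite before the $\spinc$ comparison even makes sense; in the non-provincial case these tails are absorbed into $\del Y\setminus F$ and drop out. Once the homological pairings are set up correctly, the Morse-theoretic and domain-theoretic pictures agree term by term and the proposition follows.
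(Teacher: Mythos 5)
Your overall strategy coincides with the paper's: both arguments package the difference $\s(\xgen)-\s(\ygen)$ as the Poincar\'e--Lefschetz dual of a $1$--cycle $\epsilon(\xgen,\ygen)$ built from gradient trajectories (equivalently, $1$--chains $a\subset\balpha$, $b\subset\bbeta$ with $\del a=\ygen-\xgen+\mathbf{z}$, $\del b=\ygen-\xgen$), and then observe that capping a domain $[B]$ off with the $2$--handle and half--handle cores produces a $2$--chain bounding exactly this $1$--cycle, so $\pi_2(\xgen,\ygen)\neq\varnothing$ iff $\epsilon(\xgen,\ygen)$ vanishes. Your remark that $\del^\del B=0$ forces $o(\xgen)=o(\ygen)$ is also correct and matches the paper's observation that one can choose $\mathbf{z}=0$ exactly in that case.

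However, the duality bookkeeping as written is off, and this is precisely the ``main obstacle'' you flag. Lefschetz duality on a compact $3$--manifold with boundary pairs $H^2(Y,A)$ with $H_1(Y,B)$ where $A$ and $B$ are \emph{complementary} regions of $\del Y$; it does not give $H^2(Y,\del Y\setminus F)\cong H_1(Y,\del Y\setminus F)$. The cycle $\epsilon(\xgen,\ygen)$ lives in $H_1(Y,F)$ (the $\mathbf{z}$--endpoints, i.e.\ the tails at the unmatched $\alpha^a$ arcs, terminate in $\Zmid\subset F$, not in $R_-\subset\del Y\setminus F$), and $\PD$ sends $H_1(Y,F)$ to $H^2(Y,\del Y\setminus F)$, which is where $\s(\xgen)-\s(\ygen)$ naturally lives. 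Your intermediate claim $H_1(Y,F)\cong H_1(Y,\del Y\setminus F)$ is false in general (e.g.\ $Y$ a solid torus, $F$ a disc: $H_1(Y,F)\cong\ZZ$ but $H_1(Y,\del Y\setminus F)=0$). Once you route the tails of $\gamma(\xgen)$ into $F$ rather than $R_-$ and replace the two offending isomorphisms by the correct Lefschetz pairing $H^2(Y,\del Y\setminus F)\cong H_1(Y,F)$ (and $H^2(Y,\del Y)\cong H_1(Y)$ in the provincial case), the argument closes up and agrees with the paper's proof.
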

\begin{proof}
This proof is, again, analogous to those for bordered and for sutured manifolds.

To each pair of generators $\xgen,\ygen\in\G(\HH)$, we associate a homology class
$\epsilon(\xgen,\ygen)\in H_1(Y,F)$. We do that by picking 1--chains
$a\subset\balpha$, and $b\subset\bbeta$, such that $\del a=\ygen-\xgen+\mathbf{z}$, where
$\mathbf{z}$ is a 0--chain in $\Zmid$, 
and $\del b=\ygen-\xgen$, and
setting $\epsilon(\xgen,\ygen)=[a-b]$. We can interpret $a-b$ as a set of properly
embedded arcs and circles in $(Y,F)$ containing all critical points.

The vector fields $v(\xgen)$ and $v(\ygen)$ differ only in a neighborhood of $a-b$. One
can see that in fact $\s(\ygen)-\s(\xgen)=\PD([a-b])=\PD(\epsilon(\xgen,\ygen))$. On the
other hand, we can interpret $\epsilon(\xgen,\ygen)$ as an element of
\begin{equation*}
H_1(\Sigma\times[0,1],\balpha\times\{0\}\cup\bbeta\times\{1\}\cup\Zmid\times[0,1])\cong\ H_1(Y,F).
\end{equation*}
In particular, $\pi_2(\xgen,\ygen)$ is nonempty, if and only if there is a 2--chain in 
$\Sigma\times[0,1]$ with boundary which is a representative for $\epsilon(\xgen,\ygen)$ in the relative group above.
This is equivalent to $\epsilon(\xgen,\ygen)=0\in H_1(Y,F)$. This proves the first part
of the proposition.

The second one follows analogously, noticing that we can pick a path $a-b$, such that $a\subset\balpha$,
if and only if $o(\xgen)=o(\ygen)$, and in that case
$\pi_2^{\del}(\xgen,\ygen)$ is nonempty if and only if $\epsilon^{\rel}(\xgen,\ygen)=[a-b]=0\in H_1(Y)$, while
$\s^{\rel}(\ygen)-\s^{\rel}(\xgen)=PD([a-b])\in H^2(Y,\del Y).$
\end{proof}

\subsection{Gluing}
We can glue bordered sutured diagrams, similar to the way we glue bordered
sutured manifolds.

Let $\HH_1=(\Sigma_1,\balpha_1,\bbeta_1)$ and $\HH_2=(\Sigma_2,\balpha_2,\bbeta_2)$
be bordered sutured diagrams for $(Y_1,\Gamma_1,\Z,\phi_1)$ and
$(Y_2,\Gamma_2,-\Z,\phi_2)$, respectively. We can identify $\Zmid$ with its
embeddings in
$\del\Sigma_1$ and $\del\Sigma_2$ (one is orientation preserving, the other is
orientation reversing).

Let $\Sigma=\Sigma_1\cup_{\Zmid}\Sigma_2$. Each $\alpha^a$ arc in $\HH_1$
matches up with the corresponding one in $\HH_2$ to form a closed curve in
$\Sigma$. Let $\balpha$ denote the union of all $\alpha^c$ circles in $\HH_1$
and $\HH_2$, together with the newly formed circles from all $\alpha^a$ arcs.
Finally, let $\bbeta=\bbeta_1\cup\bbeta_2$.

\begin{prop}
The diagram $\HH=(\Sigma,\balpha,\bbeta)$ is compatible with the sutured manifold
$Y_1\cup_{F(\Z)}Y_2$, as defined in 
section~\ref{sec:glue_mflds}.
\end{prop}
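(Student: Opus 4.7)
The plan is to identify the 3--manifold built from $\HH$ via the handle construction used in the proof of the correspondence between compatible diagrams and boundary compatible Morse functions, with the glued manifold described in section~\ref{sec:glue_mflds}. Since the construction from a diagram is explicit, I would build both manifolds in parallel and exhibit the canonical identification along with the sutures and the (now interior) surface $F(\Z)$.

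First, using $\Sigma=\Sigma_1\cup_{\Zmid}\Sigma_2$, decompose
\[
\Sigma\times[1,2]=(\Sigma_1\times[1,2])\cup_{\Zmid\times[1,2]}(\Sigma_2\times[1,2]).
\]
The piece $\Zmid\times[1,2]$ together with the half-handle middles $(\del D^2\setminus a)\times[0,1]$ forms precisely the subsurface $F(\Z)\subset\del Y_i$ in the construction of each bordered sutured manifold. So the initial slabs of $Y_1$ and $Y_2$ glue along $F(\pm\Z)$ to give the initial slab of $Y$, minus the half-handle middles, which become interior after the gluing.

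Next, I would sort the handles. Each $\alpha^c$ circle and each $\beta$ circle lies entirely in $\Sigma_1$ or in $\Sigma_2$, so the corresponding 2--handles appear in exactly one of $Y_1,Y_2$, matching the decomposition $\balpha^c=\balpha^c_1\cup\balpha^c_2$ and $\bbeta=\bbeta_1\cup\bbeta_2$. The key local model is the case of a pair of matched $\alpha^a_i$ arcs: the half 2--handle in $Y_1$ attached along $\alpha^a_i\subset\Sigma_1$ and the half 2--handle in $Y_2$ attached along $\alpha^a_i\subset\Sigma_2$ are glued along their middles $(\del D^2\setminus a)\times[0,1]$, producing a 3--ball whose attaching region is $(a_1\cup a_2)\times[0,1]\cong S^1\times[0,1]$. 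This is exactly the full 2--handle attached in the construction of $Y$ from $\HH$ along the newly formed circle in $\balpha\setminus(\balpha^c_1\cup\balpha^c_2)$.

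Finally, I would verify that the sutures match. Each $Y_i$ built from $\HH_i$ has sutures $\del\Sigma_i\times\{3/2\}$, of which the portion $\Zmid\times\{3/2\}$ lies in $F(\pm\Z)$. After gluing along $F$,
\[
(\Gamma_1\setminus F(\Z))\cup(\Gamma_2\setminus F(-\Z))
=(\del\Sigma_1\setminus\Zmid)\times\{3/2\}\cup(\del\Sigma_2\setminus\Zmid)\times\{3/2\}
=\del\Sigma\times\{3/2\},
\]
which is precisely the suture set produced by the construction applied to $\HH$. The main technical point is the local half-handle gluing above; once that is verified, the rest is a direct matching of the handle decompositions and of the associated boundary compatible Morse functions under the canonical identification $F(\Z)\cong F(-\Z)$ used for the gluing.
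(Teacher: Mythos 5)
Your proof is correct and follows essentially the same approach as the paper: build both $Y_1\cup_{F(\Z)} Y_2$ and the manifold associated to $\HH$ from the explicit handle decompositions, observing that the base slabs glue along $\Zmid\times[1,2]$ and the matched half 2--handles combine into full 2--handles along the newly closed $\alpha$ circles. You spell out the local half-handle gluing and the suture matching in more detail than the paper does, but the underlying argument is the same.
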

\begin{proof}
The manifolds $Y_1$ and $Y_2$ are obtained from $\Sigma_1\times[1,2]$ and
$\Sigma_2\times[1,2]$, respectively, by attaching 2--handles (corresponding
to $\alpha^c$ and $\beta$ circles), and halves of 2--handles (corresponding
to $\alpha^a$ arcs). The surface of gluing $F$ can be identified with the union
of $\Zmid\times[1,2]$ with the middles of the half-handles. Thus, we get
a base of $(\Sigma_1\cup_{\Zmid}\Sigma_2)\times[1,2]$, with the combined
2--handles from each side. In addition the half-handles glue in pairs to form
actual 2--handles, each of which is glued along matching $\alpha^a$ arcs.
\end{proof}

Similarly, we can do partial gluing. If we have manifolds
$(Y_1,\Gamma_1,\Z_0\cup\Z_1,\phi_1)$ and $(Y_2,\Gamma_2,-\Z_0\cup\Z_2,\phi_2)$
with diagrams $\HH_1$ and $\HH_2$, respectively,
$\HH_1\cup_{\Zmid_0}\HH_2$ is a diagram compatible with the bordered sutured
manifold $Y_1\cup_{F(\Z_0)}Y_2$.

\subsection{Nice diagrams}
As with the other types of Heegaard Floer invariants, the invariants become a lot
easier to compute (at least conceptually) if we work in the category of
\emph{nice diagrams}, developed originally by Sarkar and Wang in 
\cite{SW:nice_diagrams}.

\begin{defn}
A bordered sutured diagram $\HH=(\Sigma,\balpha,\bbeta,\Z,\psi)$ is \emph{nice}
if every region of $\Sigma\setminus(\balpha\cup\bbeta)$ is either adjacent to
$\del\Sigma\setminus\Zmid$---in which case we call it a \emph{boundary region}---or
is a topological disc with at most 4 corners.
\end{defn}

\begin{prop}
Any bordered sutured diagram can be made nice by isotopies of $\bbeta$, handleslides
among the circles in $\bbeta$, and stabilizations.
\end{prop}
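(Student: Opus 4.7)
The plan is to adapt the Sarkar–Wang algorithm from \cite{SW:nice_diagrams}, in the form it takes for bordered diagrams in \cite{LOT:pairing}, to the bordered sutured setting. The key conceptual point is that the portion $\del\Sigma\setminus\Zmid$ of the boundary plays the role that the basepoint region plays in the closed case: any region adjacent to it is already ``good'' by definition, so we only need to simplify regions disjoint from $\del\Sigma\setminus\Zmid$. Homological linear independence guarantees that every component of $\Sigma\setminus\balpha$ and every component of $\Sigma\setminus\bbeta$ meets $\del\Sigma\setminus\Zmid$, which is exactly what is needed to run finger moves of $\bbeta$ toward the boundary without getting stuck.

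Concretely, I would first assign to each region $R$ of $\Sigma\setminus(\balpha\cup\bbeta)$ a distance $d(R)$ equal to the minimum number of $\balpha\cup\bbeta$ arcs one must cross in $\Sigma$ to travel from $R$ to a boundary region. Then define the complexity of the diagram as in Sarkar–Wang, namely a lexicographic pair recording the total non-disc region count together with the sum, over bad disc regions, of $(d(R),\,\text{corners}(R)-4)$. A ``bad'' region is one that is either not a topological disc or a disc with more than $4$ corners, and which is not a boundary region. One now runs the Sarkar–Wang procedure: pick a bad region of minimal distance, and push an adjacent $\bbeta$ arc across an $\balpha$ arc toward the boundary by a finger move, possibly preceded by a handleslide among the $\bbeta$ circles to free up the configuration. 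Each such move strictly decreases the complexity.

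Two small adaptations are required. First, one must only move $\bbeta$; the $\balpha^{a}$ arcs have endpoints pinned on $\Zmid$ and the parametrization of the boundary must be preserved, but since all $\bbeta$ curves are closed and lie in $\sigint$, the finger moves and handleslides stay away from $\phi(G(\Z))$ and cause no trouble. Second, when a non-disc interior region is encountered, one stabilizes once to split it into a region containing a new handle and a remainder, after which the inductive step reduces its complexity; this is exactly where the stabilization move enters the list.

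The main obstacle is the termination argument, which is already the hardest step in the Sarkar–Wang proof: one must check that the complexity really does drop at each step and that the procedure never creates new bad regions of smaller distance. The verification is identical to the one in \cite{SW:nice_diagrams} and \cite{LOT:pairing}, with the single observation that a finger move of $\bbeta$ ending against $\del\Sigma\setminus\Zmid$ terminates in a boundary region, which is automatically good. Granting this, the algorithm halts after finitely many steps at a diagram all of whose interior regions are discs with at most four corners, which is by definition nice.
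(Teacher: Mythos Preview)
Your overall plan is right---this is a Sarkar--Wang argument---but you have skipped a preprocessing step that the paper treats as essential, and without it your termination claim is not justified.  The boundary $\del\Sigma$ has two kinds of pieces: $\del\Sigma\setminus\Zmid$, which you correctly identify as the ``basepoint'' part, and $\Zmid$ itself.  Regions adjacent to a Reeb chord in $\Zmid$ but not to $\del\Sigma\setminus\Zmid$ are \emph{not} boundary regions; the definition of nice requires them to be discs with at most four corners.  Your finger moves can terminate against $\Zmid$ just as easily as against $\del\Sigma\setminus\Zmid$, and when that happens you are not landing in a region that is automatically good.  The paper handles this by first running finger moves of $\bbeta$ parallel to each component of $\Zmid$ so that every region adjacent to a Reeb chord becomes a rectangle (one side on $\Zmid$, two on $\balpha^a$, one on $\bbeta$).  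After that normalization, the Sarkar--Wang finger moves through $\balpha$ arcs can never hit a $\Zmid$-adjacent region, and the sutured version of the algorithm goes through verbatim.  Your sentence ``the verification is identical to the one in \cite{SW:nice_diagrams} and \cite{LOT:pairing}'' is therefore not accurate: \cite{LOT:pairing} does exactly this extra step, and you need it too.

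Two smaller points.  First, your distance function counts crossings of $\balpha\cup\bbeta$ arcs, whereas the paper (following Sarkar--Wang) counts $\bbeta$ arcs crossed in $\Sigma\setminus\balpha$; the latter is what makes the badness decrease under a $\bbeta$ finger move through an $\balpha$ arc.  Second, stabilization is used in the paper at the very beginning to ensure every component of $\Sigma$ contains both $\alpha$ and $\beta$ curves, not to resolve non-disc regions; those are handled by finger moves that inductively reduce $H_1$ of each region.  Your proposed use of stabilization to split non-disc regions would need a separate argument.
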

\begin{proof}
The proof is a combination of those for bordered and sutured manifolds, in 
\cite{LOT:pairing} and \cite{Juh:decompositions}, respectively.

First, we make some stabilizations until every component of $\Sigma$ contains both $\alpha$ and $\beta$
curves. Next we do finger moves of $\beta$ curves until any curve in $\balpha$ intersects $\bbeta$, and
vice versa. Then, we ensure all non boundary regions are simply connected. We do that inductively, decreasing the rank
of $H_1$ relative boundary for each region.

Then, following \cite{LOT:pairing},
we do finger moves of some $\beta$ curves along curves parallel to each component of $\Zmid$
to ensure that all regions adjacent to some Reeb chord in $\Zmid$ are rectangles (where one side is in
$\Zmid$,
two are in $\balpha^a$, and one is in $\bbeta$). 

Finally, we label all regions by their \emph{distance}, i.e. number of $\beta$ arcs in $\Sigma\setminus\balpha$ one
needs to cross, to get to a boundary region, and by their badness (how many extra corners they have).
We do finger moves of a $\beta$ arc in a bad region through $\alpha$ arcs, until we hit a boundary, a bigon, or
another (or the same) bad region. There are several cases depending on what kind of region we hit, but the overall
badness of the diagram decreases, so the algorithm eventually terminates. The setup is such that we can never hit
a region adjacent to a Reeb chord, so the algorithm for sutured manifolds goes through for bordered sutured manifolds.
\end{proof}

\section{Moduli spaces of holomorphic curves}
\label{sec:moduli}

In this section we describe the moduli spaces of holomorphic curves involved in the
definitions of the bordered invariants and prove the necessary properties.
The definitions and arguments are mostly a straightforward generalization
of those in \cite[Chapter 5]{LOT:pairing}.

\subsection{Differences with bordered Floer homology}
For the reader familiar with border Floer homology we highlight the similarities
and the differences with our definitions.

In the bordered setting of Lipshitz, Ozsv\'ath, and Thurston, there is
one boundary component and one basepoint on the boundary. One counts
pseudoholomorphic discs in $\Sigma\times[0,1]\times\RR$, but in practice
one thinks of their domains in $\Sigma$. Loosely speaking, the
curves that do not hit $\del\Sigma$ correspond to differentials, the ones
that do hit the boundary correspond to algebra actions, while the ones that
hit the basepoint are not counted at all.

In the bordered sutured setting, the boundary $\del\Sigma$ has
several components, while some subset $\Zmid$ of $\del\Sigma$ is distinguished.
We again count pseudoholomorphic curves in $\Sigma\times[0,1]\times\RR$, and
again, those curves that do not hit the boundary correspond to differentials.
The novel idea is the interpretation of the boundary. Here the algebra action
comes from curves that hit any component of $\Zmid\subset\del\Sigma$, while the
curves that hit any component of $\del\Sigma\setminus\Zmid$ are not
counted. In a sense, the set $\del\Sigma\setminus\Zmid$ plays the 
role of the basepoint.

With this in mind, most of the constructions in \cite{LOT:pairing} carry over.
Below we describe the necessary analytic constructions.

\subsection{Holomorphic curves and conditions}
We will consider several variations of the Heegaard surface $\Sigma$, namely 
the compact surface with boundary $\sigbar=\Sigma$, the open surface $\sigint$,
which can be thought of as a surface with several punctures
$\textbf{p}=\{p_1,\ldots,p_n\}$, and the closed surface $\sige$, obtained by
filling in those punctures. Alternatively, it is obtained from $\sigbar$ by collapsing
all boundary components to points.

We will also be interested in the surface $\DD=[0,1]\times\RR$, with
coordinates $s\in[0,1]$ and $t\in\RR$.

Let $\wsig$ be a symplectic form on $\sigint$, such that $\del\Sigma$ is a cylindrical
end, and let $\jsig$ be a compatible almost complex structure. We can assume that $\balpha^a$
is cylindrical near the punctures in the following sense. There is a neighborhood
$U_{\textbf{p}}$ of the punctures, symplectomorphic to
$\del\Sigma\times(0,\infty)\subset T^*(\del\Sigma)$, such that
$\jsig$ and $\balpha^a\cap U_\textbf{p}$ are invariant with respect to the
$\RR$--action on $\del\Sigma\times(0,\infty)$. 
Let $\wdd$ and $\jd$ be the standard symplectic form and almost complex structure on
$\DD\subset\CC$.

Consider the projections
\begin{align*}
\psig\co \sigint\times\DD&\to\sigint,\\
\pd\co \sigint\times\DD&\to\DD,\\
s\co \sigint\times\DD&\to[0,1],\\
t\co \sigint\times\DD&\to\RR.
\end{align*}

\begin{defn}
An almost complex structure $J$ on $\sigint\times\DD$ is called \emph{admissible} if the
following conditions hold:
\begin{itemize}
\item $\pd$ is $J$--holomorphic.
\item $J(\del_s)=\del_t$ for the vector fields $\del_s$ and $\del_t$ in the fibers of $\psig$.
\item The $\RR$--translation action in the $t$--coordinate is $J$--holomorphic.
\item $J=\jsig\times\jd$ near $\textbf{p}\times\DD$.
\end{itemize}
\end{defn}

\begin{defn}
A \emph{decorated source} $S^{\triangleright}$ consists of the following data:
\begin{itemize}
\item A topological type of a smooth surface $S$ with boundary, and a finite number
of boundary punctures.
\item A labeling of each puncture by one of ``$+$'', ``$-$'', or ``$e$''.
\item A labeling of each $e$ puncture by a Reeb chord $\rho$ in $\Zmid$.
\end{itemize}
\end{defn}

Given $S^{\trr}$ as above, denote by $S_{\ebar}$ the surface obtained from $S$
by filling in all the $e$ punctures.

We consider maps
\begin{equation*}
u\co (S,\del S)\to(\sigint\times\DD,(\balpha\times\{1\}\times\RR)
\cup(\bbeta\times\{0\}\times\RR))
\end{equation*}
satisfying the following conditions:
\begin{enumerate}
\item \label{cond:first} $u$ is $(j,J)$--holomorphic for some almost complex structure $j$ on $S$.
\item $u\co S\to\sigint\times\DD$ is proper.
\item $u$ extends to a proper map $u_{\ebar}\co S_{\ebar}\to\sige\times\DD$.
\item $u_{\ebar}$ has finite energy in the sense of Bourgeois, Eliashberg, Hofer,
Wysocki and Zehnder \cite{BEHWZ:compactness}.
\item $\pd\circ u\co S\to\DD$ is a $g$--fold branched cover. (Recall that
$g$ is the cardinality of $\bbeta$, not the genus of $\Sigma$).
\item At each $+$ puncture $q$ of $S$, $\lim_{z\to q}t\circ u(z)=+\infty$.
\item At each $-$ puncture $q$ of $S$, $\lim_{z\to q}t\circ u(z)=-\infty$.
\item At each $e$ puncture $q$ of $S$, $\lim_{z\to q}\psig\circ u(z)$ is the Reeb chord
$\rho$ labeling $q$.
\item \label{cond:last}
$\psig\circ u\co S\to\sigint$ does not cover any of the regions of
$\Sigma\setminus(\balpha\cup\bbeta)$ adjacent to $\del\Sigma\setminus\Zmid$.
\item \label{cond:strong_mon}
\emph{Strong boundary monotonicity.} For each $t\in\RR$, and each $\beta_i\in\bbeta$,
$u^{-1}(\beta_i\times\{0\}\times\{t\})$ consists of exactly one point. For each
$\alpha_i^c\in\balpha^c$, $u^{-1}(\alpha_i^c\times\{1\}\times\{t\})$ consist of exactly
one point. For each
$\alpha_i^a\in\balpha^a$, $u^{-1}(\alpha_i^a\times\{1\}\times\{t\})$ consists of \emph{at most}
one point.
\item \label{cond:embedded}
$u$ is embedded.
\end{enumerate}

Under conditions~(\ref{cond:first})--(\ref{cond:last}), at each $+$ or $-$ puncture, $u$
is asymptotic to an arc $z\times[0,1]\times\{\pm\infty\}$, where $z$ is some
intersection point in $\balpha\cap\bbeta$.
If in addition we require condition~(\ref{cond:strong_mon}), then the intersection points
$x_1,\ldots,x_g$ corresponding to $-$ punctures form a generator $\xgen$,
while the ones $y_1,\ldots,y_g$ corresponding to $+$ punctures form a 
generator $\ygen$. We call $\xgen$ the \emph{incoming} generator, and $\ygen$ the
\emph{outgoing} generator for $u$.

If we compactify the $\RR$ component of $\DD$ to include $\{\pm\infty\}$, we get a
compact rectangle $\widetilde{\DD}=[0,1]\times[-\infty,+\infty]$. Let $u$ be a map satisfying
conditions~(\ref{cond:first})--(\ref{cond:strong_mon}), and with incoming and outgoing generators
$\xgen$ and $\ygen$.
Let $\widetilde{S}$ be $S$ with all punctures filled in by arcs. Then $u$ extends to a map
\begin{multline*}
\widetilde{u}\co (\widetilde{S},\del\widetilde{S})\to
(\Sigma\times\widetilde{\DD},(\balpha\times\{1\}\times[-\infty,+\infty])\cup
(\bbeta\times\{0\}\times[-\infty,+\infty])\\
\cup(\Zmid\times\widetilde{\DD})\cup
(\xgen\times[0,1]\times\{-\infty\})\cup(\ygen\times[0,1]\times\{+\infty\})).
\end{multline*}

Notice that the pair of spaces on the right is the same as the one in definition~\ref{def:homology_classes}.
It is clear that a map $u$ satisfying conditions~(\ref{cond:first})--(\ref{cond:strong_mon}) has an
associated homology class $B=[u]=[\widetilde u]\in\pi_2(\xgen,\ygen)$. 

We will also impose an extra condition on the height of the $e$ punctures of $S$.

\begin{defn}
For a map $u$ from a decorated source $S^\trr$, and an $e$ puncture $q$ on $\del S$, the
\emph{height} of $q$ is the evaluation $\ev(q)=t\circ u_{\ebar}(q)\in\RR$.
\end{defn}

\begin{defn}
Let $E(S^\trr)$
be the set of all $e$ punctures in $S$. Let $\parrow=(P_1,\ldots,P_m)$ be an ordered partition
of $E(S^\trr)$ into nonempty subsets. We say $u$ is \emph{$\parrow$--compatible} if for $i=1,\ldots,m$ all the
punctures in $P_i$ have the same height $\ev(P_i)$, and moreover
$\ev(P_i)<\ev(P_j)$ for $i<j$.
\end{defn}

To a partition $\parrow=(P_1,\ldots,P_m)$ we can associate a sequence $\brarrow(\parrow)=(\brho_1,\ldots,\brho_m)$ of
sets of Reeb chords, by setting
\begin{equation*}
\brho_i=\{\rho:\rho~{labels}~q,q\in P_i\}.
\end{equation*}

Moreover, to any such sequence $\brarrow$ we can associate a homology class
\begin{equation*}
[\brarrow]=[\brho_1]+\cdots+[\brho_m]\in H_1(\Zmid,\amid),
\end{equation*}
and an algebra element
\begin{equation*}
a(\brarrow)=a(\brho_1)\cdots a(\brho_m).
\end{equation*}

It is easy to see that $[a(\brarrow)]=[\brarrow]$. It is also easy to see that for a curve $u$ satisfying
conditions~(\ref{cond:first})--(\ref{cond:strong_mon}) with homology class $[u]=B$,
and for any
partition $\parrow$ we have
$[\brarrow(\parrow)]=\del^{\del}B$. 
\subsection{Moduli spaces}

We are now ready to define the moduli spaces that we will consider.

\begin{defn}
Let $\xgen,\ygen\in\G(\HH)$ be generators, let $B\in\pi_2(\xgen,\ygen)$ be a homology
class, and let $S^{\trr}$ be a decorated source. We will write
\begin{equation*}
\mt^B(\xgen,\ygen,S^\trr)
\end{equation*}
for the space of curves $u$ with source $S^\trr$ satisfying conditions~(\ref{cond:first})--(\ref{cond:strong_mon}),
asymptotic to $\xgen$ at $-\infty$ and to $\ygen$ at $+\infty$, and with homology class $[u]=B$.

This moduli space is stratified by the possible partitions of $E(S^\trr)$. More precisely, given a partition
$\parrow$ of $E(S^\trr)$, we write
\begin{equation*}
\mt^B(\xgen,\ygen,S^\trr,\parrow)
\end{equation*}
for the space of $\parrow$--compatible maps in $\mt^B(\xgen,\ygen,S^\trr)$, and
\begin{equation*}
\mt_{\emb}^B(\xgen,\ygen,S^\trr,\parrow)
\end{equation*}
for the space of maps in $\mt^B(\xgen,\ygen,S^\trr,\parrow)$ that also satisfy~(\ref{cond:embedded}).
\end{defn}

\begin{rmk}
The definitions in the current section are analogous to those in \cite{LOT:pairing}, and
a lot of the results in that paper carry over without change. We will cite several of them here without
proof.
\end{rmk}

\begin{prop} (Compare to \cite[Proposition 5.5]{LOT:pairing}.) There is a dense set of admissible $J$ with the
property that
for all generators $\xgen$, $\ygen$, all homology classes $B\in\pi_2(\xgen,\ygen)$ and all
partitions $\parrow$, the spaces $\mt^B(\xgen,\ygen,S^\trr,\parrow)$ are transversely cut out by the
$\dbar$--equations.
\end{prop}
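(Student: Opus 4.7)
The plan is to adapt the standard Sard--Smale transversality scheme for pseudoholomorphic curves to the cylindrical setting of $\sigint\times\DD$, following the proof of Proposition~5.5 in \cite{LOT:pairing}. First I would introduce a Banach manifold $\mathcal{J}$ of admissible almost complex structures on $\sigint\times\DD$ (say, using Floer's $C^\varepsilon$ space of smooth perturbations of a fixed background admissible $J_0$, so that the conditions of the definition of admissibility are preserved), and form the universal moduli space
\begin{equation*}
\mathcal{U}^B(\xgen,\ygen,S^\trr,\parrow)=\{(u,J) : J\in\mathcal{J},\ u\in\mt^B(\xgen,\ygen,S^\trr,\parrow)\text{ with respect to }J\}.
\end{equation*}
The goal is to realize $\mathcal{U}^B$ as a smooth Banach manifold, apply Sard--Smale to the projection $\mathcal{U}^B\to\mathcal{J}$, intersect the resulting sets of regular values over the countable collection of data $(\xgen,\ygen,B,S^\trr,\parrow)$, and finally promote the comeager subset of a Banach completion to a dense subset of the space of smooth admissible $J$.

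The crux of the argument is showing that the linearization of the universal $\dbar$--operator is surjective at every $(u,J)\in\mathcal{U}^B$. Admissibility of $J$ freezes the vertical part of $J$ (since $\pd$ must be $J$--holomorphic and $J(\del_s)=\del_t$) and fixes $J$ in the cylindrical end near $\mathbf{p}\times\DD$; however it leaves free a fiberwise family of almost complex structures on $\sigint$, varying smoothly over $\DD$ and agreeing with $\jsig$ near $\mathbf{p}$. Condition~(\ref{cond:last}) means $u$ avoids the regions adjacent to $\del\Sigma\setminus\Zmid$, so perturbations of $J$ can be supported along the image of $u$. Following the standard somewhere--injectivity argument (cf.\ McDuff--Salamon, Chapter~3), one finds a point $z\in S$ at which $u$ is injective and $\psig\circ u$ is an immersion; the branched cover structure of $\pd\circ u$ combined with the Aronszajn unique continuation theorem guarantees existence of such a $z$ in the non-cylindrical part of $\sigint\times\DD$. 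Perturbations of the fiberwise $J$ supported near $u(z)$ then fill out the cokernel of the $u$--linearization.

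Once surjectivity is established, $\mathcal{U}^B$ is a Banach manifold, Sard--Smale identifies a comeager subset of regular values $J\in\mathcal{J}$ for the projection, and for each such $J$ the fiber $\mt^B(\xgen,\ygen,S^\trr,\parrow)$ is cut out transversely. The stratum corresponding to a partition $\parrow=(P_1,\dots,P_m)$ is defined by the codimension-$(|E(S^\trr)|-m)$ conditions $\ev(P_i)=\text{const}$ and $\ev(P_i)<\ev(P_j)$; these asymptotic conditions factor through the evaluation map on the symplectization ends of $\sigint\times\DD$, where $J$ is $\RR$--invariant and the Reeb chords in $\Zmid$ are non-degenerate, so the linearizations of the evaluation constraints are independent of the interior linearization. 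Intersecting over the countably many choices of $(\xgen,\ygen,B,S^\trr,\parrow)$ preserves density.

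The hard part will be verifying surjectivity of the universal linearization in spite of the rigidity of admissibility: because $\pd$ must stay holomorphic and $J$ is fixed on the cylindrical ends, one only has the fiberwise $J$ to vary. One must argue that this horizontal freedom is sufficient to perturb $u$ off its cokernel; this in turn requires that some somewhere--injective point of $u$ live away from both the ends and the ``forbidden'' regions $\del\Sigma\setminus\Zmid$, a fact one obtains from the branched cover structure of $\pd\circ u$ and the fact that the image of $u$ meets the interior of $\sigint$ in a dense open subset (since otherwise $u$ would factor through the boundary, contradicting strong boundary monotonicity).
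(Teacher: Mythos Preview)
Your proposal is correct and follows the same approach the paper intends. The paper does not actually give a proof of this proposition; it is one of the results cited from \cite{LOT:pairing} ``without proof'' (see the remark immediately preceding it), and the parenthetical ``Compare to \cite[Proposition 5.5]{LOT:pairing}'' is the entire justification offered. Your outline of the Sard--Smale scheme with the universal moduli space, the surjectivity of the universal linearization via somewhere-injective points and fiberwise perturbations of $J$, and the countable intersection over data $(\xgen,\ygen,B,S^\trr,\parrow)$ is exactly the argument from \cite{LOT:pairing} that the paper is invoking, so you have in fact supplied more detail than the paper itself does.
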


\begin{prop} (Compare to \cite[Proposition 5.6]{LOT:pairing}.) The expected dimension $\ind(B,S^\trr,\parrow)$ of
$\mt^B(\xgen,\ygen,S^\trr,\parrow)$ is
\begin{equation*}
\ind(B,S^\trr,P)=g-\chi(S)+2e(B)+\#\parrow,
\end{equation*}
where $e(B)$ is the Euler measure of the domain of $B$.
\end{prop}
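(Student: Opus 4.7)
The plan is to mimic the proof of \cite[Proposition 5.6]{LOT:pairing}, whose index formula is identical in form, and verify that the generalization to several boundary components and a distinguished subset $\Zmid\subset\del\Sigma$ does not change the count. The underlying tool is the Riemann--Roch formula for the $\dbar$--operator on a bordered punctured Riemann surface with totally real boundary conditions and with prescribed asymptotics at $\pm$ and $e$ punctures. Following Lipshitz's cylindrical reformulation of $\HFhat$, the expected dimension of $\mt^B(\xgen,\ygen,S^\trr,\parrow)$ decomposes into three contributions: a purely topological piece coming from the source $\chi(S)$ combined with the $g$--fold branched cover condition on $\pd\circ u$, a domain piece expressible as $2e(B)$, and a combinatorial piece from the asymptotic data at the $e$ punctures.

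First I would set up the Fredholm problem as in \cite[Sections 5.1--5.3]{LOT:pairing}: linearize $\dbar_J$ at a curve $u$, identify its kernel and cokernel with the deformation theory of $u$, and compute its index via Riemann--Roch using the pullback Lagrangian along $\del S$ and the asymptotic operator at each puncture. Cylindricity of $\jsig$ and $\balpha^a$ near the punctures $\textbf{p}$ makes each $e$ puncture behave locally exactly like a bordered Reeb chord puncture in \cite{LOT:pairing}, so the local Conley--Zehnder / Maslov contribution at an $e$ puncture is the same as there. Similarly, the $\pm$ punctures asymptote to constant strips over intersection points in $\balpha\cap\bbeta$, so they contribute as in the closed cylindrical picture. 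The key new feature---namely that $\del\Sigma$ has components in $\del\Sigma\setminus\Zmid$ where no Reeb chords live---is handled by condition~(\ref{cond:last}): since $\psig\circ u$ never enters regions adjacent to $\del\Sigma\setminus\Zmid$, the image of $u$ stays in a neighborhood of $\Zmid\cup\sigint$, and the Fredholm index is unaffected by the topology of these ``basepoint-like'' pieces of $\del\Sigma$.

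Next I would assemble the pieces. Summing the local contributions at all $e$ punctures produces a term counting the total number of $e$ punctures weighted by Maslov-type data. To pass from a sum over individual punctures to $\#\parrow$, note that requiring the punctures inside a part $P_i$ to share a common height imposes $\#P_i-1$ codimension-one constraints, while the height of $P_i$ itself remains one free real parameter; these combine with Lipshitz's computation of the branched-cover contribution to yield exactly one $+1$ per part of $\parrow$. Combining with $g-\chi(S)+2e(B)$ from the branched cover and Euler measure bookkeeping gives the stated formula.

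The main obstacle is the accounting at the $e$ punctures: one must verify that the Maslov index of the totally real boundary condition along a strip running along $\balpha^a$, bounded on one side by a component of $\Zmid$, agrees with the formal count used in the closed-boundary setting of \cite{LOT:pairing}. This is a local model computation that depends only on the cylindrical end structure near $\textbf{p}$ and is insensitive to how $\Zmid$ sits inside $\del\Sigma$. Once this is checked, the index computation is identical to that in \cite{LOT:pairing} and yields
\begin{equation*}
\ind(B,S^\trr,\parrow)=g-\chi(S)+2e(B)+\#\parrow.
\end{equation*}
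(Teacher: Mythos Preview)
Your proposal is correct and matches the paper's approach: the paper does not give an independent proof but simply cites \cite[Proposition~5.6]{LOT:pairing}, noting in the preceding remark that these analytic results carry over without change to the bordered sutured setting. Your outline supplies exactly the verification the paper leaves implicit---that the local Fredholm index computation is unaffected by having several boundary components and by the presence of the basepoint-like region $\del\Sigma\setminus\Zmid$, which condition~(\ref{cond:last}) keeps the curves away from.
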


It turns out that whether the curve $u\in\mt^B(\xgen,\ygen,S^\trr,\parrow)$ is embedded depends
entirely on the topological data consisting of $B$, $S^\trr$, and $\parrow$.
That is, there are entire components of
embedded and of non embedded curves. Moreover, for such curves there is another index formula
that does not depend on $S^\trr$. To give it we need some more definitions.

For a homology class $B\in\pi_2(\xgen,\ygen)$, and a point $z\in\balpha\cap\bbeta$, let
$n_z(B)$ be the average multiplicity of $[B]$ at the four regions adjacent to $z$. Let
$n_\xgen=\sum_{x\in\xgen}n_x(B)$, and $n_\ygen=\sum_{y\in\ygen}n_y(B)$.

For a sequence $\brarrow=(\brho_1,\ldots,\brho_m)$, let $\iota(\brarrow)$ be the Maslov component
of the grading $\gr(\brho_1)\cdots\gr(\brho_m)$.

\begin{defn} For a homology class $B\in\pi_2(\xgen,\ygen)$ and a sequence $\brarrow=(\brho_1,\ldots,\brho_m)$
of Reeb chords,
define the \emph{embedded Euler characteristic} and
\emph{embedded index}
\begin{align*}
\chi_{\emb}(B,\brarrow) & = g+e(B)-n_\xgen(B)-n_\ygen(B)-\iota(\brarrow), \\
\ind(B,\brarrow) & = e(B)+n_\xgen(B)+n_\ygen(B)+\#\brarrow+\iota(\brarrow).
\end{align*}
\end{defn}

\begin{prop}
Suppose $u\in\mt^B(\xgen,\ygen,S^\trr,\parrow)$.
Exactly one of the following two statements holds.

\begin{enumerate}
\item
$u$ is embedded and the following equalities hold.
\begin{align*}
\chi(S^\trr)&=\chi_{\emb}(B,\brarrow(\parrow)),\\
\ind(B,S^\trr,\parrow)&=\ind(B,\brarrow(\parrow)),\\
\mt_{\emb}^B(\xgen,\ygen,S^\trr,\parrow)&=\mt^B(\xgen,\ygen,S^\trr,\parrow).
\end{align*}
\item
$u$ is not embedded and the following inequalities hold.
\begin{align*}
\chi(S^\trr)&>\chi_{\emb}(B,\brarrow(\parrow)),\\
\ind(B,S^\trr,\parrow)&<\ind(B,\brarrow(\parrow)),\\
\mt_{\emb}^B(\xgen,\ygen,S^\trr,\parrow)&=\varnothing.
\end{align*}
\end{enumerate}
\end{prop}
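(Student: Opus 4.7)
The plan is to follow the strategy of \cite[Proposition 5.7 and Lemma 5.73]{LOT:pairing}, adapting it to the bordered sutured setting. The key observation is that this is essentially a topological statement about the curve $u$: whether it is embedded is controlled by an intersection-theoretic quantity that can be read off from $B$, $S^{\trr}$, and $\parrow$, independently of the chosen complex structure.

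First, I would compute the ``topological complexity'' of $u$ as a map, by defining an invariant $\iota(u)$ measuring the failure of embeddedness. This is done using the self-intersection formalism of Lipshitz: compactify $u$ to $\widetilde{u}\co \widetilde{S}\to \Sigma\times\widetilde{\DD}$, perturb to a map transverse to itself (and to a pushoff in the normal direction), and count intersection points with multiplicity. The important point is that, because the $\alpha$-arcs are cylindrical near the $e$-punctures and the $\RR$-action is holomorphic, the asymptotic contributions of the $e$-punctures can be computed purely from the Reeb-chord sequence $\brarrow(\parrow)$; these contributions are exactly packaged by the Maslov component $\iota(\brarrow)$ of $\gr(\brho_1)\cdots\gr(\brho_m)$.

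Next, I would derive a topological identity of the form
\begin{equation*}
\chi(S^{\trr}) - \chi_{\emb}(B,\brarrow(\parrow)) = 2\cdot\iota(u),
\end{equation*}
or rather, an identity equating $\chi(S^\trr)$ to $\chi_{\emb}$ plus a nonnegative contribution from double points and branch points. The right-hand side is the relative self-intersection number, which by positivity of intersections for $(j,J)$-holomorphic curves is nonnegative, and vanishes if and only if $u$ is embedded. Substituting the given formula $\ind(B,S^\trr,\parrow) = g - \chi(S) + 2e(B) + \#\parrow$ and comparing with $\ind(B,\brarrow) = e(B) + n_\xgen(B) + n_\ygen(B) + \#\brarrow + \iota(\brarrow)$ then yields the inequality/equality between the two indices. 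The crucial algebraic identity
\begin{equation*}
g - \chi_{\emb}(B,\brarrow) + 2e(B) + \#\brarrow = \ind(B,\brarrow)
\end{equation*}
follows by unwinding the definitions of $\chi_{\emb}$ and $\ind$.

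Finally, I would verify that the topological side of the identity is correct by checking it on a test case---for instance a trivial strip---where both sides can be computed directly, and then argue that any two maps with the same $(B,S^\trr,\parrow)$ differ by a compactly supported deformation whose effect on $\chi(S)$ and on the self-intersection count cancel. The main obstacle, and where one has to work carefully, is the contribution of the $e$-punctures: one must verify that the correct asymptotic/linking contribution at an $e$-puncture labeled by $\rho_i$ is exactly the Maslov component from $\gr$, which is the content of the calculation relating writhe of Reeb chord braids to the inversion count $\inv$ appearing in the definition of $\gr$. Once this identification is in place (the analogue of \cite[Proposition 5.75]{LOT:pairing}, which goes through verbatim since our strand algebra is defined on the same generators and inversions), the trichotomy of embedded versus non-embedded curves follows formally, and the three conditions in each case become equivalent restatements of $\iota(u) = 0$ versus $\iota(u) > 0$.
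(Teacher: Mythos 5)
Your sketch is correct and follows the same route the paper takes: the paper's proof of this proposition is literally a one-line citation to \cite[Proposition 5.47]{LOT:pairing}, and your argument reconstructs the internals of that proposition (the self-intersection invariant, positivity of intersections, the identity $g-\chi_{\emb}(B,\brarrow)+2e(B)+\#\brarrow=\ind(B,\brarrow)$, and the writhe/Maslov identification at $e$-punctures from \cite[Proposition 5.75]{LOT:pairing}). In particular your algebraic bookkeeping checks out, and your observation that the only new feature to verify in the bordered sutured setting is that the $e$-puncture asymptotics still produce the $\iota(\brarrow)$ term is exactly the reason the paper feels entitled to cite rather than reprove.
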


\begin{proof}

This is essentially a restatement of \cite[Proposition 5.47]{LOT:pairing}
\end{proof}

Each of these moduli spaces has an $\RR$--action that is translation in the $t$ factor.
It is free on each $\mt^B(\xgen,\ygen,S^\trr,\parrow)$, except when the moduli space consists
of a single curve $u$, where $\pd\circ u$ is a trivial
$g$--fold cover of $\DD$, and $\psig\circ u$ is constant (so $B=0$). We say that $u$ is
\emph{stable} if it is not this trivial case.

For moduli spaces of stable curves we mod out by this $\RR$--action:
\begin{defn}
For given $\xgen$, $\ygen$, $S^\trr$, and $\parrow$, set
\begin{align*}
\M^B(\xgen,\ygen,S^\trr,\parrow)&=\mt^B(\xgen,\ygen,S^\trr,\parrow)/\RR,\\
\M_{\emb}^B(\xgen,\ygen,S^\trr,\parrow)&=\mt_{\emb}^B(\xgen,\ygen,S^\trr,\parrow)/\RR.
\end{align*}
\end{defn}

\subsection{Degenerations}
\label{sec:degenerations}
The properties of the moduli spaces which are necessary to prove that the invariants are well defined and
have the expected properties, are essentially the same as in \cite{LOT:pairing}. Their proofs
also carry over with minimal change. We sketch below the most important results.

To study degenerations we first pass to the space of \emph{holomorphic combs} which are trees of
holomorphic curves in $\Sigma\times\DD$, and ones that live at \emph{East infinity}, i.e. in
$\Zmid\times\RR\times\DD$. This is the proper ambient space to work in, to ensure compactness.

The possible degenerations that can occur at the boundary of 1--di\-men\-sion\-al moduli spaces of embedded curves
are of two types. One is a two story holomorphic building, as usual in Floer theory. The second type
consists of a single curve $u$ in $\Sigma\times\DD$, with another curve degenerating at East infinity, at
the $e$ punctures of $u$. Those curves that can degenerate at East infinity are of several types,
\emph{join} curves, \emph{split} curves, and \emph{shuffle} curves, that correspond to
certain operations on the algebra $\A(\Z)$. In fact, the types of curves that can appear dictate how
the algebra should behave.

There are also corresponding gluing results, that tell us that in the cases we care about, a rigid holomorphic
comb is indeed the boundary of a 1--dimensional space of curves. Unfortunately, in some cases the compactified
moduli spaces are not compact 1--manifolds. However, we can still recover the necessary result that certain
counts of 0--dimensional moduli spaces are even, and thus become $0$, when reduced to $\ZZ/2$.

The only place where significant changes need to be made to the arguments, are in ruling out
bubbling and boundary degenerations. The reason for the changes are the different homological
assumptions we have made for $\Sigma$, $\Zmid$, $\balpha$, and $\bbeta$ in the definition of bordered sutured
Heegaard diagrams. We give below the precise statement, and the modified proof.
The rest of the arguments are essentially local in nature, and do not depend on these
homological assumptions. 

\begin{prop}
\label{prop:degenerations}
Suppose $\M=\M^B(\xgen,\ygen,S^\trr,\parrow)$ is 1--dimensional. Then the following types
of degenerations cannot occur as the limit $u$ of a sequence $u_j$ of curves in $\M$.
\begin{enumerate}
\item \label{cond:bubble}
$u$ bubbles off a closed curve.
\item \label{cond:bound_degen}
$u$ has a \emph{boundary degeneration}, i.e. $u$ is a nodal curve that
collapses one or more properly embedded arcs in $(S,\del S)$.
\end{enumerate}
\end{prop}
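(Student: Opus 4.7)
The plan is to adapt the arguments of \cite[Section 5.4]{LOT:pairing}, replacing the role of their single basepoint on $\del\Sigma$ with the full subset $\del\Sigma\setminus\Zmid$. Our condition of homological linear independence is precisely the analogue of the basepoint placement condition there, and will play the same role in obstructing both types of degeneration.

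For~(\ref{cond:bubble}), suppose a closed component $v$ bubbles off in the limit. Since $\pd$ is $J$--holomorphic and $\DD$ is topologically a disc, the composition $\pd\circ v$ is a holomorphic map from a closed Riemann surface to $\DD$, hence constant. Therefore $v$ lies in a single fibre $\sigint\times\{z_0\}$ and projects to a closed holomorphic curve in $\sigint$. But every component of $\Sigma$ has non-empty boundary, so every component of $\sigint$ has at least one puncture and admits no non-constant holomorphic map from a closed Riemann surface. Hence $v$ is constant, contradicting that it is a genuine bubble.

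For~(\ref{cond:bound_degen}), a boundary degeneration produces, after collapsing the arc, a nodal component $v$ whose boundary lies entirely on $\balpha\times\{1\}\times\RR$ (the $\bbeta$ case is symmetric). A standard maximum-principle argument applied to $\pd\circ v$ forces $\pd\circ v$ to be constant, so $v$ sits in a fibre $\sigint\times\{z_0\}$. Its domain $D=(\psig\circ v)_*[v]$ is a $2$--chain on $\Sigma$ with $\del D\subset\balpha$, and by positivity of intersection all multiplicities of $D$ are non-negative. Condition~(\ref{cond:last}), carried through the BEHWZ compactness theorem, forces the multiplicity of $D$ to vanish on every region of $\Sigma\setminus(\balpha\cup\bbeta)$ adjacent to $\del\Sigma\setminus\Zmid$. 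Since $\del D$ is supported on $\balpha$, the multiplicity of $D$ is constant across each arc of $\bbeta$ and across $\Zmid$, and so is constant on each component of $\sigint\setminus\balpha$. Homological linear independence is exactly the statement that every such component contains a region adjacent to $\del\Sigma\setminus\Zmid$. Combining these facts, $D\equiv 0$, so $v$ is a ghost and no genuine boundary degeneration occurs.

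The main obstacle is the last step of~(\ref{cond:bound_degen}): one has to argue rigorously that in the limit $u_j\to u$ the multiplicities on boundary regions remain zero, which requires a careful application of the Bourgeois--Eliashberg--Hofer--Wysocki--Zehnder compactness theory together with condition~(\ref{cond:last}). Once this passage to the limit is in place, the rest of the proof is either local (positivity of intersection, behavior near the cylindrical ends $\textbf{p}\times\DD$) or is the purely topological translation of homological linear independence given above, neither of which requires substantive change from the purely bordered case treated in \cite{LOT:pairing}.
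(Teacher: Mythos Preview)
Your approach is essentially the same as the paper's, and leads to a correct proof, but there are two imprecisions worth correcting.

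First, in part~(\ref{cond:bound_degen}) you assert that the degenerated component $v$ has boundary entirely on $\balpha\times\{1\}\times\RR$, and hence $\del D\subset\balpha$. This is not quite right: the component $T$ may carry $e$--punctures of the original source, and after compactifying these contribute arcs mapping to Reeb chords in $\Zmid$. The correct target pair in the $\alpha$--case is therefore $(\Sigma,\balpha\cup\Zmid)$, and the paper concludes by observing that homological linear independence forces $H_2(\Sigma,\balpha\cup\Zmid)=0$ (and symmetrically $H_2(\Sigma,\bbeta)=0$). Fortunately your argument survives this correction unchanged: since $\Zmid\subset\del\Sigma$, the multiplicity of $D$ is still constant on each component of $\Sigma\setminus\balpha$, and still vanishes on regions adjacent to $\del\Sigma\setminus\Zmid$, so HLI still kills $D$. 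Note also that your appeal to condition~(\ref{cond:last}) passing to the limit is unnecessary here: once $\del D\subset\balpha\cup\Zmid$, vanishing of the multiplicity along $\del\Sigma\setminus\Zmid$ is automatic.

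Second, your ``standard maximum-principle argument'' showing $\pd\circ v$ is constant is where the paper is more careful. The paper instead invokes strong boundary monotonicity (condition~(\ref{cond:strong_mon})): the endpoints of the collapsed arc $a$ lie on the same $\alpha$-- or $\beta$--curve, and since the preimage of each such curve at each height $t$ is a single point, the $t$--coordinate must be constant along the boundary arc $b$ of $T$ joining them, forcing $\pd|_T$ constant. Your maximum-principle sketch is morally correct when the boundary is entirely on $\{s=1\}$ or $\{s=0\}$, but you should say why the degenerated component has this property and how $e$--punctures fit in; the monotonicity argument handles this cleanly.
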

\begin{proof}

For~(\ref{cond:bubble}) notice that if a closed curve bubbles off,
it has to map to $\sigint\times\DD\simeq\sigint$ which has no closed components.
In particular, $H_2(\sigint\times\DD)=0$, and the bubble will have zero energy.

For~(\ref{cond:bound_degen}), assume there is such a degeneration $u$ with source $S^{\trr}{}'$.
Repeating the argument in \cite[Lemma 5.37]{LOT:pairing},
if an arc $a\in S^{\trr}$ collapses in $u$, then by strong boundary monotonicity its endpoints $\del a$
lie on the same arc in $\del\Sigma$. If $b$ is the arc in $\del S^{\trr}{}'$ connecting them, then
$t\circ u$ is constant on $b$. Therefore, $\pd\circ u$ is constant on the entire component $T$ of $S^{\trr}{}'$
containing $b$.

There is a compactification $\overline{T}$ of $T$, filling in the punctures by arcs, and
an induced map $\overline{u}\co \overline{T}\to\Sigma\times\DD$. Under $\overline{u}$, the boundary
$\del\overline{T}$ can only map to either
$\balpha\cup\Zmid\times\{1\}\times\RR$ or to $\bbeta\times\{0\}\times\RR$,
which have different $t$ components. Therefore, the entire boundary maps entirely to one of them, or entirely to the
other. In particular, we have either a map
\begin{equation*}
\psig\circ\overline{u}\co (\overline{T},\del\overline{T})\to(\Sigma,\balpha\cup\Zmid),
\end{equation*}
or a map
\begin{equation*}
\psig\circ\overline{u}\co (\overline{T},\del\overline{T})\to(\Sigma,\bbeta).
\end{equation*}

By homological linear independence both of the groups $H_2(\Sigma,\balpha\cup\Zmid)$ and $H_2(\Sigma,\bbeta)$ are
$0$, and $u|_T$ must have zero energy.
\end{proof}

The equivalent statement in the bordered setting is necessary for \cite[Proposition 5.32]{LOT:pairing}.

\section{Diagram gradings}
\label{sec:grading}
In this section we define gradings on the set of generators $\G(\HH)$ for a given bordered sutured diagram $\HH$.
More precisely, if $\HH$ represents the bordered sutured manifold $(Y,\Gamma,\Z)$, for each $\spinc$--structure
$\s\in\spinc(Y,\del Y\setminus F(\Z))$ we define grading sets $\Gr(\HH,\s)$ and $\Grdn(\HH,\s)$ which have left
actions by $\Gr(-\Z)$ and $\Grdn(-\Z)$, respectively, and right actions by $\Gr(\Z)$ and $\Grdn(\Z)$, respectively.
Then we define maps $\G(\HH,\s)\to\Gr(\HH,\s)$ and $\G(\HH,\s)\to\Grdn(\HH,\s)$, which are well-defined up to a shift to
be made precise below. In the next couple of sections we use these maps to define relative gradings on the bordered sutured
invariants.

\subsection{Domain gradings}
We start by defining a grading on all homology classes in $\pi_2(\xgen,\ygen)$ for $\xgen$ and $\ygen$ generators in
$\G(\HH)$. We will abuse notation and will not distinguish between a given homology class and its associated domain in
$H_2(\Sigma,\Zmid\cup\balpha\cup\bbeta)$.

\begin{defn} Given a domain $B\in\pi_2(\xgen,\ygen)$ define
\begin{equation*}
\gr(B)=(-e(B)-n_{\xgen}(B)-n_{\ygen}(B),\del^{\del}B)\in\Gr(\Z).
\end{equation*}

Given a grading reduction $r$ from $\Gr(\Z)$ to $\Grdn(\Z)$, define
\begin{equation*}
\grdn(B)=r(I(o(\xgen)))\cdot\gr(B)\cdot r(I(o(\ygen)))^{-1}\in\Grdn(\Z).
\end{equation*}
\end{defn}

The basic properties of these gradings, and in fact the reason they are called gradings is that they are compatible with
composition of domains. They are also compatible with the indices of moduli spaces.

\begin{prop}
Given a domain $B\in\pi_2(\xgen,\ygen)$, for any compatible sequence $\brarrow$ of sets of Reeb chords, we have
$\gr(B)=\lambda^{-\ind(B,\brarrow)+\#\brarrow}\cdot\gr(\brarrow)$.

For any two domains $B_1\in\pi_2(\xgen,\ygen)$ and $B_2\in\pi_2(\ygen,\zgen)$, their concatenation has grading
$\gr(B_1*B_2)=\gr(B_1)\cdot\gr(B_2)$.

Similar statements hold for $\grdn(B)$.
\end{prop}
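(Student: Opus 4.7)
The plan is to derive both formulas by unpacking the definitions and reducing all the content to one local combinatorial identity on $\ygen$. For the first equality, I compare the two sides component by component in $\Gr(\Z) = \tfrac{1}{2}\ZZ \times H_1(\Zmid, \amid)$. Since $\lambda = (1,0)$ is central and purely Maslov, the homological component of the right-hand side is $[\brarrow] = [\brho_1] + \cdots + [\brho_m]$, which equals $\del^\del B$ by the compatibility hypothesis on $\brarrow$, matching the homological component of $\gr(B)$. The Maslov component of the right-hand side is
\begin{equation*}
\iota(\brarrow) - \ind(B,\brarrow) + \#\brarrow;
\end{equation*}
substituting the definition $\ind(B,\brarrow) = e(B) + n_\xgen(B) + n_\ygen(B) + \#\brarrow + \iota(\brarrow)$ collapses this to $-e(B) - n_\xgen(B) - n_\ygen(B)$, which is precisely the Maslov component of $\gr(B)$.

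For the second equality, the cleanest route is to reduce it to the first. Given $B_1 \in \pi_2(\xgen, \ygen)$ and $B_2 \in \pi_2(\ygen, \zgen)$, pick compatible sequences $\brarrow_1$ and $\brarrow_2$; their concatenation $\brarrow_1\brarrow_2$ is then compatible with $B_1 * B_2$ and satisfies $\gr(\brarrow_1\brarrow_2) = \gr(\brarrow_1) \cdot \gr(\brarrow_2)$ by construction. Applying the first formula to all three domains, and using that $\lambda$ is central, $\gr(B_1 * B_2) = \gr(B_1) \cdot \gr(B_2)$ becomes equivalent to the index additivity
\begin{equation*}
\ind(B_1 * B_2, \brarrow_1\brarrow_2) = \ind(B_1, \brarrow_1) + \ind(B_2, \brarrow_2).
\end{equation*}
Expanding both sides, and using the multiplication law in $\Gr(\Z)$ to write $\iota(\brarrow_1\brarrow_2) = \iota(\brarrow_1) + \iota(\brarrow_2) + L(\del^\del B_1, \del^\del B_2)$, this additivity reduces to the single local identity
\begin{equation*}
n_\ygen(B_1) + n_\ygen(B_2) = L(\del^\del B_1, \del^\del B_2).
\end{equation*}

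The main obstacle is this identity: it is a sum over points of $\ygen$ that relates interior average multiplicities near the intersection points to the boundary pairing $L = m(\del\cdot,\cdot)$ along $\Zmid$. In the classical bordered setting it is part of the standard Lipshitz index calculus, and the proof adapts without change here, since a point of $\ygen$ can only meet the boundary structure through an incident $\alpha^a$ arc: points of $\ygen$ lying on $\alpha^c$ or $\beta$ curves contribute zero to both sides, while for a point on an $\alpha^a$ arc the average-multiplicity contribution is balanced against the two $\tfrac{1}{2}$-contributions of $m$ at the arc's endpoints on $\Zmid$. The $\grdn$ statements follow formally once $\gr$ is known: since $\grdn(B) = r(I(o(\xgen))) \cdot \gr(B) \cdot r(I(o(\ygen)))^{-1}$, the middle factors $r(I(o(\ygen)))^{-1} \cdot r(I(o(\ygen)))$ telescope in $\grdn(B_1) \cdot \grdn(B_2)$, reducing the second formula to its $\gr$-version; for the first, one replaces $\gr(\brarrow)$ by the reduced grading of the idempotent-decorated product $I(o(\xgen)) \cdot a(\brho_1) \cdots a(\brho_m) \cdot I(o(\ygen))$, which pins down the unique completion compatible with the required endpoint idempotents, and the same cancellation yields the identity.
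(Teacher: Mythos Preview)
Your overall strategy matches the paper's exactly: verify the first formula by comparing Maslov and homological components, then deduce the second from the first together with additivity of the embedded index, and handle $\grdn$ by observing that the reduction terms telescope. The paper's proof is essentially a two-line version of what you wrote, except that it simply \emph{cites} index additivity as a known fact rather than attempting to prove it.

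Where you go further than the paper, you introduce an error. After expanding both sides of
\[
\ind(B_1*B_2,\brarrow_1\brarrow_2)=\ind(B_1,\brarrow_1)+\ind(B_2,\brarrow_2),
\]
the identity you should be left with is
\[
n_{\ygen}(B_1)+n_{\ygen}(B_2)-n_{\xgen}(B_2)-n_{\zgen}(B_1)=L(\del^{\del}B_1,\del^{\del}B_2),
\]
not $n_{\ygen}(B_1)+n_{\ygen}(B_2)=L(\del^{\del}B_1,\del^{\del}B_2)$. You dropped the cross terms $n_{\xgen}(B_2)$ and $n_{\zgen}(B_1)$ coming from $n_{\xgen}(B_1*B_2)=n_{\xgen}(B_1)+n_{\xgen}(B_2)$ and $n_{\zgen}(B_1*B_2)=n_{\zgen}(B_1)+n_{\zgen}(B_2)$. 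Your version is already false in the purely sutured case ($\Zmid=\varnothing$, so $L=0$): two adjacent positive bigons sharing the corner $\ygen$ give $n_{\ygen}(B_1)+n_{\ygen}(B_2)=\tfrac12\neq 0$. The corrected identity is the standard one from the Lipshitz/LOT index calculus, and your sketch of its proof needs the corresponding adjustment: at interior points of $\ygen$ (on $\alpha^c$ or shared with $\xgen,\zgen$) the $\ygen$-terms cancel against the $\xgen$- and $\zgen$-terms rather than vanishing outright, while at a point on an $\alpha^a$ arc the residual difference is matched by the $m$-contributions at the arc's endpoints on $\Zmid$.
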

\begin{proof}
For the first statement, recall that $\ind(B,\brarrow)=e(B)+n_{\xgen}(B)+n_{\ygen}(B)+\iota(\brarrow)+\#\brarrow$, and
the homological components of $\gr(\brarrow)$ and $\gr(B)$ are both $\del^{\del}B$ for a compatible pair. The second
statement follows from the first, using the fact that the index is additive for domains, and $\lambda$ is central.

For the equivalent statement for $\grdn$, we just have to use $\grdn(I_{o(\xgen)}\cdot a(\brarrow)\cdot I_{o(\ygen)})$,
instead
of $\grdn(\brarrow)$ which is not defined, and notice that the reduction terms match up.
\end{proof}

\subsection{Generator gradings}
We will give a relative grading for the generators in each $\spinc$--structure. Here a \emph{relative grading in a $G$--set}
means a map $g\co\G(\HH,\s)\to A$, where $G$ acts on $A$, say on the right. Two such gradings $g$ and $g'$ with values
in $A$ and $A'$ are equivalent, if there is a bijection $\phi\co A\to A'$, such that $\phi$ commutes with both the
$G$--actions and with $g$ and $g'$. The traditional case of a relative $\ZZ$ or $\ZZ/n$--valued grading corresponds to $\ZZ$
acting on its quotient, with the grading map defined up to an overall shift by a constant.

\begin{defn}
For a Heegaard diagram $\HH$ and generator $\xgen\in\G(\HH)$ define the \emph{stabilizer subgroup}
$\PP(\xgen)=\gr(\pi_2(\xgen,\xgen))\subset\Gr(\Z)$. For any $\spinc$--structure $\s$ pick a generator
$\xgen\in\G(\HH,\s)$ and let $\Gr(\HH,\s)$ be the set of right cosets
$\PP(\xgen_0)\backslash \Gr(\Z)$ with the usual right $\Gr(\Z)$--action.
Define the grading $\gr\co\G(\HH,\s)\to\Gr(\HH,\s)$ by $\gr(\xgen)=\PP\cdot\gr(B)$ for any $B\in\pi_2(\xgen_0,\xgen)$.
\end{defn}

\begin{prop}
Assuming $\G(\HH,\s)$ is nonempty, this is a well-defined relative grading, independent of the choice of $\xgen_0$, and has
the property $\gr(\xgen)\cdot\gr(B)=\gr(\ygen)$ for any $B\in\pi_2(\xgen,\ygen)$.
\end{prop}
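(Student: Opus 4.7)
The plan is to deduce all three claims formally from the composition rule $\gr(B_1 * B_2) = \gr(B_1) \cdot \gr(B_2)$ just proven, together with the torsor structure of homology classes: for $\xgen, \ygen \in \G(\HH, \s)$ the set $\pi_2(\xgen, \ygen)$ is nonempty by the earlier proposition relating $\pi_2$ to $\spinc$ structures, and it is a torsor over the abelian group $\pi_2(\xgen, \xgen) \cong H_2(Y, F)$ acting on the left by concatenation. In particular, for any two elements $B, B' \in \pi_2(\xgen_0, \xgen)$ there is a unique periodic class $P \in \pi_2(\xgen_0, \xgen_0)$ with $B = P * B'$, and then $\gr(B) = \gr(P) \cdot \gr(B') \in \PP(\xgen_0) \cdot \gr(B')$ by the composition rule. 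Since $\pi_2(\xgen_0, \xgen)$ is nonempty whenever $\xgen_0, \xgen \in \G(\HH, \s)$, this proves that the right coset $\gr(\xgen)$ is well defined.

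For the transformation law, let $\xgen, \ygen \in \G(\HH, \s)$ and $B \in \pi_2(\xgen, \ygen)$. Pick any $B_1 \in \pi_2(\xgen_0, \xgen)$; then $B_1 * B \in \pi_2(\xgen_0, \ygen)$, and applying the composition rule gives
\begin{equation*}
\gr(\ygen) = \PP(\xgen_0) \cdot \gr(B_1 * B) = \PP(\xgen_0) \cdot \gr(B_1) \cdot \gr(B) = \gr(\xgen) \cdot \gr(B),
\end{equation*}
as claimed.

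To show independence from the choice of $\xgen_0$, select another base point $\xgen_0' \in \G(\HH, \s)$, a connecting class $C \in \pi_2(\xgen_0, \xgen_0')$, and its reverse $\bar C \in \pi_2(\xgen_0', \xgen_0)$, characterized by $C * \bar C$ being the trivial periodic class. Since the trivial class has vanishing Euler measure, corner multiplicities, and $\del^{\del}$, its grading is the identity of $\Gr(\Z)$, so $\gr(\bar C) = \gr(C)^{-1}$. Every periodic class at $\xgen_0$ is of the form $C * P' * \bar C$ with $P' \in \pi_2(\xgen_0', \xgen_0')$ and vice versa, so $\PP(\xgen_0) = \gr(C) \cdot \PP(\xgen_0') \cdot \gr(C)^{-1}$, and the assignment $\PP(\xgen_0) \cdot g \mapsto \PP(\xgen_0') \cdot \gr(C)^{-1} \cdot g$ is a well-defined $\Gr(\Z)$-equivariant bijection between the two grading sets. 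Finally, taking $B' = \bar C * B_1 \in \pi_2(\xgen_0', \xgen)$ for some $B_1 \in \pi_2(\xgen_0, \xgen)$ shows that this bijection intertwines the grading maps defined with respect to $\xgen_0$ and $\xgen_0'$. No step presents a real obstacle: the whole statement reduces to formal bookkeeping once the composition rule is in hand, and the only minor point requiring care is that inverses of homology classes behave correctly under $\gr$, which follows immediately from the computation that the trivial periodic class has identity grading.
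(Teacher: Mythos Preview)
Your argument is correct and follows exactly the approach the paper sketches: well-definedness from the composition rule $\gr(B_1*B_2)=\gr(B_1)\cdot\gr(B_2)$, and independence from the base generator via conjugacy of the stabilizer subgroups $\PP(\xgen_0)$ and $\PP(\xgen_0')$. You have simply supplied the details that the paper omits; the only minor remark is that the existence of the reverse class $\bar C$ and the identity $\bar C * C = 0$ (needed for the conjugacy computation) follow from the torsor structure, which you invoke implicitly.
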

\begin{proof}
These follow quickly from the fact that concatenation of domains respects the grading. For example,
for any two domains $B_1$, $B_2$ from $\xgen_0$ to $\xgen$, the cosets $\PP(\xgen_0)\cdot\gr(B_i)$ are the same.
Independence from the choice of $\xgen_0$ follows from the fact that $\PP(\xgen)$ is a conjugate of $\PP(\xgen_0)$.
\end{proof}

Fixing a grading reduction $r$, and setting
$\underline{\PP}(\xgen)=\grdn(\pi_2(\xgen,\xgen))=r(I_{o(\xgen)})\cdot\PP(\xgen)\cdot r(I_{o(\xgen)})^{-1}$,
we get a reduced
grading set $\Grdn(\HH,\s)$ with a right $\Grdn(\Z)$--action, and reduced grading $\grdn$ on $\G(\HH,\s)$ with the same
properties as $\gr$.

In light of the discussion in section~\ref{sec:reversal}, the sets $\Gr(\HH,\s)$ and $\Grdn(\HH,\s)$ have left actions
by $\Gr(-\Z)$ and $\Grdn(-\Z)$, respectively. Keep in mind that for the reduced grading,
the reduction term used for acting on $\grdn(\xgen)$ is $r(I_{\obar(\xgen)})$, corresponding to the complementary
idempotent of $\xgen$.

To define the grading on the bimodules $\BSDA$, we will need to take a mixed approach. Given a bordered sutured manifold
$(Y,\Gamma,\Z_1\cup\Z_2)$, thought of as a cobordism from $-\Z_1$ to $\Z_2$, we will use the left action of 
$\Gr(-\Z_1)\subset\Gr(-(\Z_1\cup\Z_2))$ and the right action of $\Gr(\Z_2)\subset\Gr(\Z_1\cup\Z_2)$. The two actions
commute since the correction term $L$ vanishes on mixed pairs. Moreover, the Maslov components act the same on both sides.

\subsection{A simpler description}
In the special case when $\Z=\varnothing$ and the manifold is just sutured, the grading takes a simpler form that is the
same as the usual relative grading on $\SFH$. Recall that the \emph{divisibility} of a $\spinc$--structure $\s$ is the
integer $\ddiv(\s)=\gcd_{\alpha\in H_2(Y)}\left<c_1(\s),\alpha\right>$, and that sutured Floer homology groups
$\SFH(Y,\s)$ are relatively-graded by the cyclic group $\ZZ/\ddiv(\s)$. (See~\cite{Juh:SFH}.)

\begin{thm}
\label{thm:grading_sfh}
Let $\HH$ be a Heegaard diagram for a sutured manifold $(Y,\Gamma)$, which can also be interpreted as a diagram for the
bordered sutured manifold $(Y,\Gamma,\varnothing)$. For any $\spinc$--structure $\s$, the grading sets are
$\Grdn(\HH,\s)=\Gr(\HH,\s)=\frac{1}{2}\ZZ/\ddiv(\s)$, with the usual action by
$\Grdn(\varnothing)=\Gr(\varnothing)=\frac{1}{2}\ZZ$. Moreover, the relative gradings $\gr=\grdn$ on $\G(\HH,\s)$ coincide
with relative grading on $\SFH$. In particular, only the integer gradings are occupied.
\end{thm}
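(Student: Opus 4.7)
The plan is to unpack the grading definitions in the degenerate case $\Z=\varnothing$ and match them against the classical Maslov index computations that govern the sutured Floer grading.

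First I would observe that when $\Zmid=\varnothing$ one has $H_1(\Zmid,\amid)=0$ and $F(\varnothing)=\varnothing$, so both $\Gr(\Z)$ and $\Grdn(\Z)$ collapse to their central $\frac{1}{2}\ZZ$-factor (the pairings $L$ and intersection-on-$F$ are trivial on a zero group). There is only a single, empty, generating idempotent, so every connected component has a unique base idempotent and the grading reduction $r$ is forced to be trivial; this immediately gives $\gr=\grdn$. Moreover $\del^{\del}B=0$ for every $B\in\pi_2(\xgen,\ygen)$ since nothing can cross $\Zmid$, so the domain grading reduces to
\begin{equation*}
\gr(B)=-e(B)-n_{\xgen}(B)-n_{\ygen}(B)\in\tfrac{1}{2}\ZZ.
\end{equation*}

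Next I would identify the right-hand side with the negative of the ordinary Maslov index $\mu(B)=e(B)+n_{\xgen}(B)+n_{\ygen}(B)$ of Lipshitz's cylindrical formulation (equivalently, the Ozsv\'ath--Szab\'o Maslov index). Since $\mu(B)\in\ZZ$ whenever $B$ connects two generators, $\gr(B)$ is always an integer, and therefore every generator receives an integer grading under the coset construction. This proves the last sentence of the theorem (``only the integer gradings are occupied'').

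The third step is to compute the stabilizer $\PP(\xgen_0)$. Because $F(\varnothing)=\varnothing$, the isomorphism $\pi_2^{\del}(\xgen_0,\xgen_0)\cong H_2(Y)$ recorded earlier coincides with $\pi_2(\xgen_0,\xgen_0)$, and under this identification the Maslov index of a periodic class representing $A\in H_2(Y)$ is given by the first Chern class formula of Ozsv\'ath--Szab\'o (extended to the sutured setting by Juh\'asz), namely $\mu(B)=\langle c_1(\s),A\rangle$. Hence
\begin{equation*}
\PP(\xgen_0)=\{-\langle c_1(\s),A\rangle:A\in H_2(Y)\}=\ddiv(\s)\,\ZZ\subset\tfrac{1}{2}\ZZ,
\end{equation*}
and $\Gr(\HH,\s)=\Grdn(\HH,\s)=\frac{1}{2}\ZZ/\ddiv(\s)\ZZ$ with its usual right $\frac{1}{2}\ZZ$-action. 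Agreement with the $\SFH$ relative grading is then immediate: for $B\in\pi_2(\xgen,\ygen)$ the new definition gives $\gr(\ygen)-\gr(\xgen)\equiv -\mu(B)\pmod{\ddiv(\s)}$, which differs from Juh\'asz's convention only by the overall sign baked into the definitions of $\gr$ and $\iota$.

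The only substantive input that is not completely formal from the definitions in this paper is the Chern class formula for periodic domains; this is the step I would expect to be the main obstacle, but it is exactly the content of the sutured Chern class formula in \cite{Juh:SFH}, so no new analytic work is required.
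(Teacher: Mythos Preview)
Your proposal is correct and takes essentially the same approach as the paper: both identify $\gr(B)=-e(B)-n_{\xgen}(B)-n_{\ygen}(B)=-\ind(B)$ as the sutured Maslov grading and observe that the only difference from the usual $\SFH$ setup is the ambient group $\frac{1}{2}\ZZ$ instead of $\ZZ$. You are more explicit than the paper about why $\gr=\grdn$ (trivial reduction) and about computing the stabilizer via the Chern class formula, which the paper defers to the subsequent proposition; your caveat about a possible sign convention is unnecessary, since $\gr(\ygen)-\gr(\xgen)=-\ind(B)$ matches the convention that the differential lowers grading by one.
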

\begin{proof}
The grading on $\SFH$ is defined in essentially the same way, on a diagram level. There a domain $B\in\pi_2(\xgen,\ygen)$
is graded as $-\ind(B)=-e(B)-n_{\xgen}-n_{\ygen}=\gr(B)\in\Gr(\varnothing)$. The rest of definition is exactly the same,
with the result that the gradings coincide, except that in the bordered sutured case we start with the bigger group
$\frac{1}{2}\ZZ$, while $\gr(B)=-\ind(B)$ still takes only integer values.
\end{proof}

In general, the grading sets $\Gr(\HH,\s)$ and $\Grdn(\HH,\s)$ can look very complicated, but if we forget some of the
structure we can give a reasonably nice description similar to the purely sutured case.

\begin{prop}
There is a projection map $\pi\co\Grdn(\HH,\s)\to\im(i_*\co H_1(F)\to H_1(Y))$ with the following properties. Each fiber
looks like $\frac{1}{2}\ZZ/\ddiv(\s)$, with the usual translation action by the central subgroup $(\frac{1}{2}\ZZ,0)$.
Any element of the form $(*,\alpha)$
permutes the fibers of $\pi$, sending $\pi^{-1}(\beta)$ to $\pi^{-1}(\beta+i_*(\alpha))$, while preserving
the $\frac{1}{2}\ZZ$--action.
\end{prop}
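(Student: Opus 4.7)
The plan is to define $\pi$ as the descent of the composition
\begin{equation*}
\Grdn(\Z)\twoheadrightarrow H_1(F(\Z))\xrightarrow{i_*}H_1(Y),
\end{equation*}
where the first map is projection onto the homological component, to the quotient $\Grdn(\HH,\s)=\underline{\PP}(\xgen_0)\backslash\Grdn(\Z)$. To check that this descends, I would use the identification $\pi_2(\xgen_0,\xgen_0)\cong H_2(Y,F)$ from Section~\ref{sec:diagrams} together with the identification of $H_1(F)$ with $\ker\del\subset H_1(\Zmid,\amid)$ from Section~\ref{sec:algebra}. Under these, for a periodic domain $B$ the homological component of $\gr(B)$ is exactly the image of $B$ under the connecting map $H_2(Y,F)\to H_1(F)$. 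Conjugation by the reduction term $r(I_{o(\xgen_0)})$ preserves homological components, so the same description holds for $\grdn(B)$. The long exact sequence of the pair $(Y,F)$ then gives $i_*\circ\del=0$, which is exactly what is needed for $\pi$ to descend. Its image is clearly all of $\im i_*$ since every element of $H_1(F)$ lifts to $\Grdn(\Z)$.

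Next, I would analyze the fiber over $0$; the remaining fibers are obtained from it by right multiplication by any chosen lift $g(\beta)\in\Grdn(\Z)$ of $\beta\in\im i_*$. The preimage of $\ker i_*$ under $\Grdn(\Z)\to H_1(Y)$ is a central $\frac{1}{2}\ZZ$-extension of $\ker i_*$. Quotienting by $\underline{\PP}(\xgen_0)$ collapses the homological direction entirely, because by exactness $\ker i_*=\im(\del\co H_2(Y,F)\to H_1(F))$ equals the homological image of $\underline{\PP}(\xgen_0)$. The surviving relations among Maslov components come from periodic domains whose image in $H_1(F)$ vanishes, i.e.\ from $B\in\im(H_2(Y)\to H_2(Y,F))$. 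For such $B$ the reduction conjugation acts trivially on the Maslov component (the homological part being zero), so the Maslov part of $\grdn(B)$ is just $-e(B)-2n_{\xgen_0}(B)=-\ind(B)$.

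The main obstacle, and the only nontrivial analytic input, is identifying the subgroup $S=\{-\ind(B):B\in\im(H_2(Y)\to H_2(Y,F))\}$ with $\ddiv(\s)\ZZ$. This is a purely sutured-Floer statement: such $B$ are precisely the periodic domains of the closed-up sutured diagram one would obtain by ignoring the bordered structure along $F$, and their indices compute the pairing $\langle c_1(\s),H_2(Y)\rangle$ by the Ozsv\'ath--Szabó index/Chern class formula. Equivalently one can invoke Theorem~\ref{thm:grading_sfh} in the purely sutured setting, where the same calculation yields the relative $\frac{1}{2}\ZZ/\ddiv(\s)$ grading. Hence $\pi^{-1}(0)\cong\frac{1}{2}\ZZ/\ddiv(\s)$, and right multiplication by $g(\beta)$ makes each fiber a torsor over this group.

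Finally, the equivariance statements are immediate once the structure is in place. For any $(a,\alpha)\in\Grdn(\Z)$, right multiplication shifts the homological component of a representative by $\alpha$ and hence shifts $\pi$-values by $i_*(\alpha)$, so $\pi^{-1}(\beta)$ is sent to $\pi^{-1}(\beta+i_*(\alpha))$. Since the subgroup $(\frac{1}{2}\ZZ,0)\subset\Grdn(\Z)$ is central, its translation action on fibers commutes with right multiplication by $(a,\alpha)$, so the bijection between fibers respects the $\frac{1}{2}\ZZ$-torsor structure.
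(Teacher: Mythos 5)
Your proposal is correct and follows essentially the same route as the paper: the same identifications $\pi_2(\xgen_0,\xgen_0)\cong H_2(Y,F)$ and $\pi_2^\del(\xgen_0,\xgen_0)\cong H_2(Y)$, the same formula $e(B)+2n_{\xgen_0}(B)=\langle c_1(\s),[B]\rangle$ for provincial periodic domains identifying the surviving Maslov relations with $\ddiv(\s)\ZZ$, and the long exact sequence of $(Y,F)$ to identify the homological part with $\im i_*$. The only cosmetic difference is that you build the fiber over $0$ first and then transport by a chosen lift, whereas the paper quotients by the central provincial subgroup $\underline{\PP}^\del$ and then by $\underline{\PP}/\underline{\PP}^\del$ in stages.
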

\begin{proof}
Recall that $\pi_2(\xgen,\xgen)$ is isomorphic to $H_2(Y,F)$ by attaching the cores of 2--handles and half-handles. Inside,
the subgroup $\pi_2^{\del}(\xgen,\xgen)$ of provincial periodic domains is isomorphic to $H_2(Y)\subset H_2(Y,F)$. 
Similar to the purely sutured case, for provincial periodic domains $e(B)+2n_\xgen(B)=\left<c_1(\s),[B]\right>$.
The subgroup
\begin{equation*}
\PP^\del(\xgen)=\gr(\pi_2^{\del}(\xgen,\xgen))=(\left<c_1(\s),H_2(Y)\right>,0)=(\ddiv(\s)\ZZ,0)
\end{equation*}
is central, and therefore $\underline{\PP}^\del(\xgen)=\grdn(\pi_2^{\del}(\xgen,\xgen))\subset\underline{\PP}(\xgen)$ is
also $(\ddiv(\s)\ZZ,0)$ and central in $\Grdn(\Z)$.

In particular, taking the quotient $\Grdn(\Z)/\underline{\PP}^\del$ has the effect of reducing the Maslov component modulo
$\ddiv(\s)$. On the other hand, since any two classes $B_{1,2}$ with the same $\del^\del$ differ by a provincial domain,
$\underline{\PP}/\underline{\PP}^\del$ is isomorphic to
$\im(\del\co H_2(Y,F)\to H_1(F))=\ker(i_*\co H_1(F)\to H_1(Y))$.
Ignoring the Maslov component, passing to
$(\underline{\PP}/\underline{\PP}^{\del})\backslash (\Grdn(\Z)/\underline{\PP}^{\del})=
\underline{\PP}\backslash \Grdn(\Z)$ reduces the
homological component $H_1(F)$ modulo $\ker i_*$. Therefore, the new homological component is valued in
$H_1(F)/\ker i_*\cong\im i_*$.
\end{proof}

\subsection{Grading and gluing}
The most important property of the reduced grading is that it behaves nicely under gluing of diagrams. This will later allow
us to show that the pairing on $\BSDA$ respects the grading.
First, we define a grading for a pair of diagrams which can be glued together, and then show it coincides with the grading
on the gluing.

Suppose $\HH_1$ and $\HH_2$ are diagrams for $(Y_1,\Gamma_1,-\Z_1\cup\Z_2)$ and $(Y_2,\Gamma_2,-\Z_2\cup\Z_3)$,
respectively, and fix reductions for $\Gr(\Z_1)$, $\Gr(\Z_2)$, and $\Gr(\Z_3)$. Recall that
$\Grdn(\HH_1,\s_1)$ has left and right actions by $\Grdn(\Z_1)$ and $\Grdn(\Z_2)$, respectively, while
$\Grdn(\HH_2,\s_2)$ has left and right actions by $\Grdn(\Z_2)$ and $\Grdn(\Z_3)$, respectively.

It is easy to see that generators in $\HH_1\cup_{\Zmid_2}\HH_2$ correspond to pairs of generators with complementary
idempotents at $\Z_2$, and there are restriction maps on $\spinc$--structures, such that
$\s(\xgen_1,\xgen_2)|_{Y_i}=\s(\xgen_i)$. Let $F_i=F(\Z_i)$.
From the long exact homology sequence for the triple
$(Y_1\cup Y_2,F_1\cup F_2\cup F_3,F_1\cup F_3)$ and Poincar\'e duality, we can see that $\{\s:\s|_{Y_i}=\s_i\}$ is either
empty or an affine set over $\im(i_*\co H_1(F_2)\to H_1(Y_1\cup Y_2,F_1\cup F_3))$.

\begin{defn}
Let $\Grdn(\HH_1,\HH_2,\s_1,\s_2)$ be the product
\begin{equation*}
\Grdn(\HH_1,\s_1)\times_{\Grdn(\Z_2)}\Grdn(\HH_2,\s_2),
\end{equation*}
i.e. the usual product of the two sets, modulo the relation $(a\cdot g,b)\sim(a,g\cdot b)$ for any $g\in\Grdn(\Z_2)$.
It inherits a left action by $\Grdn(\Z_1)$ and a right action by $\Grdn(\Z_3)$, which commute and where the Maslov
components act in the same way.

Define a grading on $\cup_{\s|_{Y_i}=\s_i}\G(\HH_1\cup\HH_2,\s)$ by
\begin{equation*}
\grdn'(\xgen_1,\xgen_2)=[(\grdn(\xgen_1),\grdn(\xgen_2)]\in\Grdn(\HH_1,\HH_2,\s_1,\s_2).
\end{equation*}
\end{defn}

\begin{thm}
\label{thm:grading_gluing}
Assume $\s_1$ and $\s_2$ are compatible, i.e. there is at least one $\s$ restricting to each of them.
There is a projection on the mixed grading set
\begin{equation*}
\pi\co\Grdn(\HH_1,\HH_2,\s_1,\s_2)\to\im(i_*\co H_1(F_2)\to H_1(Y_1\cup Y_2,F_1\cup F_3)),
\end{equation*}
defined up to a shift in the image, with the following properties.
\begin{enumerate}
\item\label{cond:distinguish}
For any two generators $\xgen$ and $\ygen$ with
$\s(\xgen)|_{Y_i}=\s(\ygen)|_{Y_i}=\s_i$, we have
\begin{equation*}
\PD(\s(\ygen)-\s(\xgen))=\pi(\grdn'(\xgen))-\pi(\grdn'(\ygen)),
\end{equation*}
i.e. $\pi$ distinguishes $\spinc$--structures. Moreover, the $\Grdn(\Z_1)$ and $\Grdn(\Z_3)$--actions preserve the fibers
of $\pi$.
\item\label{cond:same_grading}
For each $\s$, such that $\s|_{Y_i}=\s_i$, there is a unique fiber $\Grdn_{\s}$ of $\pi$, such that the grading
$\grdn'|_{\G(\HH_1\cup\HH_2,\s)}$ is valued in $\Grdn_{\s}$, and is equivalent to $\grdn$ valued in $\Grdn(\HH_1\cup\HH_2,\s)$.
\end{enumerate}
\end{thm}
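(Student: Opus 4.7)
The plan is to establish, via the canonical cut-and-paste decomposition of generators and domains across $\Zmid_2$, that $\grdn'$ on the fibered product agrees with $\grdn$ on the glued diagram $\HH=\HH_1\cup_{\Zmid_2}\HH_2$, and then to read off $\pi$ from the extra $\Z_2$--data that is present in the fibered product but invisible after gluing.

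First I would set up the diagrammatic dictionary: generators of $\HH$ correspond to pairs $(\xgen_1,\xgen_2)$ with complementary $\Z_2$--idempotents, and any $B\in\pi_2(\xgen,\ygen)$ in $\HH$ decomposes uniquely as $B_1\cup B_2$ with $\del^{\del}_{\Z_2}B_1=-\del^{\del}_{-\Z_2}B_2$. Euler measure and the corner multiplicities are additive across the cut, while $\del^{\del}B=\del^{\del}_{-\Z_1}B_1+\del^{\del}_{\Z_3}B_2$ because the $\Zmid_2$ pieces cancel on the glued surface. Reading off the definition of $\gr$, this produces an identity of the form $\gr(B)=[\gr(B_1),\gr(B_2)]$ in $\Gr(\HH_1,\HH_2,\s_1,\s_2)$ once the $\Z_2$--boundary is absorbed via the fibered-product relation $(a\cdot g,b)\sim(a,g\cdot b)$. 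Passing to $\grdn$, the $\Z_2$ reduction terms $r(I_{o(\xgen_1)})$ and $r(I_{\obar(\xgen_1)})=r(I_{o(\xgen_2)})$ are complementary, which is exactly the data the equivalence relation quotients out, while the $\Z_1$ and $\Z_3$ reductions sit at the outer ends unchanged.

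To construct $\pi$, I would use that any two $\spinc$--structures on $Y_1\cup Y_2$ restricting to the common $\s_i$ differ by an element of $\im(i_*\co H_1(F_2)\to H_1(Y_1\cup Y_2,F_1\cup F_3))$, by a Mayer--Vietoris argument for the decomposition along $F_2$. Dually, the leftover data in the fibered product not seen by the glued $\grdn$ is precisely the $\Z_2$--homological component $\del^{\del}_{\Z_2}$ of a representative $(g_1,g_2)$, modulo the ambiguity from $\HH$--periodic classes; pushing it through $H_1(\Zmid_2,\amid_2)\twoheadrightarrow H_1(F_2)\to H_1(Y_1\cup Y_2,F_1\cup F_3)$ produces $\pi$. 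Property~(\ref{cond:distinguish}) then follows from Proposition~\ref{prop:spinc_diff}, because the class $\epsilon(\xgen,\ygen)\in H_1(Y_1\cup Y_2,F_1\cup F_3)$ whose Poincar\'e dual records $\s(\ygen)-\s(\xgen)$ decomposes across $F_2$ and its $F_2$--component is exactly the shift in $\pi\circ\grdn'$; the $\Grdn(-\Z_1)$ and $\Grdn(\Z_3)$ actions touch only outer components and hence preserve fibers.

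For property~(\ref{cond:same_grading}), with $\s$ fixed and a base generator $\xgen_0=(\xgen_{0,1},\xgen_{0,2})\in\G(\HH,\s)$, I would verify that the stabilizer $\PP(\xgen_0)$ is carried isomorphically onto the stabilizer of $[(\grdn(\xgen_{0,1}),\grdn(\xgen_{0,2}))]$ inside the fiber $\Grdn_\s$, since $\HH$--periodic classes are exactly matching pairs of $\HH_i$--periodic classes with cancelling $\Zmid_2$--boundaries; this yields the required equivariant bijection $\Grdn(\HH,\s)\cong\Grdn_\s$ intertwining the two gradings. The main obstacle will be this last stabilizer comparison: checking, via a relative Mayer--Vietoris computation for $H_2(Y_1\cup Y_2,F_1\cup F_3)$ versus the $H_2(Y_i,F_i\cup F_2)$, that no $\frac{1}{2}\ZZ$ Maslov contribution is lost under the gluing of the pairings $L_{\pm\Z_2}$, which are opposite by Section~\ref{sec:reversal} and must therefore cancel precisely in the fibered-product relation.
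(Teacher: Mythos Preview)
Your approach is essentially the same as the paper's: decompose domains across $\Zmid_2$, use the residual $H_1(F_2)$--component to build $\pi$, and identify the fibers with $\Grdn(\HH,\s)$ via a stabilizer comparison. The paper carries this out by rewriting the fibered product as a double coset space $\PP\backslash\Grdn(-\Z_1\cup\Z_2\cup-\Z_2\cup\Z_3)/H$ and defining $\Pi$ as the sum of the two $H_1(F_2)$--components, then computing $\Pi(\PP)=\ker i_*$ from the long exact sequence of the triple $(Y_1\cup Y_2,F_1\cup F_2\cup F_3,F_1\cup F_3)$; your sketch of $\pi$ amounts to the same thing.

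Two points deserve more care. First, for part~(\ref{cond:same_grading}) you verify that stabilizers match but you do not argue that the $\Grdn(-\Z_1)\times\Grdn(\Z_3)$--action is \emph{transitive} on each fiber $\Grdn_\s$; without this, matching stabilizers does not yield a bijection. The paper proves transitivity by noting that $\ker\Pi=\Grdn(-\Z_1\cup\Z_3)\cdot H$ exactly, so any two elements with the same $\pi$--value differ by something in $\PP\cdot\Grdn(-\Z_1\cup\Z_3)\cdot H$. Second, you correctly flag the Maslov cancellation as the delicate point in the stabilizer comparison, but you do not resolve it; the paper's device is to choose the base idempotents for the reduction $r$ to be those of the base generator $(\xgen_1,\xgen_2)$, so that $\grdn=\gr$ on periodic classes and the $L_{\pm\Z_2}$--terms cancel on the nose for matched pairs $(B_1,B_2)$ with $\del^{\del_2}B_1+\del^{\del_2}B_2=0$. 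With these two steps filled in, your argument is the paper's argument.
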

\begin{proof}
It is useful to pass to only right actions, as the grading sets were originally defined.
Recall that $\Grdn(\HH_i,\s_i)$ are defined as the cosets
$\underline{\PP}(\xgen_i)\backslash\Grdn(-\Z_i\cup\Z_{i+1})$ for some
$\xgen_i\in\G(\HH_i,\s_i)$, for $i=1,2$. We define the mixed grading as the quotient of the product by the action of the
subgroup $H'=\{((a,\alpha),(-a,-\alpha))\}\cong\Grdn(\Z_2)$, for all $a\in\frac{1}{2}\ZZ$, $\alpha\in H_1(F_2)$
on the right.
Since $H'$ commutes with $\Grdn(-\Z_1)\times\Grdn(\Z_3)\subset\Grdn(-\Z_1\cup\Z_2)\times\Grdn(-\Z_2\cup\Z_4)$, we can
think of $\Grdn(\HH_1,\HH_2,\s_1,\s_2)$ as the cosets
\begin{equation*}
\PP'\backslash(\Grdn(-\Z_1\cup\Z_2)\times\Grdn(-\Z_2\cup\Z_3))/H',
\end{equation*}
with a right action by
$\Grdn(-\Z_1)\times\Grdn(\Z_3)$.
Here $\PP'=\underline{\PP}(\xgen_1)\times\underline{\PP}(\xgen_2)$.
Since the Maslov components behave nicely and commute with everything,
we can take a quotient by the Maslov component $\{((a,0),(-a,0))\}$ in $H'$. We get
\begin{equation*}
\Grdn(\HH_1,\HH_2,\s_1,\s_2)=\PP\backslash\Grdn(-\Z_1\cup\Z_2\cup-\Z_2\cup\Z_3))/H,
\end{equation*}
with a right action of $\Grdn(-\Z_1\cup\Z_3)$. Here 
$H=\{(0,\alpha,-\alpha):\alpha\in H_1(F_2)\}\subset\Grdn(\Z_2\cup-\Z_2)$, while the abelian group $\PP$ is generated by
the two subgroups
$\underline{\PP}(\xgen_i)$. In other words, the mixed grading set has elements of the form 
$[a]=\PP\cdot a\cdot H$, with action $[a]\cdot g=[a\cdot g]$.

Let $\Pi$ be addition of the $H_1(F_2)$--homological component terms together, and ignoring the rest. $H$ is in the kernel
of $\Pi$, while $\Pi|_{\PP}$ is the restriction of
$\Pi\circ\grdn'$ to
\begin{multline*}
\pi_2(\xgen_1,\xgen_1)\times\pi_2(\xgen_2,\xgen_2)\cong H_2(Y_1,F_1\cup F_2)\times H_2(Y_2,F_2\cup F_3)\\
\cong
H_2(Y_1\cup Y_2,F_1\cup F_2\cup F_3),
\end{multline*}
and
coincides with the boundary map
\begin{equation*}
\del\co H_2(Y_1\cup Y_2,F_1\cup F_2\cup F_3)\to H_1(F_1\cup F_2\cup F_3,F_1 \cup F_3)\cong H_1(F_2)
\end{equation*}
from the long exact sequence of the triple. Therefore
$\Pi(\PP)=\im\del=\ker(i_*\co H_1(F_2)\to H_1(Y_1\cup Y_2,F_1\cup F_3))$, and $\Pi$ descends on the cosets to a projection
$\pi$ with values in $H_1(F)/\ker i_*\cong\im i_*$. A different choice of $\xgen_i$ only shifts the homological components,
and so the image of $\pi$.

To prove~(\ref{cond:distinguish}), we need to check that for any compatible $\ygen_i\in\G(\HH_i,\s_i)$, the difference
$\s(\ygen_1,\ygen_2)-\s(\xgen_1,\xgen_2)$ is the same as $-(\pi(\grdn'(\ygen_1,\ygen_2))-\pi(\grdn'(\xgen_1,\xgen_2)))$.
Suppose $B_i\in\pi_2(\xgen_i,\ygen_i)$. Then the latter difference is $-\pi([\grdn(B_1),\grdn(B_2)])+\pi([0,0])
=-i_*(h_1+h_2)$, where $h_i$ is the $H_1(F_2)$ part of the homological component of $\grdn(B_i)$. Since the
$\Z_2$--idempotents of $\xgen_1$ and $\xgen_2$ are complementary, as well as those of $\ygen_1$ and $\ygen_2$, the reduction terms
$r$ cancel, and we can look at $\gr(B_1)$ and $\gr(B_2)$, instead.
Therefore $h_1+h_2=\del^{\del_2}B_1+\del^{\del_2}B_2$, interpreted
as an element of $H_1(F_2)\subset H_1(\Zmid_2,\amid_2)$. Here $\del^{\del_2}$ denotes the $\Z_2$ part of $\del^\del$.
It is indeed in that subgroup, again because of the
complementary idempotents.

By the proof of proposition~\ref{prop:spinc_diff}, we have $\s(\ygen)-\s(\xgen)=\PD([a-b])$, where $a$ and $b$
are any two 1--chains in $\balpha$ and $\bbeta$, with $\del a=\ygen-\xgen+\mathbf{z}$ and $\del b=\ygen-\xgen$,
where $\mathbf{z}$ is a
0--chain in $\Zmid_1\cup\Zmid_3$. The boundaries of $B_1$ and $B_2$ almost give us such chains. Let $a_i=\del^{\alpha}B_i$,
$b_i=\del^{\beta}B_i$, and $c_i=\del^{\del_2}B_i$, as chains. Then $[c_1+c_2]=h_1+h_2$, in $H_1(\Zmid_2,\amid_2)$, and
$[a_1+a_2+b_1+b_2+c_1+c_2]=-[\del^{\del_1}B_1+\del^{\del_3}B_2]=0\in H_1(Y_1\cup Y_2,F_1\cup F_3)$. Notice that we can
represent $h_1+h_2\in H_1(F_2)$ by the 1--chain $c_1+c_2+d$, where $d$ is a sum of some $\balpha^a$--arcs in $\HH_1$. Let
$a=a_1+a_2-d$, and $b=b_1+b_2$. They have the desired properties, and
$[a-b]=[a_1+a_2+b_1+b_2+c_1+c_2]-[c_1+c_2+d]=0-i_*(h_1+h_2)$. Thus $\pi$ does indeed distinguish $\spinc$--structures.
It is clear that the $\pi|_{\Grdn(-\Z_1\cup\Z_3)}=0$ and the action preserves the fibers.

For~(\ref{cond:same_grading}),
we know the restriction $\grdn'_{\G(\HH_1\cup\HH_2,\s)}$ takes values in a unique fiber $\Grdn_\s$. To see that the
grading is equivalent to $\grdn$, we need three results. First, we need to show that the action of
$\Grdn(-\Z_1\cup\Z_3)$ is transitive
on $\Grdn_\s$. Second, we need to show that the stabilizers of
$\grdn'(\ygen_0)$ and $\grdn(\ygen_0)$ are the same for some $\ygen_0\in\G(\HH_1\cup\HH_2,\s)$.
These two steps show that $\Grdn_\s$ and $\Grdn(\HH_1\cup\HH_2,\s)$ are equivalent as grading
sets. Finally, we need to show that for any other $\ygen\in\G(\HH_1\cup\HH_2,\s)$, there is at least one
$g\in\Grdn(-\Z_1\cup\Z_3)$, such that $\grdn(\ygen)=\grdn(\ygen_0)\cdot g$ and $\grdn'(\ygen)=\grdn'(\ygen_0)\cdot g$.

For the first part, notice that $\Grdn(-\Z_1\cup\Z_3)\times H$ is exactly the kernel of $\Pi$, while the reduction to
$\pi$ was exactly by the image of $\PP$. Therefore, if $\pi([a_1])=\pi([a_2])$, then $\Pi(a_1)=\Pi(p\cdot a_2)$ for some
$p\in\PP$, so $a_1=p\cdot a_2\cdot g\cdot h$ for some $g\in\Grdn(-\Z_1\cup\Z_3)$ and $h\in H$. In other words,
$[a_1]=[a_2]\cdot g$.

For the second part, we can assume $(\xgen_1,\xgen_2)$ are in $\s$, and use that as $\ygen_0$. In this case the stabilizer
for $\grdn'$
is $(\PP\cdot H)\cap\Grdn(-\Z_1\cup\Z_3)$. We may also assume the base idempotents for $r$ are
$I_{\obar_1(\xgen_1)}$ for $\Z_1$, $I_{o_2(\xgen_1)}=I_{\obar_2(\xgen_2)}$ for $\Z_2$, and $I_{o_3(\xgen_3)}$ for $\Z_3$.
This ensures $\grdn=\gr$ for periodic domains at $(\xgen_1,\xgen_2)$. This corresponds to the gradings of pairs
of periodic classes $B_i\in\pi_2(\xgen_i,\xgen_i)$ with $\del^{\del_2}B_1$+$\del^{\del_2}B_2=0$, canceling those terms.
But such pairs are in 1--to--1 correspondence with periodic class $B\in\pi_2((\xgen_1,\xgen_2),(\xgen_1,\xgen_2))$.
The gradings for such pairs are additive, so the stabilizer of $\grdn'$ is the same as $\grdn$.

Finally, to show the relative gradings are the same, pick any $B\in\pi_2(\ygen_0,\ygen)$. It decomposes into two classes
$B_i$ connecting the $\HH_i$ components of $\ygen_0$ and $\ygen$ with canceling $\del^{\del_2}$. Similar to the above
discussion, the regular gradings satisfy $\gr(B)=\gr(B_1)\gr(B_2)$. The reduction terms match up, so the same holds for
$\grdn$. Thus $\grdn(B)$ is the grading difference between $\ygen$ and $\ygen_0$ for both gradings.
\end{proof}

\section{One-sided invariants}
\label{sec:invariants}

\subsection{Overview}

To a bordered sutured manifold $(Y,\Gamma,\Z,\phi)$, we will associate the following invariants.
Each of them is defined up to homotopy equivalence, in the appropriate sense.
\begin{enumerate}
\item A right $\Ainf$--module over $\A(\Z)$, denoted
$\CSFA(Y,\Gamma,\Z,\phi)$.
\item A left \emph{type $D$ structure} over $\A(-\Z)$, denoted
$\CSFD(Y,\Gamma,\Z,\phi)$.
\item A left differential graded module over $\A(-\Z)$, which we denote
$\CSFDD(Y,\Gamma,\Z,\phi)$.
\end{enumerate}

\subsection{Type $D$ structures}
\label{sec:prelims}
Although we can express all of the invariants and their properties in terms of differential
graded modules and $\Ainf$--modules, from a practical standpoint it is more convenient to use
the language of type $D$ structures introduced in \cite{LOT:pairing}. We recall here the
definitions and basic properties. To simplify the discussion we
will restrict to the case where the algebra is
differential graded, and not a general $\Ainf$--algebra. This is all that is necessary
for the present applications. 

\begin{rmk} Any algebra or module has an implicit action
by a base ring, and any usual tensor product $\otimes$ is taken over such a base ring.
In the case of the algebra $\A(\Z)$ associated with an arc algebra, and any modules over it, the base ring is the
idempotent ring $\I(\Z)$. We will omit the base ring from the notation to avoid clutter.
\end{rmk}

Let $A$ be a differential graded algebra with differential $\mu_1$ and multiplication $\mu_2$.

\begin{defn} A \emph{(left) type $D$ structure} over $A$ is a graded module $N$ over the base ring,
with a homogeneous operation
\begin{equation*}
\delta\co N\to (A \otimes N)[1],
\end{equation*}
satisfying the compatibility condition
\begin{equation} \label{eq:type_d}
(\mu_1\otimes\id_N)\circ\delta + (\mu_2\otimes\id_N)\circ(\id_A\otimes\,\delta)\circ\delta=0.
\end{equation}
\end{defn}

We can define induced maps
\begin{equation*}
\delta_k\co N\to (A^{\otimes k} \otimes N)[k],
\end{equation*}
by setting
\begin{equation*}
\delta_k=\begin{cases}
\id_N & \textrm{for $k=0$,} \\
(\id_{A}\otimes\,\delta_{k-1})\circ \delta &\textrm{for $k\geq 1$.}
\end{cases}
\end{equation*}

\begin{defn}
We say a type $D$ structure $N$ is \emph{bounded} if for any $n\in N$,
$\delta_k(n)=0$
for sufficiently large $k$.
\end{defn}

Given two left type $D$ structures $N$, $N'$ over $A$, the homomorphism space $\Hom(N,A\otimes N')$,
graded by grading shifts, becomes a graded chain complex with differential
\begin{equation*}
Df=(\mu_1\otimes\id_{N'})\circ f + (\mu_2\otimes\id_{N'})\circ(\id_{A}\otimes\,\delta')\circ f
+(\mu_2\otimes\id_{N'})\circ(\id_{A}\otimes f)\circ \delta.
\end{equation*}

\begin{defn}
A \emph{type $D$ structure map} from $N$ to $N'$ is a cycle in the above chain complex.

Two such maps $f$ and $g$ are \emph{homotopic} if $f-g=Dh$ for some $h\in \Hom(N,A\otimes N')$, 
called a \emph{homotopy} from $f$ to $g$.

If $f\co N\to A\otimes N'$, and $g\co N'\to A\otimes N''$ are type $D$ structure maps, their
\emph{composition} $f\bcirc g\co N\to A\otimes N''$ is defined to be
\begin{equation*}
f\bcirc g=(\mu_2\otimes\id_{N''})\circ(\id_A\otimes\,g)\circ f.
\end{equation*}
\end{defn}

With the above definitions, type $D$ structures over $A$ form a differential graded category.
This allows us, among other things, to talk about homotopy equivalences.
(In general, for an $\Ainf$--algebra $A$, this is an $\Ainf$--category.)

~

Let $M$ be a right $\Ainf$--module over $A$, with higher $\Ainf$ actions
\begin{equation*}
m_k\co M\otimes A^{\otimes (k-1)}\to M[2-k]\textrm{, for $k\geq 1$.}
\end{equation*}

Let $N$ be a left type $D$ structure. We can define a special
tensor product between them.

\begin{defn}
Assuming at least one of $M$ and $N$ is bounded, let
\begin{equation*}
M\sqtens_{A} N
\end{equation*}
be the graded vector space $M\otimes N$, with differential
\begin{equation*}
\del\co M\otimes N\to (M\otimes N)[1],
\end{equation*}
defined by
\begin{equation*}
\del=\sum_{k=1}^{\infty}(m_k\otimes\id_N)\circ(\id_M\otimes\,\delta_{k-1}).
\end{equation*}
\end{defn}

The condition that $M$ or $N$ is bounded guarantees that the sum is always finite. In that case
$\del^2=0$ (using $\ZZ/2$ coefficients), and $M\sqtens N$ is a graded chain complex.

The most important property of $\sqtens$, as shown in \cite{LOT:bimodules} is that it is functorial up to
homotopy and induces a bifunctor on the level of derived categories.

The chain complex $A\sqtens N$ is in fact a graded differential module over $A$, with
differential
\begin{equation*}
\del=\mu_1\otimes\id_N+(\mu_2\otimes\id_N)\circ\delta,
\end{equation*}
and algebra action
\begin{equation*}
a\cdot(b,n)=(\mu_2(a,b),n).
\end{equation*}

In a certain sense working with type $D$ structures is equivalent to working with their associated left
modules. In particular, $A\sqtens\cdot$ is a functor, and $M\dtens(A\sqtens N)$ and $M\sqtens N$ are
homotopy equivalent as graded chain complexes.

\subsection{\texorpdfstring{$\CSFD$ and $\CSFDD$}{BSD and BSDM}}
Let $\HH=(\Sigma,\balpha,\bbeta,\Z,\psi)$ be a provincially admissible bordered sutured
Heegaard diagram, and let $J$ be an admissible almost complex structure.

We will define $\CSFD$ as a type $D$ structure over $\A(-\Z)$.

\begin{defn}
Fix a relative $\spinc$--structure $\s\in\spinc(Y,\del Y\setminus F)$.
Let $\CSFD(\HH,J,\s)$ be the $\ZZ/2$ vector space generated by the set of all generators $\G(\HH,\s)$.
Give it the structure of an $\I(-\Z)$ module as follows. For any $\xgen\in\G(\HH,\s)$ set
\begin{equation*}
I(s)\cdot\xgen=\begin{cases}
\xgen & \textrm{if $s=\obar(\xgen)$,}\\
0 & \textrm{oterwise.}
\end{cases}
\end{equation*}
\end{defn}

We consider only discrete partitions $\parrow=(\{q_1\},\ldots,\{q_m\})$.

\begin{defn} For $\xgen,\ygen\in\G(\HH)$ define
\begin{equation*}
a_{\xgen,\ygen}=\sum_{\substack{\ind(B,\brarrow(\parrow))=1 \\ \textrm{$\parrow$ discrete}}}
\#\M_{\emb}^B(\xgen,\ygen,S^\trr,\parrow)\cdot a(-P_1)\cdots a(-P_m).
\end{equation*}
\end{defn}

We compute $a(-P_i)$, since the Reeb chord $\rho_i$ labeling the puncture $q_i$ is
oriented opposite from $-\Z$.

\begin{defn}
Define $\delta\co \CSFD(\HH,J,\s)\to\A(-\Z)\otimes\CSFD(\HH,J,\s)$ as follows.
\begin{equation*}
\delta(\xgen)=\sum_{\ygen\in\G(\HH)}a_{\xgen,\ygen}\otimes\ygen.
\end{equation*}
\end{defn}

Note that, $\pi_2(\xgen,\ygen)$ is nonempty if and only if $\s(\xgen)=\s(\ygen)$,
so the range of $\delta$ is indeed correct.

\begin{thm}
\label{thm:bsd}
The following statements are true.
\begin{enumerate}
\item \label{cond:d_squared}
$\CSFD(\HH,J,\s)$ equipped with $\delta$, and the grading $\Grdn(\HH,\s)$--valued grading $\grdn$ is a type
$D$ structure over $\A(-\Z)$. In particular,
\begin{equation*}
\lambda^{-1}\cdot\grdn(\xgen)=\grdn(a)\cdot\grdn(\ygen),
\end{equation*}
whenever $\delta(\xgen)$ contains the term $a\otimes\ygen$.
\item \label{cond:bounded}
If $\HH$ is totally admissible, $\CSFD(\HH,J,\s)$ is bounded.
\item \label{cond:invariance}
For any two provincially admissible diagrams $\HH_1$ and $\HH_2$, equipped with 
admissible almost complex structures $J_1$ and $J_2$, there is a graded homotopy equivalence
\begin{equation*}
\CSFD(\HH_1,J_1,\s)\simeq\CSFD(\HH_2,J_2,\s).
\end{equation*}
Therefore we can talk about $\CSFD(Y,\Gamma,\Z,\psi,\s)$ or just $\CSFD(Y,\Gamma,\s)$, relatively graded by $\Grdn(Y,\s)$.
\end{enumerate}
\end{thm}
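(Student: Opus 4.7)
The plan is to mirror the three-part strategy of \cite[Chapter~6]{LOT:pairing}, using Proposition~\ref{prop:degenerations} to supply the bubbling and boundary-degeneration control needed in the more general bordered sutured setting.

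For~(\ref{cond:d_squared}), I would verify the type $D$ relation~(\ref{eq:type_d}) by analyzing the compactified boundary of the 1-dimensional moduli spaces $\mb^B_{\emb}(\xgen,\ygen,S^\trr,\parrow)$ with $\parrow$ discrete and $\ind(B,\brarrow(\parrow))=2$. The possible ends split into two families: two-story holomorphic buildings, which combine with split-curve degenerations at East infinity to produce the nested contribution $(\mu_2\otimes\id)\circ(\id\otimes\,\delta)\circ\delta$; and East-infinity degenerations of join, split, or shuffle type whose Reeb chord combinatorics produce exactly the $(\mu_1\otimes\id)\circ\delta$ term, corresponding to applying the algebra differential to one of the factors $a(-P_i)$. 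Proposition~\ref{prop:degenerations} eliminates closed bubbles and boundary degenerations, and the gluing theorems and East-infinity combinatorics from \cite[Chapter 5]{LOT:pairing} carry over to give the signed (hence mod~2 even) boundary count. The grading compatibility $\lambda^{-1}\grdn(\xgen)=\grdn(a)\grdn(\ygen)$ is a direct consequence of the identity $\gr(B)=\lambda^{\#\brarrow-\ind(B,\brarrow)}\gr(\brarrow)$ specialized to $\ind=1$, together with the anti-isomorphism of Section~\ref{sec:reversal} converting the right $\Gr(\Z)$-action into a left $\Gr(-\Z)$-action and the cancellation of the reduction terms at $\xgen$ and $\ygen$, whose idempotents are complementary by construction.

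For~(\ref{cond:bounded}), I would use the fact that total admissibility forbids nonzero positive periodic domains, so between any two generators there are only finitely many positive homology classes. A contribution to $\delta_k(\xgen)$ comes from a concatenation of $k$ positive, index-1 domains through intermediate generators; its total area grows linearly with $k$, and the total Reeb chord length is likewise unbounded in $k$. Finiteness of positive connecting domains of bounded area then forces $\delta_k(\xgen)=0$ for all sufficiently large $k$, which is exactly boundedness.

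For~(\ref{cond:invariance}), I would check invariance of the homotopy equivalence class of $\CSFD(\HH,J,\s)$ under each move listed in Proposition~\ref{prop:heeagaard}, together with change of almost complex structure. Variation of $J$ and isotopies of $\balpha^c$, $\bbeta$, and (rel endpoints) $\balpha^a$ yield continuation type $D$ maps from parametrized moduli spaces, whose compositions are homotopic to the identity by a standard one-parameter family argument. Handleslides of $\bbeta$ over $\bbeta$, handleslides of $\balpha^c$ over $\balpha^c$, and stabilization are handled by counts of triangles in a suitable Heegaard triple, where provincial admissibility ensures that all relevant counts are finite and the degeneration analysis again reduces to Proposition~\ref{prop:degenerations}. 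The main obstacle is the handleslide of an $\balpha^a$ arc over an $\balpha^c$ circle, which is genuinely new to the bordered sutured setting: one must build the appropriate triple diagram, identify the canonical triangle-counting generator that realizes the chain equivalence, and rule out extra degenerations produced by the fact that the sliding arc has endpoints on $\del\Sigma$---this last step again reducing to the homological input of Proposition~\ref{prop:degenerations}. Once these local ingredients are in place, the standard continuation and composition arguments patch them into a chain of graded homotopy equivalences between any two provincially admissible diagrams, yielding the well-defined invariant $\CSFD(Y,\Gamma,\s)$ with relative grading in $\Grdn(Y,\s)$.
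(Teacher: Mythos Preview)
Your proposal is correct and follows essentially the same approach as the paper's proof, which explicitly points back to \cite[Chapter~6]{LOT:pairing} and uses Proposition~\ref{prop:degenerations} for the degeneration control. Two minor points: the paper first checks that $\delta$ itself is a finite sum (using provincial admissibility together with the fact that only finitely many Reeb-chord sequences have nonzero product in $\A(-\Z)$), which you omit; and the handleslide of an $\balpha^a$ arc over an $\balpha^c$ circle is not new to the bordered sutured setting---it already appears in the purely bordered theory of \cite{LOT:pairing}, so the triangle argument there carries over directly rather than requiring a fresh analysis.
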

\begin{proof}

In light of the discussion in section~\ref{sec:degenerations}, the proofs carry over from 
those for $\CFD$ in the
bordered case. We sketch the main steps below.

For~(\ref{cond:d_squared}), first we use provincial admissibility to guarantee the
sums in the definitions are finite. Indeed, only finitely many provincial domains
$B\in\pi_2^{\del}(\xgen,\ygen)$ are positive and can contribute. The number of non provincial
domains ends up irrelevant, since only finitely many sequences of elements of $\A(-\Z)$ have
nonzero product.

To show that Eq.~(\ref{eq:type_d}) is satisfied,
we count possible degenerations of 1--dimensional moduli spaces, which are always
an even number.
Two story buildings correspond to the 
$(\mu_2\otimes\id_N)\circ(\id_\A\otimes\,\delta)\circ\delta$ term.
The degenerations with a curve at East infinity correspond to
the $(\mu_1\otimes\id_N)\circ\delta$ term.

To show that the grading condition is satisfied, recall that $a\otimes\ygen$ can be a term in $\delta(\xgen)$ only
if there is a domain $B\in\pi_2(\xgen,\ygen)$, and a compatible sequence $\brarrow=(\{\rho_1\},\ldots,\{\rho_p\})$ of
Reeb chords, such that $\ind(B,\brarrow)=1$, and
$a=I_{\obar(\xgen)}\cdot a(-\rho_1)\cdots a(-\rho_p)\cdot I_{\obar(\ygen)}$. We will prove the statement for $\gr$, which
allows us to ignore the idempotents at the end. The $\grdn$--version then follows from using the same reduction terms.

Notice that $\gr_{-\Z}(-\rho_i)=(-1/2,-[\rho_i])$, and so
\begin{equation*}
\gr(a)=(-p/2+\sum_{i<j}L_{-\Z}(-[\rho_i],-[\rho_j]),-[\brarrow]),
\end{equation*}
which we can also interpret as a $\Gr(\Z)$--grading
acting on the right. On the other hand, $\gr_{\Z}(\rho_i)=(-1/2,[\rho_i])$, and
\begin{equation*}
\gr(\brarrow)=(-p/2+\sum_{i<j}L_{\Z}([\rho_i],[\rho_j]),[\brarrow]).
\end{equation*}

Recall that $L_{\Z}$ and $L_{-\Z}$ have
opposite signs. Therefore, we have the relation $\gr(\brarrow)^{-1}=\lambda^{-p}\gr(a)$. Thus, we have
\begin{align*}
\gr(a\otimes\ygen)&=\gr_{-\Z}(a)\cdot\gr(\ygen)=\gr(\ygen)\cdot\gr_{\Z}(a)\\
&=\gr(\xgen)\cdot\gr(B)\gr(a)=\gr(\xgen)\cdot\lambda^{-\ind(B,\brarrow)+\#\brarrow}\gr(\brarrow)\gr(a)\\
&=\gr(\xgen)\cdot\lambda^{-1+p}\lambda^{-p}\gr(a)^{-1}\gr(a)=\gr(\xgen)\cdot\lambda^{-1}.
\end{align*}

For~(\ref{cond:bounded}), we use the fact that with full admissibility, only finitely
many domains $B\in\pi_2(\xgen,\ygen)$ are positive, and could contribute to $\delta_k$, for
any $k$. Therefore, only finitely many of the terms of $\delta_k(\xgen)$ are nonzero.

For~(\ref{cond:invariance}) we use the fact that provincially admissible diagrams can be
connected by Heegaard moves. To isotopies and changes of almost complex structure,
we associate moduli spaces, depending on a path $(\HH_t,J_t)$ of isotopic diagrams
and almost complex structures. Counting 0--dimensional spaces gives a type $D$ map
$\CSFD(\HH_0,J_0)\to\CSFD(\HH_1,J_1)$.
Analogous results to those in section~\ref{sec:moduli} and
counting the ends of 1--dimensional moduli spaces show that the map is well defined and
is in fact a homotopy equivalence. To handleslides, we associate maps coming from
counting holomorphic triangles, which also behave as necessary in this special case.

For invariance of the grading, we show that both in time-dependent moduli spaces,
and when counting triangles we can grade domains
compatibly. In particular, the stabilizers are still conjugate, and the grading set is preserved.
In both cases we count domains with index 0, so the relative gradings of individual elements are also preserved.
\end{proof}

If we ignore $\spinc$ structures we can talk about the total invariant
\begin{equation*}
\CSFD(Y,\Gamma)=\bigoplus_{\s\in\spinc(Y,\Gamma)}\CSFD(Y,\Gamma,\s).
\end{equation*}

We define $\CSFDD$ in terms of $\CSFD$. The two are essentially different algebraic
representations of the same object.

\begin{defn}
Given a bordered sutured manifold $(Y,\Gamma,\Z,\phi)$, let
\begin{align*}
\CSFDD(Y,\Gamma,\s)&=\A(-\Z)\sqtens\CSFD(Y,\Gamma,\s),\\
\CSFDD(Y,\Gamma)&=\A(-\Z)\sqtens\CSFD(Y,\Gamma).
\end{align*}
\end{defn}

\begin{rmk}
Recall that if $(Y,\Gamma)$ is $p$--unbalanced, then any generator has $p$ many occupied
arcs. However, for $\CSFD$ the algebra action depends on unoccupied arcs. Therefore, if $\Z$
has $k$ many arcs, then $\CSFD(Y,\Gamma)$ is in fact a type $D$ structure over $\A(-\Z,k-p)$ only.
\end{rmk}

\subsection{\texorpdfstring{$\CSFA$}{BSA}}
The definition of $\CSFA$ is similar to that of $\CSFD$, but differs in some important
aspects. In particular, we count a wider class of curves and they are recorded differently.

Let $\HH=(\Sigma,\balpha,\bbeta,\Z,\psi)$ be a provincially admissible bordered sutured
Heegaard diagram, and let $J$ be an admissible almost complex structure.

We define $\CSFA$ as an $\Ainf$--module over $\A(\Z)$.

\begin{defn}
Fix a relative $\spinc$--structure $\s\in\spinc(Y,\del Y\setminus F)$.
Let $\CSFA(\HH,J,\s)$ be the $\ZZ/2$ vector space generated by the set of all generators $\G(\HH,\s)$.
Give it the structure of an $\I(\Z)$ module by setting
\begin{equation*}
\xgen\cdot I(s) = \begin{cases}
\xgen & \textrm{if $s=o(\xgen)$,}\\
0 & \textrm{otherwise.}
\end{cases}
\end{equation*}
\end{defn}

For generators $\xgen,\ygen\in\G(\HH)$, a homology class $B\in\pi_2(\xgen,\ygen)$,
and a source $S^\trr$ we consider all partitions $\parrow=(P_1,\ldots,P_m)$,
not necessarily discrete. We also
associate to a sequence of Reeb chords a sequence of algebra elements, instead of a product. Let 
\begin{equation*}
\overrightarrow{a}(\xgen,\ygen,\brarrow)=
I(o(\xgen))\cdot(a(\brho_1)\otimes\cdots\otimes a(\brho_m))\cdot I(o(\ygen)).
\end{equation*}

\begin{defn} For $\xgen,\ygen\in\G(\HH)$, $\brarrow=(\brho_1,\ldots,\brho_m)$ define
\begin{equation*}
c_{\xgen,\ygen,\brarrow}=\sum_{\substack{\brarrow(\parrow)=\brarrow \\ \ind(B,\brarrow)=1}}
\#\M_{\emb}^B(\xgen,\ygen,S^\trr,\parrow). 
\end{equation*}
\end{defn}

\begin{defn}
Define $m_k\co \CSFA(\HH,J,\s)\otimes\A(\Z)^{\otimes(k-1)}\to\CSFA(\HH,J,\s)$ as follows.
\begin{equation*}
m_k(\xgen,a_1,\ldots,a_{k-1})=
\sum_{\substack{\ygen\in\G(\HH)\\
\overrightarrow{a}(\xgen,\ygen,\brarrow)=a_1\otimes\cdots\otimes a_{k-1}}}
c_{\xgen,\ygen,\brarrow}\cdot \ygen.
\end{equation*}
\end{defn}

\begin{thm}
\label{thm:bsa}
The following statements are true.
\begin{enumerate}
\item $\CSFA(\HH,J,\s)$ equipped with the actions $m_k$ for $k\geq 1$, and the $\Grdn(\HH,\s)$--valued grading $\grdn$
is an $\Ainf$--module over $\A(\Z)$. In particular,
\begin{equation*}
\grdn(m_k(\xgen,a_1,\ldots,a_{k-1}))
=\grdn(\xgen)\cdot\grdn(a_1)\cdots\grdn(a_{k-1})\lambda^{k-2}.
\end{equation*}
\item If $\HH$ is totally admissible, $\CSFA(\HH,J,\s)$ is bounded.
\item For any two provincially admissible diagrams $\HH_1$ and $\HH_2$, equipped with 
admissible almost complex structures $J_1$ and $J_2$, there is a graded homotopy equivalence
\begin{equation*}
\CSFA(\HH_1,J_1,\s)\simeq\CSFA(\HH_2,J_2,\s).
\end{equation*}
Therefore we can talk about $\CSFA(Y,\Gamma,\Z,\psi,\s)$ or just $\CSFA(Y,\Gamma,\s)$, relatively graded by
$\Grdn(Y,\s)$.
\end{enumerate}
\end{thm}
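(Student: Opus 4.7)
The plan is to mirror the proof of Theorem~\ref{thm:bsd}, with the key book-keeping difference that $\CSFA$ records punctures as an ordered tensor product (via $\overrightarrow{a}$) rather than a single product, and allows non-discrete partitions $\parrow$. As in the $\CSFD$ case, provincial admissibility ensures each $c_{\xgen,\ygen,\brarrow}$ involves only finitely many provincial domains $B\in\pi_2^{\del}(\xgen,\ygen)$; the non-provincial contributions are controlled because the right-hand side $a_1\otimes\cdots\otimes a_{k-1}$ is fixed, which bounds $[\brarrow]=\del^{\del}B$. Each $m_k$ is therefore well-defined on $\CSFA(\HH,J,\s)$ with the prescribed idempotent action.

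The $\Ainf$-relations follow by counting ends of the one-dimensional strata $\M^B_{\emb}(\xgen,\zgen,S^\trr,\parrow)$ with $\ind(B,\brarrow(\parrow))=2$ and the full list $\brarrow(\parrow)=(\brho_1,\ldots,\brho_m)$ fixed. By Proposition~\ref{prop:degenerations} and the bubbling/boundary-degeneration ruling out sketched in Section~\ref{sec:degenerations}, the only ends that occur are: (i) two-story holomorphic buildings, which give the composition terms $m_{k-j+1}(m_j(\xgen,\ldots),\ldots)$; (ii) join curves at East infinity combining adjacent $\brho_i,\brho_{i+1}$, contributing terms where $a_i\otimes a_{i+1}$ is replaced by $\mu_2(a_i,a_{i+1})$; (iii) split curves at East infinity, contributing terms where some $a_i$ is replaced by $\mu_1(a_i)$. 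The overall count is even, yielding exactly the $\Ainf$-module relation. Shuffle-curve ends cancel in pairs as in~\cite{LOT:pairing}, and since my homological linear independence hypothesis matches theirs in the relevant local arguments, this goes through unchanged.

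For the grading identity, suppose $m_k(\xgen,a_1,\ldots,a_{k-1})$ contains $\ygen$ with coefficient coming from $B\in\pi_2(\xgen,\ygen)$ and $\brarrow=(\brho_1,\ldots,\brho_{k-1})$ satisfying $\ind(B,\brarrow)=1$ and $I(o(\xgen))\cdot a(\brho_i)\cdot I(o(\ygen\textrm{-intermediate}))=a_i$. Applying the domain-grading proposition from Section~\ref{sec:grading} gives $\gr(B)=\lambda^{-1+(k-1)}\cdot\gr(\brho_1)\cdots\gr(\brho_{k-1})=\lambda^{k-2}\cdot\gr(a_1)\cdots\gr(a_{k-1})$, whence $\gr(\ygen)=\gr(\xgen)\cdot\gr(B)$ rearranges to the desired formula; passing to $\grdn$ is automatic because the matching idempotents on consecutive factors cause all reduction terms $r(\cdot)$ to telescope. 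Full admissibility then yields boundedness because only finitely many positive $B\in\pi_2(\xgen,\ygen)$ exist in total, so for any fixed $\xgen$ only finitely many $k$ admit a nonzero $m_k(\xgen,\cdot)$.

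Invariance under the moves of Proposition~\ref{prop:heeagaard} proceeds by the standard continuation argument: to a generic path of provincially admissible diagrams and admissible almost complex structures one associates moduli spaces whose zero-dimensional strata define an $\Ainf$-homomorphism $\CSFA(\HH_0,J_0,\s)\to\CSFA(\HH_1,J_1,\s)$, whose one-dimensional strata show it is a chain map, and whose composition with its inverse is homotopic to the identity via a further family argument; handleslides are handled by triangle maps and stabilizations by the usual one-handle model. Grading invariance follows because all these counts are of index-zero domains and stabilizers are preserved under conjugation. The main obstacle, as always in this setup, is carefully accounting for the ends of one-dimensional moduli spaces where non-discrete partitions can degenerate via join or split curves into configurations with different partition types; this is precisely the place where the ordered tensor product structure of the $\Ainf$-module (rather than the product structure used for $\CSFD$) becomes essential, and where the analogue of \cite[Proposition 5.47]{LOT:pairing} together with the embedded index formula from Section~\ref{sec:moduli} does the bulk of the work.
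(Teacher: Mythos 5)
Your proposal follows essentially the same approach as the paper, which itself defers most of the work to the proof of Theorem~\ref{thm:bsd} and to \cite{LOT:pairing}: provincial admissibility gives well-definedness, counting ends of $1$--dimensional moduli spaces gives the $\Ainf$--relations, the explicit domain-grading computation gives the grading identity, and the standard continuation/triangle-map arguments give invariance. The grading computation you wrote out matches the paper's exactly.

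One small imprecision worth flagging in your discussion of the degeneration analysis: the terms of the $\Ainf$--relation that merge $a_i\otimes a_{i+1}$ into $\mu_2(a_i,a_{i+1})$ come primarily from \emph{collisions of adjacent levels} $P_i$, $P_{i+1}$ (i.e.\ their heights coinciding in the limit, in the $[0,1]\times\RR$ direction), not from join curves. Join curves are degenerations at East infinity and, together with shuffle curves, need to be ruled out or shown to cancel against non-generic level collisions rather than produce $\mu_2$ terms directly; this is part of what the paper means by ``we need to use more results about degenerations'' compared to the $\CSFD$ case, where all partitions are discrete. The split-curve $\leadsto$ $\mu_1$ identification and the two-story $\leadsto$ composition identification are as you say.
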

\begin{proof}
The proofs are analogous to those for $\CSFD$, with some differences. The biggest
difference is that we count more domains, so we need to use more results about
degenerations.

The other major difference is the grading.
Again, we prove the statement for $\gr$, and the one for $\grdn$ follows immediately.
Suppose $\ygen$ is a term in $m_k(\xgen,a_1,\ldots,a_{k-1})$. Then there is a domain $B\in\pi_2(\xgen,\ygen)$ and
a compatible sequence $\brarrow=(\brho_1,\ldots,\brho_{k-1})$ of sets of Reeb chords, such that
$\ind(B,\brarrow)=1$, and $a_i=a(\brho_i,s_i)$, for some appropriate completion $s_i$. In particular,
$\gr(a_1)\cdots\gr(a_{k-1})=\gr(\brarrow)$. On the other hand,
\begin{multline*}
\gr(\ygen)=\gr(\xgen)\cdot\gr(B)=\gr(\xgen)\cdot\lambda^{-\ind(B,\brarrow)+\#\brarrow}\gr(\brarrow)\\
=\gr(\xgen)\cdot\lambda^{-1+(k-1)}\gr(a_1)\cdots\gr(a_{k-1}).\qedhere
\end{multline*}
\end{proof}

As with $\CSFD$, if we ignore $\spinc$--structures we can talk about the total invariant
\begin{equation*}
\CSFA(Y,\Gamma)=\bigoplus_{\s\in\spinc(Y,\Gamma)}\CSFA(Y,\Gamma,\s).
\end{equation*}

\begin{rmk}
As with $\CSFD$ the only nontrivial algebra action is by a single component of $\A(\Z)$.
In this case the action depends on occupied arcs.
Therefore if $(Y,\Gamma)$ is $p$--unbalanced,
then $\CSFA(Y,\Gamma)$ is an $\Ainf$--module over $\A(\Z,p)$ only.
\end{rmk}

\subsection{Invariants from nice diagrams}
For a nice diagram $\HH$, the invariants can be computed completely combinatorially,
avoiding all discussion of moduli spaces.

\begin{thm}
\label{thm:nice_d}
Let $\HH$ be a nice diagram. Then for any admissible almost complex structure $J$,
the type $D$ structure $\CSFD(\HH,J)$ can be computed as follows.
The map $\delta(\xgen)$ counts the following types of curves.
\begin{enumerate}
\item 
\label{cond:empty_bigon}
A source $S^\trr$ from $\xgen$ to $\ygen$, consisting of $g$ bigons with no $e$ punctures,
where all but one of the bigons are constant on $\Sigma$, while the remaining one embeds
as a convex bigon. The interior of the image contains none of the fixed points $\xgen\cap\ygen$.
Such curves contribute $I(\obar(\xgen))\otimes\ygen$ to $\delta(\xgen)$.

\item
\label{cond:empty_rectangle}
A source $S^\trr$ from $\xgen$ to $\ygen$, consisting of $g-2$ bigons, each of which has no $e$ punctures and is constant
on $\Sigma$, and a single quadrilateral with no $e$ punctures, which embeds as a convex rectangle.
The interior of the image contains none of the fixed points $\xgen\cap\ygen$.
Such curves contribute $I(\obar(\xgen))\otimes\ygen$ to $\delta(\xgen)$.

\item
\label{cond:bdy_rect}
A source $S^\trr$ from $\xgen$ to $\ygen$, consisting of $g-1$ bigons, each of which has no $e$ punctures and is constant
on $\Sigma$, and a single bigon with one $e$ punctures, which embeds as a convex rectangle, one of whose
sides is the Reeb chord $-\rho\in\Zmid$ labeling the puncture.
The interior of the image contains none of the fixed points $\xgen\cap\ygen$.
Such curves contribute $I(\obar(\xgen))a(\rho)I(\obar(\ygen))\otimes\ygen$ to $\delta(\xgen)$.
\end{enumerate}
\end{thm}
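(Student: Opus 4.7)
The approach parallels the Sarkar--Wang argument, adapted to the bordered sutured setting as in Lipshitz--Ozsv\'ath--Thurston. The plan has two parts: first show that only the listed domains can satisfy $\ind(B,\brarrow)=1$ for a discrete partition and be representable by a holomorphic curve, and second show that each such domain contributes exactly one holomorphic curve modulo translation.

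For the first part, since the partition $\parrow$ is discrete we have $\brarrow=(\{\rho_1\},\ldots,\{\rho_m\})$ with $\gr(\rho_i)=(-1/2,[\rho_i])$, so $\#\brarrow=m$ and $\iota(\brarrow)=-m/2+\sum_{i<j}L([\rho_i],[\rho_j])$. The embedded index formula then reads
\begin{equation*}
\ind(B,\brarrow)=e(B)+n_\xgen(B)+n_\ygen(B)+m/2+\textstyle\sum_{i<j}L([\rho_i],[\rho_j])=1.
\end{equation*}
Since every moduli space element is $J$-holomorphic, the multiplicities of $B$ are nonnegative; by condition~(\ref{cond:last}) in the definition of the moduli space, $B$ avoids all boundary regions of $\Sigma\setminus(\balpha\cup\bbeta)$. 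The niceness hypothesis then restricts $B$ to a nonnegative linear combination of bigons (Euler measure $1/2$) and rectangles (Euler measure $0$). I would argue by case analysis on the total multiplicity of $B$ and the length $m$: each of $e(B)$, $n_\xgen(B)+n_\ygen(B)$, $m/2$, and $\sum L$ admits a nonnegative lower bound in terms of local multiplicities, and the equation $\ind=1$ forces $B$ to be of minimal support, matching precisely one of the three configurations in~(\ref{cond:empty_bigon}), (\ref{cond:empty_rectangle}), or~(\ref{cond:bdy_rect}).

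For the second part, for each allowed domain $B$ with its decorated source $S^\trr$ I would exhibit the moduli space $\M_{\emb}^B(\xgen,\ygen,S^\trr,\parrow)$ as a single point. The $g-1$ or $g-2$ constant bigon components of $S$ map as trivial strips over points of $\xgen\cap\ygen$ and are forced by strong boundary monotonicity. The nonconstant component is a topological disc which embeds convexly in $\Sigma$ and projects holomorphically to the universal cover of $\DD$; by the Riemann mapping theorem the conformal structure and the map are unique up to the translation action in $t$. In case~(\ref{cond:bdy_rect}) the same argument applies with the asymptotic matching at the $e$-puncture prescribed by the Reeb chord $\rho$, using the BEHWZ asymptotic analysis already cited. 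The algebra coefficient $I(\obar(\xgen))\,a(\rho)\,I(\obar(\ygen))$ in case~(\ref{cond:bdy_rect}) arises from the sign convention $a(-\rho)$ in the definition of $\delta$ combined with the orientation-reversing identification $-\rho\in\Zmid(-\Z)$ with $\rho\in\Zmid(\Z)$.

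The main obstacle is the combinatorial case analysis in the first part, where one must rule out domains whose support crosses a single $\alpha^a$-arc with multiplicity greater than one, or whose boundary on $\Zmid$ decomposes into overlapping Reeb chord sequences contributing positive cross-terms $L([\rho_i],[\rho_j])$ that compensate for additional $e(B)$ or $n_\xgen+n_\ygen$. Each such potential configuration must be shown to force $\ind\geq 2$ or to violate positivity in some region forced to have multiplicity $-1$ near $\Zmid$. This is where the nondegeneracy of $\Z$ and the homological linear independence of $\balpha$ enter in an essential way, and this step mirrors the corresponding classification for nice bordered diagrams in \cite{LOT:pairing}.
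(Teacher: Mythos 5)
Your approach starts from the \emph{embedded} index formula $\ind(B,\brarrow)=e(B)+n_\xgen+n_\ygen+\#\brarrow+\iota(\brarrow)$ and tries to run a positivity/case analysis on its terms, and you correctly flag the $\sum_{i<j}L([\rho_i],[\rho_j])$ cross-terms as the place where this breaks: they need not be nonnegative, so the analysis cannot be closed by inspection of local multiplicities. That obstacle is genuine, and the paper simply does not confront it, because it works with the other index formula, the expected dimension
\[
\ind(B,S^\trr,\parrow)=g-\chi(S)+2e(B)+\#\parrow,
\]
in which \emph{every} term is manifestly nonnegative once you notice the following: $\pi_\DD\circ u$ is a $g$-fold branched cover of the disc $\DD$, so Riemann--Hurwitz gives $\chi(S)\le g$, hence $g-\chi(S)\ge 0$; and in a nice diagram condition~(\ref{cond:last}) forces $[B]$ to be a nonnegative combination of bigons and rectangles, so $e(B)\ge 0$. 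Setting the sum equal to $1$ then immediately forces one of exactly three possibilities: $(\chi(S),e(B),\#\parrow)\in\{(g,\tfrac12,0),(g,0,1),(g-1,0,0)\}$, which are the provincial bigon, the single-$e$-puncture rectangle, and the provincial rectangle, respectively. The embedded index formula you use is only brought in afterwards, as the criterion that singles out the embedded representatives (no $\xgen\cap\ygen$ points in the interior, no nesting), not to do the classification. So the missing idea in your write-up is to switch index formulas at the outset; with that change the case analysis you describe as ``the main obstacle'' disappears, and the rest of your argument (trivial strips from strong boundary monotonicity, Riemann mapping for the nontrivial component, the $a(-\rho)\leftrightarrow a(\rho)$ orientation bookkeeping) matches the paper's proof.

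One more small point: your case analysis is over domains $B$ only, but the classification must be over triples $(B,S^\trr,\parrow)$, since a single domain can in principle support several topological types of source; the transversal formula is phrased exactly in the data $(B,S^\trr,\parrow)$ and so handles this automatically, whereas $\ind(B,\brarrow)$ does not see $S^\trr$ at all until you impose embeddedness.
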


\begin{thm}
\label{thm:nice_a}
Let $\HH$ be a nice diagram. Then for any admissible almost complex structure $J$,
the $\CSFA(\HH,J)$ can be computed as follows.

The differential $m_1(\xgen)$ counts the following types of regions. (These are the same as
cases (\ref{cond:empty_bigon}) and (\ref{cond:empty_rectangle}) in theorem \ref{thm:nice_d}.)
\begin{enumerate}
\item A source $S^\trr$ from $\xgen$ to $\ygen$, consisting of $g$ bigons with no $e$ punctures,
where all but one of the bigons are constant on $\Sigma$, while the remaining one embeds
as a convex bigon. The interior of the image contains none of the fixed points $\xgen\cap\ygen$.
Such curves contribute $\ygen$ to $m_1(\xgen)$.

\item 
A source $S^\trr$ from $\xgen$ to $\ygen$, consisting of $g-2$ bigons, each of which has no $e$ punctures and is constant
on $\Sigma$, and a single quadrilateral with no $e$ punctures, which embeds as a convex rectangle.
The interior of the image contains none of the fixed points $\xgen\cap\ygen$.
\end{enumerate}

The algebra action $m_2(\xgen,\cdot\,)$ counts regions of the type below.
\begin{enumerate}
\item A source $S^\trr$ from $\xgen$ to $\ygen$, consisting of $g-p$ bigons, each of which has no $e$ punctures
and is constant
on $\Sigma$, and a collection of $p$ bigons, each of which has one $e$ puncture and which embeds as a convex rectangle,
one of whose sides is the Reeb chord $\rho_i\in\Zmid$. The height of all $e$ punctures is the same,
the interior of any image rectangle contains none of the fixed points $\xgen\cap\ygen$ and no other rectangles.
Such curves contribute $\ygen$ to the action $m_2(\xgen,I(o(\xgen))
\{a(\rho_1,\ldots,\rho_p\})I(o(\ygen))$.
\end{enumerate}

In addition, all actions $m_k$ for $k\geq 3$ are zero.

\end{thm}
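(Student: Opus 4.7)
The plan is to adapt the Sarkar--Wang combinatorial analysis of holomorphic curves in nice Heegaard diagrams, as carried out for $\CFA$ in \cite{LOT:pairing}, to the bordered sutured setting. The analytic input---moduli space transversality, the embedded index formula $\ind(B,\brarrow) = e(B) + n_\xgen(B) + n_\ygen(B) + \#\brarrow + \iota(\brarrow)$, and the non-appearance of boundary degenerations (proposition~\ref{prop:degenerations})---is already in place from section~\ref{sec:moduli}, and the classification of curves of $\M_{\emb}^B(\xgen,\ygen,S^\trr,\parrow)$ in a nice diagram should parallel the bordered case. The underlying principle is that positivity of the domain, combined with the assumption that every non-boundary region is a bigon ($e=1/2$) or rectangle ($e=0$), and that every region touching $\Zmid$ is a rectangle with one side a Reeb chord, severely restricts the possible contributing domains.

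For $m_1$, the partition is empty, so $\ind(B,()) = 1$ reduces to $e(B)+n_\xgen+n_\ygen=1$; the Sarkar--Wang argument identifies the only contributing provincial domains as either an embedded convex bigon or an embedded convex rectangle not touching $\Zmid$, exactly as in case~(\ref{cond:empty_bigon}) and (\ref{cond:empty_rectangle}) of theorem~\ref{thm:nice_d}. For $m_2$ with a one-element partition $\parrow = (P_1)$, the constraint $\ind=1$ becomes $e(B)+n_\xgen+n_\ygen+\iota(\brho_1)=0$; since each rectangle adjacent to $\Zmid$ contributes a Reeb chord and all rectangles in $P_1$ must emit at the same height, the domain is forced to be a disjoint union of $|\brho_1|$ such rectangles with $\iota(\brho_1)=0$, so that $a(\brho_1)$ is a nonzero algebra element acting on $\xgen$ to give $\ygen$.

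The main obstacle will be proving that $m_k$ vanishes for $k \geq 3$. In this case the partition $\parrow = (P_1,\ldots,P_{k-1})$ has at least two parts, so the curve has boundary emissions at distinct heights $t_1 < \cdots < t_{k-1}$, and $\ind=1$ forces
\begin{equation*}
e(B) + n_\xgen(B) + n_\ygen(B) + \iota(\brarrow) = 2 - k \leq -1.
\end{equation*}
The delicate step is to show this is impossible for a positive domain decomposing into bigons and boundary rectangles in a nice diagram. The positivity of $e$, $n_\xgen$, $n_\ygen$ is clear; the heart of the matter is to bound $\iota(\brarrow)$ from below. I would approach this by comparing $\gr(\brho_1)\cdots\gr(\brho_{k-1})$ to the grading of the ``collapsed'' sequence $\brho_1 \cup \cdots \cup \brho_{k-1}$ via the linking form $L$ on $H_1(\Zmid,\amid)$, using that for Reeb chords arising from disjoint boundary rectangles of an embedded curve the required non-negativity of cross terms holds. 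Equivalently, and perhaps more robustly, one can follow the LOT argument directly and show that any embedded index--$1$ curve in a nice bordered sutured diagram admits a reparametrization in the $\RR$-direction bringing all of its $e$ punctures to a single height---this forces $m \leq 1$ and establishes the vanishing.

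Once the three cases are disposed of, the identifications of contributions in terms of $I(o(\xgen))$ and $I(o(\ygen))$ follow from the definition of the $\I(\Z)$-action on $\CSFA$, and agreement with the moduli space counts in theorem~\ref{thm:bsa} is automatic, completing the proof.
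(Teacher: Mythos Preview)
Your overall strategy---adapting the Sarkar--Wang/LOT classification of index--1 curves in nice diagrams---is exactly what the paper does, and your treatment of $m_1$ and $m_2$ is correct. The one substantive difference is the choice of index formula. You work throughout with the embedded index
\[
\ind(B,\brarrow)=e(B)+n_\xgen(B)+n_\ygen(B)+\#\brarrow+\iota(\brarrow),
\]
whereas the paper's proof (combined with that of theorem~\ref{thm:nice_d}) uses the source-dependent index
\[
\ind(B,S^\trr,\parrow)=g-\chi(S)+2e(B)+\#\parrow.
\]
With the latter, the vanishing of $m_k$ for $k\ge 3$ is immediate: niceness gives $e(B)\ge 0$ for positive domains, strong boundary monotonicity forces each component of $S$ to carry at least one $-$ puncture so $\chi(S)\le g$, and hence $\ind=1$ yields $2e(B)+\#\parrow\le 1$, i.e.\ $\#\parrow\le 1$. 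This is the argument in \cite{LOT:pairing} that the paper invokes.

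Your embedded-index route, by contrast, reduces the $k\ge 3$ case to the inequality $e+n_\xgen+n_\ygen+\iota(\brarrow)\ge 0$, and as you correctly note, bounding $\iota(\brarrow)$ from below is not free: a single Reeb chord already has $\iota=-1/2$, and the cross-terms $L([\brho_i],[\brho_j])$ have no a priori sign. Your suggested workaround (reparametrize to collapse all $e$ punctures to one height) is in effect a disguised appeal to the source-index argument. So nothing is wrong, but you have made the key step harder than necessary; switching to $\ind(B,S^\trr,\parrow)$ from the outset gives the clean one-line bound $\#\parrow\le 1$ that the paper uses.
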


\begin{proof}[Proof of theorems \ref{thm:nice_d} and \ref{thm:nice_a}]
The proofs follow the same steps as the ones for nice diagrams in bordered manifolds.
By looking at the index formula, and the restricted class of regions, one can
show that the only $B$, $S^\trr$, and $\parrow$ that have index
$\ind(B,S^\trr,\parrow)=1$ are of the following two types.
\begin{enumerate}
\item $S$ has no $e$ punctures, and consists of $g-1$ trivial components,
and one non-trivial bigon component, or $g-2$ trivial component and one non-trivial
rectangle component.
\item $S$ has several trivial components, and several bigons with a single $e$ puncture
each. Moreover, the partition $\parrow$ consists of only one set.
\end{enumerate}

The extra condition that the embedded index is also 1 (so the moduli space consists
of embedded curves), is equivalent to having no fixed points in the interior of a region,
and no region contained completely inside another.

For such curves, $\M_{\emb}^B(\xgen,\ygen,S^\trr,\parrow)$ has exactly one element, independent of the
almost complex structure $J$, using for example the Riemann mapping theorem.
\end{proof}

\subsection{Pairing theorem}
\label{sec:pairing}
In this section we describe the relationship between the sutured homology
of the gluing of two bordered sutured manifolds, and their bordered sutured invariants,
proving the second part of theorem~\ref{thm:intro_pairing}.

Recall that bordered sutured invariants are homotopy types of chain complexes, while
sutured Floer homology is usually regarded as an isomorphism type of homology groups.
However, one can also regard the underlying chain complex as an invariant up to
homotopy equivalence. To be precise, we will use $\SFH$ to denote sutured Floer
homology, and $\SFC$ to denote a representative chain complex defining that
homology.

\begin{thm}
\label{thm:pairing}
Suppose $(Y_1,\Gamma_1,\Z,\phi_1)$ and $(Y_2,\Gamma_2,-\Z,\phi_2)$ are two
bordered sutured manifolds that glue along $F=F(\Z)$ to form the sutured
manifold $(Y,\Gamma)$. Let $\s_i\in\spinc(Y_i,\del Y_i\setminus F)$ be relative
$\spinc$--structures for $i=1,2$. Then there is a graded chain homotopy equivalence
\begin{equation*}
\bigoplus_{\s|_{Y_i}=\s_i}\SFC(Y,\Gamma,\s)\simeq\CSFA(Y_1,\Gamma_1,\s_1)
\sqtens_{\A(\Z)}\CSFD(Y_2,\Gamma_2,\s_2),
\end{equation*}
provided that at least one of the modules on the right hand-side comes from an
admissible diagram.

To identify the gradings, we use the fact that the combined grading set
$\Grdn(Y_1\s_1)\times_{\Grdn(\Z)}\Grdn(Y_2,\s_2)$
distinguishes the individual $\spinc$--structures $\s\in\spinc(Y,\del Y)$ by its
homological component, while the Maslov component agrees with the $\SFH$ grading on each $\SFC(Y,\s)$.
\end{thm}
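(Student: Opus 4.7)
The plan is to adapt the neck-stretching argument of Lipshitz--Ozsv\'ath--Thurston to the bordered sutured setting. First I would pick provincially admissible diagrams $\HH_1$ for $(Y_1,\Gamma_1,\Z)$ and $\HH_2$ for $(Y_2,\Gamma_2,-\Z)$ with at least one of them totally admissible, so that the glued diagram $\HH=\HH_1\cup_{\Zmid}\HH_2$ is an admissible sutured Heegaard diagram for $(Y,\Gamma)$ and every relevant count is finite. The $\alpha^a$--arcs of $\HH_1$ glue to those of $\HH_2$ to form closed $\alpha$--circles in $\HH$, so a generator $\xgen\in\G(\HH)$ is exactly a pair $(\xgen_1,\xgen_2)$ with complementary $\Z$--idempotents, i.e.\ with $o(\xgen_1)=\obar(\xgen_2)$. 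Together with Proposition~\ref{prop:spinc_diff}, this gives an $\I(\Z)$--linear bijection
\begin{equation*}
\bigoplus_{\s|_{Y_i}=\s_i}\SFC(Y,\Gamma,\s)\longleftrightarrow\CSFA(Y_1,\Gamma_1,\s_1)\sqtens_{\A(\Z)}\CSFD(Y_2,\Gamma_2,\s_2)
\end{equation*}
on the level of underlying $\ZZ/2$--vector spaces.

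To match differentials I would pick a one-parameter family $J_T$ of admissible almost complex structures that stretches the neck along $\Zmid\times\DD$ as $T\to\infty$. By the SFT compactness of \cite{BEHWZ:compactness} combined with the local degeneration analysis of section~\ref{sec:degenerations}, every index--$1$ $J_T$--holomorphic curve in $\Sigma\times\DD$ limits (after passing to a subsequence) to a two-level holomorphic comb consisting of an upper story $u_1$ in $\Sigma_1\times\DD$ and a lower story $u_2$ in $\Sigma_2\times\DD$, whose east-infinity asymptotics at $\Zmid$ pair up under $\rho\leftrightarrow-\rho$. Each set $\brho_i$ of simultaneous east punctures of $u_1$ is matched with an ordered sequence of east punctures of $u_2$, and summing over all such orderings reconstructs the algebra product appearing in $\delta_{k-1}$, by exactly the combinatorics worked out in \cite[Ch.~9]{LOT:pairing}. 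A complementary gluing theorem identifies each such matched pair with a unique one-parameter family of $J_T$--holomorphic curves for $T$ large. Consequently
\begin{equation*}
\del(\xgen_1\otimes\xgen_2)=\sum_{k\geq 1}(m_k\otimes\id)\bigl(\xgen_1\otimes\delta_{k-1}(\xgen_2)\bigr),
\end{equation*}
which is the differential on $\sqtens$. Invariance of both sides under the choice of $\HH_i$ (Theorems~\ref{thm:bsd} and \ref{thm:bsa}) then gives a homotopy equivalence independent of diagrams.

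The hard part will be the analytic input. The compactness--gluing correspondence requires excluding any limit not of the above form: bubbling of closed curves and boundary degenerations on either piece are ruled out by homological linear independence applied separately to $\HH_1$ and $\HH_2$, precisely as in Proposition~\ref{prop:degenerations}, while collisions of east-infinity asymptotics must be shown to factor through the join/split/shuffle events already encoded in the differential of $\A(\Z)$. Once these are excluded, transversality on each piece for generic $J$ and a standard implicit function theorem argument yield the gluing statement, so the substantive effort is bookkeeping rather than analysis beyond section~\ref{sec:moduli}.

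For the graded statement, the combined grading set $\Grdn(\HH_1,\HH_2,\s_1,\s_2)$ of Theorem~\ref{thm:grading_gluing} distinguishes the individual summands $\s$ via the projection to $\im(i_*\co H_1(F)\to H_1(Y,\del Y))$ and restricts on each fiber $\Grdn_\s$ to a grading equivalent to the relative $\grdn$--grading on $\SFC(Y,\Gamma,\s)$; by Theorem~\ref{thm:grading_sfh} the latter coincides with the standard $\SFH$--grading. Since the generator bijection preserves idempotents and the chain equivalence is assembled from index--$0$ objects, the resulting equivalence is graded as claimed.
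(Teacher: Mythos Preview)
Your proposal is essentially correct but takes a genuinely different route from the paper. The paper proves the theorem combinatorially via \emph{nice diagrams}, following \cite[Chapter~8]{LOT:pairing}: one chooses nice diagrams $\HH_1$ and $\HH_2$, observes that the glued diagram $\HH=\HH_1\cup_{\Zmid}\HH_2$ is again nice, and then checks by hand that the index--1 domains in $\HH$ are exactly (i) provincial bigons and rectangles in one of the $\HH_i$, or (ii) rectangles crossing $\Zmid$ that split into a pair of boundary rectangles sharing a single Reeb chord. This yields an honest \emph{isomorphism} of chain complexes for these particular diagrams, not merely a homotopy equivalence, and all the analysis is reduced to the combinatorial classification in Theorems~\ref{thm:nice_d} and~\ref{thm:nice_a}.

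Your approach is the analytic one of \cite[Chapter~9]{LOT:pairing}, stretching the neck (time dilation) and invoking SFT compactness and gluing to match the $\sqtens$ differential with the sutured differential in the limit. This is more conceptual and avoids the detour through nice diagrams, but it imports substantially more analysis: the compactness statement for families $J_T$, the exclusion of extra limit configurations, and the gluing theorem all need to be carried out in the bordered sutured setting, whereas the paper's proof needs nothing beyond the moduli-space results already established in Section~\ref{sec:moduli}. Two small remarks on your write-up: the limit objects are better described as \emph{matched pairs} of curves in $\Sigma_1\times\DD$ and $\Sigma_2\times\DD$ with compatible east-infinity asymptotics, rather than ``upper and lower stories'' (the latter suggests an SFT building in a single ambient space); and your final sentence about the equivalence being ``assembled from index--0 objects'' is not quite how the time-dilation argument works---the equivalence arises from a large-$T$ identification of moduli counts, not from a chain map built out of rigid curves. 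Your treatment of the grading via Theorems~\ref{thm:grading_sfh} and~\ref{thm:grading_gluing} matches the paper exactly.
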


\begin{cor}
\label{cor:pairing}
In terms of modules and derived tensor products, the pairing theorem can be expressed
as
\begin{equation*}
\bigoplus_{\s|_{Y_i}=\s_i}\SFC(Y,\Gamma,\s)\simeq\CSFA(Y_1,\Gamma_1,\s_1)
\dtens\CSFDD(Y_2,\Gamma_2,\s_2).
\end{equation*}
\end{cor}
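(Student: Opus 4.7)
The plan is to deduce the corollary directly from Theorem~\ref{thm:pairing} by translating the box tensor product $\sqtens$ into the derived tensor product $\dtens$ via the definition of $\CSFDD$. Recall from the preceding section that $\CSFDD(Y_2,\Gamma_2,\s_2)$ is \emph{defined} as the left differential graded module $\A(-\Z)\sqtens \CSFD(Y_2,\Gamma_2,\s_2)$. Thus the right-hand side of the corollary is
\begin{equation*}
\CSFA(Y_1,\Gamma_1,\s_1)\dtens_{\A(\Z)} \bigl(\A(-\Z)\sqtens \CSFD(Y_2,\Gamma_2,\s_2)\bigr),
\end{equation*}
where we have identified $\A(-\Z)$ with $\A(\Z)$ acting on the opposite side in the usual way. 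The work is to compare this with $\CSFA(Y_1,\Gamma_1,\s_1)\sqtens_{\A(\Z)} \CSFD(Y_2,\Gamma_2,\s_2)$, to which Theorem~\ref{thm:pairing} already applies.

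The key algebraic input is the equivalence $M\dtens_A(A\sqtens N)\simeq M\sqtens_A N$ of graded chain complexes, which holds whenever one of $M$ or $N$ is bounded. This is precisely the statement cited in section~\ref{sec:prelims} as part of the general relationship between type $D$ structures and their associated left modules. To invoke it I first need a boundedness hypothesis; this is supplied exactly as in Theorem~\ref{thm:pairing}, by arranging that at least one of the bordered sutured diagrams used to represent $\CSFA(Y_1,\Gamma_1,\s_1)$ or $\CSFD(Y_2,\Gamma_2,\s_2)$ is totally admissible, so that by Theorem~\ref{thm:bsa}(2) or Theorem~\ref{thm:bsd}(2) the corresponding module or type $D$ structure is bounded. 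Once boundedness is in hand, the equivalence above gives
\begin{equation*}
\CSFA(Y_1,\Gamma_1,\s_1)\dtens_{\A(\Z)} \CSFDD(Y_2,\Gamma_2,\s_2)
\simeq \CSFA(Y_1,\Gamma_1,\s_1)\sqtens_{\A(\Z)} \CSFD(Y_2,\Gamma_2,\s_2),
\end{equation*}
and combining this with Theorem~\ref{thm:pairing} yields the desired homotopy equivalence.

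It remains to check that this chain of equivalences respects the grading structure identified in the statement, namely the splitting over $\spinc$--structures on $Y$ restricting to $\s_1,\s_2$, and the agreement of Maslov components with the relative $\SFH$--grading. The grading identification is already built into Theorem~\ref{thm:pairing} via the mixed grading set $\Grdn(Y_1,\s_1)\times_{\Grdn(\Z)}\Grdn(Y_2,\s_2)$ from Theorem~\ref{thm:grading_gluing}; the functor $\A(-\Z)\sqtens(\cdot)$ and the equivalence $M\dtens(A\sqtens N)\simeq M\sqtens N$ are both grading-preserving, so the identification transports without change. The only potential obstacle is verifying that the general equivalence $M\dtens(A\sqtens N)\simeq M\sqtens N$ indeed preserves the refined grading by $\Grdn$ and splits compatibly over relative $\spinc$--structures, but this follows formally because the equivalence is natural and the $\spinc$ splitting of each factor is preserved by both tensor constructions.
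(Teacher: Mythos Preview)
Your proof is correct and follows exactly the approach the paper intends: the paper does not even write out a separate proof of this corollary, remarking only that it is ``a restatement of theorem~\ref{thm:pairing} in purely $\Ainf$--module language,'' relying on the definition $\CSFDD=\A\sqtens\CSFD$ and the equivalence $M\dtens(A\sqtens N)\simeq M\sqtens N$ stated in section~\ref{sec:prelims}. One small remark: since $(Y_2,\Gamma_2)$ is parametrized by $-\Z$, we have $\CSFDD(Y_2,\Gamma_2,\s_2)=\A(\Z)\sqtens\CSFD(Y_2,\Gamma_2,\s_2)$ directly, so no identification of $\A(-\Z)$ with $\A(\Z)$ is actually needed.
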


Corollary~\ref{cor:pairing} is a restatement of theorem~\ref{thm:pairing} in purely
$\Ainf$--module language. This allows us to dispose of type $D$ structures entirely.
However, in practice, the definition of the derived tensor product $\dtens$ involves
an infinitely generated chain complex, while that of $\sqtens$ only a finitely generated chain complex (assuming
both sides are finitely generated).

\begin{proof}[Proof of theorem~\ref{thm:pairing}]
We can prove the theorem using nice diagrams, similar to \cite[Chapter 8]{LOT:pairing}.

Suppose $\HH_1$ and $\HH_2$ are nice diagrams
for $Y_1$ and $Y_2$, respectively. If we glue them to get a diagram $\HH=\HH_1\cup_{\Zmid}\HH_2$
for $Y=Y_1\cup_F Y_2$, then $\HH$ is also a nice diagram.
Indeed, the only regions that change are boundary regions, which are irrelevant, and regions
adjacent to a Reeb chord. In the latter case, two rectangular regions in $\HH_1$ and $\HH_2$, that
border the same Reeb chord, glue to a single rectangular region in $\HH$.

Generators in $\G(\HH)$ correspond to pairs of generators in $\G(\HH_1)$ and $\G(\HH_2)$ that occupy
complementary sets of arcs. Provincial bigons and rectangles in $\HH_i$ are also bigons and rectangles
in $\HH$. The only other regions in $\HH$ that contribute to the differential $\del$ on $\SFC$
are rectangles that are split into two rectangles in $\HH_1$ and $\HH_2$, each of which is adjacent to
the same Reeb chord $\rho$ in $\Zmid$. Such rectangles contribute terms of
the form $(m_2\otimes\id_{\CSFD(\HH_2)})\circ(\id_{\CSFA(\HH_1)}\otimes\,\delta)$. Overall, terms in
$\del:\SFC(\HH)\to \SFC(\HH)$ are in a one-to-one correspondence with terms in
$\del:\CSFA(\HH_1)\sqtens\CSFD(\HH_2)\to\CSFA(\HH_1)\sqtens\CSFD(\HH_2)$.

This shows that there is an isomorphism of chain complexes
\begin{equation*}
\SFC(\HH)\cong \CSFA(\HH_1)\sqtens\CSFD(\HH_2).
\end{equation*}

The splitting into $\spinc$--structure and the grading equivalence follow from theorems~\ref{thm:grading_sfh}
and~\ref{thm:grading_gluing}, where the latter is applied to $(Y_1,\Gamma_1,-\varnothing\cup\Z)$ and
$(Y_2,\Gamma_2,-\Z\cup\varnothing)$.
\end{proof}

\section{Bimodule invariants}
\label{sec:bimodules}

As promised in the introduction, we will associate to a decorated sutured cobordism, a special type of
$\Ainf$--bimodule. We will sketch the construction, which closely parallels the discussion of bimodules
in \cite{LOT:bimodules}. The reader is encouraged to look there, especially for a careful discussion of
the algebra involved.

\subsection{Algebraic preliminaries}
The invariants we will define have the form of \emph{type $\DA$ structures}, which is a combination of
a type $D$ structure and an $\Ainf$--module.

\begin{defn}
Let $A$ and $B$ be differential graded algebras with differential and multiplication denoted
$\del_A$, $\del_B$, $\mu_A$, and $\mu_B$, respectively.
A \emph{type $DA$ structure} over $A$ and $B$ is a graded vector space $M$, together
with a collection of homogeneous operations $m_k\co M\otimes B^{\otimes (k-1)}\to A\otimes M[2-k]$, satisfying the
compatibility condition
\begin{multline*}
\sum_{p=1}^{k}(\mu_A\otimes\id_M)\circ(\id_A\otimes\,m_{k-p+1})\circ(m_p\otimes\id_{B^{\otimes(k-p)}})
+(\del_A\otimes\id_M)\circ m_k\\
+\sum_{p=0}^{k-2}m_k\circ(\id_M\otimes\id_{B^{\otimes p}}\otimes\,\del_B\otimes\id_{B^{\otimes(k-p-2)}})\\
+\sum_{p=0}^{k-3}m_k\circ(\id_M\otimes\id_{B^{\otimes p}}\otimes\,\mu_B\otimes\id_{B^{\otimes(k-p-3)}})=0,
\end{multline*}
for all $k\geq0$.
\end{defn}

We can also define $m_k^i\co M\otimes B^{\otimes(k-1)}\to A^{\otimes i}\otimes M[1+i-k]$,
such that $m_1^0=\id_M$, $m_k^0=0$ for $k>1$, $m_k^1=m_k$, and $m_k^i$ is obtained by iterating $m_*^1$:
\begin{equation*}
m_k^i=\sum_{j=0}^{k-1}(\id_{A^{\otimes(i-1)}}\otimes\, m_{j+1})\circ (m_{k-j}^{i-1}\otimes\id_{B^{\otimes\, j}}).
\end{equation*}

In the special case where $A$ is the trivial algebra $\{1\}$,
this is exactly the definition of a right $\Ainf$--module over $B$. In the case when $B$ is trivial, or we ignore
$m_k^i$ for $k\geq 2$, this is exactly the definition of a left type $D$ structure over $A$. In that case
$m_1^i$ corresponds to $\delta_i$.

We will use some notation from \cite{LOT:bimodules} and denote a type $\DA$ structure over $A$ and $B$ by
$\lu{A}M_B$. In the same vein, a type $D$ structure over $A$ is $\lu{A}M$, and a right $\Ainf$--module over $B$
is $M_B$. We can extend the tensor $\sqtens$ to type $\DA$ structures as follows.

\begin{defn}
Let $\lu{A} M_B$ and $\lu{B} N_C$ be two type $\DA$ structures, with operations $m_k^i$, and $n_l^j$, respectively.
Let $\lu{A}M_B\,\sqtens_B\mbox{}\lu{B}N_C$ denote the type $\DA$ structure $\lu{A}(M\otimes N)_C$, with operations
\begin{equation*}
(m\sqtens n)_k^i=\sum_{j\geq 1}(m^i_j\otimes\id_N)\circ(\id_M\otimes\,n_k^{j-1}).
\end{equation*}
\end{defn}

In the case when $A$ and $C$ are both trivial, this coincides with the standard operation
$M_B\sqtens\mbox{}\lu{B}N$.

The constructions generalize to mixed multi-modules of type
$\li{^{A_1,\ldots,A_i}_{B_1,\ldots,B_j}}M^{C_1,\ldots,C_k}_{D_1,\ldots,D_l}$. Such a module is left, respectively right,
type $D$ with respect to $A_p$, respectively $C_p$, and left, respectively right $\Ainf$--module with respect to
$B_p$, respectively $D_p$. 
The category of such modules is denoted
$\li{^{A_1,\ldots,A_i}_{B_1,\ldots,B_j}}\Mod^{C_1,\ldots,C_k}_{D_1,\ldots,D_l}$.
We can apply the tensor $\sqtens_X$ to any pair of such modules, as long as one of
them has $X$ as an upper (lower) right index, and the other has $X$ as a lower (upper) left index.

We will only use a few special cases of this construction. The most important one is to
associate to $\lu{A}M_B$ a canonical $A,B$ $\Ainf$--bimodule ${}_A(A\sqtens M)_B={}_A A_A\,\sqtens_A\mbox{}\lu{A}M_B$.
This allows us to bypass type $D$ and type $\DA$ structures. In particular,
\begin{equation*}
{}_A(A\sqtens M)_B\,\dtens_B\,\mbox{}_B(B\sqtens N)_C\simeq {}_A(A\sqtens (M\sqtens_B N))_C.
\end{equation*}

\subsection{\texorpdfstring{$\CSFD$ and $\CSFA$}{BSD and BSA} revisited}
Recall that the definition of $\CSFD$ counted a subset of the moduli spaces used to define $\CSFA$, and
interpreted them differently. This operation can in fact be described completely algebraically. For any arc
diagram $\Z$, there is a bimodule (or \emph{type $\tDD$ structure}) ${}^{\A(-\Z),\A(\Z)}\II$, such that
\begin{equation*}
{}^{\A(-\Z)}\CSFD(\HH,J)=\CSFA(\HH,J)_{\A(\Z)}\,\sqtens_{\A(\Z)}\,\mbox{}^{\A(-\Z),\A(\Z)}\II.
\end{equation*}

In fact, we could use this as the definition of $\CSFD$, and use the naturallity of $\sqtens$ to
prove that it is well-defined for $\HH$ and $J$, and its homotopy type is an invariant of the underling
bordered sutured manifold.

\subsection{Bimodule categories}
For two differential graded algebras $A$ and $B$, the notion of a left-left $A,B$--module is exactly the
same as that of a left $A\otimes B$--module. Similarly, a left type $D$ structure over $A$ and $B$ is exactly
the same as a left type $D$ structure over $A\otimes B$. In other words, we can interpret a module
$\lu{A,B}M$ as $\lu{A\otimes B}M$, and vice versa, and the categories $\lu{A,B}\Mod$ and
$\lu{A\otimes B}\Mod$ are canonically identified.

The situation is not as simple for $\Ainf$--modules.
The categories $\Mod_{A,B}$ and $\Mod_{A\otimes B}$ are \emph{not} the same, or even equivalent.
Fortunately, there is a canonical functor $\F\co \Mod_{A\otimes B}\to\Mod_{A,B}$ which induces an equivalence of the
derived categories. For this result, and the precise definition of $\F$ see \cite{LOT:bimodules}.

\subsection{\texorpdfstring{$\BSDA$ and $\BSDAM$}{BSDA and BSDAM}}
We will give two definitions of the bimodules. One is purely algebraic, and allows us to easily deduce that
the bimodules are well-defined and invariant, while the other is more analytic, but is more useful in practice.

\begin{defn}
\label{def:DA_algebraic}
Suppose $(Y,\Gamma,\Z_1\cup\Z_2,\phi)$ is a bordered sutured manifold---or equivalently, a decorated
sutured cobordism from $F(-\Z_1)$ to $F(\Z_2)$. Note that $\A(\Z_1\cup \Z_2)=\A(\Z_1)\otimes\A(\Z_2)$.
Define
\begin{multline*}
{}^{\A(-\Z_1)}\BSDA(Y,\Gamma,\s)_{\A(\Z_2)}=\\
\F(\CSFA(Y,\Gamma,\s))_{\A(\Z_1),\A(\Z_2)}
\,\sqtens_{\A(\Z_1)}\, {}^{\A(-\Z_1),\A(\Z_1)}\II.
\end{multline*}
\end{defn}

The invariance follows easily from the corresponding results for $\CSFA$ and naturallity.

As promised, below we give a more practical construction. Fix a provincially admissible diagram
$\HH=(\Sigma,\balpha,\bbeta,\Z_1\cup\Z_2,\psi)$, and an admissible almost complex structure $J$.

Recall that to define both $\CSFD$ and $\CSFA$, we looked at moduli
spaces $\M_{\emb}^B(\xgen,\ygen,S^\trr,\parrow)$, where $\parrow$ is a partition of the $e$ punctures
on the source $S^\trr$. In our case, we can
distinguish two sets of $e$ punctures---those labeled by Reeb chords in $\Z_1$, and those labeled by Reeb
chords in $\Z_2$. We denote
the two sets by $E_1$ and $E_2$, respectively. Any partition $\parrow$ restricts to partitions
$\parrow_i=\parrow|_{E_i}$ on the two sets.

\begin{defn} Define the moduli space
\begin{equation*}
\M_{\emb}^B(\xgen,\ygen,S^\trr,\parrow_1,\parrow_2)=\bigcup_{\parrow|_{E_i}=\parrow_i}
\M_{\emb}^B(\xgen,\ygen,S^\trr,\parrow),
\end{equation*}
with index
\begin{equation*}
\begin{split}
\ind(B,\brarrow_1,\brarrow_2)&=e(B)+n_x(B)+n_y(B)\\
&\quad{}+\#\brarrow_1+\#\brarrow_2+\iota(\brarrow_1)+\iota(\brarrow_2),
\end{split}
\end{equation*}
where $\brarrow_i=\brarrow(\parrow_i)$ is a sequence of sets of Reeb chords in $\Z_i$, for $i=1,2$.
\end{defn}

This has the effect of forgetting about the relative height of punctures in $E_1$ to those in $E_2$. Its algebraic
analogue is applying the functor $\F$, which combines the algebra actions
$m_3(x,a\otimes 1,1\otimes b)$ and $m_3(x,1\otimes b,a\otimes 1)$ into $m_{1,1,1}(x,a,b)$.

The general idea is to treat the $\Z_1$ part of the arc diagram as in $\CSFD$, and the $\Z_2$ part as in $\CSFA$.
First, to a generator $\xgen\in\G(\HH)$ we associate idempotents $I_1(\obar(\xgen))\in\I(-\Z_1)$ and
$I_2(o(\xgen))\in\I(\Z_2))$, corresponding to unoccupied arcs on the $\Z_1$ side, and occupied arcs on the
$\Z_2$ side, respectively.
Next, we will look at discrete partitions $\parrow_1=(\{q_1\},\ldots,\{q_i\})$ on the $\Z_1$ side,
while allowing arbitrary partitions $\parrow_2$ on the $\Z_2$ side.

If the punctures in $\parrow_1$ are labeled by the Reeb chords $(\rho_1,\ldots,\rho_i)$, set
\begin{equation*}
a_1(\xgen,\ygen,\parrow_1)=I_1(\obar(\xgen))\cdot a(-\rho_1)\cdots a(-\rho_i)\cdot I_1(\obar(\ygen))\in\A(-\Z_1).
\end{equation*}

If the sets of punctures in $\parrow_2$ are labeled by a sequence of sets of Reeb chords $(\brho_1,\ldots,\brho_j)$, set
\begin{equation*}
a_2(\xgen,\ygen,\parrow_2)=I_2(\xgen)\cdot a(\brho_1)\otimes\cdots\otimes a(\brho_j) \cdot I_2(\ygen)\in\A(\Z_2)^{\otimes j}.
\end{equation*}

\begin{defn}
\label{def:DA_analytic}
Fix $\HH$, $J$, and $\s$. Let $\BSDA(\HH,J,\s)$ be freely generated over $\ZZ/2$ by $\G(\HH,\s)$, with
$\I(-\Z_1)$ and $\I(\Z_2)$ actions
\begin{equation*}
I(s_1)\cdot\xgen\cdot I(s_2)=\begin{cases}
\xgen & \textrm{if $s_1=\obar(\xgen)$ and $s_2=o(\xgen)$,} \\
0 & \textrm{otherwise.}
\end{cases}
\end{equation*}

It has type $DA$ operations
\begin{multline*}
m_k(\xgen,b_1,\ldots,b_{k-1})=\\
\sum_{\substack{
\ind(B,\brarrow(\parrow_1),\brarrow(\parrow_2))=1\\
a_2(\xgen,\ygen,\parrow_2)=b_1\otimes\cdots\otimes b_{k-1}}}
\#\M_{\emb}^B(\xgen,\ygen,S^\trr,\parrow_1,\parrow_2)\cdot
a_1(\xgen,\ygen,\parrow_1)\otimes\ygen.
\end{multline*}
\end{defn}

It is easy to check that definitions~\ref{def:DA_analytic} and~\ref{def:DA_algebraic} are equivalent.
Considering pairs of partitions corresponds to the functor $\F$, while restricting to discrete partitions on
the $\Z_1$ side and multiplying the corresponding Reeb chords corresponds to the functor
$\cdot\,\sqtens_{\A(\Z_1)}\lu{\A(-\Z_1),\A(\Z_1)}\,\II$.

We can use either definition to define
\begin{multline*}
{}_{\A(-\Z_1)}\BSDAM(Y,\Gamma,\s)_{\A(\Z_2)}=\\
{}_{\A(-\Z_1)}\A(-\Z_1)_{\A(-\Z_1)}\,\sqtens_{\A(-\Z_1)}\,{}^{\A(-\Z_1)}\BSDA(Y,\Gamma,\s)_{\A(\Z_2)}.
\end{multline*}

As with the one-sided modules, there is a well-defined grading.

\begin{thm}
The grading $\grdn$ on $\BSDA(Y,\Gamma,\s)$ is well-defined with values in $\Grdn(Y,s)$, and makes it a graded
$\DA$--structure. In particular, whenever $b\otimes\ygen$ is a term in $m_k(\xgen,a_1,\ldots,a_{k-1})$, we have
\begin{equation*}
\grdn(b)\cdot\grdn(\ygen)=\lambda^{k-2}\cdot\grdn(\xgen)\cdot\grdn(a_1)\cdots\grdn(a_{k-1}).
\end{equation*}
\end{thm}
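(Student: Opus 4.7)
The plan is to reduce the claimed identity to the corresponding statement for the unreduced grading $\gr$ with values in $\Gr(\Z_1\cup\Z_2)$, and then to chain together the relations already established for $\CSFD$ and $\CSFA$, exploiting the fact that $\Gr(\Z_1)$ and $\Gr(\Z_2)$ commute elementwise inside $\Gr(\Z_1\cup\Z_2)$.

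First I would observe that, since $\Zmid_1$ and $\Zmid_2$ are disjoint, the pairing $L$ of Section~\ref{sec:algebra} vanishes on any pair of classes supported in different $\Zmid_i$. Consequently the subgroups $\Gr(\Z_1)$ and $\Gr(\Z_2)$ commute in $\Gr(\Z_1\cup\Z_2)$, and for any interleaving $\brarrow$ of $\brarrow_1$ and $\brarrow_2$ into a single compatible sequence one has $\iota(\brarrow)=\iota(\brarrow_1)+\iota(\brarrow_2)$ and $\gr(\brarrow)=\gr(\brarrow_1)\gr(\brarrow_2)$. Combined with $\#\brarrow=\#\brarrow_1+\#\brarrow_2$, this gives $\ind(B,\brarrow)=\ind(B,\brarrow_1,\brarrow_2)$, matching the mixed index of Definition~\ref{def:DA_analytic}.

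As in the proofs of Theorems~\ref{thm:bsd} and~\ref{thm:bsa}, the reduction terms appearing in $\grdn(b),\grdn(\xgen),\grdn(\ygen)$ and the $\grdn(a_j)$ cancel in pairs on both sides of the claimed identity: the $\Z_1$-reductions attached to $b$ pair with those attached to $\xgen$ and $\ygen$, and symmetrically the $\Z_2$-reductions from the $a_j$ pair with those of $\xgen$ and $\ygen$. Hence it suffices to prove the identity for $\gr$. Suppose $b\otimes\ygen$ is a term in $m_k(\xgen,a_1,\ldots,a_{k-1})$, arising from a domain $B\in\pi_2(\xgen,\ygen)$, a discrete partition $\parrow_1$ with $p=\#\brarrow_1$, and a partition $\parrow_2=(\brho_1,\ldots,\brho_{k-1})$, satisfying $\ind(B,\brarrow_1,\brarrow_2)=1$. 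Applying the identity $\gr(B)=\lambda^{-\ind(B,\brarrow)+\#\brarrow}\gr(\brarrow)$, the $\CSFD$-style relation $\gr_{\Z_1}(b)=\lambda^{-p}\gr(\brarrow_1)^{-1}$ (obtained exactly as in Theorem~\ref{thm:bsd} and transported to a right $\Gr(\Z_1)$-action via the anti-isomorphism of Section~\ref{sec:reversal}), and $\gr(a_1)\cdots\gr(a_{k-1})=\gr(\brarrow_2)$, one computes
\begin{align*}
\gr_{-\Z_1}(b)\cdot\gr(\ygen) &= \gr(\ygen)\cdot\gr_{\Z_1}(b) \\
&= \gr(\xgen)\cdot\gr(B)\cdot\gr_{\Z_1}(b) \\
&= \gr(\xgen)\cdot\lambda^{p+k-2}\gr(\brarrow_1)\gr(\brarrow_2)\cdot\lambda^{-p}\gr(\brarrow_1)^{-1} \\
&= \lambda^{k-2}\cdot\gr(\xgen)\cdot\gr(a_1)\cdots\gr(a_{k-1}),
\end{align*}
where the last step uses centrality of $\lambda$ and commutativity of $\gr(\brarrow_1)$ with $\gr(\brarrow_2)$.

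Well-definedness of the grading---i.e.\ that different tuples $(B,\parrow_1,\parrow_2)$ giving the same contribution $b\otimes\ygen$ to $m_k(\xgen,a_1,\ldots,a_{k-1})$ yield the same relation---is automatic, since the asserted identity only involves the grading classes of the fixed data $\xgen,\ygen,b,a_1,\ldots,a_{k-1}$. The main bookkeeping obstacle is keeping straight the left $\Gr(-\Z_1)$-action versus the right $\Gr(\Z_2)$-action, and confirming that the two independent families of reduction terms cancel cleanly in the mixed bimodule setting; once this is done, the rest of the index and $\lambda$-arithmetic is inherited verbatim from the one-sided cases.
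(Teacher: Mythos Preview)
Your proposal is correct and follows exactly the approach the paper intends: its proof is the single sentence ``This is a straightforward combination of the arguments for the gradings on $\BSD$ and $\BSA$,'' and you have supplied precisely that combination. Your two key observations---that $\Gr(\Z_1)$ and $\Gr(\Z_2)$ commute inside $\Gr(\Z_1\cup\Z_2)$ because $L$ vanishes on mixed pairs (so $\gr(\brarrow)=\gr(\brarrow_1)\gr(\brarrow_2)$ and the mixed index agrees with the ordinary one), and that the reduction terms cancel so one may work with $\gr$---are exactly the missing glue, and your chain of equalities is the correct splicing of the $\BSD$ and $\BSA$ computations.
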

\begin{proof}
This is a straightforward combination of the arguments for the gradings on $\BSD$ and $\BSA$.
\end{proof}

\subsection{Nice diagrams and pairing}
The key results for bimodules allowing us to talk about a functor from the decorated sutured cobordism category
$\sdcat$ to the category $\D$ of differential graded algebras and $\Ainf$--bimodules
are the full version of theorem~\ref{thm:intro_pairing},
and theorem~\ref{thm:equivalence}. Below we give a more precise version of theorem~\ref{thm:intro_pairing}, in the
vein of theorem~\ref{thm:pairing}.

\begin{thm}
\label{thm:pairing_bimodules}
Given two bordered sutured manifolds $(Y_1,\Gamma_1,-\Z_1\cup\Z_2)$ and $(Y_2,\Gamma_2,-\Z_2\cup\Z_3)$, 
representing cobordisms from $\Z_1$ to $\Z_2$ and from $\Z_2$ to $\Z_3$, respectively, there are 
graded homotopy equivalences of bimodules
\begin{align*}
\bigoplus_{\s|Y_i=\s_i}\BSDA(Y_1\cup Y_2,\s)&
\simeq \BSDA(Y_1,\s_1)\,\sqtens_{\A(\Z_2)}\BSDA(Y_2,\s_2).\\
\bigoplus_{\s|Y_i=\s_i}\,\BSDAM(Y_1\cup Y_2,\s)&
\simeq \BSDAM(Y_1,\s_1)\dtens_{\A(\Z_2)}\BSDAM(Y_2,\s_2).
\end{align*}

The gradings are identified in the sense of theorem~\ref{thm:grading_gluing}.
\end{thm}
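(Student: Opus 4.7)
The plan is to follow the template of Theorem~\ref{thm:pairing}: reduce the analytic content to a combinatorial count on nice diagrams, and then obtain the second equivalence from the first by a purely algebraic manipulation. The grading identification is then inherited from Theorem~\ref{thm:grading_gluing}.

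First I would choose nice, provincially admissible bordered sutured diagrams $\HH_1$ and $\HH_2$ for $Y_1$ and $Y_2$, requiring that at least one of them be fully admissible so that $\sqtens_{\A(\Z_2)}$ and $\dtens_{\A(\Z_2)}$ are defined. Their gluing $\HH = \HH_1 \cup_{\Zmid_2}\HH_2$ is a diagram for $Y_1\cup Y_2$; by the same argument as in the proof of Theorem~\ref{thm:pairing} (pairs of rectangles adjacent to a chord of $\Zmid_2$ merge into a single rectangle in $\HH$, while all other regions are unchanged) the diagram $\HH$ is again nice.

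Next I would compare the two sides term by term. Generators in $\G(\HH)$ are in bijection with pairs $(\xgen_1,\xgen_2)\in\G(\HH_1)\times\G(\HH_2)$ whose $\Z_2$-idempotents are complementary, which is exactly the idempotent-level description of $\BSDA(\HH_1)\,\sqtens_{\A(\Z_2)}\BSDA(\HH_2)$. Using the combinatorial description of $\BSDA$ that follows from Definition~\ref{def:DA_analytic} for nice diagrams (a direct hybrid of Theorems~\ref{thm:nice_d} and~\ref{thm:nice_a}), the operation $m_k$ on each side counts discrete partitions $\parrow_1$ of the $\Z_1$-punctures together with arbitrary partitions $\parrow_2$ of the $\Z_2$- or $\Z_3$-punctures. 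Every rigid rectangle in $\HH$ then falls into exactly one of three classes: provincial to one of the $\HH_i$; meeting a single Reeb chord in $\Zmid_1$ or $\Zmid_3$; or the merger of two boundary rectangles across the same chord in $\Zmid_2$. The first two cases reproduce the operations of the individual $\BSDA(\HH_i)$, while the third reproduces the cross terms $(m\sqtens n)_k^i$ of the $\sqtens$-product. This gives a term-by-term isomorphism
\begin{equation*}
\BSDA(\HH) \;\cong\; \BSDA(\HH_1)\,\sqtens_{\A(\Z_2)}\,\BSDA(\HH_2),
\end{equation*}
and the splitting by $\spinc$-structures together with the grading equivalence is Theorem~\ref{thm:grading_gluing} applied to the triple $(\Z_1,\Z_2,\Z_3)$.

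Finally, to upgrade from $\BSDA$ with $\sqtens$ to $\BSDAM$ with $\dtens$, I would apply the algebraic identity
\begin{equation*}
{}_A(A\sqtens M)_B\,\dtens_B\,{}_B(B\sqtens N)_C \;\simeq\; {}_A(A\sqtens (M\sqtens_B N))_C
\end{equation*}
recorded in the bimodule preliminaries, with $A=\A(-\Z_1)$, $B=\A(\Z_2)$, $C=\A(\Z_3)$ and $M,N$ the type $DA$ bimodules $\BSDA(Y_i,\s_i)$. The left-hand side is $\BSDAM(Y_1,\s_1)\dtens\BSDAM(Y_2,\s_2)$, the inner $\sqtens$ on the right is handled by the first equivalence, and the outer $\A(-\Z_1)\sqtens\cdot$ recovers $\BSDAM(Y_1\cup Y_2)$. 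The main obstacle will be the combinatorial bookkeeping in the middle step: keeping the roles of $\Z_1$, $\Z_2$, $\Z_3$ distinct (discrete partitions on $\Z_1$ produce algebra outputs, arbitrary partitions on $\Z_3$ consume algebra inputs, partitions on $\Z_2$ disappear after gluing and must be reconstructed from the heights of crossing rectangles on the algebraic side), and verifying that the three classes of rigid rectangles in $\HH$ exhaust the $\sqtens$-product with no double counting. Once this matching is in place the rest of the argument, including invariance, is entirely parallel to the one-sided case.
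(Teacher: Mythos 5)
Your proposal is correct and follows essentially the same route as the paper: nice diagrams, niceness of the gluing, term-by-term matching of rigid domains against the $\sqtens$-product, grading identification via Theorem~\ref{thm:grading_gluing}, and the algebraic upgrade from $\BSDA/\sqtens$ to $\BSDAM/\dtens$. The paper packages the combinatorial description you derive ad hoc as its own statement (Theorem~\ref{thm:nice_da}), whose proof also supplies the observation you flag as the "main obstacle"---that a rigid domain in the glued diagram cannot simultaneously hit $\Zmid_1$ and $\Zmid_3$, because a domain hitting both boundary pieces would decompose into two unconstrained pieces and have index at least~2---but the underlying argument is the same.
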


The proof is completely analogous to that of theorem~\ref{thm:pairing}. It relies on the combinatorial form of
$\BSDA$ from a nice diagram, and the fact that gluing two such diagrams also gives a nice diagram with direct
correspondence of domains. The actual result for nice diagrams is given below.

\begin{thm}
\label{thm:nice_da}
For any nice diagram $\HH=(\Sigma,\balpha,\bbeta,\Z_1\cup\Z_2,\psi)$ and any admissible almost complex structure $J$
the domains that contribute to $m_k$ are of the following types.
\begin{enumerate}
\item Provincial bigons and rectangles, which contribute terms of the form $I\otimes \ygen$ to $m_1(\xgen)$.
\item \label{cond:z1}
Rectangles hitting a Reeb chord at $\Z_1$, which contribute terms of the form
$a\otimes\ygen$ to $m_1(\xgen)$.
\item \label{cond:z2}
Collections of rectangles hitting Reeb chords at $\Z_2$, at the same height, which contribute terms of the form
$I\otimes\ygen$ to $m_2(\xgen,\ldots)$.
\end{enumerate}
\end{thm}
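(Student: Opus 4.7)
The plan is to reduce to a combined version of the arguments for Theorems \ref{thm:nice_d} and \ref{thm:nice_a}, treating the $\Z_1$-side (where partitions are required to be discrete) exactly as in the $D$-side, and the $\Z_2$-side (where partitions are arbitrary) exactly as in the $A$-side. Starting from the embedded index
\[
\ind(B, \brarrow_1, \brarrow_2) = e(B) + n_\xgen(B) + n_\ygen(B) + \#\brarrow_1 + \#\brarrow_2 + \iota(\brarrow_1) + \iota(\brarrow_2),
\]
I would set $\ind = 1$ and exploit positivity together with the nice-diagram hypothesis: every non-boundary region is a bigon or rectangle with $e \geq 0$, and $e(B) + n_\xgen(B) + n_\ygen(B) \geq 0$ with equality only for disjoint rectangular regions with no points of $\xgen \cap \ygen$ in their interiors. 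This is the standard first step from the proofs of the earlier nice-diagram theorems.

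Next I would case-split on the $e$-punctures. With no $e$-punctures at all, the argument of Theorem \ref{thm:nice_a} gives a single non-trivial bigon or rectangle, producing case 1. With exactly one $e$-puncture mapping to $\Z_1$ (the discreteness of $\parrow_1$ forbids more without raising the index), the rectangle-with-Reeb-side analysis of Theorem \ref{thm:nice_d} gives case 2. With all $e$-punctures mapping to $\Z_2$ and sharing a single height (so $\#\brarrow_2 = 1$), the collection-of-rectangles analysis of Theorem \ref{thm:nice_a} for $m_2$ gives case 3. In each case the Riemann mapping theorem produces a unique embedded curve in $\M_{\emb}^B(\xgen, \ygen, S^\trr, \parrow_1, \parrow_2)$, independent of $J$, exactly as before.

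The main obstacle is ruling out the mixed configurations: domains with multiple heights on $\Z_2$, multiple $\Z_1$-punctures, or punctures on both $\Z_1$ and $\Z_2$ at once. Each such additional feature---extra bigon or rectangle, extra discrete $\Z_1$-puncture, or extra $\Z_2$-height---adds at least $1$ to $\ind$, by the same arithmetic that appears in the one-sided proofs (for a single Reeb rectangle, $e = 0$, $n_\xgen + n_\ygen = 1/2$, $\#\brarrow = 1$, $\iota = -1/2$, totalling $1$; for provincial bigons and rectangles the analogous computation also gives $1$). The subtle piece is the mixed case: since the pairing $L$ vanishes between Reeb chords supported on different components of $\Zmid$, the Maslov contributions $\iota(\brarrow_1)$ and $\iota(\brarrow_2)$ split additively without cross terms, so the two bookkeepings cannot cancel. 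Consequently any second feature beyond the ones listed forces $\ind \geq 2$, which simultaneously rules out mixed $\Z_1 / \Z_2$ contributions and shows $m_k = 0$ for $k \geq 3$. Once this decoupling is verified, Definition \ref{def:DA_analytic} directly assembles the three listed families of contributions to $m_k$.
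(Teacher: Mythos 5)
Your proof is correct and takes essentially the same route as the paper's. The paper phrases the ``no mixed terms'' step via decomposition: in a nice diagram a mixed domain breaks into disjoint one-sided pieces, and since the union moduli space $\M_{\emb}^B(\xgen,\ygen,S^\trr,\parrow_1,\parrow_2)$ imposes no constraint on the relative heights of $\Z_1$ and $\Z_2$ punctures, one gains an extra $\RR$-degree of freedom and $\ind\geq 2$; your feature-by-feature accounting (each provincial bigon/rectangle, each $\Z_1$-puncture, each new $\Z_2$-height contributes $+1$) is the same observation unpacked into the index arithmetic. The $L$-vanishing remark in your ``subtle piece'' paragraph is a valid but unnecessary aside for the conclusion you draw: the mixed index $\ind(B,\brarrow_1,\brarrow_2)$ already separates $\iota(\brarrow_1)+\iota(\brarrow_2)$ as additive summands by definition, so there is no cross-term available to cancel the extra $\#\brarrow_1+\#\brarrow_2$; the vanishing of $L$ across components is what makes that formula agree with the genuine dimension of the union moduli space, but once the formula is taken as given, the arithmetic alone closes the argument.
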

\begin{proof}
The proof is the same as those for $\CSFD$ and $\CSFA$. The only new step is showing that there are no mixed terms,
i.e. combinations of~(\ref{cond:z1}) and~(\ref{cond:z2}). In other words, the actions of $\A(-\Z_1)$ and $\A(\Z_2)$
commute for a nice diagram. The reason is that such a combined domain that hits both $\Z_1$ and
$\Z_2$ decomposes into two domains that hit only one side each. There is no constraint of the relative heights,
so such a domain will have index at least 2, and would not be counted.
\end{proof}

\subsection{Bimodule of the identity}
In this subsection we sketch the proof of theorem~\ref{thm:equivalence}. We prove a version for $\BSDA$, which implies
the original statement.

\begin{defn} Given an arc diagram $\Z$, define the bimodule $\lu{\A(\Z)}\,\II_{\A(\Z)}$, which as an $\I(\Z)$--bimodule
is isomorphic to $\I(\Z)$ itself, and whose nontrivial operations are
\begin{equation}
\label{eq:identity}
m_2(I_i,a)=a\otimes I_f,
\end{equation}
for all algebra elements $a\in\A(\Z)$ with initial and final idempotents $I_i$ and $I_f$, respectively.
It is absolutely graded by $\Grdn(\Z)$, as a subset of $\A(\Z)$, i.e. all elements are graded 0.
\end{defn}

It is easy to see that
$\lu{\A(\Z)}\,\II_{\A(\Z)}\,\sqtens\,\lu{\A(\Z)}M_{\A(\Z')}\cong\lu{\A(\Z)}M_{\A(\Z')}$ canonically, and that
$\A(\Z)\,\sqtens\,\lu{\A(\Z)}\,\II_{\A(\Z)}\simeq\A(\Z)$.

\begin{thm}
The identity decorated sutured cobordism $\id_{\Z}=(F(\Z)\times[0,1],\Lambda\times[0,1])$ from $\Z$ to $\Z$ has
a graded bimodule invariant
\begin{equation*}
\lu{\A(\Z)}\,\BSDA(\id_{\Z})_{\A(\Z)}\simeq\lu{\A(\Z)}\,\II_{\A(\Z)}.
\end{equation*}
\end{thm}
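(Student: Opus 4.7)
The plan is to exhibit an explicit ``standard'' bordered sutured Heegaard diagram $\HH_{\id}$ for the identity cobordism $\id_\Z = (F(\Z)\times[0,1],\Lambda\times[0,1])$, and to read off $\BSDA(\HH_{\id})$ using the combinatorial nice-diagram description in Theorem~\ref{thm:nice_da}. The construction is the bordered sutured analogue of the identity diagram used by Lipshitz, Ozsv\'ath and Thurston for the bimodules in the closed bordered setting, so I can proceed by making the obvious adjustments and then localizing all holomorphic-curve counting to domains in the diagram.

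First I would build $\HH_{\id}$ as follows. Take $\Sigma$ to be a surface whose boundary contains one copy of $\Zmid$ (corresponding to the incoming $-\Z$) and one copy of $-\Zmid$ (corresponding to the outgoing $\Z$), with $\Sigma$ a tubular neighborhood of $G(\Z)\cup G(-\Z)$ glued along the $k$ matched handle cores. Declare $\balpha^a$ to be the union of the $k$ cores on the $-\Z$ side and the $k$ cores on the $\Z$ side (so $\balpha^c=\varnothing$), and place $\bbeta$ to be $k$ small circles, one $\beta_i$ encircling the $i$-th pair of dual $\alpha^a$ arcs transversely (so $\beta_i$ meets exactly the two $\alpha^a$ arcs matched by $M^{-1}(i)$, in one point each). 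One checks that the associated boundary-compatible Morse function recovers $F(\Z)\times[0,1]$ with its standard parametrization, that homological linear independence holds on both boundaries, and that by a small finger-move isotopy of the $\beta_i$ the diagram is both admissible and nice.

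Next I would match generators and operations against $\II$. A generator $\xgen_s$ must put one intersection point on each $\beta_i$, and the strong boundary monotonicity of each $\beta_i$ forces the choice to land on either the $-\Z$-arc or the $\Z$-arc in the $i$-th pair; thus generators are in canonical bijection with subsets $s\subset\{1,\dots,k\}$, with $o(\xgen_s)=s\subset\A(\Z)$ and $\obar(\xgen_s)$ the complementary set on the $-\Z$ side. This identifies $\BSDA(\HH_{\id})$ with $\II$ as a bimodule over $\I(\Z)$ (viewed equivalently as $\I(-\Z)\otimes\I(\Z)$ via the complementary-idempotent convention). Applying Theorem~\ref{thm:nice_da} to the nice diagram, the only possibly contributing domains are provincial bigons/rectangles, rectangles touching a single Reeb chord of $-\Z$, and collections of rectangles touching Reeb chords of $\Z$ at the same height. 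By construction no provincial bigon or provincial rectangle exists (every nontrivial domain must cross some $\beta_i$, which forces it to touch one or both boundary components), and each elementary rectangle touching a Reeb chord $\rho$ of $\Z$ has a unique mirror rectangle on the $-\Z$ side touching the corresponding Reeb chord $-\rho$. Grouping these mirrored rectangles by height reproduces exactly Equation~(\ref{eq:identity}): a family of Reeb chords $\brho$ in $\Z$ contributes $m_2(\xgen_s,a(\brho,s))=a(\brho,s)\otimes \xgen_{s'}$, and nothing else.

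The main technical obstacle will be verifying this Reeb-chord bookkeeping cleanly: namely, checking that the domains in $\HH_{\id}$ realizing the action of each $a_p(\brho,s)\in\A(\Z)$ consist of exactly one family of mirrored rectangles at a common height, and that no other partitions $\parrow_2$, no mixed $\Z$/$-\Z$ domains, and no higher $m_k$ with $k\geq 3$ can occur; this follows as in \cite[Chapter 10]{LOT:bimodules} by the index calculation and the local nature of nice-diagram domains, since the relative heights of punctures on different sides are already forgotten by the $\BSDA$ formalism and any domain hitting both boundaries splits as in the proof of Theorem~\ref{thm:nice_da}. Gradings pose no separate difficulty: the contributing rectangles all have $e(B)+n_\xgen(B)+n_\ygen(B)=0$ and their $\del^\del$-boundary records exactly the homological class of the corresponding algebra element, so $\grdn(m_2(\xgen_s,a))=\grdn(a)\cdot\grdn(\xgen_{s'})$ with Maslov shift zero, matching the absolute grading $0$ on $\II$. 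Combining these steps yields the asserted graded homotopy equivalence $\BSDA(\id_\Z)\simeq\II$, and the statement for $\BSDAM$ follows by tensoring with $\A(\Z)$ on the left.
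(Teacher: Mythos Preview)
Your approach has a genuine gap at the step where you claim the diagram can be made nice while keeping the generator set in bijection with idempotents and then read off the operations directly. The standard identity diagram $\HH_{\id}$ (which is the diagram of Figure~\ref{subfig:HH_T}, with the left side interpreted as $-\Z$ and the right as $\Z$) is \emph{not} nice: the domain realizing $m_2(I_i,a(\rho,s))=a(\rho,s)\otimes I_f$ for a length-one Reeb chord $\rho$ is a convex \emph{octagon}, not a rectangle. Making the diagram nice requires substantial finger moves that introduce many new $\alpha$--$\beta$ intersections, hence new generators and a nontrivial $m_1$; the resulting bimodule is then only homotopy equivalent to $\II$, and establishing that equivalence is precisely the content of the theorem.

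More seriously, your description of the nice-diagram contributions contradicts Theorem~\ref{thm:nice_da} itself. That theorem says that in a nice diagram the only $m_2$ contributions have \emph{trivial} type-$D$ output $I\otimes\ygen$, while nontrivial algebra outputs come only from $m_1$; any domain hitting both the $-\Z$ and the $\Z$ boundary decomposes and has index $\geq 2$, so it is not counted. Hence one cannot obtain $m_2(\xgen_s,a)=a\otimes\xgen_{s'}$ with $a$ nontrivial from a single height-level in a nice diagram, and your ``mirrored rectangles'' picture does not exist there. The paper sidesteps this by working with the original (non-nice) diagram and verifying Eq.~(\ref{eq:identity}) directly only for length-one Reeb chords via those octagons. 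The rest is algebraic: a bimodule with underlying space $\I(\Z)$ encodes an $\Ainf$--endomorphism $\phi$ of $\A(\Z)$; the length-one computation shows $\phi$ induces the identity on $H_*(\A(\Z))$ (which is Massey-generated by length-one chords), so $\phi$ is a quasi-isomorphism; and the pairing theorem gives $\BSDA(\id_\Z)\sqtens\BSDA(\id_\Z)\simeq\BSDA(\id_\Z)$, i.e.\ $\phi\circ\phi\simeq\phi$, which together with $\phi$ being a quasi-isomorphism forces $\phi\simeq\id$.
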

\begin{proof}[Proof (sketch)]
The proof is essentially the same as that of the corresponding statement for pure bordered identity cobordisms
in~\cite{LOT:bimodules}. First we look at an appropriate Heegaard diagram $\HH$ for $\id_{\Z}$. For any $\Z$ there
is a canonical diagram of the form in Fig.~\ref{subfig:HH_T}, only here we interpret the left side as the $-\Z$,
or type $D$, portion of the boundary, while the right side is the $+\Z$, or $\Ainf$--type, portion. Indeed,
choosing which of the right arcs are occupied in a generator determines it uniquely, and $\G(\HH)$ is a one-to-one
correspondence with elementary idempotents. Thus the underlying space for $\BSDA(\id_{\Z})$ is $\I(\Z)$.
For any Reeb chord $\rho$ of length one there is a convex octagonal domain in $\HH$ that makes Eq.~(\ref{eq:identity})
hold
for $a=a(\rho,s)$, for any such $\rho$, and any completion $s$.

The rest of the proof is algebraic. Any bimodule with underlying module $\I(\Z)$ corresponds to some $\Ainf$--algebra
morphism $\phi\co\A(\Z)\to\A(\Z)$. We compute the homology of $\A(\Z)$ and show it is Massey generated by length one
Reeb chords as above. Since Eq.~(\ref{eq:identity}) holds for such elements, $\phi$ is a quasi-isomorphism. By
theorem~\ref{thm:pairing_bimodules}, we know $\BSDA(\id_{\Z})$ squares to itself, and so does $\phi$, i.e.
$\phi\circ\phi\simeq\phi$. Since it is a quasi isomorphism, it is homotopic to the identity morphism, and
Eq.~(\ref{eq:identity}) holds for all $a$, up to homotopy equivalence.

Finally, for the grading, $\Grdn(-\Z\cup\Z)$ has two copies of $H_1(F(\Z))$, with opposite pairings. For all
$\spinc$--structures, there are obvious periodic domains, such that $\pi_2(\xgen,\xgen)=H_1(F)$. Taking the quotient
by the stabilizer subgroup identifies the subgroups $\Grdn(-\Z)$ and $\Grdn(\Z)$ by the canonical anti-isomorphism.
All domains have vanishing Maslov grading and canceling homological gradings, so in each $\spinc$--structure all
generators have the same relative grading. Thus, we can identify it with an absolute grading where all gradings are 0.
\end{proof}

\section{Examples}
\label{sec:examples}
To help the reader understand the definitions we give some
simple examples of bordered sutured manifolds and compute their invariants.

\subsection{Sutured surfaces and arc diagrams}
First we discuss some simple arc diagrams and their algebras, that parametrize the same sutured surfaces

\begin{exm}
\label{exm:ss_fn}
One of the simplest classes of examples is the following. Let $F_n$ be the sutured surface $(D^2,\Lambda_n)$, where
$\Lambda_n$ consists of $2n$ distinct points. That is, $F_n$ is a disc, whose boundary circle is divided into
$n$ positive and $n$ negative arcs.
\end{exm}

There are many different arc diagrams for $F_n$, especially for large $n$, but there are two special cases
which we will consider in detail.

\begin{exm}
\label{exm:ad_wn}
Let $\Zmid=\{Z_1,\ldots,Z_n\}$ be a collection of oriented arcs, and
$\amid=\{a_1,\ldots,a_{2n-2}\}$ be a collection of points, such that $a_1,\ldots,a_{n-1}\in Z_1$ are
in this order, and $a_{n+i-1}\in Z_{i+1}$ for $i=1,\ldots,n-1$. Let $M$ be the matching
$M(a_i)=M(a_{2n-i-1})=i$ for $i=1,\ldots,n-1$. The arc diagram
$\W_n=(\Zmid,\amid,M)$ parametrizes $F_n$, as in Fig.~\ref{subfig:wn}.
\end{exm}

\begin{prop}
For the arc diagram $\W_n$ from example~\ref{exm:ad_wn},
the algebra $\A(\W_n)$ satisfies $\A(\W_n,k)\cong\A(n-1,k)$ for all $k=0,\ldots,n-1$.
\end{prop}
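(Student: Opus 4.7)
The plan is to construct an explicit algebra isomorphism $\Phi\co\A(n-1,k)\to\A(\W_n,k)$ using the Reeb chord description. The key structural feature of $\W_n$ is that the trunk $Z_1$ carries all $n-1$ matched points of $\amid\cap Z_1$, while each of $Z_2,\ldots,Z_n$ carries a single point. Consequently every Reeb chord of $\W_n$ lies inside $Z_1$, and the correspondence $a_i\leftrightarrow i$ gives a canonical bijection between Reeb chords of $\W_n$ and the moves $i\to j$ with $i<j$ that label the ``moving strands'' of generators of $\A(n-1,k)$.

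On the standard basis element $(S,T,\phi)\in\A(n-1,k)$, I would decompose $\phi$ into its moving part (pairs with $\phi(i)>i$) and its horizontal part (pairs with $\phi(i)=i$). Let $\brho$ denote the induced collection of Reeb chords in $\{1,\ldots,n-1\}$ and $s_0=\{i\in S : \phi(i)=i\}$ the horizontal completion. Define
\begin{equation*}
\Phi\bigl((S,T,\phi)\bigr) = a(\brho_L, s_0) \in \A(\W_n, k),
\end{equation*}
where $\brho_L$ is the lift of $\brho$ to $Z_1\cap\amid$ via $i\mapsto a_i$. One checks that $\Phi\bigl((S,S,\id_S)\bigr)=a(\varnothing,S)=I(S)$, so idempotents land in idempotents. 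By the Reeb chord description, a basis of $\A(\W_n,k)$ is given by the elements $a(\brho,s_0)$ ranging over $k$-completable collections $\brho$ together with $(k-|\brho|)$-completions $s_0$, and the indexing data for these is canonically bijective with the indexing data for the standard basis of $\A(n-1,k)$.

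The crux is verifying that $\Phi$ preserves multiplication and the differential. I would prove this via the observation that for any generator of $\A(\W_n,k)$, viewed as an element of the ambient strand algebra $\A(2n-2,k)$, \emph{all inversions lie inside $Z_1$}. Indeed, component-preservation in the extended strand algebra forces each moving strand to stay within a single $Z_j$, and for $j\geq 2$ the arc $Z_j$ carries only one point and admits no nontrivial move; furthermore, every point on $Z_j$ with $j\geq 2$ lies above every point of $Z_1$ in the $\amid$-order, so horizontal strands off $Z_1$ form no inversions with anything in $Z_1$ or with each other. Consequently the inversion count of $a(\brho_L,s_0)$ matches that of its $\Phi$-preimage, and both the multiplication rule (governed by idempotent compatibility and additivity of inversions) and the differential (which resolves inversions) transfer verbatim.

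The main obstacle will be the bookkeeping in the multiplication step: a product $a(\brho_L,s_0)\cdot a(\brho_L',s_0')$ in $\A(\W_n,k)$ is a double sum over sections of $s_0$ and $s_0'$, most of whose terms vanish by idempotent mismatch. I would show that the surviving terms reassemble into exactly $\Phi$ applied to the corresponding product in $\A(n-1,k)$; this reduces to a combinatorial matching showing that for each section on one side, the compatible section on the other is uniquely determined, and that as one varies over all valid sections, the outputs range precisely over the sections of the combined horizontal completion required by the definition of $a(\,\cdot\,,\,\cdot\,)$.
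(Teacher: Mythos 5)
Your proof goes in the opposite direction from the paper's and is correspondingly more laborious. The paper's argument observes that $\A(\W_n,k)$ sits inside $\A(n-1,1,\ldots,1;k)\cong\A(n-1)\otimes\A(1)^{\otimes(n-1)}$, and that $\A(1)$ splits as the direct sum of two one-dimensional idempotent algebras $\A(1,0)\oplus\A(1,1)$. The projection $\pi$ onto the $\A(n-1)\otimes\A(1,0)^{\otimes(n-1)}\cong\A(n-1)$ summand is therefore automatically a morphism of differential algebras, with no bookkeeping required; the only content is the single combinatorial observation that for each generator $a(\brho,s)$ exactly one term survives $\pi$ (the one whose section of $s$ lies entirely in $Z_1$), so $\pi|_{\A(\W_n,k)}$ is a bijection. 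Your $\Phi$ is the inverse of this $\pi$, so the strategy can be made to work, but verifying that $\Phi$ respects multiplication and $\del$ requires exactly the term-by-term matching of sections that the paper's choice of direction renders unnecessary. One imprecision to flag: the assertion that ``the inversion count of $a(\brho_L,s_0)$ matches that of its $\Phi$-preimage'' is not correct for the sum as a whole --- terms whose stationary strands sit on $Z_j$ with $j\geq 2$ have strictly fewer inversions than the $Z_1$-section term (a stationary strand at $a_i\in Z_1$ nested inside the span of a moving strand contributes an inversion, whereas its mate $a_{2n-i-1}$ on $Z_{n-i+1}$ contributes none). What you actually use, and what is true, is that all crossings live inside $Z_1$, so the vanishing of products (the double-crossing condition) and the resolutions appearing in $\del$ are governed by $Z_1$-data alone; with that correction your sketch can be completed, but the paper's projection argument gets multiplicativity and compatibility with $\del$ for free.
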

\begin{proof}
The algebra $\A(\W_n)$ is a subalgebra of $\A(n-1,1,1,\ldots,1)\cong\A(n-1)\otimes\A(1)^{\otimes(n-1)}$.
But $\A(1)=\A(1,0)\oplus\A(1,1)$, where both summands are trivial.
The projection $\pi$ to $\A(n-1)\otimes\A(1,0)^{\otimes(n-1)}\cong\A(n-1)$ respects the algebra structure.
For each $\brho$ and completion $s$, the projection $\pi$ kills all summands in $a(\brho,s)$, except
the one corresponding to the unique section $S$ of $s$, where $S\subset\{1,\ldots,n-1\}$. Therefore
$\pi|_{\A(\W_n)}$ is an isomorphism.
\end{proof}

\begin{exm}
\label{exm:ad_vn}
Let $\Zmid=\{Z_1,\ldots,Z_n\}$ and $\amid=\{a_1,\ldots,a_{2n-2}\}$, again but set
$a_1\in Z_1$, $a_{2n-2}\in Z_n$, while $a_{2i},a_{2i+1}\in Z_{i+1}$ for $i=1,\ldots,n-2$. Set the matching $M$
to be $M(a_{2i-1})=M(a_{2i})=i$ for $i=1,\ldots,n-1$. The arc diagram $\V_n=(\Zmid,\amid,M)$ also parametrizes $F_n$, as
in Fig.~\ref{subfig:vn}
\end{exm}

\begin{prop}
For the arc diagram $\V_n$ from example~\ref{exm:ad_vn}, its associated algebra $\A(\V_n)$ has no differential.
\end{prop}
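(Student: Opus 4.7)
The plan is to verify directly that every generator of $\A(\V_n)$ is closed under the differential by showing the underlying permutation carries no inversions. Since $\A(\V_n)$ sits inside the extended strands algebra $\A(|Z_1|,\ldots,|Z_n|)\cong \A(1)\otimes\A(2)^{\otimes(n-2)}\otimes\A(1)$, and the differential acts by the Leibniz rule across these tensor factors, it suffices to show that the restriction of every generator to each $Z_j$ contributes no inversions. The one-point segments $Z_1$ and $Z_n$ are trivial, so the entire problem reduces to the middle segments $Z_{j+1}$ with $|Z_{j+1}|=2$, which contain the two points $a_{2j}, a_{2j+1}$.

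Next I would enumerate the possible restrictions to $Z_{j+1}$ of a generator $a(\brho,s)$. Since Reeb chords are positively oriented, the only chord supported inside $Z_{j+1}$ is $\rho\co a_{2j}\to a_{2j+1}$, and $\brho$ is a set, so at most one copy of $\rho$ appears. A completion $s$ contributes stationary strands at points of $M^{-1}(s)$, which by construction are disjoint from $\brho^{-}\cup\brho^{+}$. The restriction $(S_j,T_j,\phi_j)$ therefore falls into exactly one of the following configurations: the empty configuration; a single stationary strand at $a_{2j}$ or at $a_{2j+1}$; two stationary strands at $a_{2j}$ and $a_{2j+1}$ simultaneously; or the lone moving strand $a_{2j}\to a_{2j+1}$ (with no accompanying stationary strands, since sections avoid the endpoints of $\rho$). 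In every case $\phi_j$ is strictly increasing on its at most two-element domain, so no inversion is present.

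The only potential obstacle is ruling out a generator with two crossing moving strands inside a single $Z_{j+1}$; but a crossing would require at least two positively oriented Reeb chords with distinct endpoints in the same segment, and a segment with only two points admits no such pair. Consequently every Leibniz contribution to $\del\, a(\brho,s)$ vanishes, and since such elements generate $\A(\V_n)$ over $\I(\V_n)$, the algebra $\A(\V_n)$ has zero differential.
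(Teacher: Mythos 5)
Your proof is correct and uses essentially the same strategy as the paper: exploit the tensor decomposition of $\A(\V_n)$ as a subalgebra of $\A(1)\otimes\A(2)^{\otimes(n-2)}\otimes\A(1)$ together with the Leibniz rule. The paper is a bit more economical, though: it simply observes that $\A(1)$ and $\A(2)$ have identically zero differential (no element of either can carry an inversion, since each segment has at most two marked points), so the ambient tensor algebra already has vanishing differential and the subalgebra $\A(\V_n)$ inherits it; you instead re-derive this fact by enumerating the per-segment restrictions of the generators $a(\brho,s)$, which is correct but is extra work given that the vanishing holds before restricting to the subalgebra.
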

\begin{proof}
By definition $\A(\V_n)$ is a subalgebra of $\A(1)\otimes\A(2)^{\otimes(n-2)}\otimes\A(1)$.
It is trivial to check that neither $\A(1)$, nor $\A(2)$ have differentials. The differential on
their product is defined by the Leibniz rule, so it also vanishes.
\end{proof}

\begin{figure}
\begin{subfigure}[b]{.48\linewidth}
	\centering
	\includegraphics[scale=.7]{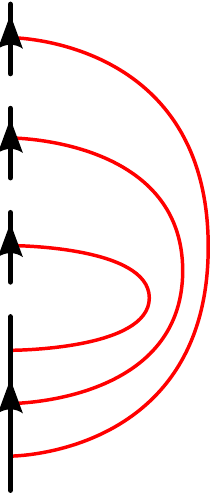}\qquad
	\labellist
	\small\hair 2pt
	\pinlabel $Z_1$ [t] at 80 80
	\pinlabel $Z_2$ [r] at 0 160
	\pinlabel $Z_n$ [l] at 160 160
	\pinlabel $\cdots$ [b] at 80 240
	\endlabellist
	\includegraphics[scale=.5]{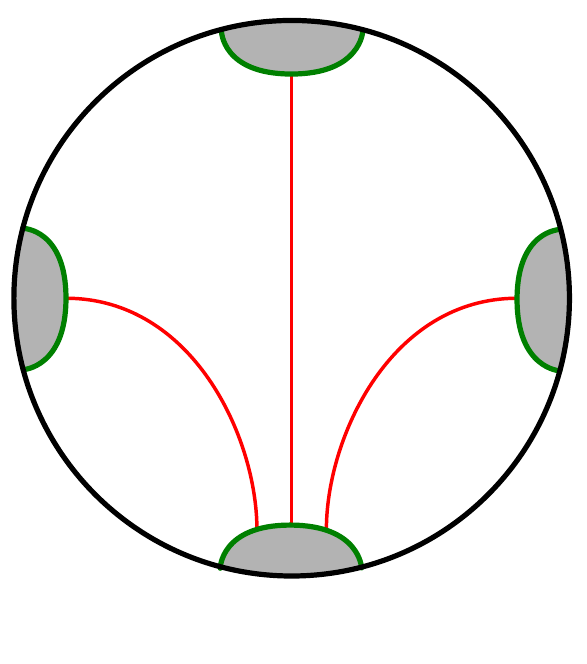}
	\caption{The arc diagram $\W_n$ for $F_n$ and corresponding parametrization.}
	\label{subfig:wn}
\end{subfigure}
\begin{subfigure}[b]{.48\linewidth}
	\centering
	\includegraphics[scale=.7]{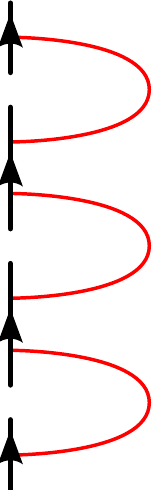}\qquad
	\labellist
	\small\hair 2pt
	\pinlabel $Z_1$ [t] at 80 80
	\pinlabel $Z_2$ [r] at 0 160
	\pinlabel $Z_n$ [l] at 160 160
	\pinlabel $\cdots$ [b] at 80 240
	\endlabellist
	\includegraphics[scale=.5]{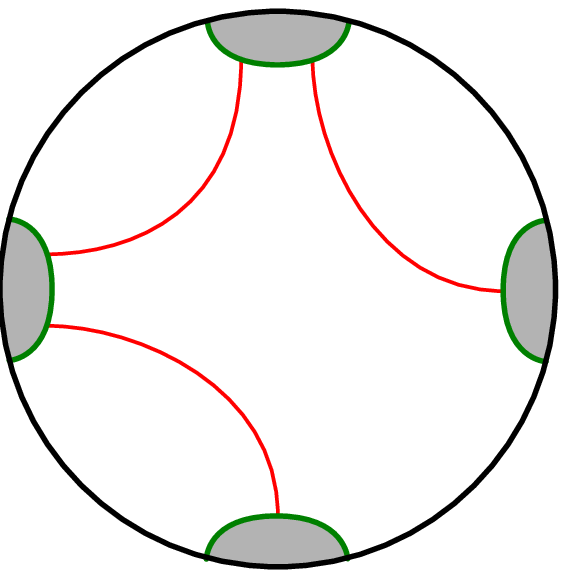}
	\caption{The arc diagram $\V_n$ for $F_n$ and corresponding parametrization.}
	\label{subfig:vn}
\end{subfigure}
\caption{Two parametrizations of $F_n$.}
\label{fig:fn_params}
\end{figure}

It will be useful for next section to compute the two algebras $\A(\W_4)$ and $\A(\V_4)$ explicitly.
Recall definition~\ref{def:a_rho_s}, which assigns to a collection $\brho$ of Reeb chords, corresponding to
moving strands, and a completion $s$, corresponding to stationary strands, an algebra element $a(\brho,s)$.
Abusing notation, we will denote the idempotent $I(\{1,2,4\})$ by $I_{124}$, etc.

In $\W_4$ there are three Reeb chords---$\rho_1$ from $a_1$ to $a_2$, $\rho_2$ from $a_2$ to $a_3$, and
their concatenation $\rho_{12}$ from $a_1$ to $a_3$. The algebra splits into 4 summands.
The 0-- and 3--summands $\A(\W_4,0)=\left<I_{\varnothing}\right>$ and $\A(\W_4,3)=\left<I_{123}\right>$ are trivial.
The 1--summand is $\A(\W_4,1)=\left<I_1,I_2,I_3,\rho'_1,\rho'_2,\rho'_{12}\right>$. It has three idempotents and three other generators
$\rho'_i=a(\{\rho_i\},\varnothing)$. It has no differential, and the only nontrivial product is
$\rho'_1\cdot\rho'_2=\rho'_{12}$. The 2--summand
$\A(\W_4,2)=\left<I_{12},I_{13},I_{23},\rho''_1,\rho''_2,\rho''_{12},\rho''_2\cdot\rho''_1\right>$ is the most interesting.
Here $\rho''_1=a(\{\rho_1\},\{3\})$, $\rho''_2=a(\{\rho_2\},\{1\})$, $\rho''_{12}=a(\{\rho_{12}\},\{2\})$, and
$\rho''_2\cdot\rho''_1=a(\{\rho_1,\rho_2\},\varnothing)$. There is a nontrivial differential
$\del\rho''_{12}=\rho''_2\cdot\rho''_1$, and one nontrivial product, which is clear from our notation.

In $\V_4$ there are two Reeb chords---$\sigma_1$ from $a_2$ to $a_3$, and $\sigma_2$ from $a_4$ to $a_5$. Again,
the summands $\A(\V_4,0)=\left<I_{\varnothing}\right>$ and $\A(\V_4,3)=\left<I_{123}\right>$ are trivial.
The 1--summand is $\A(\V_4,1)=\left<I_1,I_2,I_3,\sigma'_1,\sigma'_2\right>$, where $\sigma'_i=a(\{\sigma_i\},\varnothing)$. It has no
nontrivial differentials or products. The 2--summand is
$\A(\V_4,2)=\left<I_{12},I_{13},I_{13},\sigma''_1,\sigma''_2,\sigma''_2\cdot\sigma''_1\right>$,
where $\sigma''_1=a(\{\sigma_1\},\{3\})$, $\sigma''_2=a(\{\sigma_2\},\{1\})$, and
$\sigma''_2\cdot\sigma''_1=a(\{\sigma_1,\sigma_2\},\varnothing)$. There are no differentials and there is one nontrivial product.

\subsection{Bordered sutured manifolds}
We give three examples of bordered sutured manifolds. Topologically they are all very simple---in fact they are all
$D^2\times[0,1]$. They are, nonetheless, interesting and have nontrivial invariants. Bordered sutured manifolds
of this type are essential for the study of
what happens when we fill in a sutured surface with a chord diagram.

\begin{figure}
\begin{subfigure}[t]{.32\linewidth}
	\centering
	\includegraphics[scale=.45]{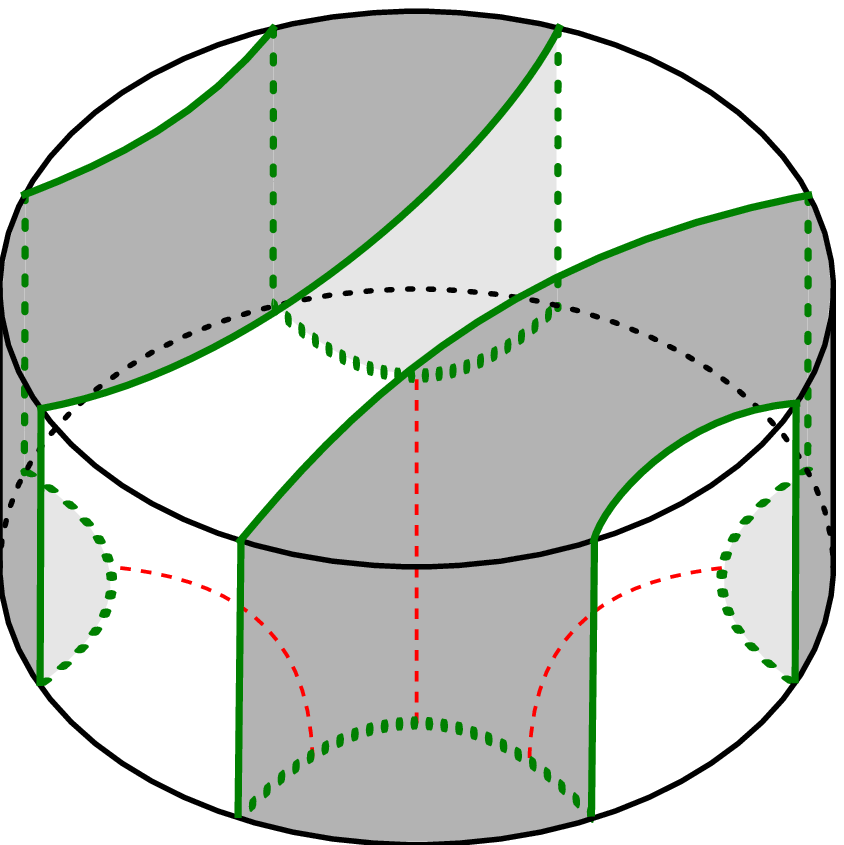}
	\caption{$M_1$ bordering $-\W_4$}
	\label{subfig:bs_mfld_1}
\end{subfigure}
\begin{subfigure}[t]{.32\linewidth}
	\centering
	\includegraphics[scale=.45]{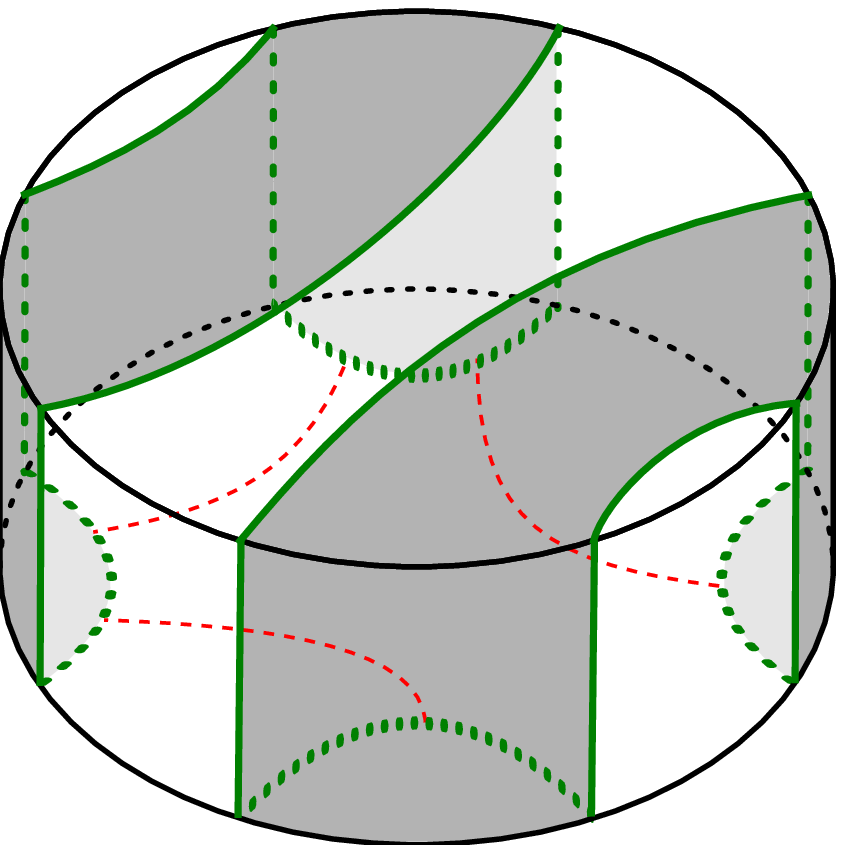}
	\caption{$M_2$ bordering $-\V_4$}
	\label{subfig:bs_mfld_2}
\end{subfigure}
\begin{subfigure}[t]{.32\linewidth}
	\centering
	\includegraphics[scale=.45]{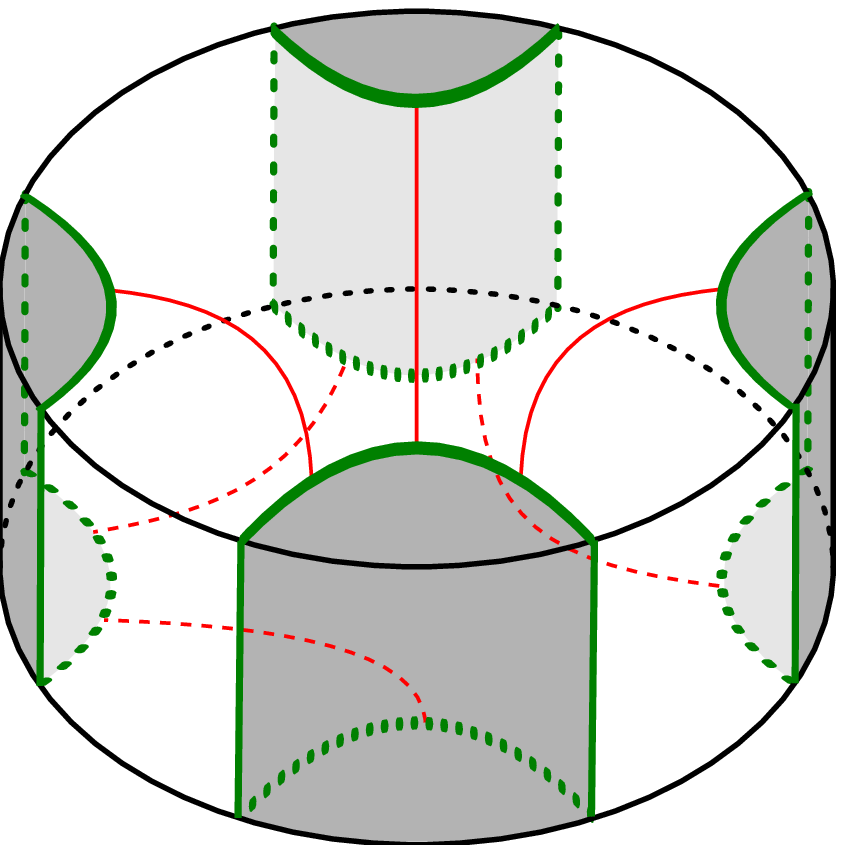}
	\caption{$M_3$ bordering\\$-\V_4\cup\W_4$}
	\label{subfig:bs_mfld_3}
\end{subfigure}
\begin{subfigure}[t]{.32\linewidth}
	\centering
	\labellist
	\small
	\pinlabel $A$ at 40 210
	\pinlabel \rotatebox{180}{\reflectbox{$A$}} at 40 75
	\small\hair 4pt
	\pinlabel $-\W_4$ [tr] at 0 360
	\hair 2pt
	\pinlabel $\rho_1$ [r] at 0 45
	\pinlabel $\rho_2$ [r] at 0 75
	\small
	\pinlabel $x$ [tr] at 20 60
	\pinlabel $y$ [br] at 20 90
	\endlabellist
	\includegraphics[scale=.6]{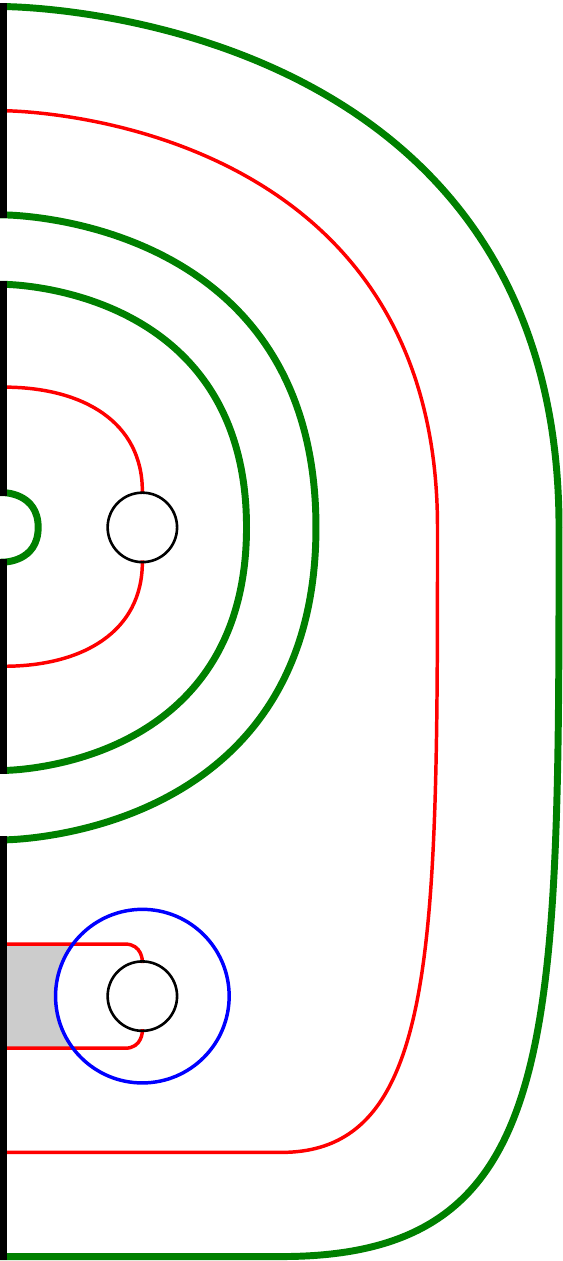}
	\caption{$\HH_{M_1}$}
	\label{subfig:bs_diag_1}
\end{subfigure}
\begin{subfigure}[t]{.32\linewidth}
	\centering
	\labellist
	\small
	\pinlabel $A$ at 50 210
	\pinlabel \rotatebox{180}{\reflectbox{$A$}} at 50 75
	\small\hair 4pt
	\pinlabel $-\V_4$ [tr] at 0 360
	\hair 2pt
	\pinlabel $\sigma_1$ [r] at 0 170
	\pinlabel $\sigma_2$ [r] at 0 250
	\small
	\pinlabel $u$ [tr] at 53 50
	\pinlabel $v$ [bl] at 45 100
	\endlabellist
	\includegraphics[scale=.6]{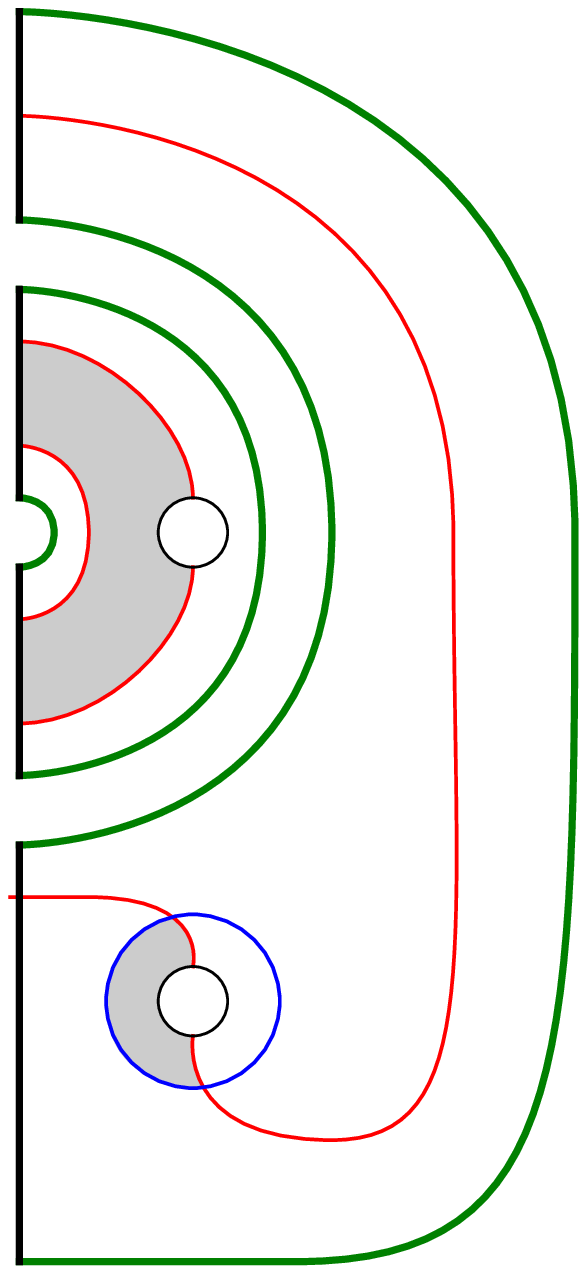}
	\caption{$\HH_{M_2}$}
	\label{subfig:bs_diag_2}
\end{subfigure}
\begin{subfigure}[t]{.32\linewidth}
	\centering
	\labellist
	\small
	\pinlabel $A$ at 55 -5
	\pinlabel \rotatebox{180}{\reflectbox{$A$}} at 55 -70
	\pinlabel $B$ at 100 75
	\pinlabel \rotatebox{180}{\reflectbox{$B$}} at 100 -100
	\pinlabel $C$ at 145 170
	\pinlabel \rotatebox{180}{\reflectbox{$C$}} at 145 -130
	\small\hair 4pt
	\pinlabel $-\V_4$ [tr] at 15 200
	\pinlabel $\W_4$ [tl] at 185 200
	\hair 2pt
	\pinlabel $\rho_1$ [l] at 185 -115
	\pinlabel $\rho_2$ [l] at 185 -85
	\pinlabel $\sigma_1$ [r] at 15 10
	\pinlabel $\sigma_2$ [r] at 15 90
	\small
	\pinlabel $a$ [tr] at 125 -130
	\pinlabel $b$ [tr] at 100 -120
	\pinlabel $c$ [tr] at 80 -100
	\pinlabel $d$ [tr] at 55 -90
	\pinlabel $e$ [tr] at 35 -70
	\pinlabel $f$ [bl] at 165 -130
	\pinlabel $g$ [bl] at 120 -100
	\pinlabel $h$ [bl] at 75 -70
	\endlabellist
	\includegraphics[scale=.6]{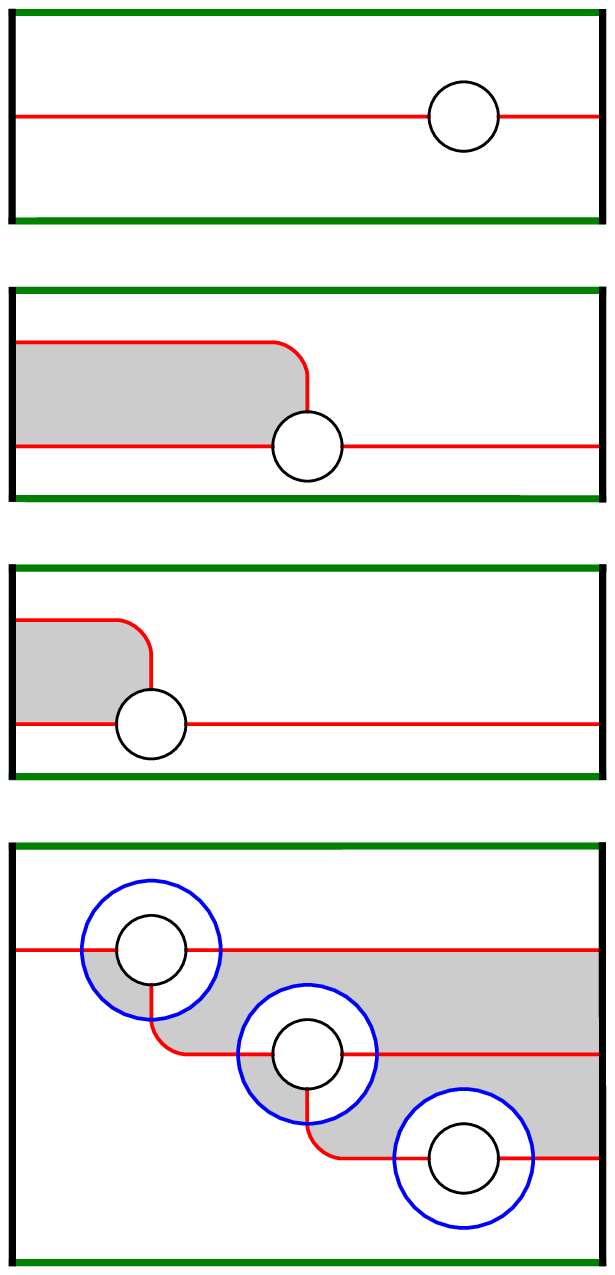}
	\caption{$\HH_{M_3}$}
	\label{subfig:bs_diag_3}
\end{subfigure}
\caption{Three examples of bordered sutured manifolds (top row), and their diagrams (bottom row).
Capital roman letters denote 1--handles, lower case roman letters denote intersection points,
and Greek letters denote Reeb chords (always oriented upward). All non-boundary elementary regions have been shaded.}
\label{fig:bs_examples}
\end{figure}

\begin{exm}
\label{exm:m1}
The first example is $M_1=(D^2\times[0,1],\Gamma_1,-\W_4)$, where $D^2\times\{0\}$ is parametrized by $-\W_n$, and the rest of the boundary
is divided into two positive and three negative regions (see Fig.~\ref{subfig:bs_mfld_1}). An admissible---and in fact nice---Heegaard diagram 
for $M_1$ is
given in Fig.~\ref{subfig:bs_mfld_1}. We will compute $\lu{\A(\W_4)}\,\CSFD(M_1)$.

First, notice that the relative $\spinc$--structures are in one-to-one correspondence with $H_1(D^2\times[0,1],D^2\times\{0\})=0$, so there is
a unique $\spinc$--structure. There are two generators $(x)$ and $(y)$, with idempotents $I_{13}\cdot(x)=(x)$, and $I_{12}\cdot(y)=(y)$
(both in $\I(\W_4,2)$).
There is a single region contributing to $\delta$. It corresponds to a source $S^\trr$ which is a bigon from $(y)$ to $(x)$, with one
$e$ puncture labeled $-\rho_2$. It contributes $a_2(\rho_2)\otimes(x)=\rho''_2\otimes(x)$ to $\delta(y)$.
Therefore, the only nontrivial term in $\delta$ is
\begin{equation*}
\delta((y))=\rho''_2\otimes(x).
\end{equation*}

If we want to compute $\CSFA(M_1)_{\A(-\W_4)}$, the same generators have idempotents $(x)\cdot I_2=(x)$ and $(y)\cdot I_3=(y)$, and
the same region contributes $(x)$ to $m_2((y),a(-\rho_2))$, instead. The only nontrivial term is
\begin{equation*}
m_2((y),-\rho'_2)=(x).
\end{equation*}
\end{exm}

\begin{exm}
\label{exm:m2}
The second example is $M_2=(D^2\times[0,1],\Gamma_1,-\V_4)$, which is the same as $M_1$, except for the different parametrization of
$D^2\times\{0\}$ (see Fig.~\ref{subfig:bs_mfld_2}). An admissible diagram for $M_2$ is given in Fig.~\ref{subfig:bs_diag_2}.

First, we compute ${}^{\A(\V_4)}\CSFD(M_2)$. It has two generators, with idempotents $I_{12}\cdot(u)=(u)$ and $I_{23}\cdot(v)=(v)$. There is
one region which is a bigon with two $e$ punctures labeled $-\sigma_2$ and $-\sigma_1$, at different heights. It contributes
$a_2(\sigma_2)a_2(\sigma_1)\otimes (v)=\sigma''_2\cdot\sigma''_1\otimes(v)$ to $\delta((u))$. Therefore the differential is
\begin{equation*}
\delta((u))=\sigma''_2\cdot\sigma''_1\otimes(v).
\end{equation*}

For $\CSFA(M_2)_{\A(-\V_4)}$, the idempotents are $(u)\cdot I_3=(u)$ and $(v)\cdot I_1=(v)$. The region contributes to $m_3$, yielding
\begin{equation*}
m_3((u),-\sigma'_2,-\sigma'_1)=(v).
\end{equation*}
\end{exm}

\begin{exm}
\label{exm:m3}
Our last---and richest---example is $M_3=(D^2\times[0,1],\Gamma_2,-\V_4\cup\W_4)$, where $-\V_4$ parametrizes $D^2\times\{0\}$, and
$\W_4$ parametrizes $D^2\times\{1\}$ (see Fig.~\ref{subfig:bs_mfld_3}). This is a decorated sutured cobordism from
$\V_4$ to $\W_4$, which is an isomorphism in the decorated category $\sdcat$.
An admissible diagram for $M_3$ is given in
Fig.~\ref{subfig:bs_diag_3}.

We will compute (part of) ${}^{\A(\V_4)}\BSDA(M_3)_{\A(\W_4)}$. In this case, since $H_1(D^2\times[0,1],D^2\times\{0,1\})=\ZZ$, there are multiple
$\spinc$--structures. As in the proof of theorem~\ref{thm:decomp}, the $\spinc$--structures correspond to how many $\alpha^a$ arcs are occupied on
the $\W_4$ side of $-\V_4\cup\W_4$. Let $\s_k$ be the $\spinc$--structure with $k$ arcs occupied. There are $3-k$ arcs occupied on the $-\V_4$ side
for each such generator, and therefore $\BSDA(M_3,\s_k)$ is a bimodule over $\A(\V_4,k)$ and $\A(\W_4,k)$. Moreover, only $k=0,1,2,3$ give
nonzero invariants.

It is easy to check that $\BSDA(M_3,\s_0)$ and $\BSDA(M_3,\s_3)$ have unique generators, $(ace)$ and $(fgh)$, respectively, with no
nontrivial actions $m_k$. We will leave $\BSDA(M_3,\s_1)$ as an exercise and compute $\BSDA(M_3,\s_2)$.
There are five generators, with idempotents as follows.
\begin{align*}
I_{12}\cdot(agh)\cdot I_{23}&=(agh) & I_{12}\cdot(fbh)\cdot I_{13}&=(fbh)\\
I_{13}\cdot(fch)\cdot I_{13}&=(fch) & I_{13}\cdot(fgd)\cdot I_{12}&=(fgd)\\
I_{23}\cdot(fge)\cdot I_{12}&=(fge)
\end{align*}

There are four elementary domains, each of which contributes one term to $m_1$ or $m_2$. Some of them also contribute to $m_1$ or $m_2$
for $\BSDA(M_3,\s_1)$, and there is a composite domain that also contributes in that case. The nontrivial operations for $\BSDA(M_3,\s_2)$
are listed below.
\begin{align*}
m_1((fgd))&=\sigma''_1\otimes(fge) & m_2((fgd),\rho''_2)&=I_{13}\otimes(fch)\\
m_1((fbh))&=\sigma''_2\otimes(fch) & m_2((fbh),\rho''_1)&=I_{12}\otimes(agh)
\end{align*}
\end{exm}

\subsection{Gluing}
Our final example is of gluing of bordered sutured manifolds and the corresponding operation on their invariants.

\begin{exm}
We will use the manifolds from examples~\ref{exm:m1}--\ref{exm:m3}. If we glue $M_1$ and $M_2$ along $F(\W_4)$ we obtain exactly $M_2$.
Treating
${}^{\A(\W_4)}\CSFD(M_1)$ as ${}^{\A(\W_4)}\BSDA(M_1)_{\A(\varnothing)}$, we can
compute the product
\begin{equation*}
\BSDA(M_3)\,\sqtens_{\A(\W_4)}\,\CSFD(M_1),
\end{equation*}
which is a type $D$ structure over $\A(\V_4)$.
Since the only relative $\spinc$--structure on $M_3$ which
extends over $M_1$ is $\s_2$, the product is equal to $\BSDA(M_3,\s_2)\sqtens\CSFD(M_1)$. Another way to see this is to notice that
if we decompose the product over $\sqtens_{\A(\W_4,k)}$, only the $k=2$ term is nonzero.

After taking the tensor product $\otimes_{\I(\W_4,2)}$ of the underlying modules, the generators and idempotents are
\begin{align*}
I_{13}\cdot(fch)\sqtens(x)&=(fch)\sqtens(x) & I_{12}\cdot(fbh)\sqtens(x)&=(fbh)\sqtens(x)\\
I_{23}\cdot(fge)\sqtens(y)&=(fge)\sqtens(y) & I_{13}\cdot(fgd)\sqtens(y)&=(fgd)\sqtens(y)
\end{align*}

The induced operations are
\begin{align*}
\delta((fgd)\sqtens(y))&=\sigma''_1\otimes((fge)\sqtens(y))+I_{13}\otimes((fch)\sqtens(x))\\
\delta((fbh)\sqtens(x))&=\sigma''_2\otimes((fch)\sqtens(x))
\end{align*}

There is one pure differential, from $(fgd)\sqtens(y)$ to $(fch)\sqtens(x)$. We can cancel it, and see that the complex is homotopy
equivalent to $\CSFD(M_2)$, as expected from the pairing theorem.
\end{exm}

\section{Applications}
\label{sec:applications}

In this section we describe some applications of the new invariants. First,
as a warm-up we describe how both sutured Floer homology and the regular 
bordered Floer homology appear as special cases of
bordered sutured homology. Then we describe how we can recover the sutured
Floer homology of a manifold with boundary from its bordered invariants.

Another application is a new proof for the surface decomposition formula
\cite[Theorem 1.3]{Juh:decompositions} of Juh\'asz.

\subsection{Sutured Floer homology as a special case}
We have already seen that the for a bordered sutured manifold $(Y,\Gamma,\varnothing)$,
the bordered sutured invariants coincide with the sutured ones. However, there are many
more cases when this happens. In fact, for any \emph{balanced} bordered sutured manifold,
the $\BSD$ and $\BSA$ invariants still reduce to $\SFH$, no matter what the arc diagram is.

\begin{thm}
Let $(Y,\Gamma)$ be a balanced sutured manifold, and
$\phi\co G(\Z)\to\del Y$ be a parametrization of any part of $(Y,\Gamma)$ by an arc
diagram $\Z$ with $k$ matched pairs. Let $(\SFC,\del)$ be the sutured chain complex
for $(Y,\Gamma)$.

The following statements hold.
\begin{enumerate}
\item $(\CSFA(Y,\Gamma,\Z,\phi),m_1)\simeq (\SFC(Y,\Gamma),\del)$, where $\A(\Z,0)=\{I(\varnothing)\}$ acts
by identity on $\CSFA$ and $\A(\Z,k)$ kills it for any $k>0$.
\item $\CSFD(Y,\Gamma,\Z,\phi)\cong \SFC(Y,\Gamma)$ as a set, with
\begin{equation*}
\delta(x)=I\otimes \del(x),
\end{equation*}
where $I=I(\{1,\ldots,k\})$ is the unique idempotent in $\A(-\Z,k)$.
\item $\CSFDD(Y,\Gamma,\Z,\phi)\simeq \A(-\Z,k) \otimes \SFC(Y,\Gamma)$ as a product of
chain complexes, with the standard action of $\A(-\Z)$ on $\A(-\Z,k)$ on the left.
\end{enumerate}
\end{thm}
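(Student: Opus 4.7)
The plan is to work with a nice Heegaard diagram $\HH$ for $(Y,\Gamma,\Z,\phi)$ and exploit the strong restriction balancedness places on the generators. First, since $(Y,\Gamma)$ is balanced the bordered sutured manifold $(Y,\Gamma,\Z,\phi)$ is $0$-unbalanced, so by the remark in section~\ref{sec:diagrams} every $\xgen\in\G(\HH)$ satisfies $o(\xgen)=\varnothing$, equivalently $\obar(\xgen)=\{1,\ldots,k\}$. Thus in $\CSFA$ every generator is fixed by $I(\varnothing)\in\A(\Z,0)$ and annihilated by any $a\in\A(\Z,p)$ with $p>0$, while in $\CSFD$ every generator is supported on the unique idempotent $I=I(\{1,\ldots,k\})\in\A(-\Z,k)$. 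This already gives the idempotent part of each statement.

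The key geometric step is to rule out all Reeb-chord contributions. By theorems~\ref{thm:nice_d} and~\ref{thm:nice_a}, each contribution to $\delta$ or to any $m_k$ with $k\geq 1$ is either provincial (a bigon or rectangle not meeting $\Zmid$) or involves a rectangle one of whose sides is a Reeb chord $\rho\subset\Zmid$. Such a boundary-hitting rectangle has, going around its four sides, the pattern $\Zmid$, $\alpha^a_{M(\rho^+)}$, $\beta$, $\alpha^a_{M(\rho^-)}$, and its two non-Reeb corners lie on the $\alpha^a$-arcs $\alpha^a_{M(\rho^\pm)}$. For the rectangle to contribute to a differential from $\xgen$ to $\ygen$ these corners must be points of $\xgen$ and $\ygen$ respectively, forcing one of those generators to occupy an $\alpha^a$-arc and contradicting balancedness. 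So only the provincial bigons and rectangles survive.

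The three statements then follow. For (1), $m_1$ now counts exactly the provincial bigons and rectangles, which are precisely the terms of the sutured Floer differential $\del_{\SFC}$ on the shared underlying generator set; all higher $m_k$ with $k\geq 2$ require curves with nonempty Reeb-chord partitions, which we have just ruled out. For (2), every surviving contribution to $\delta$ has algebra factor equal to $I$, yielding $\delta(\xgen)=I\otimes\del_{\SFC}(\xgen)$. For (3), combining (2) with the defining identity $\CSFDD=\A(-\Z)\sqtens\CSFD$ and the fact that $\CSFD$ is concentrated on the $I$-idempotent collapses $\A(-\Z)\otimes_{\I(-\Z)}\CSFD$ to $\A(-\Z,k)\otimes\SFC$, and the induced differential becomes $\del_{\A(-\Z,k)}\otimes\id+\id\otimes\del_{\SFC}$---the differential on the tensor product of chain complexes.

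The main remaining work is identifying the provincial chain complex from $\HH$ with the sutured Floer chain complex $\SFC(Y,\Gamma)$ obtained from a Juh\'asz-style diagram. Intuitively the $\alpha^a$-arcs and the $\Zmid$ portion of $\del\Sigma$ are invisible to the provincial moduli spaces and can be ignored, but rigorously one needs either a local modification near $F(\Z)$ that caps off $\Zmid$ without affecting provincial domains, or an application of the pairing theorem~\ref{thm:pairing} with an auxiliary bordered sutured cap whose invariant is supported on a single generator with trivial differential. This bookkeeping, rather than any further analytic input, is the principal remaining obstacle.
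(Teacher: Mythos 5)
Your argument is correct but takes a genuinely different route from the paper's. The paper works with an arbitrary provincially admissible diagram $\HH$ and argues analytically: for any strongly boundary monotonic curve $u$ it tracks the set $o_t(u)$ of $\alpha$--curves hit at height $t$, observing that $o_t(u)\subset\balpha^c$ near $t=-\infty$ (since $\xgen$ occupies no arcs) and that a height crossing an $e$ puncture can only swap one $\alpha^a$ arc for another within $o_t(u)$; hence $o_t(u)\subset\balpha^c$ for all $t$, the source has no $e$ punctures, and every counted curve is provincial. You instead pass to a nice diagram and use theorems~\ref{thm:nice_d} and~\ref{thm:nice_a} directly: the only boundary-hitting regions are rectangles with one $\Zmid$ side, and the two $\alpha^a\cap\bbeta$ corners of such a rectangle are forced to be generator points on $\alpha^a$ arcs, contradicting $o(\xgen)=o(\ygen)=\varnothing$. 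Both routes are valid; the paper's is diagram-independent and stays in the analytic setup, while yours is more concretely combinatorial at the price of invoking niceness. The step you flag as the ``principal remaining obstacle'' --- identifying the provincial complex with $\SFC(Y,\Gamma)$ --- is resolved exactly by your first suggested route, and it is simpler than you imply: one simply erases $\Zmid$ and $\balpha^a$ from $\HH$ to obtain a sutured diagram $\HH'=(\Sigma,\balpha^c,\bbeta)$ for $(Y,\Gamma)$, whose admissibility follows since every periodic domain of $\HH'$ is a provincial periodic domain of $\HH$; the half-handles attached along $\balpha^a$ are topologically trivial, so $\HH'$ really does present $(Y,\Gamma)$, and the generators and surviving domains of $\HH$ coincide with those of $\HH'$ once the arcs are dropped. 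With that filled in, your parts (1)--(3) follow as you describe.
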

\begin{proof}
Let $\HH$ be a provincially admissible Heegaard diagram for $(Y,\Gamma,\Z,\phi)$.
If we erase $\mathbf Z$ and
$\balpha^a$ from the diagram, we obtain an admissible sutured diagram $\HH'$
for $(Y,\Gamma)$.
(Indeed, any periodic domain for $\HH'$ is a provincial periodic domain for $\HH$.)

Remember that for a balanced, i.e. 0--unbalanced manifold, each generator occupies 0 arcs in $\balpha^a$.
In particular $\G(\HH)=\G(\HH')$.

Let $u\in\M^B(\xgen,\ygen,S^\trr,\parrow)$
be a strongly boundary monotonic curve.
Let $o_t(u)$ denote the set of $\alpha\in\balpha$, for which
$u^{-1}(\alpha\times\{1\}\times\{t\})$ is nonempty.
Since $\xgen$ occupies only $\alpha$ circles, $o_t(u)\subset\balpha^c$ for small $t$.
The only changes in $o_t(u)$ can occur at the heights of $e$ punctures.
But at an $e$ puncture, the boundary goes over a Reeb chord, so $o_t(u)$ can only change by
replacing some arc in $\balpha^a$ with another.
Therefore, $o_t(u)\subset\balpha^c$ for all $t\in\RR$, and $S^\trr$ has no
$e$ punctures. Thus, $u$ is a curve with no $e$ punctures and doesn't involve $\balpha^a$.
But these are exactly the curves from $\HH'$ counted in the definition of $\SFH$.

Therefore, the curves counted for the definitions of $\CSFD$ and $\CSFA$ from $\HH$ 
are in a one-to-one correspondence with curves counted for the definition of $\SFH$ from $\HH'$.
Moreover, in $\CSFD$ and $\CSFA$ these curves are all provincial.

Algebraically, in $\CSFD$ a provincial curve from $\xgen$ to $\ygen$ contributes $1 \otimes \ygen$ to
$\delta(\xgen)$. In $\CSFA$ it contributes $\ygen$ to $m_1(\xgen)$. Finally, in $\SFH$ it contributes $\ygen$
to $\del(\xgen)$. The first two statements follow. The last is a trivial consequence of the
definition of $\CSFDD$.
\end{proof}

In particular, the interesting behavior of the bordered sutured invariants occurs
when the underlying sutured manifold is unbalanced. In that case sutured Floer
homology is not defined, or is trivially set to 0.

\subsection{Bordered Floer homology as a special case}

The situation in this section is the opposite of that in the previous one. Here we show that if
we look at manifolds that are, in a sense, maximally unbalanced, the bordered sutured
invariants reduce to purely bordered invariants.

First we recall a basic result from \cite{Juh:SFH}.
\begin{prop}
Let $\C$ denote the class of all closed connected 3--man\-i\-folds, and $\C'$ denote the class
of all sutured 3--manifolds with one boundary component homeomorphic to $S^2$,
and a single suture on it.
The following statements hold.
\begin{enumerate}
\item $\C$ and $\C'$ are in a one-to-one correspondence given by the map
\begin{equation*}
\xi\co \C\to\C',
\end{equation*}
where $\xi(Y)$ is obtained by removing an open 3--ball from $Y$, and putting a single suture
on the boundary.
\item There is a natural homotopy equivalence
$\CFhat(Y)\simeq \SFC(\xi(Y))$.
\end{enumerate}
\end{prop}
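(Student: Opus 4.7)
The plan is to establish both parts of the proposition by explicitly identifying the Heegaard-diagrammatic data on the two sides.

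For part~(1), I would construct an explicit inverse $\xi^{-1}\co\C'\to\C$ by filling in the $S^2$ boundary component with a 3-ball. The composition $\xi^{-1}\circ\xi$ is clearly the identity on $\C$. That $\xi\circ\xi^{-1}$ is the identity on $\C'$ reduces to the standard fact that any embedded 3-ball in a connected 3-manifold is isotopically unique (Cerf's theorem), and that any self-diffeomorphism of $S^2$ preserving orientation extends to $B^3$. The requirement that $\Gamma$ consist of a single curve on $S^2$ (rather than several) is needed to ensure non-emptiness of $R_{\pm}$ and the sutured-manifold conditions.

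For part~(2), I would pass between Heegaard diagrams as follows. Given a pointed Heegaard diagram $(\Sigma,\balpha,\bbeta,z)$ for $Y$ in the usual sense of Ozsv\'ath--Szab\'o, define $\Sigma'=\Sigma\setminus\nu(z)$, keep the same curves $\balpha,\bbeta$, and let the single boundary circle $\del\Sigma'$ be the suture. I would verify that this is a sutured Heegaard diagram for $\xi(Y)$: the $\alpha$-handlebody and $\beta$-handlebody, attached on opposite sides of $\Sigma'$, together describe the complement of a regular neighborhood of $z$ in $Y$, namely $\xi(Y)$, with $R_+$ and $R_-$ being the two hemispheres cut out by the suture. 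Conversely, any sutured Heegaard diagram for $\xi(Y)$ has a connected boundary, and capping it off with a disk and marking a point in that disk gives a pointed Heegaard diagram for $Y$.

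Under this identification, the generators match tautologically, since both are tuples of intersection points in $\balpha\cap\bbeta$ (for $\CFhat$ this avoids $z$, but $z$ lies in a region disjoint from all curves anyway). The differentials match because $\CFhat(Y)$ counts holomorphic disks with $n_z=0$, and such disks have domains supported in $\Sigma\setminus\nu(z)=\Sigma'$, which is precisely the condition for the disks counted by $\SFC(\xi(Y))$ (namely provincial disks, whose domains do not touch $\del\Sigma'$). A weakly admissible pointed diagram yields an admissible sutured diagram and vice versa, since periodic domains in both theories are forced to have zero multiplicity in the unique region touching $\del\Sigma'$ (respectively containing $z$).

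The main obstacle is bookkeeping: confirming that the various conventions (admissibility, orientation of the suture, placement of $R_\pm$, and the choice of almost complex structure cylindrical near $\del\Sigma'$ versus near~$z$) translate faithfully. Since these are essentially local considerations near the single boundary component, and this correspondence is precisely what motivated Juh\'asz's definition in~\cite{Juh:SFH}, no new analytic input is needed beyond what is already contained in the sutured and pointed Heegaard Floer machinery.
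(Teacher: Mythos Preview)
Your proposal is correct and follows precisely the approach the paper indicates. In fact, the paper does not give a detailed proof of this proposition at all: it is introduced as ``a basic result from \cite{Juh:SFH}'' and followed only by the one-sentence remark that ``the correspondence is most evident on the level of Heegaard diagrams, where a diagram for $\xi(Y)$ is obtained from a diagram for $Y$ by cutting out a small disc around the basepoint.'' Your argument is exactly a fleshing-out of that remark, so there is nothing to compare---you have supplied the details the paper omits.
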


The correspondence is most evident on the level of Heegaard diagrams, where a diagram for $\xi(Y)$ is obtained
from a diagram for $Y$ by cutting out a small disc around the basepoint.

There is a natural extension of this result to the bordered category.

\begin{thm}
\label{thm:bordered_special_case}
Let $\B$ denote the class of bordered manifolds with one boundary component,
and let $\B'$ denote the class of bordered sutured manifolds of the following form.
$(Y,\Gamma,\Z,\phi)\in\B'$ if and only if $D=\del Y\setminus F(\Z)$ is a single
disc $D$ and $\Gamma\cap D$ is a single arc.
The following statements hold.
\begin{enumerate}
\item \label{cond:correspondence}
$\B$ and $\B'$ are in a one-to-one correspondence given by the map
\begin{equation*}
\zeta\co \B\to\B',
\end{equation*}
which to a bordered manifold $Y$ parametrized by $\Z=(Z,\amid,M,z)$ associates
a bordered sutured manifold $\zeta(Y)=(Y,Z,\Z',\phi)$, pa\-ram\-e\-trized by
$\Z'=(Z\setminus D,\amid,M)$, where $D$ is a small neighborhood of $z$.
\item \label{cond:homotopy_equiv}
For any $Y\in \B$, we have
\begin{align*}
\CSFD(\zeta(Y))&\simeq\CFD(Y),\\
\CSFA(\zeta(Y))&\simeq\CFA(Y).
\end{align*}
\item \label{cond:gluing_equiv}
If $Y_1$ and $Y_2$ are bordered manifolds that glue to form a closed manifold $Y$,
then $\zeta(Y_1)$ and $\zeta(Y_2)$ glue to form $\xi(Y)$.
\end{enumerate}
\end{thm}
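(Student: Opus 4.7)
The plan is to handle the three parts in order, with the bulk of the work in part~(\ref{cond:homotopy_equiv}), which reduces to checking that a bordered Heegaard diagram for $Y$ and a bordered sutured Heegaard diagram for $\zeta(Y)$ count exactly the same holomorphic curves.

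For part~(\ref{cond:correspondence}) I would verify this purely topologically. Given a pointed matched circle $\Z=(Z,\amid,M,z)$, the point $z$ cuts the circle into an oriented interval, so $Z\setminus\nu(z)$ is an oriented line segment (a single-component 1--manifold), and $(Z\setminus\nu(z),\amid,M)$ satisfies the non-degeneracy condition precisely because surgery on the original pointed matched circle produces no closed components other than the circle containing $z$. The complement $\del Y\setminus F(\Z')$ is then the closure of $\nu(z)\times[0,1]$ inside $\del Y$, i.e.\ a disc, and the single suture $\Gamma=Z\setminus\nu(z)$ meets this disc in a single arc, showing $\zeta(Y)\in\B'$. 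Conversely, any element of $\B'$ has its parametrizing arc diagram on a single-component segment (since the remaining disc region cuts off the unique component containing the endpoints), and gluing that segment back into a circle at the suture arc on the disc recovers the bordered data. Functoriality of these inverse constructions gives the bijection.

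For part~(\ref{cond:homotopy_equiv}) I would set up an explicit correspondence of diagrams. Let $\HH=(\Sigma,\balpha,\bbeta,\Z)$ be a bordered Heegaard diagram for $Y\in\B$ with basepoint $z\in\del\Sigma$. Deleting a small open neighborhood of $z$ from $\del\Sigma$ and from $\Zmid$ yields a bordered sutured diagram $\HH'=\zeta(\HH)$ for $\zeta(Y)$. The generators, homology classes, and domains are in canonical bijection, because the regions of $\Sigma\setminus(\balpha\cup\bbeta)$ adjacent to $\del\Sigma\setminus\Zmid$ in $\HH'$ are precisely the region of $\HH$ containing $z$ (and possibly the adjacent region across $\del\Sigma$, which does not exist here). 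The forbidden-region condition~(\ref{cond:last}) in the bordered sutured moduli problem therefore translates to the usual condition $n_z(B)=0$ in bordered Floer homology. Homological linear independence for $\HH'$ follows from the fact that the single component of $\del\Sigma\setminus\Zmid$ separates $\Sigma$ in precisely the same way as the basepoint does in $\HH$. Under these identifications, the analytic conditions~(\ref{cond:first})--(\ref{cond:embedded}) defining $\mt^B_{\emb}(\xgen,\ygen,S^\trr,\parrow)$ match those in~\cite{LOT:pairing} for the bordered setting term by term, and the algebra $\A(\Z')$ from section~\ref{sec:algebra} agrees with the strands algebra $\A(\Z)$ from the bordered setting by construction. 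Therefore the structure maps $\delta$ (resp.\ $m_k$) defined in theorems~\ref{thm:bsd} and~\ref{thm:bsa} for $\HH'$ coincide on the nose with the maps defining $\CFD(\HH)$ and $\CFA(\HH)$, yielding the claimed homotopy equivalences at the level of chain-type objects. Invariance in each setting then upgrades this to an equivalence of homotopy types.

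For part~(\ref{cond:gluing_equiv}) I would argue directly: if $Y=Y_1\cup Y_2$ is a closed manifold obtained by bordered gluing, then on the $\zeta$--side the two disc regions $D_1\subset\del Y_1$ and $D_2\subset\del Y_2$ are glued along their suture arcs (since the boundary of $D_i$ lies in the parametrizing surface $F(\Z'_i)$, which is being glued to $F(-\Z'_i)$). The result is a single 2--sphere with a single simple closed suture, embedded in $Y_1\cup Y_2=Y$, which is exactly the sutured structure produced by $\xi$. The section~\ref{sec:glue_mflds} gluing construction applied to $\zeta(Y_1),\zeta(Y_2)$ thus yields $\xi(Y)$.

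The main obstacle is the diagram-level identification in part~(\ref{cond:homotopy_equiv}): one must check carefully that the region-avoidance condition in the bordered sutured holomorphic curve count reproduces exactly the usual basepoint condition of \cite{LOT:pairing}, and that no extra boundary degenerations are possible in one setting but not the other. Both of these reduce to homological bookkeeping, but verifying them rigorously for all sources $S^\trr$ and partitions $\parrow$ is the delicate step; once it is done, invariance and the gluing consistency follow formally.
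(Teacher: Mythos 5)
Your proposal follows essentially the same approach as the paper: a combinatorial verification of the bijection, a diagram-level identification of moduli spaces for part~(\ref{cond:homotopy_equiv}), and a direct boundary bookkeeping argument for part~(\ref{cond:gluing_equiv}). The key observation---that the single component of $\del\Sigma'\setminus\Zmid$ takes over the role of the basepoint $z$, so the two sets of analytic conditions and hence the moduli spaces coincide on the nose for matched diagrams---is precisely what the paper does, and the gluing argument that the two discs $D_1$, $D_2$ combine to a sphere with one suture is also the same.

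One small slip in part~(\ref{cond:correspondence}): you write ``the single suture $\Gamma = Z\setminus\nu(z)$,'' but the suture of $\zeta(Y)$ must be the entire circle $Z$ (a suture has to be a simple closed curve); $Z\setminus\nu(z)$ is the arc diagram's $\Zmid$, which embeds in $\Gamma$ as an open subset. Under the correct identification $\Gamma = Z$, the intersection $\Gamma\cap D = Z\cap D$ is indeed a single arc. The rest of your argument there is fine.

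The one place where your argument is less tight than the paper's is part~(\ref{cond:homotopy_equiv}). The paper first works at the Morse-theoretic level, removing a ball $B$ around the flowline connecting the index--$0$ and index--$3$ critical points of a boundary compatible Morse function, and showing that $f|_{Y\setminus B}$ is a boundary compatible Morse function for $\zeta(Y)$; the diagram correspondence (delete a neighborhood of $z$ from $\del\Sigma$) then comes out automatically, along with the identification of $\del\Sigma'\setminus\Zmid$ with the former basepoint region. Your version jumps directly to the diagram level, which is fine in substance but leaves implicit why the resulting diagram is a bordered sutured diagram \emph{for $\zeta(Y)$} rather than for some other bordered sutured manifold. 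That link is exactly what the Morse function provides; you should either cite it or make the compatibility check explicit. Similarly, in your part~(\ref{cond:gluing_equiv}), the phrase ``glued along their suture arcs'' is a misnomer (the discs are glued along their full boundary circles, which happen to each meet the suture in two points); your parenthetical explanation is correct, so this is only a wording issue.
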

\begin{proof}
In the bordered setting the parametrization of $F(\Z)=\del Y$
means that there is a self-indexing Morse function $f$ on
$F$ with one index--0 critical point $p$, one index--2 critical point $q$, and $2k$ many
index--1 critical points $r_1,\ldots,r_{2k}$. The circle $Z$ is the level set $f^{-1}(3/2)$,
the basepoint $z$ is the intersection of the gradient flow from $p$ to $q$ with $Z$, and
the matched points $M^{-1}(i)\in\amid$ are the intersections of the flowlines from $r_i$ with $Z$.

Note that $F'=F\setminus D$ is a surface with boundary, parametrized by the arc diagram
$\Z'=(Z',\amid,M)$, where $Z'=Z\setminus D$. Indeed, if we take $D$ to be a neighborhood of the flowline
from $p$ to $q$, then $f|_{F'}$ is a self indexing Morse function for $F'$ with only
index 1--critical points, and their stable manifolds intersect the level set $Z'$ at the
matched points $\amid$.

Moreover, the circle $Z$ separates $F$ into two regions---a disc $R_+$ around the index--2
critical point $q$, and a genus $k$ surface $R_-$ with one boundary component.
Thus, $(Y,Z)$ is indeed sutured, and the arc $Z'$ embeds into the suture $Z$.
Since $D\cap Z$ is an arc, the manifold we get is indeed in $\B'$.

To see that the construction is reversible we need to check that
for any $(Y,\Gamma,\Z,\phi)\in\B'$ there is only one suture in $\Gamma$, $\Zmid$ has only one component, and $R_+$ is a disc.
Indeed, $\Zmid\cap\Gamma$ consists only of properly embedded arcs in $F(\Z)$.
But $\Gamma\cap\del F=\Gamma\cap \del D$ consists of two points, and therefore there is only
one arc. Now $\Gamma=(\Gamma\cap F)\cup(\Gamma\cap D)$ is a circle, and $R_+\cap F$ is half
a disc, so $R_+$ is a disc. This proves~(\ref{cond:correspondence}).

To see~(\ref{cond:homotopy_equiv}), we will investigate the correspondence on Heegaard diagrams.
Consider a boundary compatible Morse function $f$ on a bordered 3--manifold
$Y$. On the boundary it behaves as described in the first part of the proof. In the interior,
there are only index--1 and index--2 critical points. Let $B$ be a neighborhood in $Y$ of the
flowline from the index--0 to the index--3 critical point, which are the index--0 and index--2 critical
points on the surface. Then $D$ is precisely $B\cap\del Y$. Let $Y'=Y\setminus B$. Topologically,
passing from $Y$ to $Y'$ has no effect, except for canceling the two critical points. Now $f|_{Y'}$ is a boundary compatible
Morse function for the bordered sutured manifold $Y'=\xi(Y)$. One can verify this is the same
construction as above, except we have pushed $D$ slightly into the manifold.

Looking at the Heegaard diagrams $\HH=(\Sigma,\balpha,\bbeta)$ and $\HH'=(\Sigma',\balpha,\bbeta)$,
compatible with
$f$ and $f|_{Y'}$, respectively, one can see that the effect of removing $B$ on $\HH$
is that of removing a neighborhood of the basepoint $z\in\del\Sigma$. Now
$\Zmid=\del\Sigma\setminus{\nu(z)}$, the Reeb chords correspond, and
$\del\Sigma'\setminus\Zmid$ is a small arc in the region where $z$ used to be.

Recall that the definitions of $\CFD$ and $\CFA$ on one side, and $\CSFD$ and $\CSFA$ on the other, are
the same, except that $\del\Sigma'\setminus\Zmid$ in the latter plays the role of $z$ in the former.
Therefore the corresponding moduli spaces $\M$ exactly coincide, and for these particular diagrams
there is actual equality of the invariants, proving~(\ref{cond:homotopy_equiv}).

For~(\ref{cond:gluing_equiv}), it is enough to notice that $Y=Y_1\cup_{F}Y_2$, while
$\zeta(Y_1)\cup_{F\setminus D}\zeta(Y_2)=Y_1\cup_{F\setminus D}Y_2$, which is $Y$ minus a ball.
\end{proof}

\subsection{From bordered to sutured homology}
\label{sec:bordered_to_sutured}

In the current section we prove theorem~\ref{thm:intro_bordered_to_sutured}, which
was the original motivation for developing the theory
of bordered sutured manifolds and their invariants. Recall that it states that
for any set of sutures on a bordered manifold, the sutured homology can be obtained from
the bordered homology in a functorial way. A refined version is given below.

\begin{thm}
Let $F$ be a closed connected surface parametrized by some pointed matched circle $Z$.
Let $\Gamma$ be any set of sutures on $F$, i.e. an oriented multi curve in $F$ that divides
it into positive and negative regions $R_+$ and $R_-$.

There is a (non unique) left type $D$ structure $\CFD(\Gamma)$ over $\A(\Z)$, with the
following property. If $Y$ is any 3--manifold, such that $\del Y$ is identified with $F$,
making $(Y,\Gamma)$ a sutured manifold, then
\begin{equation*}
\SFC(Y,\Gamma)\simeq\CFA(Y)\sqtens\CFD(\Gamma).
\end{equation*}

Similarly, there is a (non unique) right $\Ainf$--module $\CFA(\Gamma)$ over $\A(-\Z)$,
such that
\begin{equation*}
\SFC(Y,\Gamma)\simeq\CFA(\Gamma)\sqtens\CFD(Y).
\end{equation*}
\end{thm}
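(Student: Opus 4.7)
The strategy is to separate the suture data $\Gamma$ from the topology of $Y$ by bordering a collar neighborhood of $\del Y$, so that $\Gamma$ is encoded by a bordered sutured manifold built from $(F,\Gamma,Z)$ while the $3$--manifold topology is encoded by the bordered invariants of $Y$.  Theorem~\ref{thm:bordered_special_case} already converts the bordered manifold $Y$ into the bordered sutured manifold $\zeta(Y)=(Y,Z,\Z',\phi)$, where $\Z'$ is the arc diagram obtained from the pointed matched circle $Z$ by removing a small neighborhood $D$ of the basepoint; moreover $\CFA(Y)\simeq\CSFA(\zeta(Y))$ and $\CFD(Y)\simeq\CSFD(\zeta(Y))$.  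The problem therefore reduces to constructing a companion bordered sutured manifold $N$ parametrized by $-\Z'$ such that gluing $\zeta(Y)$ to $N$ along $F(\Z')\cong F(-\Z')$ recovers the sutured manifold $(Y,\Gamma)$ in a form the pairing theorem can digest.

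To construct $N=N(\Gamma,Z)$, take the underlying 3--manifold to be the collar $F\times[0,1]$, parametrize a subsurface $F(-\Z')\subset F\times\{0\}$ by $-\Z'$, isotope $\Gamma$ off the basepoint disc $D$ and place the sutures $\Gamma$ on $F\times\{1\}$, and complete the sutures on $D\subset F\times\{0\}$ by the arc of the pointed matched circle $Z$ that crosses $D$.  The resulting bordered sutured manifold depends only on the triple $(F,\Gamma,Z)$ and not on $Y$.  Define
\begin{equation*}
\CFD(\Gamma):=\CSFD(N),\qquad \CFA(\Gamma):=\CSFA(N).
\end{equation*}

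Gluing $\zeta(Y)\cup_{F(\Z')}N$ produces a bordered sutured manifold whose underlying 3--manifold is $Y$ with an open ball removed, and whose sutures consist of $\Gamma$ on the $F$--component of the boundary together with a single circle of suture on the new $S^2$--component obtained by joining the two copies of $D$ along their common boundary (with one half of $Z$ contributed from each side of the gluing).  By the correspondence of Juh\'asz between closed 3--manifolds and sutured manifolds with a single--sutured $S^2$ boundary (the same principle used in the proof of theorem~\ref{thm:bordered_special_case}), the extra boundary sphere leaves $\SFC$ unchanged.  The pairing theorem~\ref{thm:pairing} therefore yields
\begin{equation*}
\SFC(Y,\Gamma)\simeq\CSFA(\zeta(Y))\sqtens\CSFD(N)\simeq\CFA(Y)\sqtens\CFD(\Gamma),
\end{equation*}
and symmetrically $\SFC(Y,\Gamma)\simeq\CFA(\Gamma)\sqtens\CFD(Y)$.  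The non--uniqueness claimed in the statement is just the choice of $N$ (basepoint disc, isotopy class of $\Gamma$, connecting arcs), each of which produces a homotopy equivalent invariant.

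The main obstacle will be a careful verification in the second step that $N$ really has a well--defined bordered sutured structure: one must check that the chosen sutures divide each component of $\del N$ into orientation--compatible $R_\pm$ regions, that the non--degeneracy and homological linear independence conditions hold, and that $N$ admits a provincially admissible Heegaard diagram (by proposition~\ref{prop:admissibility}) so that the pairing theorem applies.  One must also confirm that the gluing produces the claimed topology---in particular that the two copies of $D$ combine to give exactly an $S^2$ with a single suture rather than something more complicated---after which the theorem follows directly from theorems~\ref{thm:pairing} and~\ref{thm:bordered_special_case}.
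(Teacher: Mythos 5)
Your proposal follows the same overall strategy as the paper: isolate the suture data in an auxiliary bordered sutured collar piece, convert $Y$ to a bordered sutured manifold via $\zeta(Y)$ from Theorem~\ref{thm:bordered_special_case}, and apply the pairing theorem~\ref{thm:pairing}. The one substantive difference is in the auxiliary piece itself. You take $N = F\times[0,1]$, bordered by $-\Z'$ along $F(-\Z')\subset F\times\{0\}$. Since $F(-\Z')$ lies inside $F' = F\setminus D$, the disc $D\times\{0\}$ is not glued, and $\zeta(Y)\cup_{F(\Z')}N$ is not $(Y,\Gamma)$ but rather $Y$ with an interior ball deleted, with a single-sutured $S^2$ boundary alongside the copy of $F$ carrying $\Gamma$. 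Your argument therefore needs the lemma that deleting an interior ball from a sutured manifold and placing one suture on the new $S^2$ leaves $\SFC$ unchanged up to homotopy equivalence. That is true and elementary (remove a small disc from the interior of a region of the sutured Heegaard surface), and it is in the same spirit as the correspondence $\xi$, but the paper only records $\xi$ for closed manifolds, so you would have to supply the sutured version explicitly.

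The paper avoids this extra step by taking the auxiliary piece to be $P = (F\setminus D)\times[0,1]$ rather than $F\times[0,1]$. With this choice, $\zeta(Y)\cup_{F'}P$ has boundary exactly $F$: the disc $D\subset\del(\zeta(Y))$, the side annulus $\del D\times[0,1]\subset\del P$, and the top $F'\times\{1\}\subset\del P$ assemble into one copy of $F$, and the basepoint-tangency normalization of $\Gamma$ is what makes the sutures along the seam close up to exactly $\Gamma$. No extra $S^2$ appears and the pairing theorem gives the result directly. Both routes reach the conclusion, but the paper's choice of auxiliary manifold is cleaner, eliminating the ball-deletion lemma which your proposal flags but does not prove.
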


Before we begin the proof, we will note that although $\CFD(\Gamma)$ and $\CFA(\Gamma)$
are not unique (not even up to homotopy equivalence), they can be easily made so by fixing
some extra data. The exact details will become clear below.

\begin{proof}
Fix the surface $F$, pointed matched circle $\Z=(\Zmid,\amid,M)$, and the sutures $\Gamma$.
Repeating the discussion in the proof of theorem~\ref{thm:bordered_special_case}, 
the parametrization of $F$ means that there is a self-indexing
Morse function $f$ on
$F$ with exactly one index--0 critical point, and exactly one index--2 critical point, where the circle $Z$
is the level set $f^{-1}(3/2)$.

The choice that breaks uniqueness is the following.
Isotope $\Gamma$ along $F$ until one of the sutures $\gamma$ becomes tangent to $Z$ at the
basepoint $z$, and so that the orientations of $Z$ and $\gamma$ agree. Let
$D$ be a disc neighborhood of $z$ in $F$. We can further isotope $\gamma$ until $\gamma\cap D=Z \cap D$.
We will refer to this operation as \emph{picking a basepoint, with direction, on $\Gamma$}.

Let $F'=F\setminus D$, and let $P$ be the 3--manifold
$F'\times[0,1]$. Let $\Delta$ be a set of sutures on
$P$, such that
\begin{align*}
(F'\times\{1\})\cap\Delta&=(F'\cap\Gamma)\times\{1\},\\
(F'\times\{0\})\cap\Delta&=(F'\cap Z)\times\{0\},\\
(\del D\times[0,1])\cap\Delta&=(\Gamma\cap\del D)\times[0,1].
\end{align*}

We orient $\Delta$ so that on the ``top'' surface $F'\times\{1\}$ 
its orientation agrees with $\Gamma$, its orientation on the ``bottom'' is
opposite from $Z$, and on $\del D\times[0,1]$ the two segments are oriented
opposite from each other.

As in theorem~\ref{thm:bordered_special_case},
$F'$ is parametrized by the arc diagram
$\Z'=(Z\setminus D,\amid,M)$.
Therefore the ``bottom'' of $P$, i.e. $F'\times\{0\}$ is parametrized by $-\Z'$.
(Indeed $-(Z\setminus D)$ is part of $\Delta$.) This makes $(P,\Delta)$ into a bordered sutured manifold,
parametrized by $-\Z'$.

Isotopies of $\Gamma$ outside of $D$ have no effect on $P$, except for an isotopy of
$\Delta$ in the non parametrized part of $\del P$.
Therefore the bordered sutured manifold $P$ is an invariant of $F$, $\Gamma$, and
the choice of basepoint on $\Gamma$.

Define
\begin{align*}
\CFD(\Gamma)&=\CSFD(P,\Delta),\\
\CFA(\Gamma)&=\CSFA(P,\Delta).
\end{align*}

It is clear that their homotopy types are invariants of $\Gamma$ and the choice of basepoint (with
direction). Since $\A(\Z')=\A(\Z)$, they are indeed modules over $\A(\Z)$ and $\A(-\Z)$,
respectively.

To prove the rest of the theorem, consider any manifold $Y$ with boundary $\del Y=F$.
By the construction in theorem~\ref{thm:bordered_special_case}, 
$\zeta(Y)$ is the sutured manifold $(Y,Z)$, where $F'$ is parametrized by $\Z'$. 

If we glue $\zeta(Y)$ and $P$ along $F'$, we get the sutured manifold
\begin{equation*}
(Y\cup F'\times[0,1],(Z\setminus F')\cup(\Delta\setminus F'\times\{0\})).
\end{equation*}

The sutures consist of  $Z\setminus F'=Z\cap D=\Gamma\cap D$,
$\Delta\cap(\del D\times[0,1])=(\Gamma\cap\del D)\times[0,1]$, and
$\Delta\cap(F'\times\{1\})=(\Gamma\setminus D)\times\{1\}$.
Up to homeomorphism, $Y\cup F'\times[0,1]=Y$, and under that homeomorphism
the sutures get collapsed to $\Gamma\subset F$. Therefore,
$\zeta(Y)\cup_{F'}P$ is precisely $(Y,\Gamma)$.

Using theorem~\ref{thm:bordered_special_case},
$\CSFD(\zeta(Y))\simeq\CFD(Y)$, and 
$\CSFA(\zeta(Y))\simeq\CFA(Y)$. By therorem~\ref{thm:pairing},
\begin{align*}
\SFC(Y,\Gamma)\simeq \CSFA(\zeta(Y))\sqtens\CSFD(P)&\simeq \CFA(Y)\sqtens\CFD(\Gamma),\\
\SFC(Y,\Gamma)\simeq \CSFA(P)\sqtens\CSFD(\zeta(Y))&\simeq \CFA(\Gamma)\sqtens\CFD(Y).
\qedhere
\end{align*}
\end{proof}

\subsection{Surface decompositions}
\label{sec:surface_decompositions}
The final application we will show is a new proof of the surface decomposition theorem
of Juh\'asz proved in \cite{Juh:decompositions}.

More precisely we prove the following statement.

\begin{thm}
\label{thm:decomp}
Let $(Y,\Gamma)$ be a balanced sutured manifold. Let $S$ be a properly embedded surface in $Y$
with the following properties. $S$ has no closed components, and each component of $\del S$
intersects both $R_-$ and $R_+$. (Juh\'asz calls such a surface a \emph{good decomposing surface}.)

A $\spinc$ structure $\s\in\spinc(Y,\Gamma)$ is \emph{outer} with respect to $S$ if it is represented by
a vector field $v$ which is nowhere tangent to a normal vector to $-S$ (with respect to some metric).

Let $(Y',\Gamma')$ be the sutured manifold, obtained by \emph{decomposing $Y$ along $S$}. More precisely,
$Y'$ is $Y$ cut along $S$, such that $\del Y'=\del Y\cup+S\cup-S$, and the sutures
$\Gamma'$ are chosen so that $R_-(\Gamma')=\overline{R_-(\Gamma)\cup-S}$, and
$R_+(\Gamma')=\overline{R_+(\Gamma)\cup+S}$. Here $+S$ (respectively $-S$) is the copy of $S$ on $\del Y'$, whose
orientation agrees (respectively disagrees) with $S$.

Then the following statement holds.
\begin{equation*}
\SFH(Y',\Gamma')\cong\bigoplus_{\s~\textrm{outward to}~S}\SFH(Y,\Gamma,\s).
\end{equation*}
\end{thm}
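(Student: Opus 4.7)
My plan is to localize the decomposition to a neighborhood of $S$ and then reduce the theorem to a purely local computation via the pairing theorem of Section~\ref{sec:pairing}. Let $\nu S \cong S \times [-1,1]$ be a regular neighborhood of $S$ chosen small enough that $\Gamma \cap \nu S = (\Gamma \cap S) \times [-1,1]$, and let $Y_0 = \overline{Y \setminus \nu S}$. The boundary $\partial Y_0$ contains two copies $+S$ and $-S$ arising from the cut. I would choose an arc diagram $\Z$ parametrizing the sutured surface $(S,\partial S \cap \Gamma)$; this induces parametrizations of $+S$ and $-S$ by $\Z$ and $-\Z$, so both $Y_0$ and $\nu S$ become bordered sutured manifolds whose parametrized boundaries agree (with opposite orientation) along $F(\Z) \sqcup F(-\Z)$.

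With this setup, $(Y,\Gamma)$ is reconstructed by gluing $(Y_0,\Gamma_0)$ to the natural bordered sutured structure on $(\nu S,\Gamma \cap \nu S)$, while $(Y',\Gamma')$ is reconstructed by gluing the same $(Y_0,\Gamma_0)$ to a \emph{trivial} bordered sutured manifold $M$ given by the disjoint union $S\times[-1,0]\sqcup S\times[0,1]$ with sutures placing the top face of the first piece entirely in $R_-$ and the bottom face of the second entirely in $R_+$. Theorem~\ref{thm:pairing} would then yield
\begin{equation*}
\SFC(Y,\Gamma) \simeq \CSFA(Y_0,\Gamma_0) \sqtens \CSFD(\nu S,\Gamma \cap \nu S),
\qquad
\SFC(Y',\Gamma') \simeq \CSFA(Y_0,\Gamma_0) \sqtens \CSFD(M),
\end{equation*}
and both pairings respect the refinement by relative $\spinc$ structures on the $\nu S$ side.

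The heart of the argument will be a local identity of the form
\begin{equation*}
\bigoplus_{\mathfrak{t} \in \mathrm{Out}} \CSFD(\nu S,\Gamma \cap \nu S, \mathfrak{t}) \;\simeq\; \CSFD(M),
\end{equation*}
where $\mathrm{Out}$ is the subset of relative $\spinc$ structures on $\nu S$ that restrict from outer $\spinc$ structures on $(Y,\Gamma)$. Using Turaev's description, outerness means the representative vector field on $\nu S$ is nowhere normal to $-S$, which over the product $S \times [-1,1]$ is a purely local condition singling out a specific family of homotopy classes. Because both $\nu S$ and $M$ are products over $S$ with prescribed sutures, their $\CSFD$ invariants can be read off from nice diagrams using Theorem~\ref{thm:nice_d}: in each case the generators are combinatorial decorations of the arcs of $\Z$, and the outer condition amounts to selecting precisely the decorations that appear in $\CSFD(M)$. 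Once this local isomorphism is established, combining it with the pairing displays above and the splitting of $\SFC(Y,\Gamma)$ by $\spinc$ structures immediately gives the desired formula.

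The main obstacle will be the local identification of $\mathrm{Out}$ with the correct summand. I will need to translate Juh\'asz's vector-field characterization of outerness into the bordered-sutured language of occupied $\alpha^a$ arcs along the $\pm S$ portions of $\partial\nu S$, which encode exactly whether $v$ points into $R_+$ or $R_-$ across each elementary strip of the parametrization. Once this dictionary is made explicit over the product region $\nu S$, the remaining matching between $\bigoplus_{\mathrm{Out}} \CSFD(\nu S,\mathfrak{t})$ and $\CSFD(M)$ is a straightforward diagrammatic comparison on a pair of nice diagrams that differ only in how the $\pm S$ sides are capped off.
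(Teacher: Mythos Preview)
Your proposal is correct and follows essentially the same strategy as the paper: split off a product neighborhood of $S$, realize both $(Y,\Gamma)$ and $(Y',\Gamma')$ as the complement $Y_0$ glued to two different local pieces (your $\nu S$ and $M$ are the paper's $T$ and $P$), and reduce via the pairing theorem to a local comparison of $\CSFD$ on those pieces. The only refinement to note is that in the paper the ``family'' $\mathrm{Out}$ on the product neighborhood turns out to be a single $\spinc$--structure $\s_k$ (characterized by all $\alpha^a$ arcs on the $-\Z_1$ side being unoccupied), and the local identity $\CSFD(P)\cong\CSFD(T,\s_k)$ is read off directly from explicit one-generator Heegaard diagrams rather than via the nice-diagram machinery.
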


\begin{proof}
We will consider three bordered sutured manifolds. Let $T=S\times[-2,2]\subset Y$ be a neighborhood
of $S$ in $Y$ (so the positive normal of $S$ is in the $+$ direction).
Let $W=\overline{Y\setminus T}$, and let $P=S\times([-2,-1]\cup[1,2])\subset T$.
We can assume that $\Gamma\cap\del T$ consists of arcs parallel to the $[-2,2]$ factor.

Put sutures on $T$, $W$ and $P$ in the following way. First, notice that $R_+(\Gamma)\cap\del S$ consists of
several arcs $a=\{a_1,\ldots,a_n\}$. Let $A_+\subset S$ be a collection of disjoint discs, such that
$A_+\cap\del S=a$.

On $T$ put sutures $\Gamma_T$, such that
\begin{align*}
R_+(\Gamma_T)\cap\del Y&=R_+(\Gamma)\cap\del T,\\
R_+(\Gamma_T)\cap(S\times\{\pm 2\})&=A_+\times\{\pm 2\}.
\end{align*}

On $W$ put sutures $\Gamma_W$, such that
\begin{align*}
R_+(\Gamma_W)\cap\del Y&=R_+(\Gamma)\cap\del W,\\
R_+(\Gamma_W)\cap(S\times\{\pm 2\})&=A_+\times\{\pm 2\}.
\end{align*}

On $P$ put sutures $\Gamma_P$, such that
\begin{align*}
R_+(\Gamma_P)\cap\del Y&=R_+(\Gamma)\cap\del P,\\
R_+(\Gamma_P)\cap(S\times\{\pm 2\})&=A_+\times\{\pm 2\},\\
R_+(\Gamma_P)\cap(S\times\{-1\})&=S\times\{-1\},\\
R_+(\Gamma_P)\cap(S\times\{1\})&=\varnothing.
\end{align*}

Fix a parametrization of $S$ by a balanced arc diagram $\Z_S$ with $k$ many arcs, such that
the positive region of $S$ is $A_+$. This is possible, since $S$ has no closed components,
and the arcs $a$ hit every boundary component.

Parametrize the surfaces $S\times\{\pm 2\}$ in each of $T$, $W$, and $P$ by $\pm\Z_S$,
depending on orientation.
If we set $U=S\times\{\pm 2\}\subset W$, with the boundary orientation from $W$, then $U$
is parametrized by $\Z=\Z_1\cup\Z_2$, where $\Z_1\cong\Z_S$ parametrizes $S\times\{-2\}$, and
$\Z_2\cong-\Z_S$ parametrizes $S\times\{2\}$.
Thus, $W$ is a bordered sutured manifold parametrized by $\Z$, while
$T$ and $P$ are parametrized by $-\Z$ (see Fig.~\ref{fig:decomp_manifolds}).
Moreover, gluing along the
parametrization,
\begin{align*}
W\cup_U T&=(Y,\Gamma),\\
W\cup_U P&=(Y',\Gamma').
\end{align*}

\begin{figure}
\begin{subfigure}[b]{\linewidth}
	\centering
	\labellist
	\small\hair 2pt
	\pinlabel {$S\times\{-2\}$} [b] at 200 120
	\pinlabel {$S\times\{2\}$} [b] at 375 120
	\endlabellist
	\includegraphics[scale=.5]{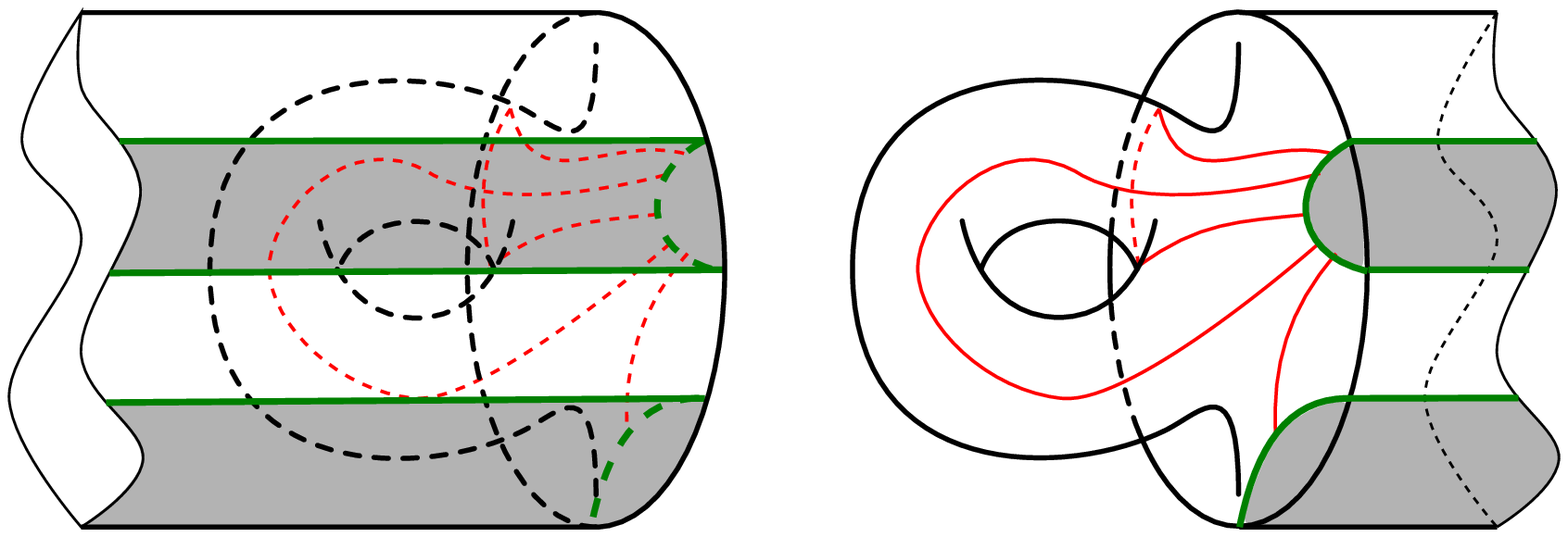}
	\caption{$W$ parametrized by $\Z$.}
\end{subfigure}
\begin{subfigure}[b]{\linewidth}
	\centering
	\labellist
	\small\hair 2pt
	\pinlabel {$S\times\{-2\}$} [b] at 25 120
	\pinlabel {$S\times\{2\}$} [b] at 200 120
	\endlabellist
	\includegraphics[scale=.5]{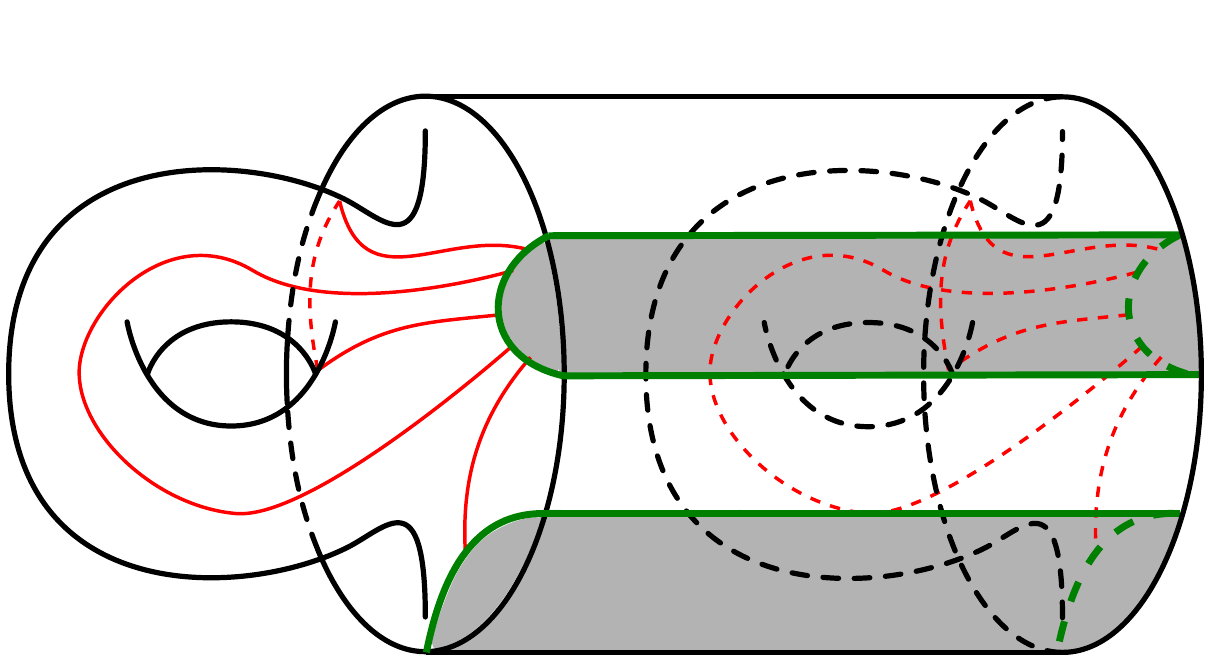}
	\caption{$T$ parametrized by $-\Z$.}
\end{subfigure}
\begin{subfigure}[b]{\linewidth}
	\centering
	\labellist
	\small\hair 2pt
	\pinlabel {$S\times\{-2\}$} [b] at 15 120
	\pinlabel {$S\times\{-1\}$} [b] at 200 120
	\pinlabel {$S\times\{1\}$} [b] at 375 120
	\pinlabel {$S\times\{2\}$} [b] at 560 120
	\endlabellist
	\includegraphics[scale=.5]{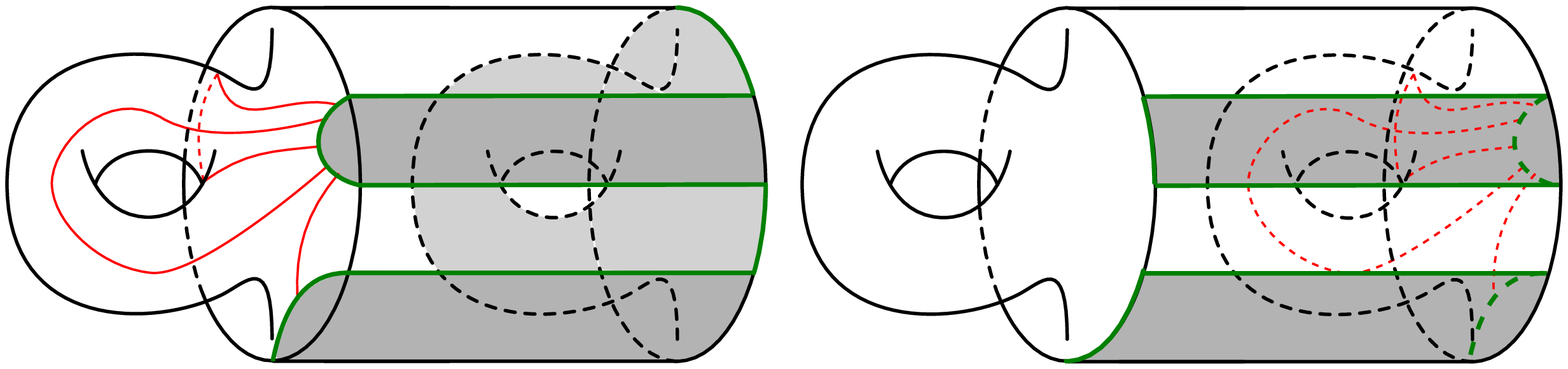}
	\caption{$P$ parametrized by $-\Z$.}
\end{subfigure}
\caption{Bordered sutured decomposition of $(Y,\Gamma)$ and $(Y',\Gamma')$.}
\label{fig:decomp_manifolds}
\end{figure}

We will look at the relationship between $\CSFD(T)$ and $\CSFD(P)$.
For simplicity we will assume $S$ is connected. The argument easily generalizes to multiple
connected components. Alternatively, it follows by induction.
The Heegaard diagrams $\HH_T$ and $\HH_P$ for $T$ and $P$ are shown in figure~\ref{fig:decomp_HH}.
Since all regions $D$ have nonzero $\del^\del D$, the diagrams are automatically
provincially admissible.

\begin{figure}
\begin{subfigure}[b]{.45\linewidth}
	\centering
	\labellist
	\small\hair 2pt
	\pinlabel $A$ at 215 200
	\pinlabel \rotatebox{180}{\reflectbox{$A$}} at 215 140
	\pinlabel $B$ at 180 170
	\pinlabel \rotatebox{180}{\reflectbox{$B$}} at 180 110
	\pinlabel $C$ at 145 80
	\pinlabel \rotatebox{180}{\reflectbox{$C$}} at 145 -10
	\small\hair 5pt
	\pinlabel {-$\mathcal{Z}_1$} [r] at 110 95
	\pinlabel {-$\mathcal{Z}_2$} [l] at 240 95
	\endlabellist
	\includegraphics[scale=.65]{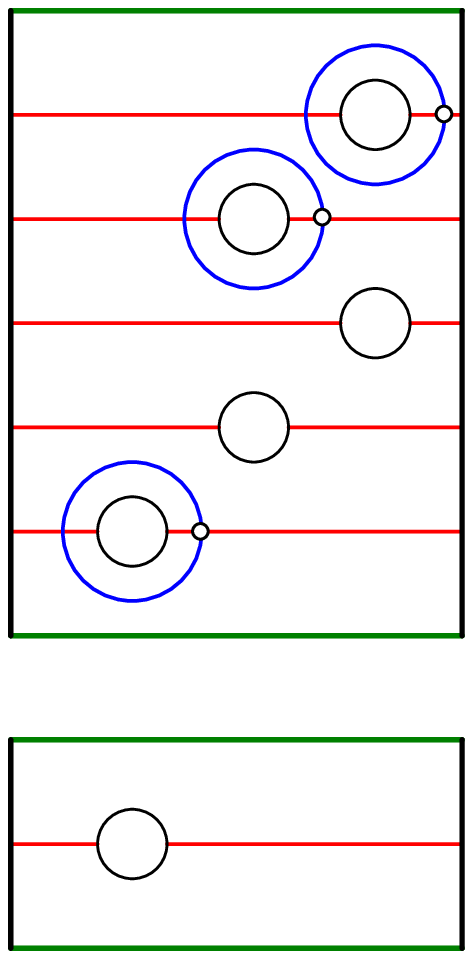}
	\caption{$\mathcal{H}_T$ with $\xgen_T$ marked.}
	\label{subfig:HH_T}
\end{subfigure}
\begin{subfigure}[b]{.45\linewidth}
	\centering
	\labellist
	\small\hair 2pt
	\pinlabel $A$ at 215 200
	\pinlabel \rotatebox{180}{\reflectbox{$A$}} at 215 140
	\pinlabel $B$ at 180 170
	\pinlabel \rotatebox{180}{\reflectbox{$B$}} at 180 110
	\pinlabel $C$ at 145 80
	\pinlabel \rotatebox{180}{\reflectbox{$C$}} at 145 -10
	\small\hair 5pt
	\pinlabel {-$\mathcal{Z}_1$} [r] at 0 95
	\pinlabel {-$\mathcal{Z}_2$} [l] at 240 95
	\endlabellist
	\includegraphics[scale=.65]{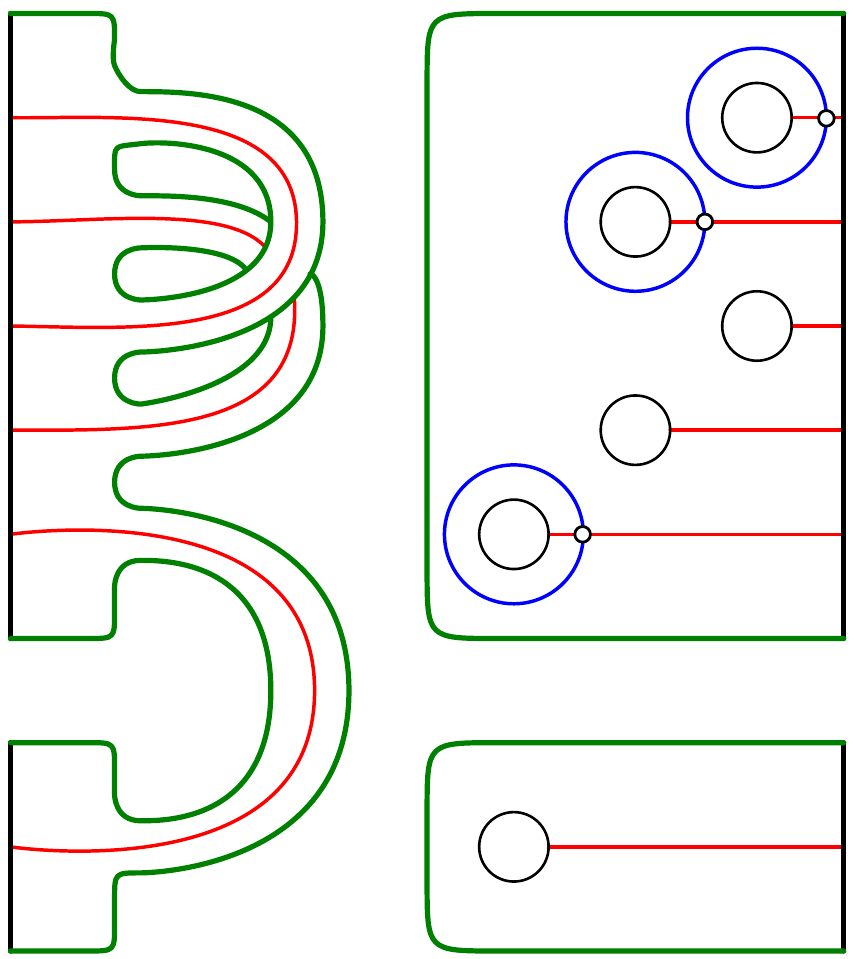}
	\caption{$\mathcal{H}_P$ with $\xgen_P$ marked.}
	\label{subfig:HH_P}
\end{subfigure}
\caption{Heegaard diagrams for $P$ and $T$.
Here $A$, $B$, and $C$ denote 1--handles.}
\label{fig:decomp_HH}
\end{figure}

The algebra $\A(\Z)$ splits as $\A(\Z_1)\otimes\Z(\Z_2)$, and each idempotent $I\in\I(\Z)$
splits as the product $I=I_1\otimes I_2$, where $I_1\in\I(\Z_1)$ and $I_2\in\I(\Z_2)$.
Moreover, $I_1$ is in a summand $\I(\Z,l)$ for some $l=0,\ldots,k$. Denote
this number by $l(I)$. Intuitively, $l(I)$ means ``how many arcs on the $\Z_1$ portion of $\Z$
does $I$ occupy''. Similarly, for a generator $\xgen$ we can define $l(\xgen)=l(I(\obar(\xgen))$.

Notice that $\HH_P$ has a unique generator $\xgen_{P}$,
such that $l(\xgen_P)=k$. 
Moreover, there are only two regions in the diagram, and both of them are boundary regions.
Therefore, no curves contribute to $\delta$. Thus, $\CSFD(P)$ has a unique generator $\xgen_P$,
with $\delta(\xgen_P)=0$.

Now, consider $\HH_T$. Every $\alpha^a$ arc intersects a unique $\beta$ curve, and any $\beta$ curve
intersects a unique pair of $\alpha^a$ arcs, that correspond in $-\Z_1$ and $-\Z_2\cong\Z_1$.
Therefore for any $s\subset\{1,\ldots,k\}$ there is a unique generator $\xgen_s\in\G(\HH_t)$, such that
$I(\obar(\xgen))=I_1(s)\otimes I_2(\sbar)$, and $l(\xgen_s)=\#s$. These are all the elements of $\G(\HH_P)$.

Consider all the $\spinc$--structures in $\spinc(T,\del T\setminus S\times\{\pm 2\})$. By
Poincar\'e duality they are an affine space over 
$H_1(T,S\times\{\pm 2\})=H_1(S\times[-2,2],S\times\{\pm 2\})\cong\ZZ$, generated by an arc
$\mu=\{p\}\times[-2,2]$ for any $p\in S$.
It is easy to see that $\epsilon(\xgen,\ygen)=(l(\xgen)-l(\ygen))\cdot[\mu]$. Thus, for any
$\xgen\in\G(\HH_T)$, its $\spinc$--structure $\s(\xgen)$ depends only on $l(\xgen)$.
In particular, there is a unique generator $\xgen_T$, in the $\spinc$--structure
$\s_k=\s(\xgen_T)$ which corresponds to $l=k$.

Since $l(\xgen_T)=k$, any class
$B\in\pi_2(\xgen,\xgen)$ that contributes to $\delta$ could not hit any Reeb chords on the $\Z_2$ side, and
$\del^{\del}B\cap\Zmid_2$ should be empty. But any elementary region in the diagram
hits Reeb chords on both sides. Therefore any such $B$ should be $0$, and $\delta(\xgen_T)=0$.

Notice that $\G(\HH_P)=\{\xgen_P\}\cong\{\xgen_T\}=\G(\HH_T,\s_k)$,
$I(\obar(\xgen_P))=I(\obar(\xgen_T))=I_1(\{1,\ldots,k\})\otimes I_2(\varnothing)$, and $\delta(\xgen_P)=\delta(\xgen_T)=0$.
Therefore $\CSFD(\HH_P)\cong\CSFD(\HH_T,\s_k)$, and $\CSFD(P)\simeq\CSFD(T,\s_k)$, as
type $D$ structures over $\A(\Z)$.

By the pairing theorem,
\begin{equation*}
\begin{split}
\SFC(Y',\Gamma')&\simeq\CSFA(W)\sqtens\CSFD(P)\\
&\simeq\CSFA(W)\sqtens\CSFD(T,\s_k)
\simeq\bigoplus_{\s|_T=\s_k}\SFC(Y,\Gamma,\s).
\end{split}
\end{equation*}

To finish the proof, we need to check that $\s\in\spinc(Y,\del Y)$ is outward to $S$ if and only if
$\s|_T=\s_k$. This follows from the fact that being outward to $S$ is a local condition. In $T=S\times[-2,2]$
the existence of an outward vector field representing $\s_l$ is equivalent to $l=k$.
\end{proof}

In fact, using bimodules the proof carries through even when $W$ has another bordered component $\Z'$. Thus we get a
somewhat stronger version of the formula.

\begin{thm}
If $(Y,\Gamma,\Z,\phi)$ is a bordered sutured manifold, and $S$ is a nice decomposing surface, where
$\del S\subset\del Y\setminus F(\Z)$, and $(Y',\Gamma',\Z,\phi)$ is obtained by decomposing along $S$, then
the following formulas hold.
\begin{align*}
\CSFD(Y',\Gamma')&\simeq\bigoplus_{\textrm{$\s$ outward to $S$}}\CSFD(Y,\Gamma,\s),\\
\CSFA(Y',\Gamma')&\simeq\bigoplus_{\textrm{$\s$ outward to $S$}}\CSFA(Y,\Gamma,\s).
\end{align*}
\end{thm}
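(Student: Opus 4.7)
The plan is to run the proof of Theorem~\ref{thm:decomp} essentially verbatim, but with bimodules in place of one-sided modules throughout. First I would form the same decomposition: let $T = S \times [-2,2]$ be a tubular neighborhood of $S$, let $W = \overline{Y \setminus T}$, let $P = S \times ([-2,-1] \cup [1,2])$, put sutures $\Gamma_T$, $\Gamma_W$, $\Gamma_P$ on the pieces exactly as before, and parametrize $U = S \times \{\pm 2\}$ by $\Z_U = \Z_S \cup -\Z_S$ for a balanced arc diagram $\Z_S$ of $S$. The hypothesis $\del S \subset \del Y \setminus F(\Z)$ lets me choose $T$ disjoint from $F(\Z)$, so the original bordered parametrization $(\Z,\phi)$ sits entirely inside $W$. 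The upshot is that $W$ becomes a bordered sutured manifold with parametrization $\Z \cup \Z_U$, while $T$ and $P$ are parametrized purely by $-\Z_U$, and the gluings $W \cup_U T$, $W \cup_U P$ reproduce $(Y,\Gamma,\Z,\phi)$ and $(Y',\Gamma',\Z,\phi)$ respectively.

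Next I would apply the bimodule pairing theorem (Theorem~\ref{thm:pairing_bimodules}) to obtain
\begin{align*}
\CSFD(Y,\Gamma,\s) &\simeq \BSDA(W,\Gamma_W) \sqtens_{\A(\Z_U)} \CSFD(T,\Gamma_T,\s|_T), \\
\CSFD(Y',\Gamma') &\simeq \BSDA(W,\Gamma_W) \sqtens_{\A(\Z_U)} \CSFD(P,\Gamma_P),
\end{align*}
where the first line is split by the restriction of $\s \in \spinc(Y,\del Y \setminus F(\Z))$ to $T$. The purely local computation inside the proof of Theorem~\ref{thm:decomp} already shows that $\CSFD(P)$ and the single summand $\CSFD(T,\s_k)$ indexed by the outward $\spinc$--structure each consist of one generator with vanishing differential and with the same idempotent $I_1(\{1,\dots,k\}) \otimes I_2(\varnothing)$. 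Hence $\CSFD(P) \cong \CSFD(T,\s_k)$ as type $D$ structures over $\A(-\Z_U)$, and tensoring this isomorphism on the left with $\BSDA(W,\Gamma_W)$ yields
\[
\CSFD(Y',\Gamma') \;\simeq\; \bigoplus_{\s|_T = \s_k} \CSFD(Y,\Gamma,\s).
\]
Since outwardness of $\s$ with respect to $S$ is a local condition inside $T$ and is equivalent to $\s|_T = \s_k$, the right-hand sum is exactly the desired one. The $\CSFA$ statement follows by the symmetric argument, with the tensor product placed on the opposite side.

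The one step I expect to require genuine care is compatibility of the $\spinc$--splitting with the bimodule tensor product, namely that $\BSDA(W) \sqtens_{\A(\Z_U)} \CSFD(T,\s_l)$ really picks out precisely those global $\s$ with $\s|_T = \s_l$. This is where Theorem~\ref{thm:grading_gluing}, and in particular property~(\ref{cond:distinguish}), does the work: the projection $\pi$ on the mixed grading set separates $\spinc$--structures by their homological components, and the way $\pi$ is built from the two sides shows that each fiber on the glued side corresponds to a unique restriction class on $T$. Once this bookkeeping is in place, everything else is a routine upgrade of Theorem~\ref{thm:decomp} from the sutured category to the bordered sutured category.
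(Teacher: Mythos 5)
Your proposal is correct and follows exactly the approach the paper intends: keep the original parametrization $\Z$ on $W$ (possible because $\del S\subset\del Y\setminus F(\Z)$), replace $\CSFA(W)$ in the proof of Theorem~\ref{thm:decomp} by $\BSDA(W)$, and reuse the unchanged local computation identifying $\CSFD(P)$ with $\CSFD(T,\s_k)$ (and likewise for $\CSFA$) before tensoring on the appropriate side. One minor slip: since $T$ and $P$ are parametrized by $-\Z_U$, their $\CSFD$'s are type $D$ structures over $\A(\Z_U)$, not $\A(-\Z_U)$, consistent with the $\boxtimes_{\A(\Z_U)}$ you wrote in the preceding display.
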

\begin{proof}
The first statement follows as in theorem~\ref{thm:decomp}, using $\BSDA(W)$.
The second follows analogously, replacing the argument for $\CSFD(T)$ and $\CSFD(P)$ with
one for $\CSFA(T)$ and $\CSFA(P)$.
\end{proof}

\bibliographystyle{hamsplain2}
\bibliography{/home/rumen/papers/bibliography/all}

\end{document}